\newtheorem{assumption}{Assumption}
\theoremstyle{plain}
\newtheorem{thm}{Theorem}[section]
\theoremstyle{plain}
\newtheorem{lem}[thm]{Lemma}
\newtheorem{proposition}[thm]{Proposition}
\theoremstyle{definition}
\newtheorem{definition}{Definition}[section]
\newtheorem{remark}{Remark}[section]
\newtheorem*{maintheorem*}{Main Theorem}
\newtheorem*{maincorollary*}{Main Corollary}
\numberwithin{equation}{section} \allowdisplaybreaks
\begin{document}
	\title[{\bf SLQ} problem with wave equation]
	{Convergence Analysis for an implementable scheme to solve the Linear-quadratic stochastic optimal control problem with stochastic wave equation}
	\author{Abhishek Chaudhary}
	\maketitle
	\medskip
	\centerline{Mathematisches Institut Universität Tübingen, Auf der Morgenstelle 10, 72076 Tübingen, Germany}\centerline{chaudhary@na.uni-tuebingen.de}
	
	%
	
\begin{abstract}
We study an optimal control problem for the stochastic wave equation driven by affine multiplicative noise, formulated as a stochastic linear–quadratic (SLQ) problem. By applying a stochastic Pontryagin's maximum principle, we characterize the optimal state–control pair via a coupled forward–backward SPDE system. We propose an \emph{implementable} discretization using conforming finite elements in space and an implicit midpoint rule in time. By a new technical approach we obtain strong convergence rates for the discrete state–control pair \emph{without} relying on {\em Malliavin calculus}. For the practical computation we develop a gradient-descent algorithm based on {\em artificial} iterates that employs an exact computation for the arising conditional expectations, thereby eliminating costly Monte Carlo sampling. Consequently, each iteration has a computational cost that is proportional to the number of spatial degrees of freedom, producing a scalable method that preserves the established strong convergence rates. Numerical results validate its efficiency.
\end{abstract}
	\noindent
	
	{\bf Keywords:} stochastic wave equation; linear noise; Wiener process; linear quadratic control problem; BSPDE; Pontryagin's maximum principle; finite element method; Euler method; gradient descent method; artificial iterates. 
	
	\noindent
	
	{\bf Mathematics Subject Classification} 49J20, 65M60, 93E20.
	\section{Introduction}

	Let $D\subset\mathbb{R}^d$ $(1\le d\le 3)$ be a bounded domain with a smooth enough boundary $\Gamma$, and let $T>0$ be a fixed time. Our aim is to numerically approximate the $\mathbb{L}^2(D)$-valued, $\mathbb{F}$-adapted distributed control process $U^*\equiv\{ U^*(t); t\in[0,T]\}$ on the filtered probability space $(\Omega,\mathcal{F},{\mathbb{F}=}\{\mathcal{F}_t\}_{t\in[0,T]},\mathbb{P})$ that minimizes the cost functional $(\alpha>0, \beta\ge 0)$
	\begin{align}\label{1.1}
		J(X,U)=\frac{1}{2}\mathbb{E}\bigg[\int_0^T\big(\|X(t)-{\widetilde{X}}(t)\|_{\mathbb{L}^2(D)}^2+\alpha\|U(t)\|_{\mathbb{L}^2(D)}^2\big) \,{\rm d}t\bigg]{+\beta \mathbb{E}\big[\|X(T)-\widetilde{X}(T)\|_{\mathbb{L}^2(D)}^2\big]}
	\end{align}
	subject to the (controlled forward) stochastic wave equation driven by the affine noise
	\begin{align}
		\label{1.2}
		{\scriptstyle 
		\begin{cases} 
				\,{\rm d} X_t(t)=(\Delta X(t)+ U(t))\,{\rm d}t +(\sigma(t)+ \gamma X(t))\,\,{\rm d}W(t)& {\rm in}\,D\times (0,T], \\
				X(0) = X_{1,0} &{\rm in}\,{D},\\
				X_t(0)=X_{2,0} &{\rm in}\,{D},\\
				X(t)= X_t(0)=0 &{\rm on}\,\Gamma\times(0,T],
		\end{cases}}
	\end{align}
	where $\gamma\in\mathbb{R}^m$, $W\equiv\{W(t); t\in[0,T]\}$ is a $\mathbb{R}^m$-valued Wiener process that generates a complete filtration $\{\mathcal{F}_t\}_{t\in[0,T]}$, with initial data $X_{1,0}\in \mathbb{H}_0^1(D), X_{2,0}\in \mathbb{L}^2(D)$, the notation $X_t\equiv\partial_t X$ ({\em i.e.,} a partial derivative of $X$ {\em w.r.t.} the time variable$)$, $\widetilde{X}\in C([0,T];H_0^1(D))$ ({\em i.e.,} the given deterministic target trajectory) and {additive noise coefficient} $\sigma\in \mathbb{L}^2_{\mathbb{F}}(\Omega\times[0,T];\mathbb{L}^2(D; \mathbb{R}^m))\cap C([0,T];\mathbb{L}^2(\Omega;\mathbb{L}^2(D;\mathbb{R}^m)))$.
    
    For every $U\in L_{\mathbb{F}}^2(\Omega\times[0,T];\mathbb{L}^2(D))$, there exists a unique weak solution {\em} $X\equiv \mathcal{X}[U]\in \mathbb{L}^2_{\mathbb{F}}(\Omega\times [0,T];\mathbb{H}_0^1(D))\cap \mathbb{L}^2_{\mathbb{F}}(\Omega;H^1([0,T];\mathbb{L}^2(D)))$ to the SPDE \eqref{1.2} (see Lemma~\ref{Lemma 2.1}), and there exists also a unique minimizer $(X^*, U^*)\in \mathbb{L}^2_{\mathbb{F}}(\Omega;C([0,T];\mathbb{H}_0^1(D))\cap \mathbb{L}^2_{\mathbb{F}}(\Omega;H^1([0,T];\mathbb{L}^2(D)))\times \mathbb{L}^2_{\mathbb{F}}(\Omega\times [0,T];\mathbb{L}^2(D))$ of the stochastic optimal control problem (see Proposition~\ref{existence of a unique optimal pair}): \lq{minimize \eqref{1.1} subject to \eqref{1.2}}\rq, which we later refer to as the {\bf SLQ} problem.

\noindent	
Let $X_1=X$ and $X_2=X_t$ then we rewrite the {\bf SLQ} problem~\eqref{1.1}-\eqref{1.2} as follows: find the unique optimal tuple $(X_1^*, X_2^*, U^*)\in \mathbb{L}^2_{\mathbb{F}}(\Omega;C([0,T];\mathbb{H}_0^1(D))\cap \mathbb{L}^2_{\mathbb{F}}(\Omega;H^1([0,T];\mathbb{L}^2(D)))\times \mathbb{L}^2_{\mathbb{F}}(\Omega\times [0,T];\mathbb{L}^2(D)) $ that minimizes the following cost functional
	\begin{align}\label{1.3}
	J(X_1,U)=\frac{1}{2}\mathbb{E}\bigg[\int_0^T\big(\|X_1(t)-\widetilde{X}(t)\|_{\mathbb{L}^2(D)}^2+\alpha\|U(t)\|_{\mathbb{L}^2(D)}^2\big) \,{\rm d}t\bigg]+ \beta\mathbb{E}\big[\|X_1(T)-\widetilde{X}(T)\|_{\mathbb{L}^2(D)}^2\big]
\end{align}
subject to the (controlled forward) stochastic system driven by the affine noise
\begin{align}
	\label{1.4}
	{\scriptstyle 	\begin{cases} 
			\,{\rm d}X_1(t)= X_2(t)\,{\rm d}t& {\rm in }\,D\times (0,T], \\
			\,{\rm d} X_2(t)=(\Delta X_1(t)+ U(t))\,{\rm d}t +(\sigma(t)+\gamma X_1(t))\,\,{\rm d}W(t)& {\rm in}\,D\times (0,T],  \\
			X_1(0) = X_{1,0} &{\rm in}\,{D},\\
			X_2(0)=X_{2,0} &{\rm in}\,{D},\\
			X_1(t)=X_2(t)= 0 &{\rm on}\,\Gamma\times(0,T].
	\end{cases}}
\end{align}	
Clearly, the {\bf SLQ} problem \eqref{1.1}–\eqref{1.2} is equivalent to the {\bf SLQ} problem \eqref{1.3}–\eqref{1.4}. To given $X_{1}^*$ and $\widetilde{X}$, the following system of {\bf BSPDE} 
	\begin{align}
		\label{1.5}
		\begin{cases} 
			\,{\rm d}Y_1(t)=-[\Delta Y_2(t)+\gamma\cdot Z_2(t)+ X_1^*(t)-\widetilde{X}(t)]\,{\rm d}t + Z_1(t)\,{\rm d}W(t) & {\rm in}\,D\times [0,T),  \\
			\,{\rm d}Y_2(t)=-Y_1(t)\,{\rm d}t +  Z_2(t)\,{\rm d}W(t) & {\rm in}\,D\times [0,T), \\
			Y_1(T) = \beta \big(X^*_1(T)-\widetilde{X}(T)\big) &{\rm in}\,{D},\\
			Y_2(T) = 0 &{\rm in}\,{D},\\
			Y_1(t)=Y_2(t)=0 &{\rm on}\,\Gamma\times [0,T),
		\end{cases}
	\end{align}
	has a unique strong solution quadruple $(Y_1, Y_2, Z_1, Z_2)\in L_{\mathbb{F}}^2(\Omega; C([0,T];\mathbb{L}^2(D))\times L_{\mathbb{F}}^2(\Omega; C([0,T];\mathbb{H}_0^1(D))\times \mathbb{L}^2_{\mathbb{F}}(\Omega\times[0,T];\mathbb{L}^2(D;\mathbb{R}^m))\times \mathbb{L}^2_{\mathbb{F}}(\Omega\times[0,T]; {\mathbb{H}_0^1(D;\mathbb{R}^m)})$; see Lemma~\ref{Lemma 2.3}. The adjoint variable $Y_2$ is then related to the optimal control by Pontryagin's maximum principle (see Theorem~\ref{Pontryagin's Maximum Principle}), which in the case of problem {\bf SLQ} \eqref{1.3}-\eqref{1.4} is 
	\begin{align}\label{1.6}
		\alpha U^*=-{Y}_2\qquad{\rm in}\,\mathbb{L}^2_{\mathbb{F}}(\Omega\times[0,T];\mathbb{H}^1_0(D)).
	\end{align}
Stochastic wave equations driven by additive or multiplicative noise arise naturally in many applications, such as structural vibration control under random excitations \cite{KloedenPlaten2001}, acoustic wave propagation in uncertain media, and energy harvesting from random ocean-wave fields \cite{XiaEtAl2019}. These systems are modeled by a second-order hyperbolic SPDE with Gaussian forcing \cite{CohenLarssonSigg2017}. In this context, one can formulate optimal control problems in a stochastic linear–quadratic ({\bf SLQ}) framework, aiming to minimize a quadratic cost functional subject to stochastic wave dynamics; see \cite[Example 7.1]{Lu2014}. The present work numerically addresses this class of {\bf SLQ} problems by using an open-loop approach via the stochastic maximum principle for wave equations with additive-multiplicative noise.

\subsection{Previous works}

For the {\em deterministic} linear–quadratic control of the wave equation, the foundational existence and uniqueness theory was laid out by Lions \cite{Lions1971}, and further detailed by Tr\"oltzsch \cite{Troeltzsch2010}, where the coupled state–adjoint system is shown to be well-posed in the natural energy spaces. The well-posedness of analytic solutions is established via abstract weak compact embeddings, which are not suitable for numerical computation. Zuazua \cite{Zuazua2005} analyzed finite-difference discretizations of the \emph{deterministic} wave equation and showed that, unlike exact controllability, the discrete \textbf{LQ} controls converge despite of spurious high-frequency numerical artifacts.

Löscher and Steinbach \cite{LoscherSteinbach2022} introduced a space–time finite‐element discretization for the distributed \textbf{LQ} control of the wave equation and established convergence of the fully discrete scheme without any CFL‐type restriction. Building on this, Langer et al.\ \cite{LangerEtAl2024} developed block‐preconditioned iterative solvers for the resulting global systems, demonstrating mesh‐independent convergence and parallel scalability in the Tikhonov‐regularized hyperbolic setting.

Engel et al. \cite{EngelTrautmannVexler2019} derived optimal finite‐element error estimates for wave‐equation control with bounded‐variation controls. In the one‐dimensional, measure‐valued setting, Trautmann et al. \cite{TrautmannVexlerZlotnik2018a,TrautmannVexlerZlotnik2018b} proved convergence rates via three‐level time‐stepping and conforming finite elements.

On the algorithmic front, Kröner et al. \cite{KronerKunischVexler2009} proved local superlinear convergence of semismooth Newton and primal–dual active‐set methods for both distributed and boundary control problems, and Steinbach and Zank~\cite{SteinbachZank2022} developed an inf-sup stable variational formulation for linear-quadratic optimal control problems that facilitates the deign of robust and scalable space--time solvers, including parallel implementations.

In contrast, numerical investigations of \emph{stochastic} control problems remain relatively scarce. For systems governed by finite‐dimensional SDE, see \cite{Archibald_Bao_Yong_Zhou, Archibald_Baoand_Young;, MenaDammStillfjord, Yanqing 2021, Yanqing 2023}. In the context of SPDE‐constrained distributed control, key references include \cite{Duns&Prohl, Li Zhou, Prohl&Wang1, ProhlWang2021, ProhlWangbook, B.Li,  wang2020}. Notably, \cite{Duns&Prohl} employs a data‐driven partitioning regression estimator to approximate the control and state, derives convergence rates for a conforming finite‐element semi‐discretization, and discusses practical implementation; the interaction between spatial and temporal discretization errors is further analyzed in \cite{Prohl&Wang1, ProhlWang2021}.

Our analysis is based on the FBSPDE system \eqref{1.4}-\eqref{1.5} with the optimality condition \eqref {1.6} and its fully discrete version. The extensive literature on numerical schemes for BSDEs includes, among others, \cite{BenderDenk2007, BenderSteiner, gobet2005, Lemor2006, Longstaff2001, Milstein2006}, which provide various approaches and theoretical insights into their discretization and practical implementation.
Notably, Chaudhary et al. \cite{ChaudharyProhl} provide an approach based on recursive formula to {\em avoid} the statistical approximation of arising conditional expectations for the simulation in the case of a different {\bf SLQ} problem, which would otherwise limit the space-time resolution of the FBSPDE system.
\subsection{Our contributions in this paper}
\begin{figure}[ht!]
  \centering
  \adjustbox{max width=\textwidth,max totalheight=0.60\textheight,keepaspectratio}{
    \begin{tikzpicture}[
        box/.style={
          rectangle, draw, fill=blue!5,
          text width=1.7cm,     
          minimum height=6mm,   
          text centered, rounded corners,
          font=\sffamily\scriptsize,
          inner sep=0.6mm
        },
        arrow/.style={thick, -{Stealth[length=2.0mm, width=1.1mm]}},
        row sep=1.5mm, column sep=5mm
      ]
      \node (slq) [box, text width=2.4cm] {{\bf SLQ} problem \eqref{1.1}-\eqref{1.2}};

      \node (error) [box, below left=1.5mm and 6mm of slq, text width=2.0cm]
        {{\bf Pontryagin's maximum principle} (Theorem~\ref{Pontryagin's Maximum Principle})};
      \node (algo)  [box, below right=1.5mm and 6mm of slq, text width=2.0cm]
        {Full discretization of {\bf SLQ} problem~\eqref{fully discrete cost functional}-\eqref{discrete state equation} ({\em ${\bf SLQ}_{h\tau}$ problem})};

      \node (semi)   [box, below=2.5mm of error, xshift=-15mm]
        {Space-discretization of {\bf SLQ} problem ({\em {\bf SLQ}$_h$ problem \eqref{3.1}-\eqref{3.2}})};
      \node (fully)  [box, below=2.5mm of error, xshift=15mm]
        {Full-discretization of {\bf SLQ} problem ({\em ${\bf SLQ}_{h\tau}$ problem~\eqref{fully discrete cost functional}-\eqref{discrete state equation}})};
      \node (discrete)[box, below=2.5mm of algo, xshift=-10mm]
        {Fully discrete {\bf PMP}\\(Theorem~\ref{fully discrete Pontryagin's maximum principle})};
      \node (gradient)[box, below=2.5mm of algo, xshift=15mm]
        {Gradient descent method ({\em i.e,} Algorithm~\ref{tt1})};

      \node (fem)     [box, below=2.5mm of semi, xshift=-15mm]
        {Error estimate for {\bf SLQ}$_h$ problem (Theorems ~\ref{Thm 3.8} \&~\ref{thm3.5})};
      \node (fulldisc) [box, below=2.5mm of fully, xshift=15mm]
        {Error estimate for ${\bf SLQ}_{h\tau}$ problem (Theorems~\ref{strong rate of convergence for time discretization} and \ref{strong rate of convergence for time discretization1})};
      \node (recursive) [box, below=2.5mm of gradient, xshift=-6mm]
        {Artificial iterates for the conditional expectation $\mathbb{E}[\cdot|\mathcal{F}_{t_n}]$ (Section~\ref{subsection 5.4})};
      \node (implement) [box, below=2.5mm of gradient, xshift=30mm]
        {{\em Implementable } algorithm for simulation (Algorithm~\ref{tt3})};

      \node (rateconv) [box, below=2.5mm of fulldisc, xshift=-15mm]
        {Convergence rate for ${\bf SLQ}_{h\tau}$ problem towards {\bf SLQ} problem (Theorem~\ref{final result for full discrete scheme})};
      \node (rateimpl) [box, below=3.5mm of implement, xshift=-10mm]
        {Convergence rate for {\em Implementable } Algorithm (Theorem~\ref{convergence of gradient descent method})};
      \node (example)  [box, below=2.5mm of implement, xshift=16mm]
        {Numerical simulation (Section~\ref{example 1})};

      \draw[arrow] (slq) -- (error);
      \draw[arrow] (slq) -- (algo);

      \draw[arrow] (error) -- (semi);
      \draw[arrow] (error.east) .. controls +(8mm,-6mm) and +(-8mm,6mm) .. (fully.north);
      \draw[arrow] (semi) -- (fem);
      \draw[arrow] (fully) -- (fulldisc);
      \draw[arrow] (fem.east) .. controls +(10mm,-4mm) and +(-10mm,4mm) .. (rateconv.north);
      \draw[arrow] (fulldisc) -- (rateconv);

      \draw[arrow] (algo) -- (discrete);
      \draw[arrow] (discrete) -- (gradient);
      \draw[arrow] (gradient) -- (recursive);
      \draw[arrow] (recursive) -- (implement);
      \draw[arrow] (implement) -- (example);

      \draw[arrow] (gradient.south) -- (rateimpl.north);
      \draw[arrow] (rateconv) -- (rateimpl);
      \draw[arrow] (implement) -- (rateimpl);
    \end{tikzpicture}
  }
  \caption{A flowchart outlining the error analysis and algorithmic approach for the {\bf SLQ} problem. Here, \textbf{PMP} denotes Pontryagin's maximum principle.}
  \label{fig:my_flowchart}
\end{figure}
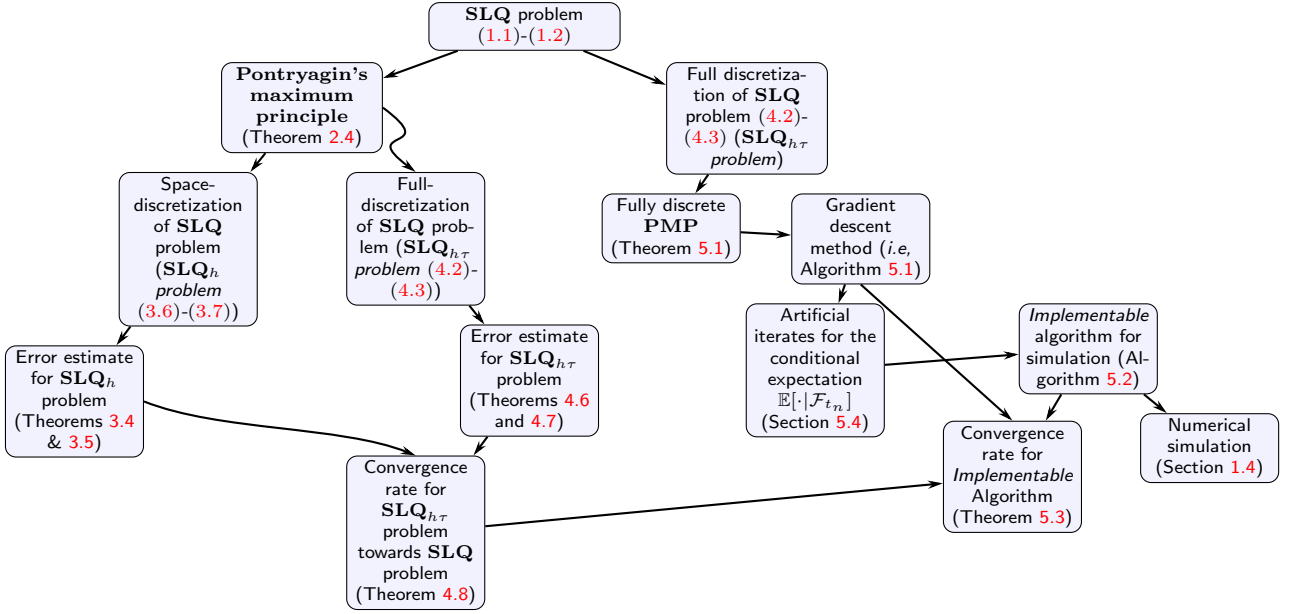
The main objective of this paper is to propose an \emph{efficient} and \emph{ implementable} numerical scheme—referred to as Algorithm~\ref{tt3}—for solving the {\bf SLQ} problem governed by a stochastic wave equation \eqref{1.2}. This algorithm is constructed to approximate the unique optimal control $U^*$ and the associated state $X^*$ for problem {\bf SLQ} \eqref{1.1}-\eqref{1.2}. Below, we detail the contributions of this work:

\begin{itemize}
	\item[(1)]\textbf{A coupled FBSPDE as optimality system:} We begin by establishing existence and uniqueness of the optimal tuple \((X^*_1, X_2^*, U^*)\) for the {\bf SLQ} problem~\eqref{1.3}-\eqref{1.4}.  Here, the state equation is posed in its standard variational (weak) form (see Definition~\ref{definition1}).  Applying a stochastic version of Pontryagin’s maximum principle yields a {\em coupled} forward–backward SPDE system~\eqref{1.4}-\eqref{1.5} that characterizes a unique optimal tuple \((X^*_1, X_2^*, U^*)\) via an optimality condition~\eqref{1.6}; see Theorem \ref{Pontryagin's Maximum Principle}.
	\item[(2)]\textbf{First discretize then optimize:} For the practical implementation, we propose a fully discrete approximation of the {\bf SLQ} problem, denoted by \({\bf SLQ}_{h\tau}~\eqref{fully discrete cost functional}-\eqref{discrete state equation}\), which combines a conforming finite element method in space with an implicit midpoint scheme in time. The implicit midpoint rule is selected for its time-reversibility, unconditional stability, and conserved energy-behavior in the deterministic wave setting. This space–time discretization yields a coupled discrete optimality system (see Propositions~\ref{Semi-discrete Pontryagin's maximum principle} and~\ref{fully discrete Pontryagin's maximum principle}).  
	
	\item[(3)]\textbf{Avoidance of {\em Malliavin calculus}:} A common approach for deriving error estimates in stochastic control problems, particularly those involving parabolic equations \cite{Prohl&Wang1,ProhlWang2021}, is to rely on {\em Malliavin calculus} to handle the involved BSPDE \eqref{1.5}; see Remark~\ref{Remark 4.3}. However, in our setting—due to the distinct structure of the BSPDE \eqref{1.5} arising from \textbf{SLQ} problem \eqref{1.3}-\eqref{1.4}—it may become difficult to apply {\em Malliavin calculus}, especially for estimating error terms associated with the diffusion component \(Z=(Z_1, Z_2)\) in the analysis of the time discretization; see Remark~\ref{Remark 4.3}. To overcome this difficulty, we develop a key proposition (see Proposition~\ref{Proposition02}) that {\em avoids} the use of {\em Malliavin calculus}; see Remarks~\ref{Remark 4.3} and ~\ref{Remark 4.5}. These results allow us to prove strong convergence of the fully discrete optimal tuple \((X^*_{1, h\tau}, X_{2,h\tau}^*, U^*_{h\tau})\) towards the continuous solution tuple \
    \((X_1^*, X_2^*, U^*)\) without invoking Malliavin derivatives (see Theorem~\ref{strong rate of convergence for time discretization}). This approach forms one of the central novelties of our work. 
	
	\item[(4)] \textbf{Artificial gradient iterates:} A subsequent step then is to decouple the discrete optimality system; see Proposition~\ref{fully discrete Pontryagin's maximum principle}. To compute the discrete optimal control in practice, we employ a gradient‐descent method, \({\bf SLQ}_{h\tau}^{\mathrm{grad}}\) (see Algorithm \ref{tt1}), that alternates updates of the state and control iterates $(X_{h\tau}^{(\ell)}=(X_{1,h\tau}^{(\ell)}, X_{2,h\tau}^{(\ell)}), U_{h\tau}^{(\ell)})$. A major computational bottleneck is the need to evaluate conditional expectations $\mathbb{E}[\cdot|\mathcal{F}_{t_n}]$ at each time step in the computation of adjoint iterate $Y_{h\tau}^{(\ell)}$, which usually is approximated by Monte Carlo least squares-regression methods; see Section~\ref{subsection 1.3} and Remark~\ref{5.1}. In the additive‐noise setting (i.e., $\gamma=0$), we {\em avoid} these Monte Carlo methods by introducing  a concept of {\em artificial} gradient iterates; see Section~\ref{5.4.1}. Consequently, each iteration has a computational cost proportional to the number of spatial degrees of freedom, making Algorithm~\ref{tt3} both efficient and scalable in high‐dimensional discretizations (see Section~\ref{example 1}), while preserving the strong convergence rate of the underlying scheme (see Theorem~\ref{convergence of gradient descent method}). This concept of {\em artificial} gradient iterate forms another novelty of our work.
\end{itemize}

\subsection{High complexity problem to approximate conditional expectations}\label{subsection 1.3}

Approximating the conditional expectation \(\mathbb{E}[\cdot \mid \mathcal{F}_{t_n}]\approx\mathbb{E}[\cdot \mid X_{1,h\tau}^{(\ell)}(t_n)]  \)---which occurs in the equation~\eqref{backward spde} for the adjoint iterates---becomes notoriously difficult in situations when the path of state $X_{1,h\tau}^{(\ell)}(t_n, \omega) \in \mathbb{V}_h\cong \mathbb{R}^{d_h}$ realizes a high dimension\footnote{In the setting of {\bf SLQ} problem, the state space $\mathbb{V}_h$ is a high-dimensional subspace of the infinite dimensional space $\mathbb{H}_0^1(D)$-- whose dimension depends on the mesh size $h>0$; see Section~\ref{Section 3} for its definition.}. Classical statistical techniques, which rely on probabilistic Monte Carlo regression, encounter the \textbf{curse of dimensionality} \cite{bellman1957dynamic, ChessariEtAl2023}. As the dimension \(d_h\) increases, the state-space volume grows rapidly, causing data sparsity and slowing statistical convergence—approximately at a rate of \(\mathrm{M}^{-2/(d_h+2)}\) for least-squares regression methods, where \(\mathrm{M}\) is the number of Monte Carlo samples \cite[Theorem 4.2]{Gyorfi2002}. This makes such methods highly non-efficient or even impractical for a higher dimension $d_h$.

Specific approaches, like the least-squares Monte Carlo (LSMC) method \cite{Longstaff2001}, originally designed for option pricing, have been adapted for \textbf{BSDE} \cite{gobet2005, Lemor2006, ChessariEtAl2023}. Refinements, such as those by Bender \& Steiner \cite{BenderSteiner}, replace generic regression bases with martingale systems tailored to the Markovian structure, simplifying projections and improving stability. However, these methods still demand a combinatorial number of basis functions and samples \cite[Section 4]{ChessariEtAl2023}. Alternative techniques—including {\em Malliavin calculus}, quantization, tree-based methods, cubature, and forward numerical methods—perform well in few dimensions but again falter in high-dimensional state spaces due to the same curse of dimensionality \cite[Table 1, Section 7]{ChessariEtAl2023}. In \cite{Duns&Prohl}, Dunst and Prohl used a random partitioning estimator-based strategy to approximate arising conditional expectations in the approximation of high-dimensional \textbf{BSDE}, but again this approach becomes increasingly costly when numerical parameters $h, \tau$ tend to zero.

Our algorithm ({\em i.e.,} Algorithm \ref{tt3}) overcomes these challenges with the help of {\em artificial} gradient iterate in Section~\ref{5.4.1} for the {\em exact} computation of conditional expectations on the high-dimensional space \(\mathbb{V}_h\), eliminating the need for Monte Carlo sampling. As a result, its runtime scales \emph{proportionally} with the problem size, rather than exponentially in \(d_h\) (see Remark \ref{Remark 1.1}), and this removes the curse of dimensionality to simulate appearing conditional expectations. Additionally, our {\em implementable } algorithm maintains an explicit convergence rate tied to the numerical parameters \(h\), \(\tau\), and \(\ell\); see Theorem \ref{convergence of gradient descent method}.

\subsection{Numerical simulation}\label{example 1}
We motivate the capabilities of Algorithm~\ref{tt3} by a numerical simulation. For this purpose, we consider the spatial domain \(D=(0,1)\) and final time \(T=1\).  The initial data are chosen as
\[
X_{1,0}(x) = x^2 (1-x),\quad\text{and}\quad
X_{2,0}(x) = 0\quad\forall\,x\in [0,1],
\]
and the noise coefficients are given, for \(1\le i\le m=10\), by
\[
\sigma_i(t, x)
= 2\sin((i+1)\pi x) \cos(0.5(i+1)\pi t)(1+x)\quad\forall\,(t, x)\in [0,1]\times[0,1],
\]
with $\mathbb{R}^m$-valued Wiener process $W$. For the quadratic cost functional we take \(\beta=9\), \(\alpha=0.01\), and set the target profile
\[
\widetilde X(t, x)
=\sin(3\pi x)(0.5+\cos(2\pi t))\qquad\forall\,(t,x)\in [0,1]\times [0,1].
\]
The space–time discretization parameters are
$\tau = \frac1{60},\,\text{and}\,h = \frac1{100} $ (so $d_h=99$),
while the gradient‐descent iteration in Algorithm~\ref{tt3} uses
$
\ell = 10,\,\text{and}\, \kappa = 2.8.
$
Moreover, for the decay of the cost functional, we define the approximated cost functional
\begin{align*}
    J_{h\tau}(X_{h\tau}^{(\ell)},U_{h\tau}^{(\ell)})&\approx J_{h\tau}^{\rm M}(X_{h\tau}^{(\ell)},U_{h\tau}^{(\ell)})\\&=\frac{1}{\rm 2M}\sum_{{\rm m}=1}^{\rm M} \bigg[\int_0^T\big(\|X_{1,h\tau}^{(\ell,\mathrm{m})}(t)-\widetilde{X}(t)\|_{\mathbb{L}^2(D)}^2+\alpha\|U_{h\tau}^{(\ell, \mathrm{m})}(t)\|_{\mathbb{L}^2(D)}^2\big) \,{\rm d}t+\beta\|X_{1,h\tau}^{(\ell,\mathrm{m})}(T)-\widetilde{X}(T)\|_{\mathbb{L}^2(D)}^2\bigg],
\end{align*}
where $\{(X_{1, h\tau}^{(\ell,,{\rm m})},U_{h\tau}^{(\ell,{\rm m})})\}_{{\rm m}=1}^{\rm M}$ is the collection of ${\rm M}$ - Monte Carlo copies of $(X_{h\tau}^{(\ell)},U_{h\tau}^{(\ell)}).$ Note that upon convergence of Algorithm~\ref{tt3}, the discrete approximations satisfy, for all \((t, x)\in[0,T]\times D\),
\[
X^*(t, x)\approx X_{1,h\tau}^{(\ell)}(t, x), 
\quad
\partial_t X^*(t, x)\approx X_{2,h\tau}^{(\ell)}(t, x),
\quad
U^*(t, x)\approx U_{h\tau}^{(\ell)}(t, x).
\]
\subsubsection*{\textbf{Example 1}}
In this example, first we simulate a single path of control iterate $U_{h\tau}^{(\ell)}$ and state iterate $X_{1,h\tau}^{(\ell)}$ computed by Algorithm~\ref{tt3}; see Figure~\ref{fig:wave_control_state}. Secondly, we plot the discrete cost functional~\eqref{fully discrete cost functional} and the marginal histogram plot for the control iterate $U_{h\tau}^{(\ell)}$ in Figure~\ref{jjj1}.
\bigskip

\begin{figure}[htbp]
	\centering
	\begin{subfigure}[b]{0.49\textwidth}
		\centering
		\includegraphics[width=\textwidth]{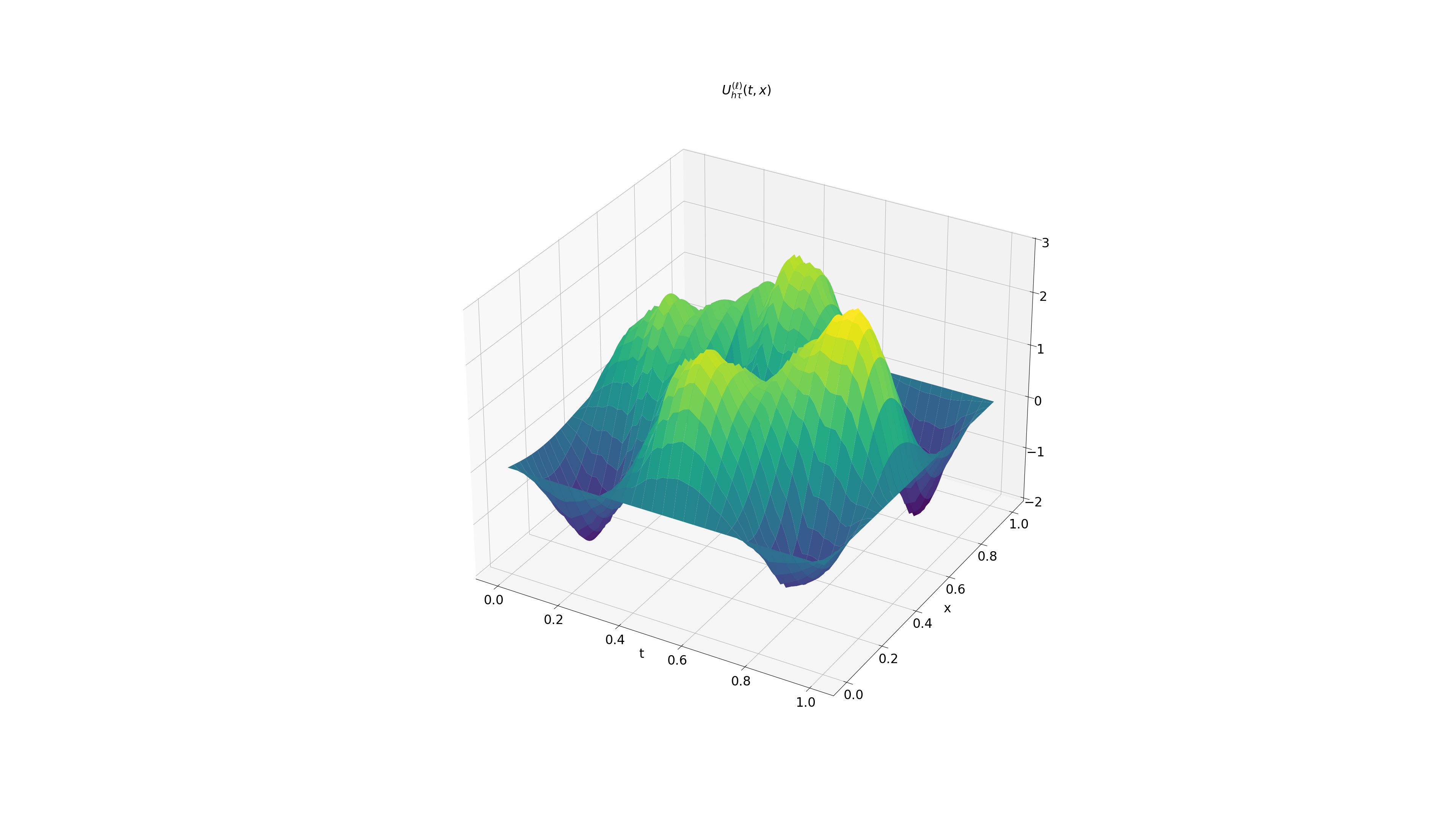}
		\caption{A path of control iterate $U_{h\tau}^{(\ell)}$}
		\label{fig:control_wave}
	\end{subfigure}
	\hfill
	\begin{subfigure}[b]{0.49\textwidth}
		\centering
		\includegraphics[width=\textwidth]{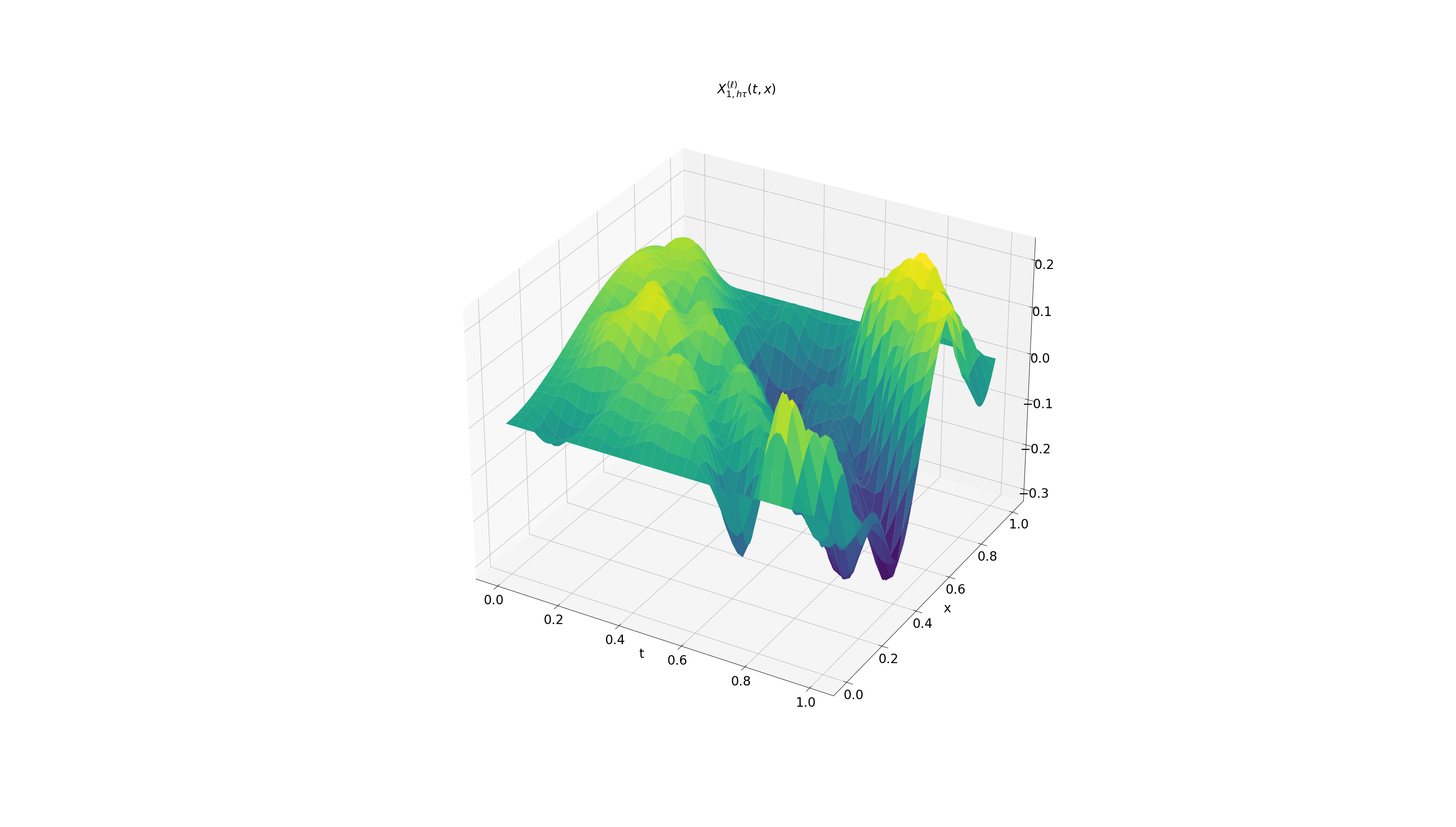}
		\caption{A path of displacement state iterate $X_{1,h\tau}^{(\ell)}$}
		\label{fig:state_wave}
	\end{subfigure}
	\caption{Surface plots for a path of the $\ell$‑th iterate over the space–time domain: (A) control iterate $(t, x)\mapsto U_{h\tau}^{(\ell)}(\omega, t, x)$; (B) displacement state iterate $(t, x)\to X_{1,h\tau}^{(\ell)}(\omega,t, x)$.}
	\label{fig:wave_control_state}
\end{figure}

\begin{figure}[htbp]
	\centering
	\begin{subfigure}[b]{0.49\textwidth}
		\centering
		\includegraphics[width=\textwidth]{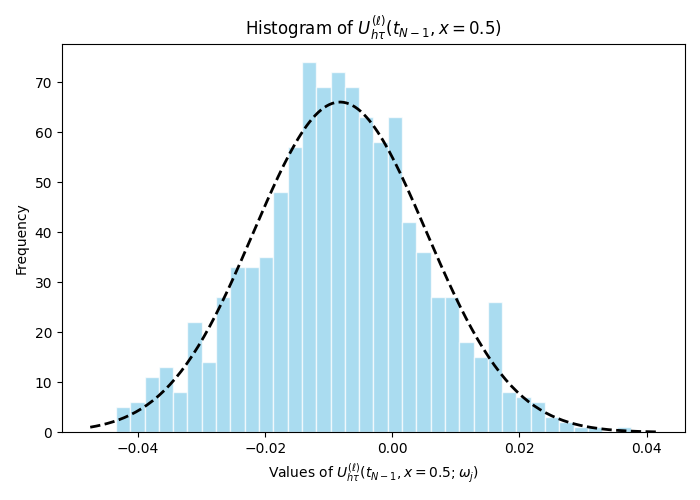}
		\caption{Histogram of control iterate $U_{h\tau}^{(\ell)}$ at $(t_{N-1}, 0.5)$ }
		\label{fig:xvstime1}
	\end{subfigure}
\hfill
	\begin{subfigure}[b]{0.47\textwidth}
		\centering
		\includegraphics[width=\textwidth]{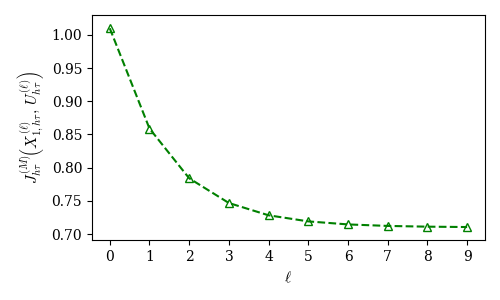}
		\caption{Decay of the (approximated) cost functional $\ell\mapsto J_{h\tau}^{\rm M}(X_{1,h\tau}^{(\ell)},U_{h\tau}^{(\ell)})$ with $\beta=0$.}
		\label{fig:uvstime1}
	\end{subfigure}
 	\caption{(A) Histogram ({empirical density}) of $\big\{U_{h \tau}^{(\ell)}(t_{N-1}, 0.5;\omega_{i})\big\}_{i=1}^{\rm M}$, and (B) decay of the (approximated) cost functional $\ell\mapsto J_{h\tau}^{\rm M}(X_{1,h\tau}^{(\ell)},U_{h\tau}^{(\ell)})$ for ${\rm M}=1000$.}
	\label{jjj1}
\end{figure} 
\begin{remark}[Computational time]\label{Remark 1.1}
	In our case, simulating one path of the optimal state iterate \(X_{1,h\tau}^{(\ell)}\) and the optimal control iterate \(U_{h\tau}^{(\ell)}\) via Algorithm~\ref{tt3} required less than \(10\)\,seconds. For comparison, we mention the work \cite{ChaudharyProhl}, where a convergent discretization for a Dirichlet-boundary {\bf SLQ} control problem was constructed. That work employed a technique based on a recursive formula for the adjoint iterate, and compared CPU times for the computation of a single sample path of the approximated control in their way \emph{vs.} a regression-based estimator method. It was found there that the regression-based estimator method was more than \(500\) times slower;  see \cite[Remark 1.1]{ChaudharyProhl}. We expect a corresponding improved performance in CPU time for the present {\bf SLQ} problem \eqref{1.3}--\eqref{1.4} as well.
    \end{remark}
The next example is intended to highlight the difference between optimal control tuples--which are computed by our algorithm in the deterministic case ({\em i.e.,} $\sigma \equiv 0$ in \eqref{1.2}{\em }) and in the stochastic case ({\em i.e.,} $\sigma \neq 0$ in \eqref{1.2}{\em }).

\subsubsection*{\textbf{Example 2}} In this example, we study the results of our algorithm ({\em i.e.,} Algorithm~\ref{tt3}) for the wave‐equation system~\eqref{1.4} under three noise regimes: zero, small, and large.  Let the noise coefficients satisfy
\[
\sigma'_i =
\begin{cases}
	0, & \text{(zero noise)},\\
	0.1\,\sigma_i, & \text{(small noise)},\\
	\sigma_i, & \text{(large noise)},
\end{cases}
\quad i=1,\dots,m=10,
\]
where \(\sigma_i\) denotes the noise coefficients.  The evolution of the displacement, and velocity iterates under these settings is displayed in Figures~\ref{j22} and \ref{j33}.

\medskip

Figures~\ref{j22} and \ref{j33} show how the solution profiles change as \(\sigma'\) increases.  Under \textbf{zero noise} (\(\sigma' = 0\)), both iterates follow their deterministic, periodic pattern for some fixed times $t$ as expected due to our target profile $\widetilde{X}$.  When \textbf{small noise} (\(\sigma' = 0.1\sigma\)) is introduced:
\begin{itemize}
	\item The \emph{displacement} The iterate $X_{1, h\tau}^{(\ell)}$ deviates only slightly from their noise‐free trajectories; see {\em columns}~ 1 \& 2 in Figures~\ref{j22} and \ref{j33}.
	\item The \emph{velocity} iterate $X_{2,h\tau}^{(\ell)}$ already exhibits more noticeable fluctuations; see in particular Figures~\ref{j22}(E) and \ref{j33}(E), since the stochastic perturbation enters directly into the velocity component $X_{t}$ of the wave equation~\eqref{1.2}.
\end{itemize}
As we move to \textbf{large noise} (\(\sigma' =\sigma\)):
\begin{itemize}
	\item The displacement iterate $X_{1, h\tau}^{(\ell)}$, and velocity iterate $X_{2,h\tau}^{(\ell)}$—display significant, rapid variations; see {\em column} 3 in Figures~\ref{j22} and \ref{j33}.
	\item The clear periodicity seen at lower noise levels is effectively lost, overwhelmed by the stronger stochastic disturbances.
\end{itemize}
Overall, these plots suggest that the velocity component is most sensitive to noise, and that sufficiently large noise levels can completely disrupt the system’s regular oscillatory behavior.

\begin{figure}[htbp]
	\centering
	\setlength{\tabcolsep}{4pt}
	\renewcommand{\arraystretch}{1.2} 
	
	\begin{tabular}{ccc}
		\textbf{Zero noise ($\sigma'=0$)} &
		\textbf{Small noise ($\sigma' = 0.1\sigma$)} &
		\textbf{Large noise ($\sigma' = \sigma$)} \\
		\\[-1ex] 

		\begin{subfigure}[b]{0.32\textwidth}
			\includegraphics[width=\linewidth]{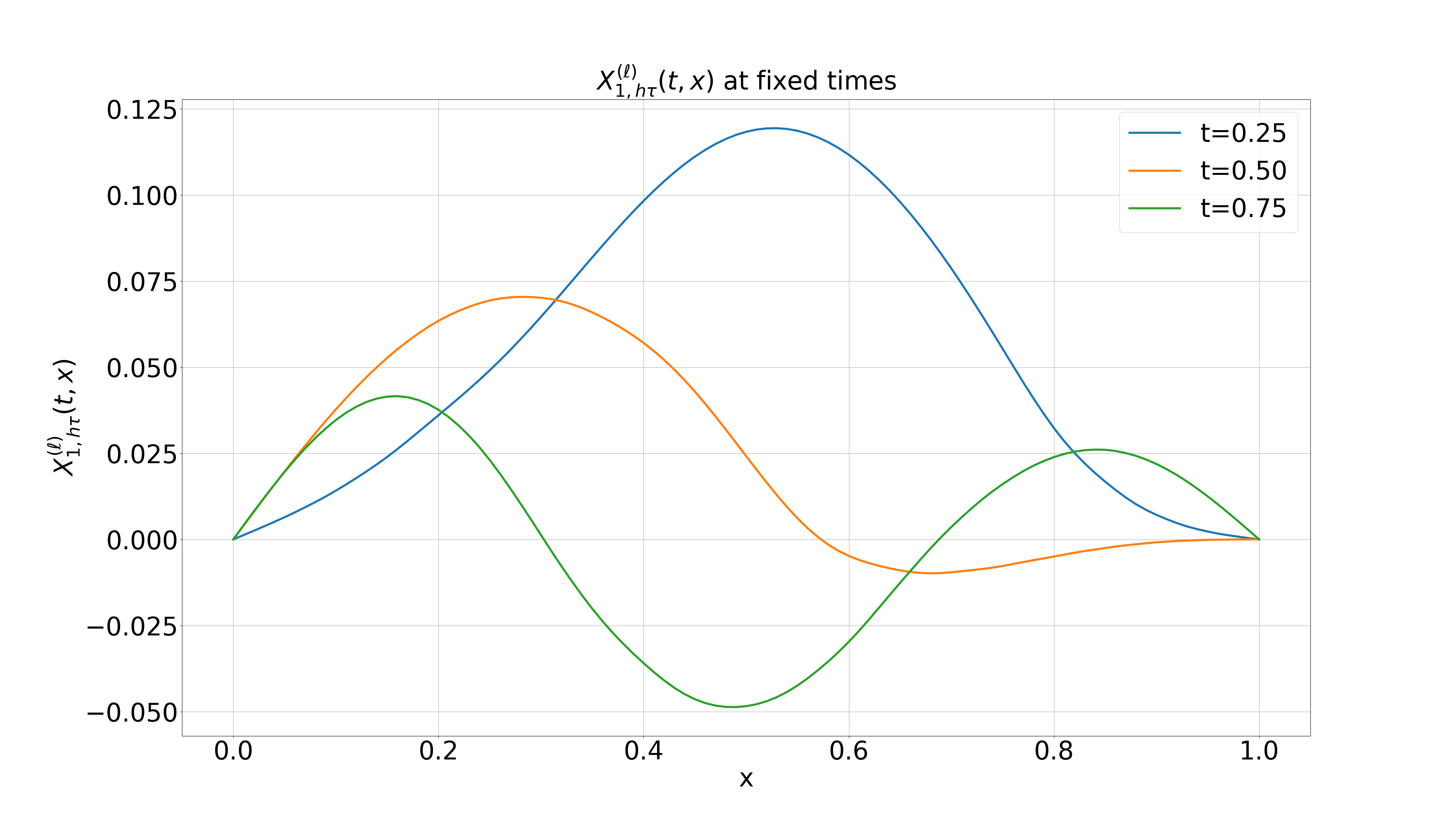}
			\caption{$x\mapsto X_{1,h\tau}^{(\ell)}(t,x)$}
		\end{subfigure}
		&
		\begin{subfigure}[b]{0.32\textwidth}
			\includegraphics[width=\linewidth]{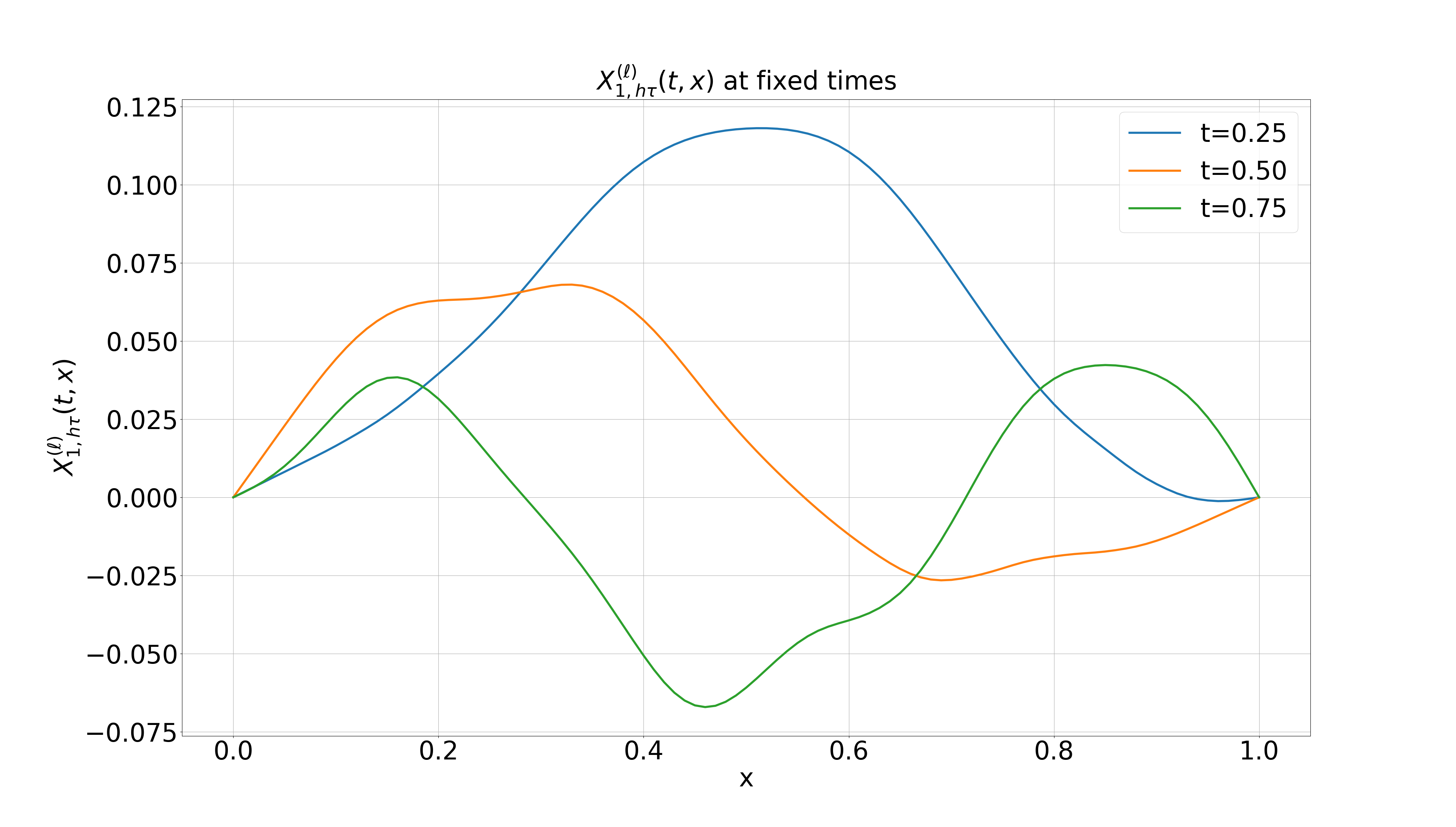}
			\caption{$x\mapsto X_{1,h\tau}^{(\ell)}(t,x)$}
		\end{subfigure}
		&
		\begin{subfigure}[b]{0.32\textwidth}
			\includegraphics[width=\linewidth]{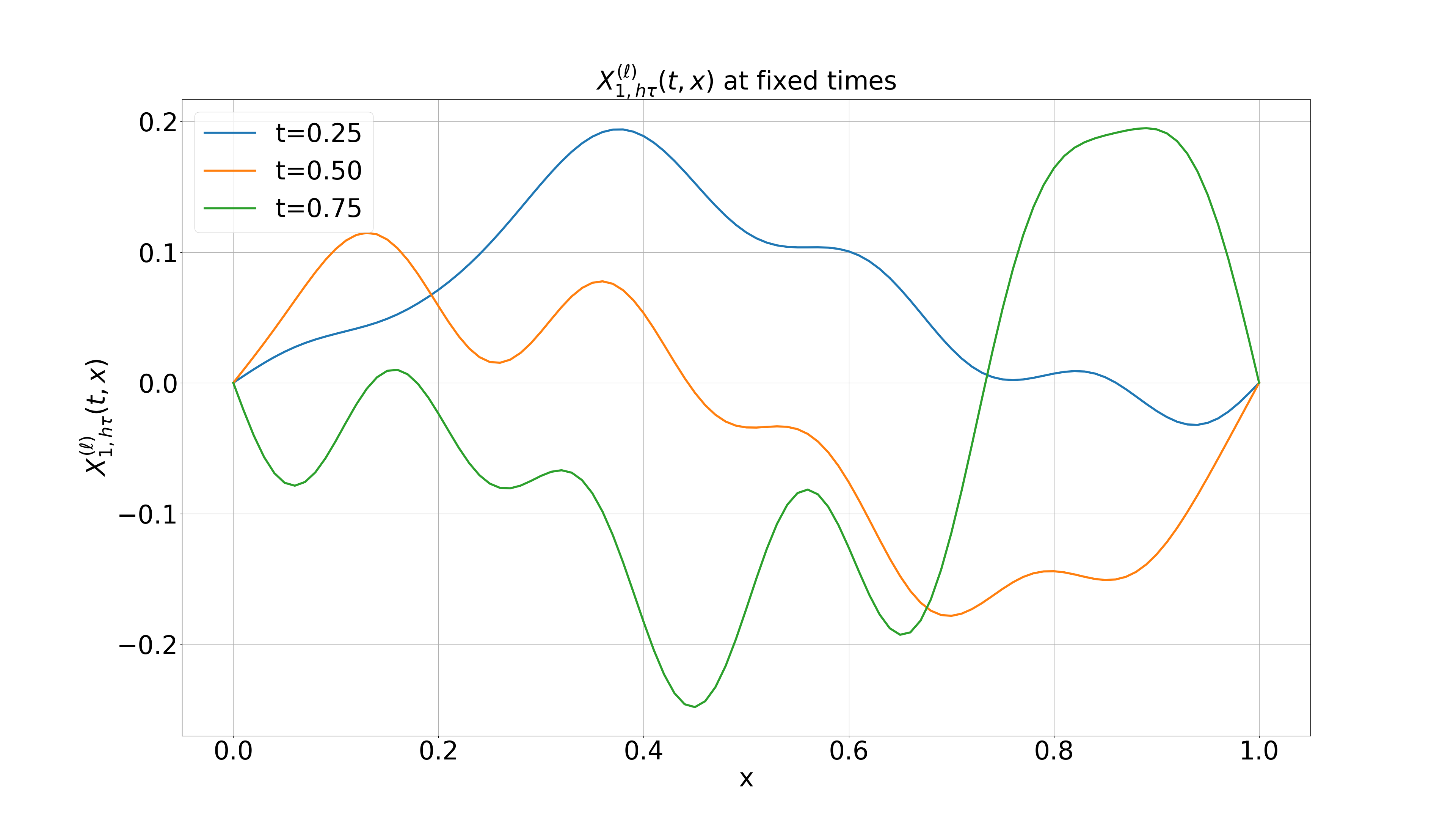}
			\caption{$x\mapsto X_{1,h\tau}^{(\ell)}(t,x)$}
		\end{subfigure}
		\\
		
		\begin{subfigure}[b]{0.32\textwidth}
			\includegraphics[width=\linewidth]{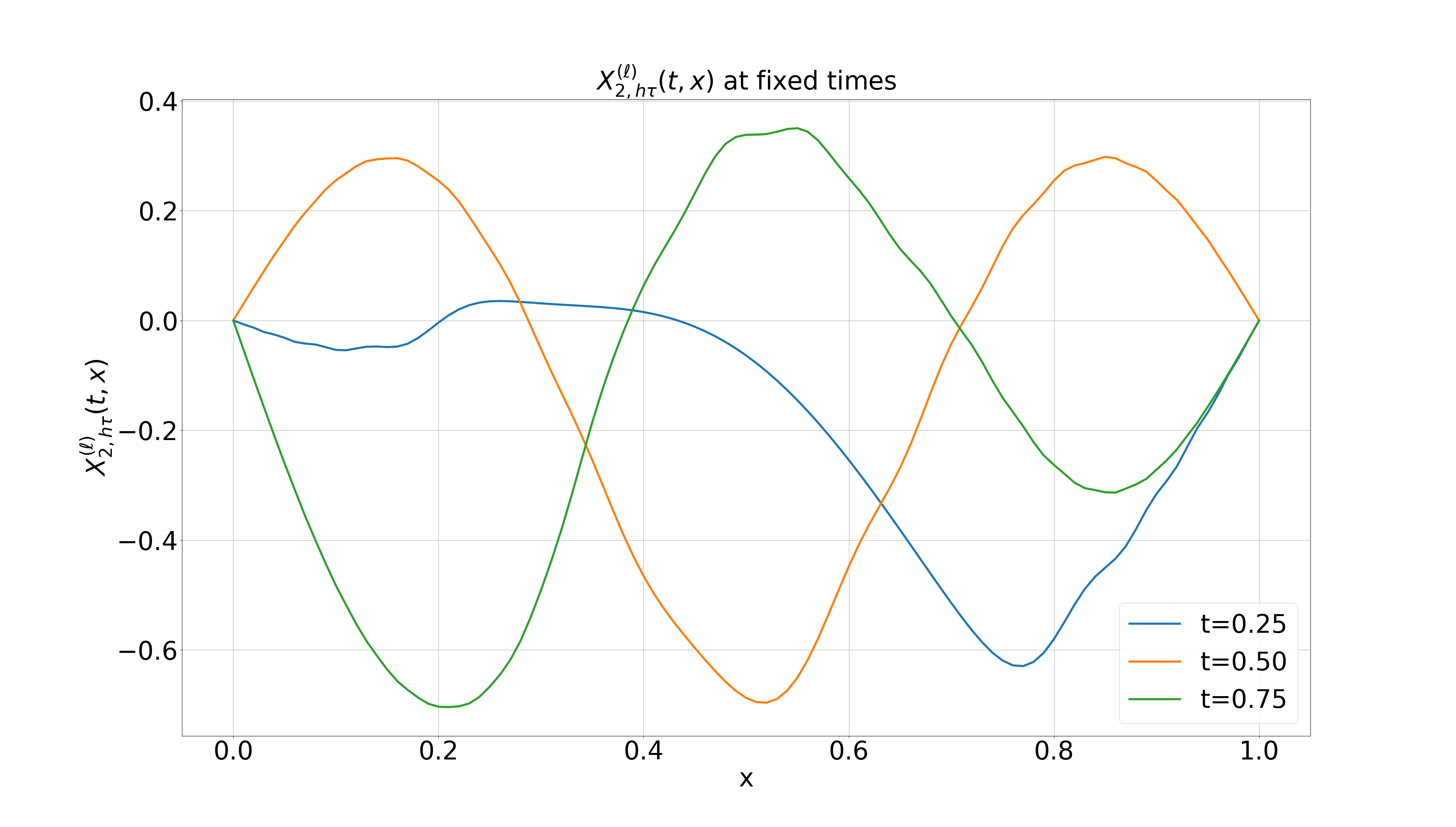}
			\caption{$x\mapsto X_{2,h\tau}^{(\ell)}(t,x)$}
		\end{subfigure}
		&
		\begin{subfigure}[b]{0.32\textwidth}
			\includegraphics[width=\linewidth]{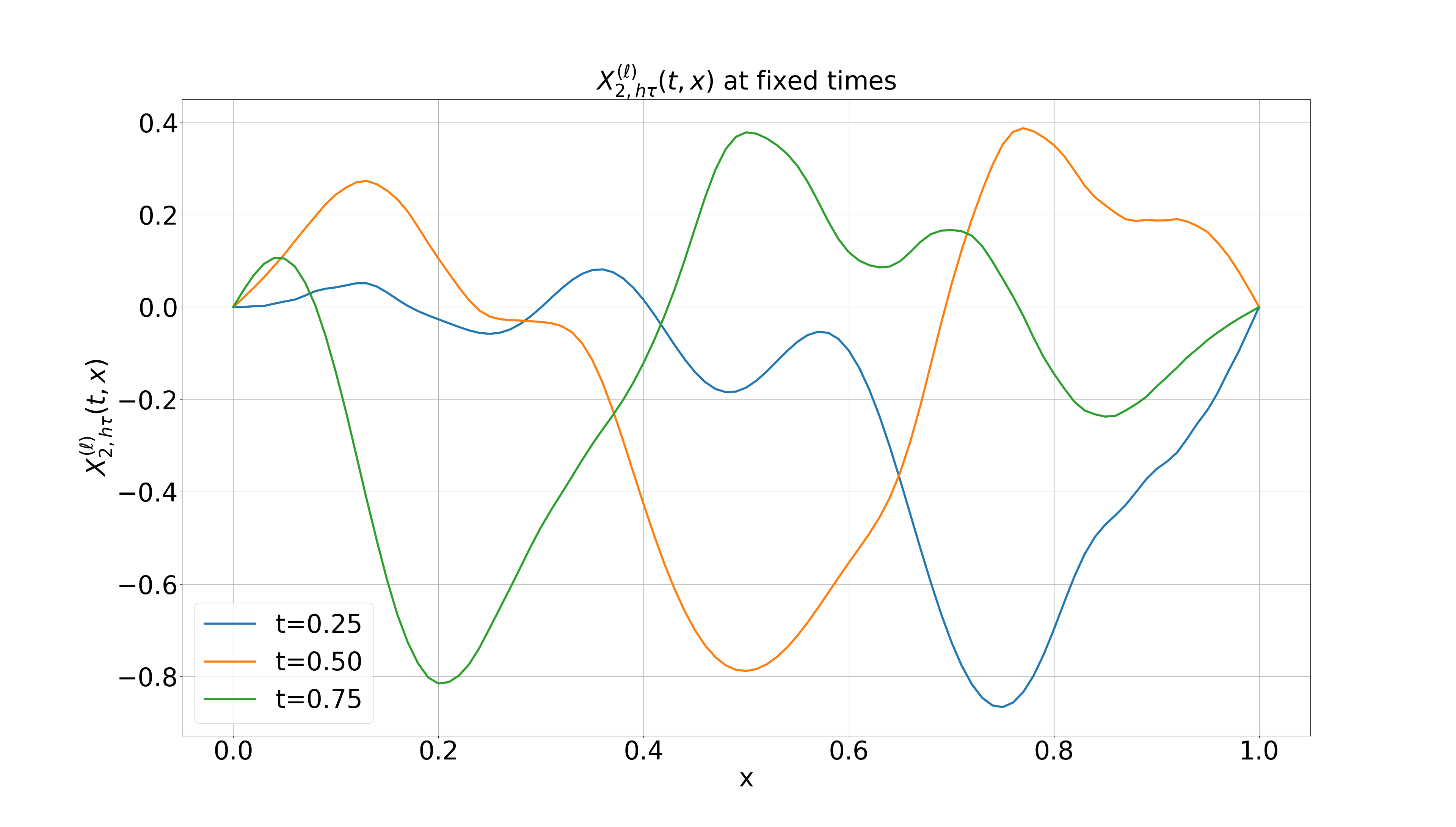}
			\caption{$x\mapsto X_{2,h\tau}^{(\ell)}(t,x)$}
			\label{j4}
		\end{subfigure}
		&
		\begin{subfigure}[b]{0.32\textwidth}
			\includegraphics[width=\linewidth]{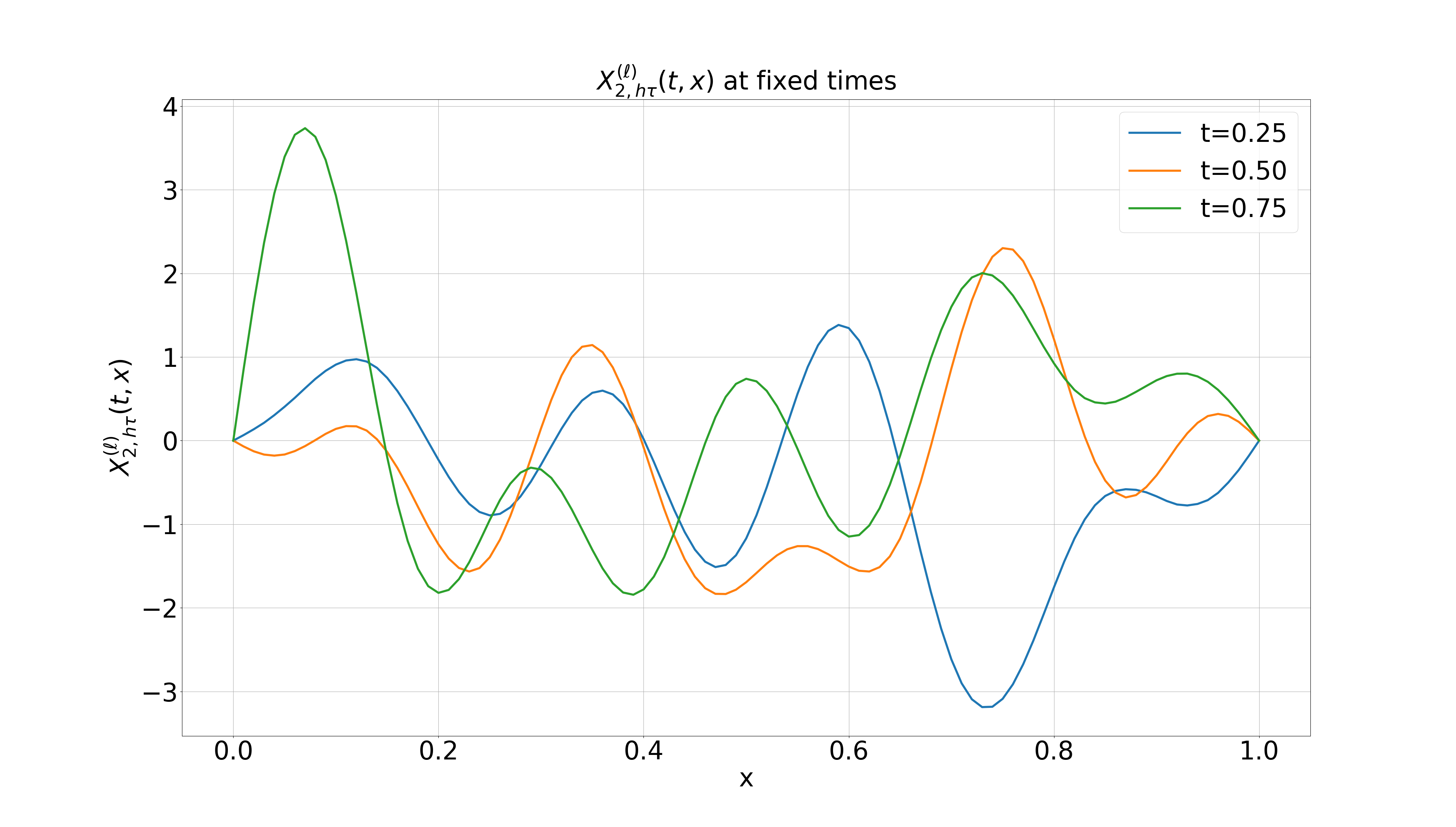}
			\caption{$x\mapsto X_{2,h\tau}^{(\ell)}(t,x)$}
		\end{subfigure}
		\\
		\end{tabular}
		\caption{Comparison of the iterates under three noise levels (columns). Rows show various profiles of a single path of a displacement iterate $X_{1, h\tau}^{(\ell)}(\cdot; \omega)$, and velocity iterate $X_{2,h\tau}^{(\ell)}(\cdot; \omega)$. In \emph{Row 1,2,3}: Displacement iterate $x\mapsto X_{1, h\tau}^{(\ell)}(t, x, \omega)$ and velocity iterate $x\mapsto X_{2, h\tau}^{(\ell)}(t, x, \omega)$, respectively,  for different times $t=0.25, 0.50, 0.75$.}
		\label{j22}
		\end{figure}
	\begin{figure}[htbp]
		\centering
		\setlength{\tabcolsep}{4pt}
		\renewcommand{\arraystretch}{1.2}
		\begin{tabular}{ccc}
			\textbf{Zero noise ($\sigma'=0$)} & \textbf{Small noise ($\sigma'=0.1\sigma$)} & \textbf{Large noise ($\sigma' = \sigma$)} \\
		
		\begin{subfigure}[b]{0.32\textwidth}
			\includegraphics[width=\linewidth]{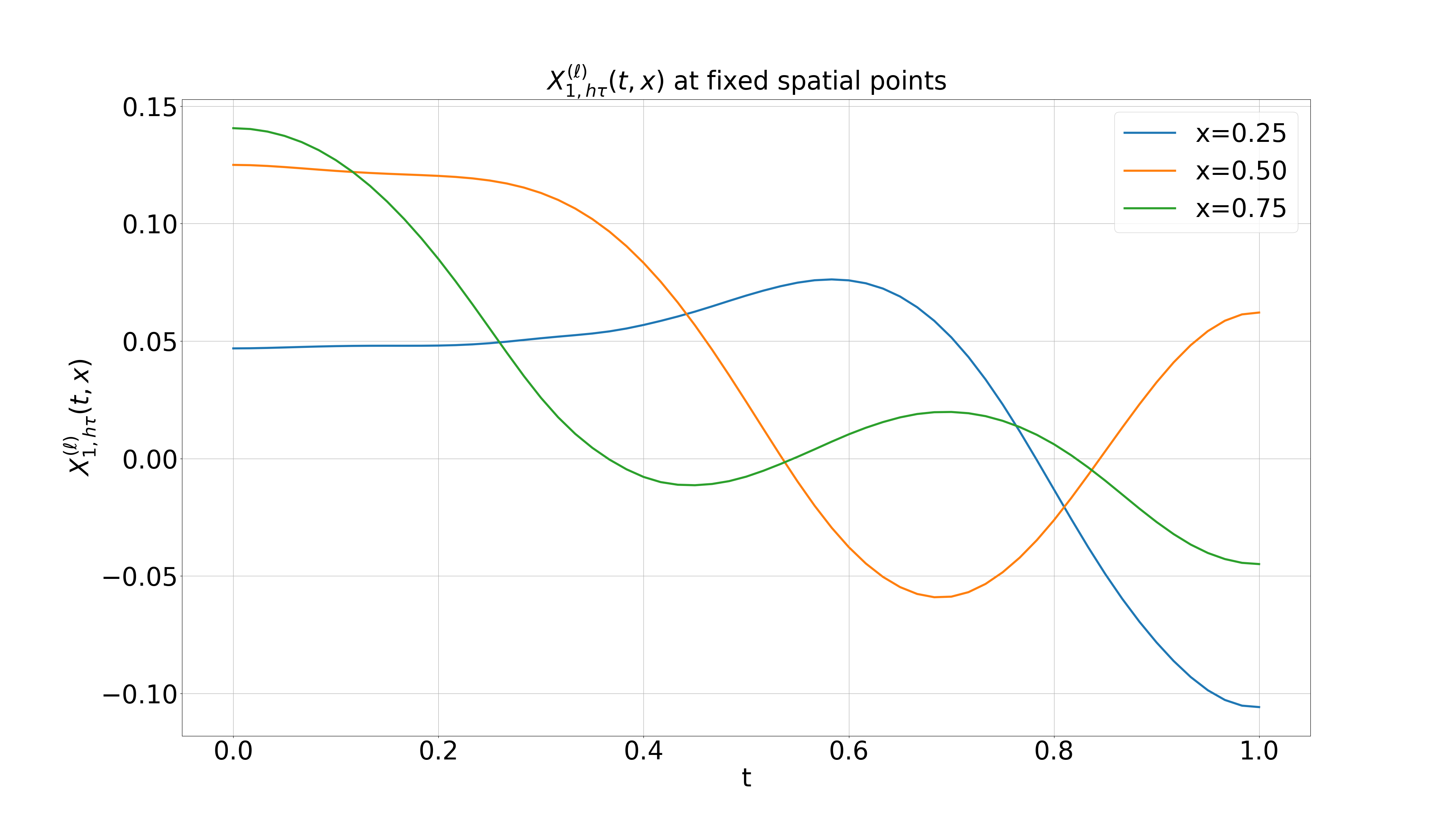}
			\caption{$t\mapsto X_{1, h\tau}^{(\ell)}(t,x)$}
		\end{subfigure}
		&
		\begin{subfigure}[b]{0.32\textwidth}
			\includegraphics[width=\linewidth]{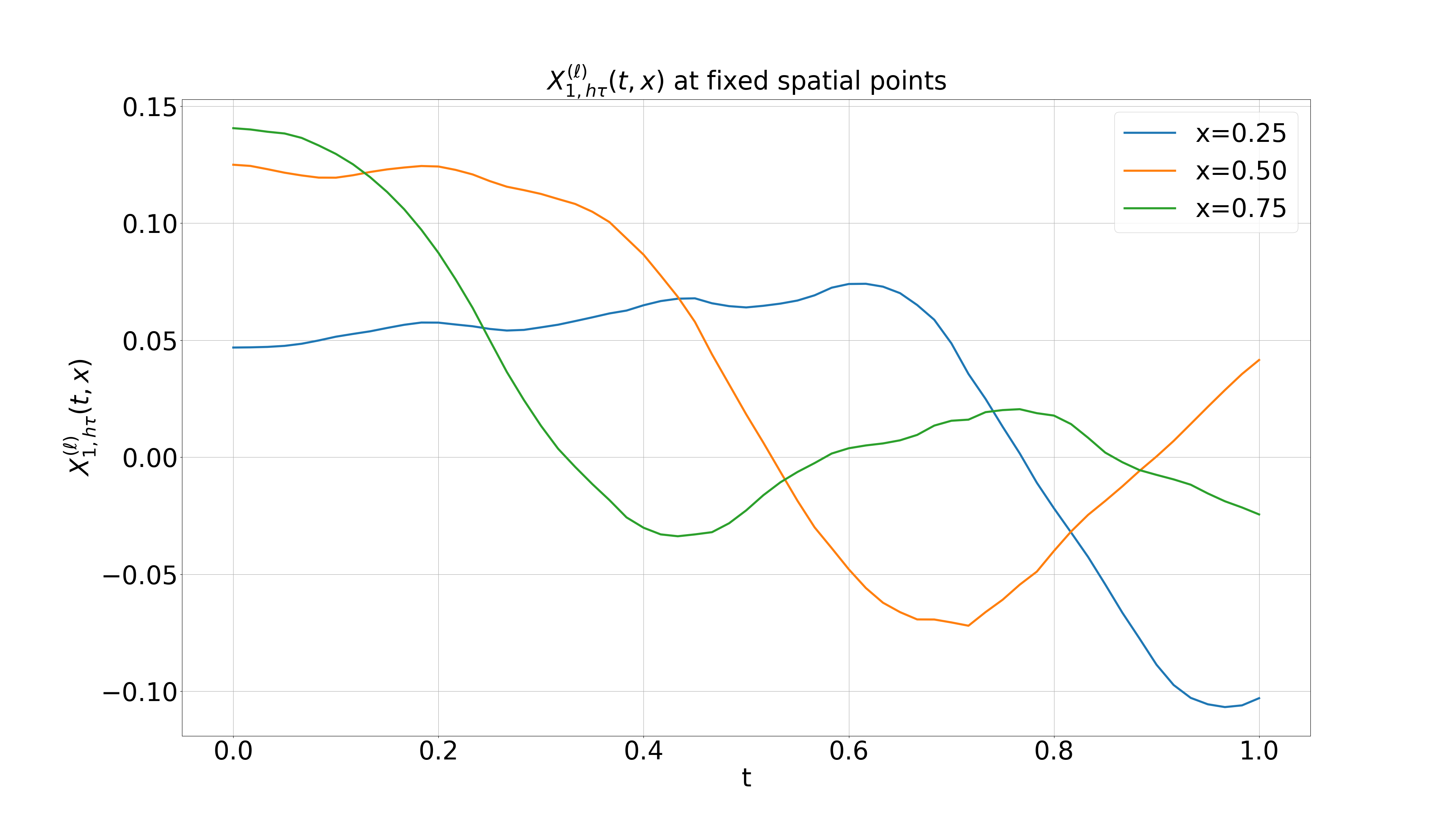}
			\caption{$t\mapsto X_{1, h\tau}^{(\ell)}(t,x)$}
		\end{subfigure}
		&
		\begin{subfigure}[b]{0.32\textwidth}
			\includegraphics[width=\linewidth]{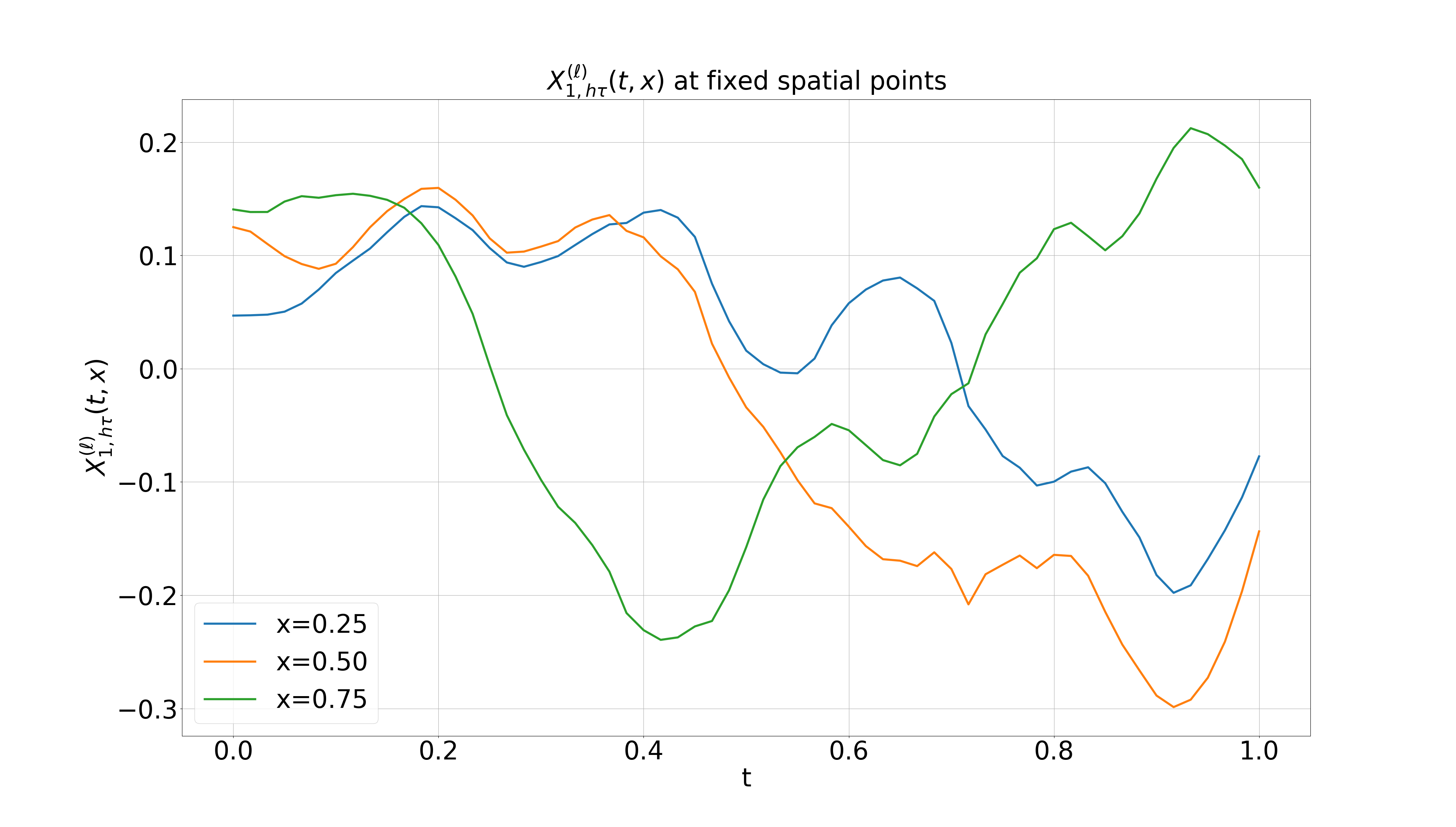}
			\caption{$t\mapsto X_{1, h\tau}^{(\ell)}(t,x)$}
		\end{subfigure}
		\\
		
		\begin{subfigure}[b]{0.32\textwidth}
			\includegraphics[width=\linewidth]{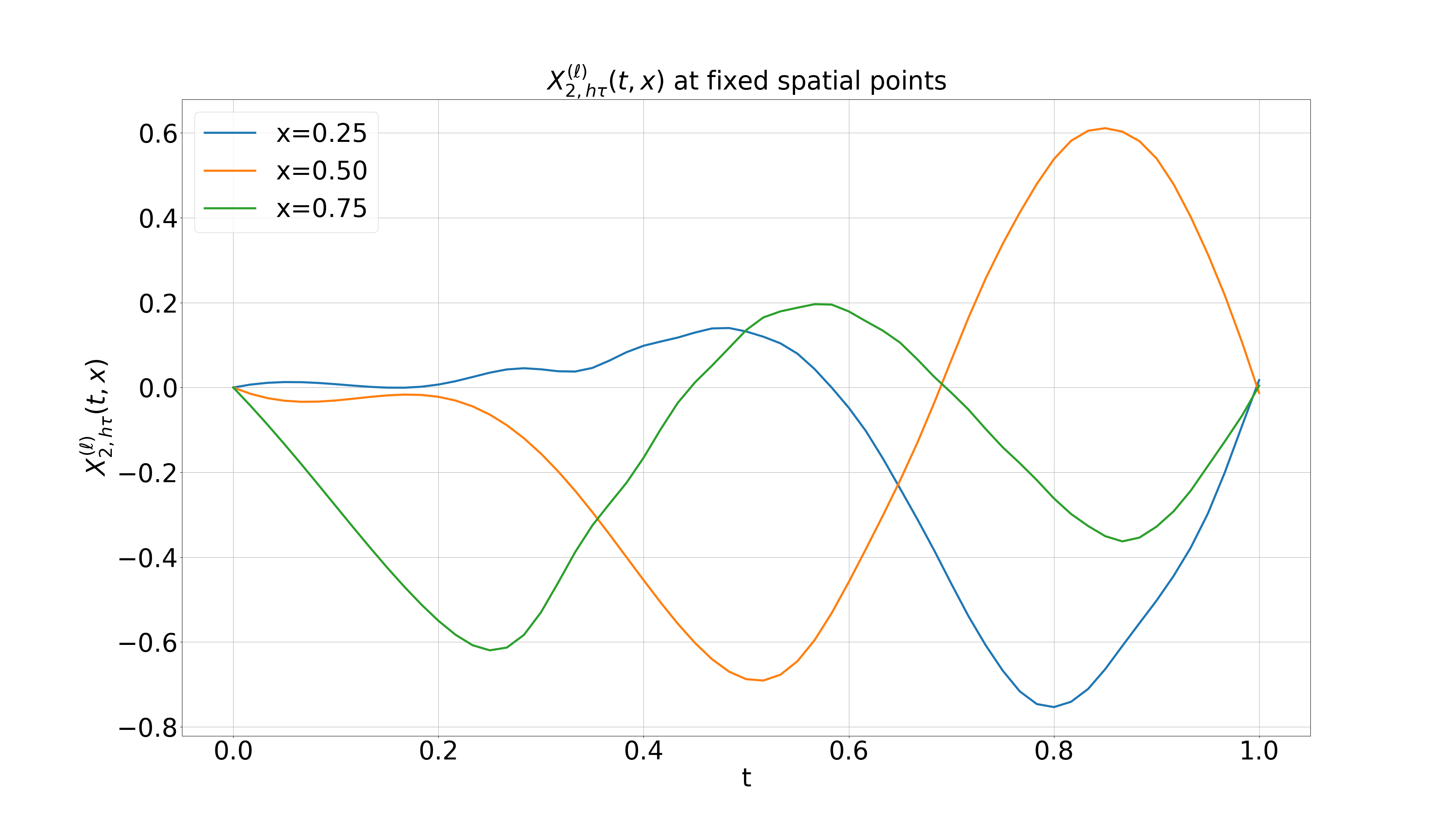}
			\caption{$t\mapsto X_{2, h\tau}^{(\ell)}(t,x)$}
		\end{subfigure}
		&
		\begin{subfigure}[b]{0.32\textwidth}
			\includegraphics[width=\linewidth]{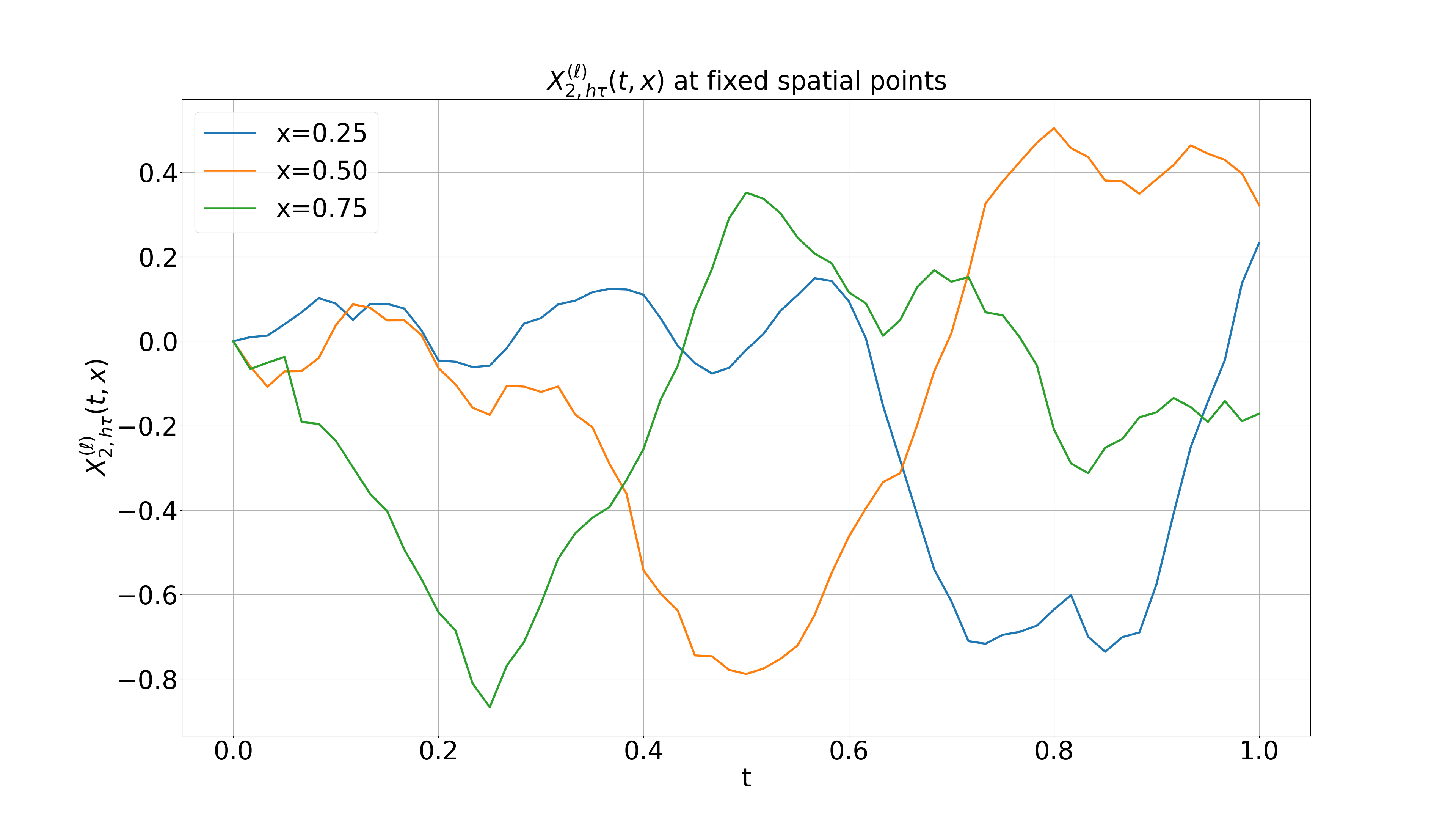}
			\caption{$t\mapsto X_{2, h\tau}^{(\ell)}(t,x)$}
			\label{j5}
		\end{subfigure}
		&
		\begin{subfigure}[b]{0.32\textwidth}
			\includegraphics[width=\linewidth]{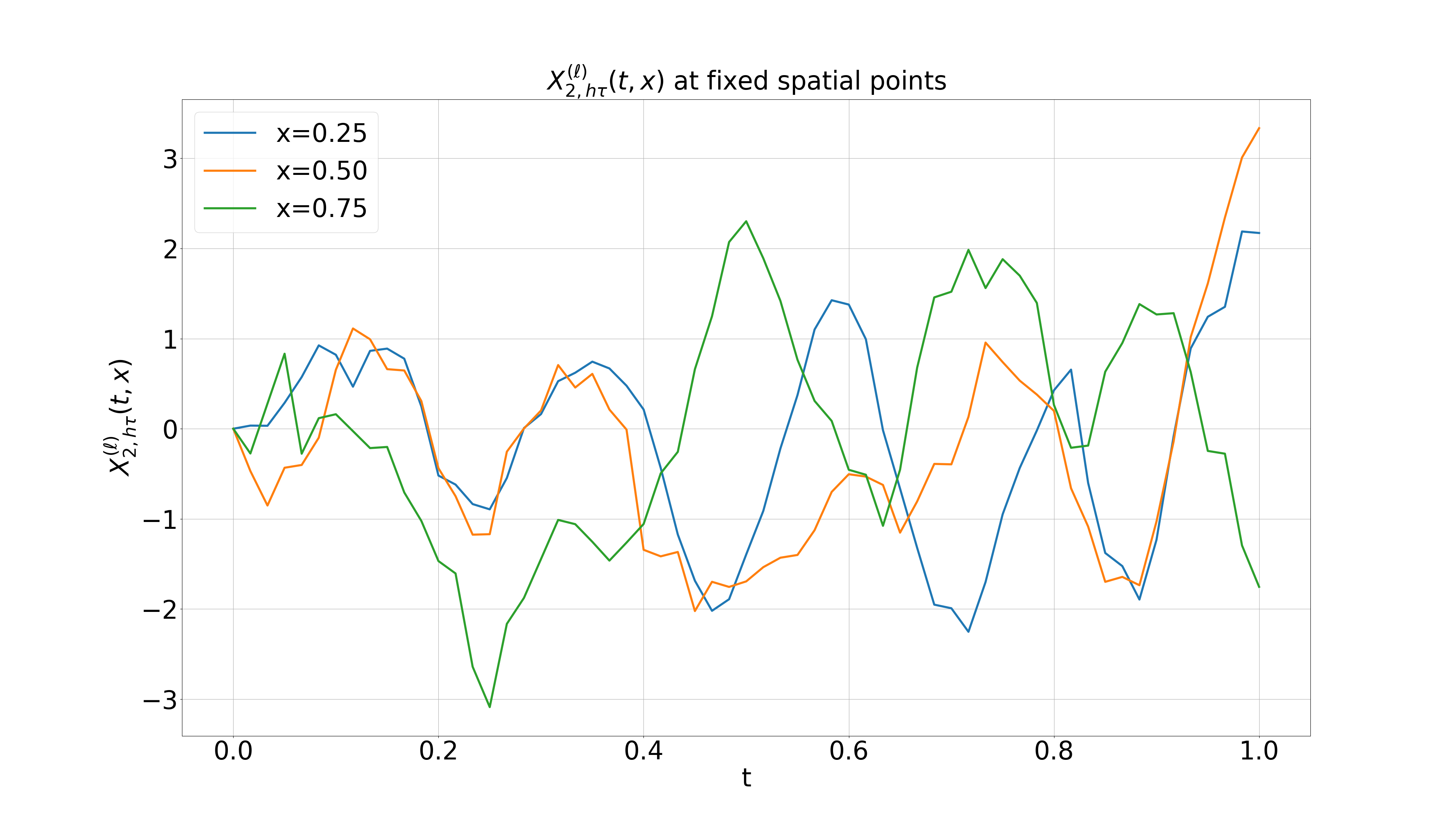}
			\caption{$t\mapsto X_{2, h\tau}^{(\ell)}(t,x)$}
		\end{subfigure}
		
	\end{tabular}
	
	\caption{Comparison of the iterates under three noise levels (columns). Rows show various profiles of the single path of the  displacement iterate $X_{1,h\tau}^{(\ell)}(\cdot; \omega)$, and velocity iterate $X_{2,h\tau}^{(\ell)}(\cdot; \omega)$. In \emph{Row 1,2,3}: Displacement iterate $t\mapsto X_{1, h\tau}^{(\ell)}(t, x, \omega)$ and velocity iterate $t\mapsto X_{2, h\tau}^{(\ell)}(t, x, \omega)$, respectively,  for different spatial points $x=0.25, 0.50, 0.75$.}
	\label{j33}
\end{figure}

\section{Preliminary results and Pontryagin's maximum principle}
\subsection{Notations for function spaces and assumptions on data} \label{subsection 2.1}
Let \(\bigl(\mathbb{K},(\!\cdot\,,\!\cdot\!)_{\mathbb{K}}\bigr)\) be a separable Hilbert space with norm 
\(\|\phi\|_{\mathbb{K}}=\left\langle \phi,\phi\right\rangle_{\mathbb{K}}^{1/2}\).  
On a bounded domain \(D\subset\mathbb{R}^d\) we set 
\(\mathbb{L}^2_x:=\mathbb{L}^2(D)\) with norm \(\|\cdot\|_{\mathbb{L}^2_x}\) and inner product \(\left\langle \cdot,\cdot \right\rangle_{\mathbb{L}^2_x}\), and define 
\[
\mathbb{H}_0^1:={H}_0^1(D), 
\quad
\mathbb{H}^i_x:=H^i(D)\cap \mathbb{H}_0^1\quad (i=2,3,4),
\]
each equipped with its usual norm \(\|\cdot\|_{\mathbb{H}^i_x}\).  
Let \((\Omega,\mathcal{F},\{\mathcal{F}_t\}_{t\in[0,T]},\mathbb{P})\) be a complete filtered probability space whose filtration is generated by $\mathbb{R}^m$-valued Wiener process \(W\) (augmented by all \(\mathbb{P}\)–null sets).  We write
\[
\mathbb{L}^2_{\mathbb{F}}(0,T;\mathbb{K})
=\Bigl\{X:\Omega\times[0,T]\to\mathbb{K}\ \text{be}\, \mathbb{F}\text{-adapted}\ \big|\ \mathbb{E}\bigg[\!\int_0^T\|X(t)\|_{\mathbb{K}}^2\,dt\bigg]<\infty\Bigr\},
\]
\[
\mathbb{L}^2_{\mathbb{F}}(\Omega;C([0,T];\mathbb{K}))
=\Bigl\{X:\Omega\times[0,T]\to\mathbb{K}\ \text{be}\, \mathbb{F}\text{-adapted, continuous}\ \big|\ \mathbb{E}\big[\!\sup_{t\in[0,T]}\|X(t)\|_{\mathbb{K}}^2\big]<\infty\Bigr\},
\]
and for each \(t\in[0,T]\),
\[
\mathbb{L}^2_{\mathcal{F}_t}(\Omega;\mathbb{K})
=\bigl\{\eta:\Omega\to\mathbb{K}\ \text{be}\, \mathcal{F}_t\text{-measurable}\ \big|\ 
\mathbb{E}\big[\|\eta\|_{\mathbb{K}}^2\big]<\infty\bigr\}.
\]
Finally, for brevity, we set
\[
\mathbb{L}^2_{t, x}:=L^2(0,T;\mathbb{L}^2_x),
\quad
\mathbb{L}^2_t\mathbb{K}:=L^2(0,T;\mathbb{K}),
\quad
\mathbb{L}^2_{\mathbb{F}}\mathbb{L}^2_{t, x}
:=L^2_{\mathbb{F}}(\Omega\times(0,T);\mathbb{L}^2_x),\]
\[\mathbb{L}^2_{\mathbb{F}}\mathbb{L}^2_t\mathbb{K}
:=L^2_{\mathbb{F}}(\Omega\times[0,T];\mathbb{K}), \qquad
\mathbb{L}^2_{\mathbb{F}}C_t\mathbb{K}=\mathbb{L}^2_{\mathbb{F}}(\Omega;C([0,T];\mathbb{K})),\qquad\text{and}\qquad \mathbb{L}^2_{\mathbb{F}}C_t^{1/2}\mathbb{K}=\mathbb{L}^2_{\mathbb{F}}(\Omega;C^{1/2}([0,T];\mathbb{K})).
\]
Note that for the sake of simplicity, throughout in the mathematical analysis of this paper, we set \( m = 1 \) in the case of  $\mathbb{R}^m$-valued Wiener process and $\gamma\in \mathbb{R}^m$. However, all results remain valid for any \( m \in \mathbb{N} \).
\subsection{Preliminary results for SPDE \eqref{1.4}}
Next, we define a weak variational solution to forward SPDE \eqref{1.2}.
	\begin{definition}\label{definition1}  Let $U, \sigma \in \mathbb{L}^2_{\mathbb{F}}\mathbb{L}^2_{t, x}$. We call the pair $(X_1, X_2)$ a weak variational solution of \eqref{1.4} on the interval $[0,T]$ with initial data $(X_{1,0}, X_{2,0})\in \mathbb{H}_0^1\times\mathbb{L}^2_x$ if the pair
	$(X_1, X_2) \in \mathbb{L}^2_{\mathbb{F}}C_t\mathbb{H}_0^1 \times \mathbb{L}^2_{
        \mathbb{F}}C_t\mathbb{L}^2_x$ satisfies the following variational formulation
	\begin{equation}
		\left\langle X_1(t), \phi\right\rangle = \int_0^t \left\langle X_2(t), \phi\right\rangle \,{\rm d}t + \left\langle X_{1,0}, \phi\right\rangle\quad \forall\, \phi \in \mathbb{L}_x^2,
	\end{equation}
    and for each $t \in [0,T]$ $\mathbb{P}$-a.s.
	\begin{equation}
		\left\langle X_2(t), \psi\right\rangle = -\int_0^t \left[ \left\langle \nabla X_1(t), \nabla \psi\right\rangle + \left\langle U(t), \psi\right\rangle \right] \,{\rm d}t + \int_0^t \left\langle \psi, (\sigma(t)+ \gamma X_1(t)) \,{\rm d}W(t)\right\rangle\,{\rm d}t + \left\langle X_{2,0}, \psi\right\rangle \quad \forall\, \psi \in \mathbb{H}_0^1.
	\end{equation}
\end{definition}
In the following lemma, we state a priori estimates in high-order Sobolev spaces.
	\begin{lem} \label{Lemma 2.1} Let $U, \sigma \in \mathbb{L}^2_{\mathbb{F}}\mathbb{L}^2_{t, x}$, $X_{1,0}\in\mathbb{H}_0^1$ and $X_{2,0}\in\mathbb{L}^2_x$. Then there exists a unique weak (variational) solution $(X_1,X_2)$  to \eqref{1.4} with given control $U$ in the sense of Definition~\ref{definition1}. Moreover, the following estimates holds:
    \begin{itemize}
        \item[1.] For all $X_{1,0}\in\mathbb{H}_0^1$, $X_{2,0}\in\mathbb{L}_x^2$, $U\in \mathbb{L}^2_{t,x}$, $\sigma\in\mathbb{L}^2_{\mathbb{F}}\mathbb{L}^2_{t,x}$,
        \begin{align}
            \mathbb{E} \left[ \sup_{0 \leq t \leq T} \left( \|X_1(t)\|_{\mathbb{H}_0^1}^{2} + \|X_2(t)\|_{\mathbb{L}^2_x}^{2} \right) \right] &\leq C(\|X_{1,0}\|_{\mathbb{H}_0^1}^2+\|X_{2,0}\|_{\mathbb{L}_x^2}^2+\mathbb{E}\big[\|U\|_{\mathbb{L}^{2}_{t, x}}^2\big] + \mathbb{E}\big[\|\sigma\|_{\mathbb{L}^{2}_{t, x}}^2\big]),\label{001}
        \end{align}
        \item[2.] For all $X_{1,0}\in\mathbb{H}_x^2$, $X_{2,0}\in\mathbb{H}_0^1$, $U\in \mathbb{L}^2_{\mathbb{F}}\mathbb{L}^2_{t}\mathbb{H}_0^1$, $\sigma\in\mathbb{L}^2_{\mathbb{F}}\mathbb{L}^2_{t}\mathbb{H}^1_0$,
        \begin{align}
            \mathbb{E} \left[ \sup_{0 \leq t \leq T} \left( \|X_1(t)\|_{\mathbb{H}_x^2}^{2} + \|X_2(t)\|_{\mathbb{H}^1_0}^{2} \right) \right] &\leq C(\|X_{1,0}\|_{\mathbb{H}_x^2}^2 +\|X_{2,0}\|_{\mathbb{H}_0^1}^2 +\mathbb{E}\big[\|U\|_{\mathbb{L}^2_t\mathbb{H}_0^1}^2\big] + \mathbb{E}\big[\|\sigma\|_{\mathbb{L}^2_t\mathbb{H}_0^1}^2\big]),\label{002}
        \end{align}
        \item[3.] For all $X_{1,0}\in\mathbb{H}_x^3$ with $\Delta X_{1,0}\in \mathbb{H}_0^1$, $X_{2,0}\in\mathbb{H}_x^2$, $U\in \mathbb{L}^2_{\mathbb{F}}\mathbb{L}^2_{t}\mathbb{H}_x^2$, $\sigma\in\mathbb{L}^2_{\mathbb{F}}\mathbb{L}^2_{t}\mathbb{H}^2_x$,
        \begin{align}
            \mathbb{E} \left[ \sup_{0 \leq t \leq T} \left( \|X_1(t)\|_{\mathbb{H}_x^3}^{2} + \|X_2(t)\|_{\mathbb{H}_x^2}^{2} \right) \right] &\leq C(\|X_{1,0}\|_{\mathbb{H}_x^3}^2 +\|X_{2,0}\|_{\mathbb{H}_x^2}^2 +\mathbb{E}\big[\|U\|_{\mathbb{L}^2_t\mathbb{H}_x^2}^2\big] + \mathbb{E}\big[\|\sigma\|_{\mathbb{L}^2_t\mathbb{H}_x^2}^2\big]).\label{0002}
        \end{align}
    \end{itemize}
\end{lem}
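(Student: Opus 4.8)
The plan is to construct the solution by a Galerkin approximation in the eigenbasis of the Dirichlet Laplacian and to derive all three estimates from a \emph{hierarchy of wave energies}, exploiting the skew-symmetric coupling between $X_1$ and $X_2$ that is characteristic of the wave operator. Let $\{e_k\}_{k\ge1}$ be the $\mathbb{L}^2_x$-orthonormal eigenfunctions of $-\Delta$ with homogeneous Dirichlet conditions, $-\Delta e_k=\lambda_k e_k$, and let $P_n$ denote the orthogonal projection onto $V_n=\mathrm{span}\{e_1,\dots,e_n\}$. Seeking $X_i^n(t)=\sum_{k=1}^n c_{i,k}^n(t)e_k$ solving the projected system
\begin{align*}
\,{\rm d}X_1^n = X_2^n\,{\rm d}t,\qquad
\,{\rm d}X_2^n = (\Delta X_1^n+P_nU)\,{\rm d}t + P_n(\sigma+\gamma X_1^n)\,{\rm d}W,
\end{align*}
reduces to a finite-dimensional linear Itô SDE with globally Lipschitz coefficients, which admits a unique strong adapted solution. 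Because the $e_k$ are eigenfunctions, $P_n$ commutes with every power of $\Delta$; this is what will keep the higher-order estimates uniform in $n$.

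For the energy estimate \eqref{001}, I would apply Itô's formula to $\mathcal{E}_0^n=\tfrac12(\|\nabla X_1^n\|_{\mathbb{L}^2_x}^2+\|X_2^n\|_{\mathbb{L}^2_x}^2)$. Its drift contains the two terms $\pm\langle \Delta X_1^n,X_2^n\rangle$, which cancel exactly — this cancellation is the whole reason for working with the wave energy rather than with the components separately — leaving
\begin{align*}
\,{\rm d}\mathcal{E}_0^n=\Big[\langle X_2^n,P_nU\rangle+\tfrac12\|P_n(\sigma+\gamma X_1^n)\|_{\mathbb{L}^2_x}^2\Big]\,{\rm d}t+\langle X_2^n,P_n(\sigma+\gamma X_1^n)\rangle\,{\rm d}W.
\end{align*}
I would bound the drift by $C(\mathcal{E}_0^n+\|U\|_{\mathbb{L}^2_x}^2+\|\sigma\|_{\mathbb{L}^2_x}^2)$ via Young's inequality and Poincaré ($\|X_1^n\|_{\mathbb{L}^2_x}\le C\|\nabla X_1^n\|_{\mathbb{L}^2_x}$), estimate the martingale after taking $\sup_t$ by the Burkholder--Davis--Gundy inequality, and absorb the resulting $\varepsilon\,\mathbb{E}[\sup_t\|X_2^n\|_{\mathbb{L}^2_x}^2]$ term into the left-hand side. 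A Gronwall argument then yields \eqref{001} with a constant independent of $n$.

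The uniform bound gives weak-$*$ convergence of $(X_1^n,X_2^n)$ along a subsequence in the relevant spaces; since the system is \emph{linear}, identifying the limits of $\Delta X_1^n$, $\gamma X_1^n$ and of the stochastic integral is routine, and the limit is checked to satisfy the variational identities of Definition~\ref{definition1}, giving existence. Uniqueness follows by applying the same energy identity to the difference of two solutions with identical data: with vanishing initial data and forcing the homogeneous estimate forces $\mathcal{E}_0\equiv0$. The higher-order estimates \eqref{002} and \eqref{0002} are obtained by repeating the argument with the energies $\mathcal{E}_1^n=\tfrac12(\|\Delta X_1^n\|_{\mathbb{L}^2_x}^2+\|\nabla X_2^n\|_{\mathbb{L}^2_x}^2)$ and $\mathcal{E}_2^n=\tfrac12(\|\nabla\Delta X_1^n\|_{\mathbb{L}^2_x}^2+\|\Delta X_2^n\|_{\mathbb{L}^2_x}^2)$, which by elliptic regularity control $\|X_1^n\|_{\mathbb{H}^2_x}^2+\|X_2^n\|_{\mathbb{H}^1_0}^2$ and $\|X_1^n\|_{\mathbb{H}^3_x}^2+\|X_2^n\|_{\mathbb{H}^2_x}^2$, respectively. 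The same cancellation $\langle \Delta^{j}X_1^n,\Delta^{j}X_2^n\rangle$ reappears at each level, the compatibility conditions $X_{1,0}\in\mathbb{H}^3_x$ with $\Delta X_{1,0}\in\mathbb{H}_0^1$ and $X_{2,0}\in\mathbb{H}^2_x$ make the boundary integrations-by-parts clean and keep $\mathcal{E}_j^n(0)$ bounded independently of $n$, and the lower-order estimate controls the cross terms.

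The delicate point is obtaining the bound for $\mathbb{E}[\sup_t(\cdots)]$ — rather than the easier $\sup_t\mathbb{E}[\cdots]$ — uniformly in the Galerkin index $n$: this is precisely what forces the BDG-plus-absorption step, and one must verify both that the multiplicative term $\gamma X_1^n$ in the diffusion produces only contributions controllable by the energy itself (so that Gronwall closes) and that commuting $P_n$ through each $\Delta^{j}$ keeps every constant $n$-independent so that the bounds survive the passage to the limit.
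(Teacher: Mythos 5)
Your proposal is correct, and it is essentially the argument the paper intends: the paper gives no self-contained proof of Lemma~\ref{Lemma 2.1}, delegating well-posedness to \cite[Lemma 8.1]{Chow2014} and the a priori bounds to \cite[Lemma 3.2]{FengPandaProhl2024}, both of which rest on exactly the Galerkin/energy-hierarchy machinery you describe (the cancellation $\langle \Delta^j X_1,\Delta^j X_2\rangle$, BDG plus absorption for the $\sup_t$ bound, Gronwall, and commutation of the spectral projection with $\Delta$ to keep constants $n$-independent). The one point to tighten is uniqueness: since the wave system is not coercive, a weak variational solution only has $X_2\in \mathbb{L}^2_{\mathbb{F}}C_t\mathbb{L}^2_x$ (not $\mathbb{L}^2_{\mathbb{F}}\mathbb{L}^2_t\mathbb{H}^1_0$), so the energy identity cannot be applied directly to the difference of two weak solutions via the standard variational It\^o formula; one should first regularize the difference (e.g., project with $P_n$ or apply a Yosida approximation $(I-\epsilon\Delta)^{-1}$), derive the identity at the regularized level, and pass to the limit — a routine but necessary step.
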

\begin{proof}
	For the well-posedness result, we refer to \cite[Lemma 8.1]{Chow2014}. For a priori estimates, we can follow similar arguments as in the proof of \cite[Lemma 3.2]{FengPandaProhl2024}. We leave its proof to the interested reader.
\end{proof}

For convenience, we define a solution operator such that $\mathcal{X}[U]=(\mathcal{X}_1[U],\mathcal{X}_2[U])$, where $(\mathcal{X}_1[U],\mathcal{X}_2[U])$ is the unique weak variational solution to \eqref{1.4} with given distributed control $U\in\mathbb{L}^2_{\mathbb{F}}\mathbb{L}^2_{t, x}$.
\subsection{Assumptions on data}\label{sec assumption}
For our main result concerning the rate of convergence ({\em i.e.}, Theorem~\ref{convergence of gradient descent method}) of the numerical algorithms ({\em i.e.}, Algorithms~\ref{tt1} and~\ref{tt3}), we require the following set of assumptions on the data.

\begin{assumption}\label{BB}
Let $
X_{1,0}\in\mathbb{H}_x^3$ with $\Delta X_0\in \mathbb{H}_{0}^1$,\,$
X_{2,0}\in\mathbb{H}_x^2,\, 
\widetilde{X}\in C_t\mathbb{H}_x^2\cap C_t^{1/2}\mathbb{H}^1_0,
$
and
$
\sigma\in \mathbb{L}^2_{\mathbb{F}}C_t^{1/2}\mathbb{H}^1_0
\;\cap\;
\mathbb{L}^2_{\mathbb{F}}\mathbb{L}^2_t\mathbb{H}_x^2.
$
\end{assumption}
However, the setup of our main algorithms ({\em i.e.}, Algorithms~\ref{tt1} and~\ref{tt3}) remains valid under the following weaker regularity assumptions on the data.
\begin{assumption}\label{CC}
Let $
X_{1,0}\in\mathbb{H}_0^1$,\,$
X_{2,0}\in\mathbb{H}_0^1,\, 
\widetilde{X}\in C_t\mathbb{H}_0^1,
$
and
$
\sigma\in \mathbb{L}^2_{\mathbb{F}}C_t\mathbb{H}^1_0.
$
\end{assumption}
 
\subsection{Preliminary results for SLQ problem \eqref{1.3}-\eqref{1.4}}

\noindent In the following proposition, we discuss the well-posedness of the optimal tuple $(X_1^*, X_2^*, U^*)$ to the {\bf SLQ} problem \eqref{1.3}-\eqref{1.4}. 
\begin{proposition}[Existence of a unique optimal tuple]\label{existence of a unique optimal pair} Let Assumption~\ref{CC} hold. Then there exists a unique optimal tuple $(X_1^*, X_2^*, U^*)\in \mathbb{L}^2_{\mathbb{F}}\mathbb{C}_{t}\mathbb{H}_0^1\times\mathbb{L}^2_{\mathbb{F}}\mathbb{C}_{t}\mathbb{L}^2_x\times\mathbb{L}^2_{\mathbb{F}}\mathbb{L}^2_{t,x}$ to {\bf SLQ} problem \eqref{1.3}-\eqref{1.4}. Moreover, the following bound holds;
	\begin{align}\label{bound for optimal control}
		\mathbb{E}\bigg[\sup_{t\in[0,T]}(\|X_1^*(t)\|_{\mathbb{H}_0^1}^2+ \|X_2^*(t)\|_{\mathbb{L}^2_x}^2) +\|U^*\|_{\mathbb{L}^2_{t, x}}^2\bigg]\le C\big(\|X_{1,0}\|_{\mathbb{H}^1_x}^2+\|X_{2,0}\|_{\mathbb{L}^2_x}^2+\|\widetilde{X}\|_{C_t\mathbb{L}^2_x}^2+\mathbb{E}\big[\|\sigma\|_{\mathbb{L}^2_{t, x}}^2\big]\big).
	\end{align}
\end{proposition}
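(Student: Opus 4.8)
The plan is to reduce the constrained minimization to the unconstrained minimization of a strictly convex, coercive functional over the Hilbert space $\mathbb{L}^2_{\mathbb{F}}\mathbb{L}^2_{t,x}$, and then to invoke the direct method of the calculus of variations. First I would exploit the linearity of the state system \eqref{1.4}: since the solution map $U\mapsto\mathcal{X}[U]=(\mathcal{X}_1[U],\mathcal{X}_2[U])$ is affine, I would write $\mathcal{X}_1[U]=\mathcal{X}_1[0]+\mathcal{L}[U]$, where $\mathcal{L}$ is the bounded linear operator sending $U$ to the first component of the solution of \eqref{1.4} with vanishing initial data and $\sigma\equiv 0$. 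Boundedness of $\mathcal{L}$ from $\mathbb{L}^2_{\mathbb{F}}\mathbb{L}^2_{t,x}$ into $\mathbb{L}^2_{\mathbb{F}}C_t\mathbb{H}_0^1$ follows directly from estimate \eqref{001} of Lemma~\ref{Lemma 2.1}. Substituting this decomposition into \eqref{1.3}, the reduced cost functional $j(U):=J(\mathcal{X}_1[U],U)$ acquires the quadratic form $j(U)=\tfrac12\langle\mathcal{A}U,U\rangle+\langle b,U\rangle+c$ for a bounded, self-adjoint, coercive operator $\mathcal{A}$ satisfying $\mathcal{A}\ge\alpha\,\mathrm{Id}$ thanks to the regularization parameter $\alpha>0$, a suitable linear functional $b$, and a constant $c$.

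Next I would verify the three hypotheses of the direct method. Strict convexity of $j$ is immediate from $\mathcal{A}\ge\alpha\,\mathrm{Id}$ with $\alpha>0$. Coercivity follows from $j(U)\ge\tfrac{\alpha}{2}\|U\|_{\mathbb{L}^2_{\mathbb{F}}\mathbb{L}^2_{t,x}}^2-\|b\|\,\|U\|+c$, whence $j(U)\to\infty$ as $\|U\|\to\infty$. Weak lower semicontinuity holds because $j$ is convex and continuous, the continuity of $U\mapsto\mathcal{X}_1[U]$ again being furnished by \eqref{001}. Taking a minimizing sequence $U_n$, coercivity yields boundedness in the reflexive Hilbert space $\mathbb{L}^2_{\mathbb{F}}\mathbb{L}^2_{t,x}$, so a subsequence converges weakly to some $U^*$; weak lower semicontinuity then gives $j(U^*)\le\liminf_{n}j(U_n)=\inf j$, so $U^*$ is a minimizer. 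Uniqueness follows from strict convexity, and setting $(X_1^*,X_2^*)=\mathcal{X}[U^*]$ produces the optimal tuple, with the asserted regularity supplied by Lemma~\ref{Lemma 2.1}.

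Finally, for the a priori bound \eqref{bound for optimal control}, I would test optimality against the admissible control $U\equiv 0$: since $j(U^*)\le j(0)$ and every term in $J$ is nonnegative, $\tfrac{\alpha}{2}\mathbb{E}\big[\|U^*\|_{\mathbb{L}^2_{t,x}}^2\big]\le j(0)$. The right-hand side $j(0)$ is controlled, via \eqref{001} applied with $U\equiv 0$, by the data norms of $X_{1,0},X_{2,0},\sigma$ together with $\|\widetilde{X}\|_{C_t\mathbb{L}^2_x}$, yielding the control bound. Re-inserting this bound on $\|U^*\|_{\mathbb{L}^2_{t,x}}$ into \eqref{001} then delivers the supremum estimate for $(X_1^*,X_2^*)$, completing \eqref{bound for optimal control}.

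I anticipate the only delicate point to be the passage to the limit underlying weak lower semicontinuity: one must ensure that weak convergence $U_n\rightharpoonup U^*$ transfers weakly to the states $\mathcal{X}_1[U_n]\rightharpoonup\mathcal{X}_1[U^*]$ in the spaces where the tracking terms $\int_0^T\|X_1-\widetilde{X}\|^2$ and $\|X_1(T)-\widetilde{X}(T)\|^2$ live. This is guaranteed by the linearity and boundedness of $\mathcal{L}$, but it is the step where the structure of the stochastic wave dynamics \eqref{1.4} genuinely enters, and where some care is needed to make the terminal term at time $T$ pass to the limit.
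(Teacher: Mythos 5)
Your proposal is correct and coincides with the argument the paper intends: the paper's proof simply cites the standard references (Theorem~1.43 of Hinze et al.\ and \cite{ChaudharyProhl}), whose content is exactly your direct-method argument for the strictly convex, coercive reduced functional, together with the comparison $j(U^*)\le j(0)$ and re-insertion into estimate~\eqref{001} for the a priori bound. The delicate point you flag at the end is already resolved by your own observation that convexity plus strong continuity of $j$ yields weak lower semicontinuity, so no gap remains.
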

\begin{proof}This proof is standard. For the existence and uniqueness of the optimal control tuple $(X_1^*, X_2^*,  U^*)$, one can follow similar arguments as in the proof of \cite[Theorem 1.43]{Hinze et al. book}, for more details see \cite{LuZhang2021}. For the estimate \eqref{bound for optimal control}, one can follow similar lines as in the proof of \cite[Lemma 4.2]{ChaudharyProhl} and leave details to the interested reader.
\end{proof}
\begin{lem}[Existence and uniqueness of a solution to {\bf BSPDE \eqref{1.5}}]\label{Lemma 2.3} Let Assumption~\ref{CC} hold. There exists a unique weak solution $(Y_1, Y_2, Z_1,Z_2)\in \big(\mathbb{L}^2_{\mathbb{F}}\mathbb{L}^2_t(\mathbb{L}^2_x\times\mathbb{H}_0^1)\big)^2$ to {\bf BSPDE} \eqref{1.5}. Moreover, there exists $C>0$ such that
{\begin{align}\label{today020}
		&\mathbb{E}\bigg[\sup_{t\in[0,T]}\big[\|Y_1(t)\|_{\mathbb{L}^2_x}^2+\|\nabla Y_2(t)\|_{\mathbb{L}^2_x}^2\big]\bigg]+\mathbb{E}\bigg[\int_0^{T}\|Z_1(t)\|_{\mathbb{L}^2_x}^2\,{\rm d}t +\int_0^T\|\nabla Z_2(t)\|^2_{\mathbb{L}_x^2}\,{\rm d}t\bigg]\notag\\\qquad&\le C \mathbb{E}\big[\|X_1^*-\widetilde{X}\|_{\mathbb{L}^2_{t, x}}^2+\beta^2\|X_1^*(T)-\widetilde{X}(T)\|_{\mathbb{L}^2_x}^2\big],
	\end{align}
and
\begin{align}\label{space regularity H-2 bound for adjoint Y}
	&\mathbb{E}\big[\sup_{t\in[0,T]}\big[\|\nabla Y_1(t)\|_{\mathbb{L}^2_x}^2+\|\Delta Y_2(t)\|_{\mathbb{L}^2_x}^2\big]\big]+\mathbb{E}\bigg[\int_0^{T}\|\nabla Z_1(t)\|_{\mathbb{L}^2_x}^2\,{\rm d}t +\int_0^T\|\Delta Z_2(t)\|^2_{\mathbb{L}^2_x}\,{\rm d}t\bigg]\notag\\&\qquad\le C \mathbb{E}\big[\|\nabla X_1^*-\widetilde{X}\|_{\mathbb{L}^2_{t, x}}^2+\beta^2\|\nabla\big( X_1^*(T)-\widetilde{X}(T)\big)\|_{\mathbb{L}^2_x}^2\big].
\end{align}}
\end{lem}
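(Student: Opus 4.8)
The statement has two distinct parts: well-posedness of the linear BSPDE system~\eqref{1.5} in the stated energy class, and the two a priori bounds~\eqref{today020} and~\eqref{space regularity H-2 bound for adjoint Y}. For existence and uniqueness I would either appeal directly to the general well-posedness theory for linear BSPDEs in the variational (energy) setting developed in \cite{LuZhang2021}, or, to stay self-contained, run a spectral Galerkin scheme: expanding the unknowns in the Dirichlet eigenbasis $\{e_k\}$ of $-\Delta$ reduces~\eqref{1.5} at each truncation level $N$ to a finite-dimensional linear backward SDE, whose unique solvability is classical. The uniform-in-$N$ estimates produced by the energy argument below, together with weak compactness and weak lower semicontinuity of the norms, let me pass to the limit and identify the limit as the unique solution in $\big(\mathbb{L}^2_{\mathbb{F}}\mathbb{L}^2_t(\mathbb{L}^2_x\times\mathbb{H}_0^1)\big)^2$.

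The heart of the proof is the first-order estimate~\eqref{today020}, which I obtain by applying It\^o's formula to the wave energy $E(t):=\|Y_1(t)\|_{\mathbb{L}^2_x}^2+\|\nabla Y_2(t)\|_{\mathbb{L}^2_x}^2$. The decisive structural feature is a skew-symmetric cancellation: testing the $Y_1$-equation against $Y_1$ and integrating the $-\Delta Y_2$ term by parts yields $2\langle\nabla Y_1,\nabla Y_2\rangle$, whereas the contribution of the $-Y_1$ drift to $\mathrm{d}\|\nabla Y_2\|_{\mathbb{L}^2_x}^2$ is $-2\langle\nabla Y_1,\nabla Y_2\rangle$; the two cancel, so no indefinite energy term survives. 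Integrating backward over $[t,T]$, inserting the terminal data $Y_1(T)=\beta(X_1^*(T)-\widetilde X(T))$, $Y_2(T)=0$, and taking expectations (the It\^o integral being a genuine martingale) leaves
\begin{align*}
\mathbb{E}[E(t)]+\mathbb{E}\Big[\int_t^T\!\big(\|Z_1\|_{\mathbb{L}^2_x}^2+\|\nabla Z_2\|_{\mathbb{L}^2_x}^2\big)\,{\rm d}s\Big]=\beta^2\mathbb{E}\big[\|X_1^*(T)-\widetilde X(T)\|_{\mathbb{L}^2_x}^2\big]+\mathbb{E}\Big[\int_t^T\!\big(2\gamma\langle Y_1,Z_2\rangle+2\langle Y_1,X_1^*-\widetilde X\rangle\big)\,{\rm d}s\Big].
\end{align*}
The source term $2\langle Y_1,X_1^*-\widetilde X\rangle$ is harmless by Young's inequality. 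The main obstacle is the drift coupling $2\gamma\langle Y_1,Z_2\rangle$, which is atypical for BSDEs: here $Z_2$ enters the drift of $Y_1$, inherited from the multiplicative noise $\gamma X_1$ in~\eqref{1.4}. Crucially, the wave energy controls $\|\nabla Z_2\|_{\mathbb{L}^2_x}$ on the left, so Poincar\'e's inequality $\|Z_2\|_{\mathbb{L}^2_x}\le C_P\|\nabla Z_2\|_{\mathbb{L}^2_x}$ and Young's inequality let me absorb $2\gamma\langle Y_1,Z_2\rangle$ as $\varepsilon\|\nabla Z_2\|_{\mathbb{L}^2_x}^2+C_\varepsilon\|Y_1\|_{\mathbb{L}^2_x}^2$. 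A Gronwall argument in $\mathbb{E}[E(t)]$ then bounds the left-hand side by the data, and the sup-in-time bound follows from the Burkholder--Davis--Gundy inequality applied to the martingale $\int_0^\cdot 2(\langle Y_1,Z_1\rangle+\langle\nabla Y_2,\nabla Z_2\rangle)\,{\rm d}W$, absorbing the resulting $\tfrac12\mathbb{E}[\sup_tE(t)]$ back to the left.

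The higher-order estimate~\eqref{space regularity H-2 bound for adjoint Y} follows the same template applied to the second-order energy $\|\nabla Y_1(t)\|_{\mathbb{L}^2_x}^2+\|\Delta Y_2(t)\|_{\mathbb{L}^2_x}^2$. The identical skew-symmetric cancellation removes the cross term $\langle\Delta Y_1,\Delta Y_2\rangle$; the noise coupling now appears as $2\gamma\langle\nabla Y_1,\nabla Z_2\rangle$ and is absorbed using $\|\nabla Z_2\|_{\mathbb{L}^2_x}\le C\|\Delta Z_2\|_{\mathbb{L}^2_x}$ (again a Poincar\'e-type bound for $Z_2\in\mathbb{H}^2_x$) together with Young's inequality; the source term becomes $\|\nabla(X_1^*-\widetilde X)\|_{\mathbb{L}^2_x}$, which is finite under Assumption~\ref{CC} since Proposition~\ref{existence of a unique optimal pair} places $X_1^*\in\mathbb{L}^2_{\mathbb{F}}C_t\mathbb{H}_0^1$ and $\widetilde X\in C_t\mathbb{H}_0^1$. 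Gronwall and BDG then close the estimate exactly as before, with all higher-order manipulations rigorously justified at the Galerkin level (where every quantity is smooth) and preserved in the limit. The single genuinely non-routine point throughout is the absorption of the $\gamma Z$ drift coupling, which succeeds only because the hyperbolic energy supplies one spatial derivative of gain on $Z_2$.
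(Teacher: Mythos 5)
Your proposal is correct and follows essentially the same route as the paper: a Galerkin scheme for existence and uniqueness, It\^o's formula applied to the energies $\|Y_1\|_{\mathbb{L}^2_x}^2+\|\nabla Y_2\|_{\mathbb{L}^2_x}^2$ and $\|\nabla Y_1\|_{\mathbb{L}^2_x}^2+\|\Delta Y_2\|_{\mathbb{L}^2_x}^2$ with the same skew-symmetric cancellation of the cross terms, Young/BDG/Gronwall to close the first estimate, and the second-order identity carried out at the Galerkin level precisely to avoid boundary terms. Your write-up is in fact more explicit than the paper's at the one delicate point — absorbing the drift coupling $2\gamma\langle Y_1,Z_2\rangle$ via Poincar\'e's inequality into the $\|\nabla Z_2\|_{\mathbb{L}^2_x}^2$ term on the left — which the paper subsumes under "an application of Young's inequality."
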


\begin{proof} The derivation of existence and uniqueness follows from a standard Galerkin approximation argument, and we refer to \cite{MaYong1999, PardouxPeng1990, Tessitore} for more details related to well-posedness of \textbf{BSDE}. To first obtain estimate \eqref{today020}, we apply It\^o's formula to $f(Y_1)=\frac{1}{2}\|Y_1\|_{\mathbb{L}^2_x}^2$, which leads to $\mathbb{P}$-almost surely, $s\in[0,T],$
\begin{align}
	 -\|Y_1(s)\|^{2}_{\mathbb{L}^2_x}+\beta^2\|X_1^*(T)-\widetilde{X}(T)\|_{\mathbb{L}^2_x}^2&=2\bigg[\int_{s}^{T}\left\langle \nabla Y_2(t), \nabla Y_1(t)\right\rangle\,{\rm d}t\notag\\&\qquad-\int_s^T\left\langle \gamma Z_2(t), Y_1(t)\right\rangle\,{\rm d}t-\int_s^{T}\left\langle \big(X_1^*(t)-\widetilde{X}(t)\big), Y_1(t)\right\rangle\,{\rm d}t\notag \\
	 &\qquad+ \int_s^T \left\langle  Z_1(t), Y_1(t)\right\rangle\,{\rm d}W(t) \bigg]+\int_s^T\|Z_1(t)\|^2_{\mathbb{L}^2_x}\,{\rm d}t.\label{jj1}
 \end{align}
Again by applying It\^o's formula $Y_2\to \|\nabla Y_2\|_{\mathbb{L}^2_x}^2$, we have $\mathbb{P}$-almost surely, for all $s\in[0,T]$,
\begin{align}
	\|\nabla Y_2(s)\|^2&= 2\bigg[\int_s^T\left\langle \nabla Y_1(t), \nabla Y_2(t)\right\rangle\,{\rm d}t  +  \int_{s}^{T}\left\langle \nabla Z_2(t), \nabla Y_2(t)\right\rangle\,{\rm d}W(t)\bigg]-\int_s^{T}\|\nabla Z_2(t)\|^2\,{\rm d}t.\label{jj2}
\end{align}
From \eqref{jj1}-\eqref{jj2}, we obtain that $\mathbb{P}$-almost surely, for all $s\in[0,T]$,
\begin{align*}
	\|Y_1(s)\|^{2}_{\mathbb{L}^2_x}&+\|\nabla Y_2(s)\|^2_{\mathbb{L}^2_x}+\int_s^{T}\|Z_1(t)\|_{\mathbb{L}^2_x}^2\,{\rm d}t +\int_s^T\|\nabla Z_2(t)\|^2_{\mathbb{L}^2_x}\,{\rm d}t=  \beta^2\|X_1^*(T)-\widetilde{X}(T)\|_{\mathbb{L}^2_x}^2\\&\qquad+ 2\bigg[\int_{s}^{T}\left\langle \nabla Z_2(t), \nabla Y_2(t)\right\rangle\,{\rm d}W(t)+\int_s^T\left\langle \gamma Z_2(t), Y_1(t)\right\rangle\,{\rm d}t+\int_s^{T}\left\langle X_1^*(t)-\widetilde{X}(t), Y_1(t)\right\rangle\,{\rm d}t \\&\qquad-\int_s^T \left\langle  Z_1(t), Y_1(t)\right\rangle\,{\rm d}W(t)\bigg].
\end{align*}
As an application of Young's inequality, as well as BDG inequality and Gronwall's inequality, we can  conclude that there exists $C>0$ such that
\begin{align}\label{today02}
	&\mathbb{E}\big[\sup_{t\in[0,T]}\big[\|Y_1(t)\|_{\mathbb{L}^2_x}^2+\|\nabla Y_2(t)\|_{\mathbb{L}^2_x}^2\big]\big]+\mathbb{E}\bigg[\int_0^{T}\|Z_1(t)\|_{\mathbb{L}^2_x}^2\,{\rm d}t +\int_0^T\|\nabla Z_2(t)\|^2_{\mathbb{L}^2_x}\,{\rm d}t\bigg]\notag\\&\qquad\le C \mathbb{E}\big[\|X_1^*-\widetilde{X}\|_{\mathbb{L}^2_{t, x}}^2+\beta^2\|X_1^*(T)-\widetilde{X}(T)\|_{\mathbb{L}^2_x}^2\big].
	\end{align}
Similarly, we apply It\^o's formula to 
\[
Y_{1,n} \;\mapsto\; \|\nabla Y_{1,n}\|_{\mathbb{L}^2_x}^2 
\quad \text{and} \quad 
Y_{2,n} \;\mapsto\; \|\Delta Y_{2,n}\|_{\mathbb{L}^2_x}^2,
\]
where $Y_{1,n}$ and $Y_{2,n}$ denote the Galerkin approximations of $Y_1$ and $Y_2$, respectively. 
This allows to avoid the boundary terms arising in the integration by parts formula, as used the in the proof of \cite[Lemmas~3.6 and~3.7]{KronerKunischVexler2009}. 
By passing to the limit it then yields the desired estimate \eqref{space regularity H-2 bound for adjoint Y}.

\end{proof}
\subsection{Pontryagin's maximum principle} To derive the Pontryagin's maximum principle, we need the Fréchet derivative of the solution operators $\mathcal{X}_i[\cdot]$, for $i=1,2$. To find this, we proceed as follows.  For given $V\in \mathbb{L}^2_{\mathbb{F}}\mathbb{L}^2_{t, x}$, let $(\mathcal{X}_1^0[V], \mathcal{X}_2^0[V])\equiv(X_1^0, X_2^0)$ be the unique solution to the following auxiliary SPDE system:
\begin{align}\label{1.9}
	\begin{cases} 
		\,{\rm d}X_1^0(t) = X_2^0(t) \,{\rm d}t & \text{in } D \times (0,T], \\
		\,{\rm d} X_2^0(t) = (\Delta X_1^0(t) + V(t)) \,{\rm d}t + \gamma X_1^0(t) \,{\rm d}W(t) & \text{in } D \times (0,T], \\
		X_1^0(0) = X_2^0(0)= 0 & \text{in } D, \\
		X_1^0(t) = 0 & \text{on } \Gamma \times (0, T].
	\end{cases}
\end{align}
Note that in equation~\eqref{1.9} the noise coefficient \(\sigma\) and the initial data are set to zero, which is in contrast to equation~\eqref{1.4}. Consequently the solution map \(U\mapsto \mathcal{X}_i[U]\) is affine (indeed linear in the control increment) and one has
\begin{align}\label{Goud}
\mathcal{X}_i[U+V]=\mathcal{X}_i[U]+\mathcal{X}_i^0[V],\qquad i=1,2,
\end{align}
for all \(U,V\in\mathbb{L}^2_{\mathbb{F}}\mathbb{L}^2_{t,x}\), where \(\mathcal{X}_i^0[V]\) denotes the solution corresponding to zero initial data and zero noise with control \(V\). Hence the Fréchet derivatives of the solution operators at \(U\) for $i=1,2,$ are given by 
\begin{align}\label{toaday}
\mathcal{D}_U \mathcal{X}_i[U]=\mathcal{X}_i^0[U]\qquad\forall\,U\in\mathbb{L}^2_{\mathbb{F}}\mathbb{L}^2_{t,x}.
\end{align}
\begin{remark}
     We define the reduced cost function $\hat{\mathcal{J}}: \mathbb{L}^2_{\mathbb{F}}\mathbb{L}_{t,x}^2\to \mathbb{R}$ as follows:
\begin{align*}
	\hat{\mathcal{J}}(U)=\frac{1}{2}\mathbb{E}\bigg[\int_0^T\big(\|\mathcal{X}_1[U](t)-\widetilde{X}(t)\|_{\mathbb{L}^2_x}^2+\alpha\|U(t)\|_{\mathbb{L}^2_x}^2\big) \,{\rm d}t +\beta\|\mathcal{X}_1[U](T)-\widetilde{X}(T)\|_{\mathbb{L}^2_x}^2\bigg],
\end{align*} 
where $(\mathcal{X}_1[U], \mathcal{X}_2[U])\equiv (X_1, X_2)$ is the unique weak variational solution to the following SPDE \eqref{1.4} with the given distributed control $U$.
\end{remark}
In the following theorem, we derive Pontryagin's maximum principle, which provide the optimality condition~\eqref{1.6} and an integral identity~\eqref{today0001}. The optimality conditions enhance spatial regularity (see Proposition~\ref{Spatial regularity of optimal control}), while the integral identity plays a pivotal role in the error analysis of the spatial discretization ${\bf SLQ}_h$ (see Theorem~\ref{Thm 3.8}).
\begin{thm}[Pontryagin's maximum principle]\label{Pontryagin's Maximum Principle}
	Let Assumption~\ref{CC} hold. Let $(X^*_1, X_2^*,  U^*)$ be the unique optimal control tuple for the {\bf SLQ} problem \eqref{1.3}-\eqref{1.4}, and let the quadruple $(Y_1, Y_2, Z_1, Z_2)$ be the solution to the BSPDE \eqref{1.5}. Then the optimality condition \eqref{1.6} holds. Moreover, the following integral identity holds: for all $V\in\mathbb{L}^2_{\mathbb{F}}\mathbb{L}^2_{t, x}$,
	\begin{align}\label{today0001}
		\mathbb{E} \bigg[\int_0^T\big[\left\langle X_1^*(t)-\widetilde{X}(t),\mathcal{X}_1^0[V](t)\right\rangle + \alpha\left\langle U^*(t), V(t)\right\rangle\big]\,{\rm d}t \bigg] +\beta\mathbb{E}\big[\left\langle X_1^*(T),\mathcal{X}_1^0[V](T)\right\rangle\big]=0.
	\end{align} 
\end{thm}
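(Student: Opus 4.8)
The plan is to derive the optimality condition \eqref{1.6} and the integral identity \eqref{today0001} by combining the first-order optimality condition for the reduced cost functional with a duality (integration-by-parts in time) between the linearized state system \eqref{1.9} and the adjoint BSPDE \eqref{1.5}. First I would compute the Fréchet derivative of $\hat{\mathcal{J}}$ at the optimum $U^*$ in a direction $V\in\mathbb{L}^2_{\mathbb{F}}\mathbb{L}^2_{t,x}$. Using the affine structure \eqref{Goud} and the identity \eqref{toaday} for $\mathcal{D}_U\mathcal{X}_i[U]=\mathcal{X}_i^0[V]$, a direct differentiation of the quadratic functional yields
\begin{align*}
\langle \mathcal{D}\hat{\mathcal{J}}(U^*),V\rangle
=\mathbb{E}\bigg[\int_0^T\big(\langle X_1^*(t)-\widetilde{X}(t),\mathcal{X}_1^0[V](t)\rangle+\alpha\langle U^*(t),V(t)\rangle\big)\,{\rm d}t\bigg]
+\beta\,\mathbb{E}\big[\langle X_1^*(T)-\widetilde{X}(T),\mathcal{X}_1^0[V](T)\rangle\big].
\end{align*}
Since $U^*$ is the unique minimizer over the linear space $\mathbb{L}^2_{\mathbb{F}}\mathbb{L}^2_{t,x}$, this derivative must vanish for every admissible $V$, which is precisely \eqref{today0001}. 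This establishes the integral identity directly, before any reference to the adjoint system.

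The second and more delicate step is to rewrite the state-dependent terms in \eqref{today0001} using the adjoint variables, thereby producing the pointwise optimality relation \eqref{1.6}. The key computation is to apply It\^o's product rule to the pairings $\langle Y_1,\mathcal{X}_2^0[V]\rangle$ and $\langle Y_2,\mathcal{X}_1^0[V]\rangle$ (equivalently $\langle\nabla Y_2,\nabla\mathcal{X}_1^0[V]\rangle$) on $[0,T]$, using the backward dynamics of $(Y_1,Y_2,Z_1,Z_2)$ from \eqref{1.5} together with the forward dynamics of $(\mathcal{X}_1^0[V],\mathcal{X}_2^0[V])$ from \eqref{1.9}. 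The plan is to track the terminal conditions $Y_1(T)=\beta(X_1^*(T)-\widetilde{X}(T))$, $Y_2(T)=0$ and the zero initial data $\mathcal{X}_i^0[V](0)=0$, and then take expectations so that the It\^o integrals against ${\rm d}W$ vanish. After cancellation of the cross terms coming from $\Delta$ (via integration by parts in space, legitimate since both families vanish on $\Gamma$) and from the noise coefficient $\gamma$ (matching $\langle\gamma Z_2,Y_1\rangle$ against the $Z_2$ contribution from the quadratic variation), the surviving boundary-in-time and forcing terms should collapse to
\begin{align*}
\mathbb{E}\bigg[\int_0^T\langle X_1^*(t)-\widetilde{X}(t),\mathcal{X}_1^0[V](t)\rangle\,{\rm d}t\bigg]+\beta\,\mathbb{E}\big[\langle X_1^*(T)-\widetilde{X}(T),\mathcal{X}_1^0[V](T)\rangle\big]
=\mathbb{E}\bigg[\int_0^T\langle Y_2(t),V(t)\rangle\,{\rm d}t\bigg].
\end{align*}
Substituting this duality identity into \eqref{today0001} gives $\mathbb{E}\big[\int_0^T\langle Y_2(t)+\alpha U^*(t),V(t)\rangle\,{\rm d}t\big]=0$ for all $V$, and since $V$ ranges over the whole space $\mathbb{L}^2_{\mathbb{F}}\mathbb{L}^2_{t,x}$, the fundamental lemma of the calculus of variations yields $\alpha U^*=-Y_2$ as an $\mathbb{L}^2_{\mathbb{F}}\mathbb{L}^2_{t,x}$-identity; the stated $\mathbb{H}_0^1$-regularity then follows from the regularity of $Y_2$ guaranteed by Lemma~\ref{Lemma 2.3}.

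The main obstacle I anticipate is the careful bookkeeping in the It\^o-duality step: because the system is a \emph{second-order} (wave-type) FBSPDE written as a first-order pair, the It\^o expansions of the two cross-pairings each generate several terms (drift contributions from $\Delta$, from $-Y_1$, from the forcing $X_1^*-\widetilde{X}$, and a genuine It\^o correction term $\langle Z_1,\cdot\rangle$ or $\langle\gamma Z_2,\cdot\rangle$ from the quadratic variation of the $\gamma X_1^0\,{\rm d}W$ noise), and one must verify that exactly the right terms cancel in pairs so that only the $\langle Y_2,V\rangle$ forcing and the correct terminal term survive. To justify the It\^o product rule and the spatial integration by parts rigorously, I would work with Galerkin approximations (as in the proof of Lemma~\ref{Lemma 2.3}), confirm the cancellations at the finite-dimensional level where all integrations by parts are elementary, and then pass to the limit using the a priori bounds \eqref{today020}, \eqref{001}, and the well-posedness of \eqref{1.9}. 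The regularity of $Y_2$ in $\mathbb{H}_0^1$ from \eqref{space regularity H-2 bound for adjoint Y} is what ultimately upgrades the optimality condition from an $\mathbb{L}^2_x$-identity to the claimed $\mathbb{H}_0^1$-statement in \eqref{1.6}.
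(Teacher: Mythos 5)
Your overall strategy is exactly the paper's: first derive the integral identity \eqref{today0001} from the vanishing Fr\'echet derivative of the reduced functional (your Step 1 matches the paper's \eqref{today0}--\eqref{today00}), then use an It\^o duality between the linearized forward system \eqref{1.9} and the adjoint BSPDE \eqref{1.5} to convert the state terms into $\mathbb{E}\int_0^T\langle Y_2,V\rangle\,{\rm d}t$, and finally conclude $\alpha U^*=-Y_2$ by arbitrariness of $V$. However, the concrete duality step you propose would fail as written: you apply It\^o's product rule to the \emph{crossed} pairings $\langle Y_1,\mathcal{X}_2^0[V]\rangle$ and $\langle Y_2,\mathcal{X}_1^0[V]\rangle$, whereas the required cancellations only occur for the \emph{diagonal} pairings $\langle Y_1,\mathcal{X}_1^0[V]\rangle$ and $\langle Y_2,\mathcal{X}_2^0[V]\rangle$, which is what the paper uses in \eqref{today1}--\eqref{today2}. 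With your pairings, the control term comes out as $\langle Y_1,V\rangle$ (since $V$ enters through ${\rm d}X_2^0$ and you pair $X_2^0$ with $Y_1$), the source term appears as $\langle X_1^*-\widetilde X, \mathcal{X}_2^0[V]\rangle$ rather than $\langle X_1^*-\widetilde X,\mathcal{X}_1^0[V]\rangle$, the terminal term involves $\mathcal{X}_2^0[V](T)$ rather than $\mathcal{X}_1^0[V](T)$, and the Laplacian contributions $\langle\Delta Y_2,\mathcal{X}_2^0[V]\rangle$ and $\langle Y_1,\Delta \mathcal{X}_1^0[V]\rangle$ have no partners to cancel against; the resulting identity is therefore a different one and does not yield the duality formula you claim.

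Relatedly, your description of the $\gamma$-cancellation is off: no term $\langle\gamma Z_2,Y_1\rangle$ arises in this duality computation (such a term belongs to the energy estimate in the proof of Lemma~\ref{Lemma 2.3}, where one tests the equation with $Y_1$ itself). In the correct diagonal computation, the drift term $\langle\gamma Z_2,\mathcal{X}_1^0[V]\rangle$ coming from ${\rm d}Y_1$ paired against $\mathcal{X}_1^0[V]$ cancels the quadratic covariation term $\langle Z_2,\gamma\mathcal{X}_1^0[V]\rangle$ produced by $Z_2\,{\rm d}W$ in ${\rm d}Y_2$ meeting $\gamma\mathcal{X}_1^0[V]\,{\rm d}W$ in ${\rm d}X_2^0$. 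Once the pairings are corrected, the rest of your argument (terminal conditions $Y_1(T)=\beta(X_1^*(T)-\widetilde X(T))$, $Y_2(T)=0$, zero initial data, expectations annihilating the stochastic integrals, the final variational conclusion, and the Galerkin justification of the formal manipulations) goes through and coincides with the paper's proof.
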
 

\begin{proof} 
	Since $(\mathcal{X}_1[U^*], U^*) \equiv (X_1^*, X_2^*, U^*)$ is the unique optimal control tuple for the {\bf SLQ} problem \eqref{1.3}-\eqref{1.4}, we then obtain the following variational equality
	\begin{align}\label{today0}
		\left\langle \mathcal{D}_{U}\hat{\mathcal{J}}(U^*), V\right\rangle_{\mathbb{L}^2_{\mathbb{F}}\mathbb{L}^2_{t, x}} = 0 \quad \forall\, V \in \mathbb{L}^2_\mathbb{F}\mathbb{L}^2_{t, x}.
	\end{align}
	A straightforward computation with the help of the identity \eqref{toaday} yields
	\begin{align}\label{today00}
		\left\langle \mathcal{D}_{U}\hat{\mathcal{J}}(U^*), V\right\rangle_{\mathbb{L}^2_{\mathbb{F}}\mathbb{L}^2_{t, x}}&= \mathbb{E} \bigg[\int_0^T\big[\left\langle  X_1^*(t)-\widetilde{X}(t),\mathcal{X}_1^0[V](t)\right\rangle + \alpha\left\langle U^*(t), V(t)\right\rangle\big]\,{\rm d}t \bigg]\notag\\&\qquad+\beta\mathbb{E}\big[\left\langle X_1^*(T),\mathcal{X}_1^0[V](T)\right\rangle\big].
	\end{align}    
Let $V\in \mathbb{L}^2_{\mathbb{F}}\mathbb{L}^2_{t, x}$. By applying It\^o's product formula to $(Y_1,\mathcal{X}_1^0[V])\to \left\langle Y_{1},\mathcal{X}_1^0[V]\right\rangle$, we obtain $\mathbb{P}$- almost surely
	\begin{align}\label{today1}
		\left\langle  Y_1(T), \mathcal{X}_1^0[V](T) \right\rangle - \left\langle  Y_1(0), \mathcal{X}_1^0[V](0) \right\rangle = & \int_0^T \left\langle  Y_1(t), \mathcal{X}_2^0[V](t) \right\rangle \,{\rm d}t + \int_0^T \left\langle  \nabla \mathcal{X}_1^0[V](t), \nabla Y_2(t) \right\rangle \,{\rm d}t\notag \\
		& - \int_0^T \left\langle \gamma Z_2(t), \mathcal{X}_1^0[V](t)\right\rangle \,{\rm d}t - \int_0^T \left\langle  X_1^*(t)-\widetilde{X}(t), \mathcal{X}_1^0[V](t) \right\rangle \,{\rm d}t\notag \\
		& + \int_0^T \left\langle Z_1(t), \mathcal{X}_1^0[V](t)\right\rangle \,{\rm d}W(t).
	\end{align}
	Similarly, by applying It\^o's product formula to $(Y_2,\mathcal{X}_2^0[V])\to \left\langle Y_{2},\mathcal{X}_2^0[V]\right\rangle$, we obtain $\mathbb{P}$-almost surely
	\begin{align}\label{today2}
		\left\langle  Y_2(T), \mathcal{X}_2^0[V](T) \right\rangle - \left\langle  Y_2(0), \mathcal{X}_2^0[V](0) \right\rangle = & - \int_0^T \left\langle  \nabla Y_2(t), \nabla \mathcal{X}_1^0[V](t) \right\rangle \,{\rm d}t + \int_0^T \left\langle  Y_2(t), V(t) \right\rangle \,{\rm d}t\notag \\
		& + \int_0^T \left\langle  \mathcal{X}_1^0[V](t), Y_2(t)\right\rangle \,{\rm d}W(t) - \int_0^T \left\langle  Y_1(t), \mathcal{X}_2^0[V](t) \right\rangle \,{\rm d}t \notag\\
		&+ \int_0^T \left\langle  \gamma Z_2(t), \mathcal{X}_2^0[V](t) \right\rangle \,{\rm d}W(t) + \int_0^T \left\langle \mathcal{X}_1^0[V](t), \gamma Z_2(t)\right\rangle \,{\rm d}t.
	\end{align}
	By adding \eqref{today1} and \eqref{today2}, using the facts $\mathcal{X}_{1}^0[V](0)=\mathcal{X}_2^0[V](0)=Y_{2}(T)=0$ and $Y_1(T)=\beta(X_1^*(T)-\widetilde{X}(T))$, and taking the expectation, we obtain for all $V \in \mathbb{L}^2_\mathbb{F}\mathbb{L}^2_{t, x}$,
	\begin{align}\label{today000}
		\mathbb{E} \bigg[\int_0^T \left\langle  X_1^*(t)-\widetilde{X}(t), \mathcal{X}_1^0[V](t) \right\rangle \bigg] + \beta\mathbb{E}\big[\left\langle \big(X^*_1(T)-\widetilde{X}(T)\big), \mathcal{X}_1^0[V](T)\right\rangle\big] =\mathbb{E} \bigg[\int_0^T \left\langle  Y_2(t), V(t) \right\rangle \,{\rm d}t \bigg].
	\end{align}
	Combining \eqref{today0}, \eqref{today00}, and \eqref{today000}, we conclude that 
	\begin{align*}
		\alpha U^*=- Y_2\qquad\text{in}\, \mathbb{L}^2_{\mathbb{F}}\mathbb{L}^2_{t}\mathbb{H}_0^1.
	\end{align*}
	This completes the proof.
\end{proof}
\begin{remark}[Vanishing on the boundary and enhanced spatial regularity]\label{vanishing on the boundary}
In Proposition~\ref{existence of a unique optimal pair}, the optimal control \(U^*\) is shown to satisfy $U^*\in \mathbb{L}^2_{\mathbb F}\mathbb{L}^2_{t,x}.$ However, the optimality condition~\eqref{1.6} yields the improved spatial regularity $U^*\in \mathbb{L}^2_{\mathbb F}C_t\mathbb{H}_0^1,$ which is essential for the error estimates in Section~\ref{Section 3}.  In particular, the optimal control \(U^*\) vanishes on the boundary of \(D\) in the sense of traces.
\end{remark}

\begin{remark}[Equivalent formulation]
Theorem~\ref{Pontryagin's Maximum Principle} shows that solving the \(\mathbf{SLQ}\) problem \eqref{1.3}--\eqref{1.4} is equivalent (in the sense of necessary and sufficient optimality conditions) to solving the optimality system consisting of the state SPDE \eqref{1.4}, the adjoint BSPDE \eqref{1.5}, and the optimality condition \eqref{1.6}. As it will be seen in Section~\ref{Section 5}, we introduce a space–time discretized version of this system for practical implementation; see in particular Proposition~\ref{fully discrete Pontryagin's maximum principle}.
\end{remark}

\section{Space discretization}\label{Section 3}
We partition the bounded domain $D \subset \mathbb{R}^d$ via a regular triangulation $\mathcal{T}_h$ into elements $K$ with maximum mesh
\[
h := \max_{K\in\mathcal T_h}\,\mbox{diam}(K).
\]
We work in the following discrete space
\[
\mathbb{V}_h \;:=\; \{\phi\in \mathbb{H}_0^1(D)\;:\;\phi|_K\in\mathbb{P}_1(K)\,\;\forall\, K\in\mathcal{T}_h\},
\]
where $\mathbb{P}_1(K)$ denotes the space of affine polynomials on a finite element $K$. 

\subsection{Projection operators and approximation estimates}
Recall the following projections:
\begin{definition}[$\mathbb{L}_x^2$–projection]
	The $\mathbb{L}_x^2$–projection $\Pi_h:\mathbb{L}_x^2\to\mathbb{V}_h$ is defined as follows: for all $v\in\mathbb{L}_x^2$,
    \[
	(\Pi_h v - v,\;\phi_h) \;=\;0
	\qquad\forall\,\phi_h\in\mathbb{V}_h.
	\]
\end{definition}
\begin{definition}[Discrete Laplacian] The discrete Laplacian $\Delta_h:\mathbb{V}_h\to \mathbb{V}_h$ is defined as follows: for all $\xi_h\in\mathbb{V}_h$,
	\begin{align*}
		\left\langle  \Delta_h \xi_h, \varphi_h\right\rangle =-\left\langle \nabla \xi_h, \nabla\varphi_h\right\rangle \qquad\forall\,\varphi_h\in \mathbb{V}_h.
	\end{align*}
\end{definition}

\begin{definition}[Ritz projection]
	The Ritz (or elliptic) projection $\mathcal{R}_h:\mathbb{H}_0^1\to\mathbb{V}_h$ is defined as follows: for all $u\in\mathbb{H}_0^1$,
	\[
	(\nabla(\mathcal{R}_h u - u),\;\nabla\phi_h)\;=\;0
	\qquad\forall\,\phi_h\in\mathbb{V}_h.
	\]
\end{definition}

Both operators satisfy relevant stability and approximation properties.  In particular, for all $v\in \mathbb{H}^2_x$, there exists a constant $C>0$, independent of $h$, such that

\medskip
\noindent\textbf{$\mathbb{L}_x^2$–projection estimates:}
\begin{align}
	\|v - \Pi_h v\|_{\mathbb{L}_x^2} 
	&\;\le\; C\,h^s\,\|v\|_{\mathbb{H}_x^s}\qquad\forall\,v\in\mathbb{H}_x^s,\qquad s=1, 2,
	\label{eq:L2proj-L2} \\
	\|\nabla(v - \Pi_h v)\|_{\mathbb{L}_x^2}
	&\;\le\; C\,h\,\|v\|_{\mathbb{H}_x^2}\qquad\forall\,v\in\mathbb{H}_x^2.
	\label{eq:L2proj-H1}
\end{align}

\noindent\textbf{Ritz‐projection estimates:}
\begin{align}
	\|\nabla(\mathcal{R}_h v - v)\|_{\mathbb{L}_x^2}
	&\;\le\; C\,h\,\|v\|_{\mathbb{H}_x^2}\qquad\forall\, v\in \mathbb{H}_x^2,
	\label{eq:Ritz-H1} \\
	\| \mathcal{R}_h v - v\|_{\mathbb{L}_x^2}
	&\;\le\; C\,h^s\,\|v\|_{\mathbb{H}_x^s}\qquad\forall\,  v\in \mathbb{H}_x^s,\qquad s=1,2.
	\label{eq:Ritz-L2}
\end{align}
Moreover, both $\Pi_h$ and $\mathcal{R}_h$ enjoy the following \emph{stability} bounds:
\begin{align}
	\|\Pi_h v\|_{\mathbb{L}_x^2} \;\le\;\|v\|_{\mathbb{L}_x^2},
	\qquad
	\|\nabla \mathcal{R}_h v\|_{\mathbb{L}_x^2}\;\le\;\|\nabla v\|_{\mathbb{L}_x^2}.
\end{align}
All of the above estimates are followed by the classical interpolation theory on each $K\in\mathcal{T}_h$ together with the summation over the mesh; see, e.g., \cite{Brenner&Scott, Ciarlet}. We define also $\widetilde{X}_h=\mathcal{R}_h\widetilde{X}$.

\subsection{Space-discretization of {\bf SLQ} problem}
The spatial semi-discretization \({\bf SLQ}_h\) of problem \({\bf SLQ}\) \eqref{1.3}-\eqref{1.4} reads as follows: Find an optimal tuple \((X_{1, h}^*, X_{2,h}^*, U_h^*) \in [\mathbb{L}^2_{\mathbb{F}}C_t\mathbb{V}_h]^2\times \mathbb{L}^2_{\mathbb{F}}\mathbb{L}^2_t\mathbb{V}_h \) that minimizes the following functional  
\begin{equation}\label{3.1}
	{J}(X_{1,h}, U_h) = \frac{1}{2} \mathbb{E} \Bigg[ \int_0^T \big[\| X_{1,h}(t)-\widetilde{X}_h(t) \|^2_{\mathbb{L}^2_x} + \alpha\| U_h(t) \|^2_{\mathbb{L}^2_x} \big]\, \,{\rm d}t+ \beta\|X_{1,h}(T)-\widetilde{X}_{h}(T)\|_{\mathbb{L}^2_x}^2 \Bigg]
\end{equation}
subject to the following SDE;  
\begin{equation}
	\begin{cases}\label{3.2}
		\,{\rm d}X_{1,h}=X_{2,h}(t)\,{\rm d}t & \qquad\forall\, t\in (0,T],\\
		\,{\rm d}X_{2,h}(t)= [\Delta_h X_{1,h}(t) + U_h(t)] \, \,{\rm d}t + [\gamma X_{1,h}(t)+\mathcal{R}_h \sigma(t)] \,{\rm d}W(t)& \qquad \forall\, t \in (0,T], \\
		X_{1,h}(0) = \mathcal{R}_h X_{1,0},\\
		X_{2,h}(0)=\mathcal{R}_h X_{2,0}.
		\end{cases}
\end{equation}
Note that, in view of Remark~\ref{vanishing on the boundary}, the space of the semi-discrete control is $\mathbb{L}^2_{\mathbb{F}} \mathbb{L}^2_t \mathbb{V}_h.$

\subsection{Semi-discrete Pontryagin's maximum principle}
We define the reduced cost as follows: for all $U_h\in\mathbb{L}_{\mathbb{F}}^2\mathbb{L}^2_t\mathbb{V}_h$,
\begin{align*}
    \hat{J}_h(U_{h})=J(\mathcal{X}_{1,h}[U_h],U_h),
\end{align*}
where $\mathcal{X}_{1,h}[U_h]$ is the first component of the unique solution $(\mathcal{X}_{1,h}[U_h],\mathcal{X}_{2,h}[U]_h)\equiv(X_{1,h},X_{2,h})$ to the semi-discrete SDE \eqref{3.2} with the semi-discrete distributed control $U_h$. 

Let the adjoint quadruple \(\big((Y_{1,h}, Y_{2,h}), (Z_{1,h}, Z_{2,h})\big) \in \mathbb{L}^2_{\mathbb{F}} C_t(\mathbb{V}_h\times \mathbb{V}_h) \times \mathbb{L}^2_{\mathbb{F}}\mathbb{L}^2_t(\mathbb{V}_h\times\mathbb{V}_h)\) solve the following \({\bf BSPDE}_h\)  
\begin{equation}\label{discrete adjoint ode}
	\begin{cases}
		\,{\rm d}Y_{1,h}(t) = -[\Delta_h Y_{2,h}(t)+ Z_{2,h}(t) + X_{1,h}^*(t)-\widetilde{X}_h] \,{\rm d}t + Z_{1,h}(t)\,{\rm d}W(t) &\qquad \forall\, t \in [0,T], \\
		\,{\rm d}Y_{2,h}(t) = -Y_{1,h}(t)\,{\rm d}t + Z_{2,h}(t)\,{\rm d}W(t)& \qquad \forall\, t \in [0,T],\\
		Y_{1,h}(T) = \beta(X^*_{1,h}(T)-\widetilde{X}_h(T)),\\
		Y_{2,h}(T)=0.
	\end{cases}
\end{equation}
For given $V_h\in\mathbb{L}_{\mathbb{F}}^2\mathbb{L}^2_t\mathbb{V}_h$, let $(\mathcal{X}_{1,h}^0[{V_h}], \mathcal{X}_{2,h}^0[V_h]) \equiv (X^0_{1,h}, X^0_{2,h})\in \mathbb{L}^2_{\mathbb{F}}C_t(\mathbb{V}_h\times\mathbb{V}_h)$ be the unique solution to the following semi-discrete SDE:
\begin{align}\label{1.91}
	\begin{cases} 
		\,{\rm d}X^0_{1,h}(t) = X_{2,h}^0(t) \,{\rm d}t &\qquad\forall\,t\in (0,T], \\
		\,{\rm d} X_{2,h}^0(t) = (\Delta X_{1,h}^0(t) + V_h(t)) \,{\rm d}t + \gamma X_{1,h}^0(t) \,{\rm d}W(t) &\qquad\forall\,t\in(0,T], \\
		X_{1,h}^0(0) = 0, \\
		X_{2,h}^0(0) = 0,
	\end{cases}
\end{align}
which is the space-discretization of SPDE \eqref{1.9}.
Note that  for all $U_h, V_h\in\mathbb{L}^2_{\mathbb{F}}\mathbb{L}^2_{t}\mathbb{V}_h$, for $i=1,2,$
\begin{align}\label{goud}
\mathcal{X}_{i,h}[U_h+V_h]=\mathcal{X}_{i,h}[U_h]+\mathcal{X}_{i,h}^0[V_h],\qquad \mathcal{X}_{i,h}[U_h]-\mathcal{X}_{i,h}[V_h]=\mathcal{X}_{i,h}^0[U_h-V_h].
\end{align}
\begin{proposition} Let $U_h\in \mathbb{L}^2_{\mathbb{F}}\mathbb{L}^2_t\mathbb{V}_h$, then there exists $C>0$ such that for all $U_h\in\mathbb{L}_{\mathbb{F}}^2\mathbb{L}^2_t\mathbb{V}_h$,
\begin{align}
	\mathbb{E}\big[\sup_{s\in[0,T]}&\big[\|\mathcal{X}_{2,h}^0[U_h](t)\|_{\mathbb{L}^2_x}^2+\|\nabla \mathcal{X}_{1,h}^0[U_h](t)\|_{\mathbb{L}^2_x}^2\big]\big]\le C\,\mathbb{E}\big[\|U_{h}\|_{\mathbb{L}^2_{t, x}}^2\big].\label{today0002}
\end{align}
\begin{proof}The proof is a simple consequence of It\^o's formula and Gronwall's inequality. For the proof, one can follow similar arguments as in the proof of \cite[Lemma 3.2]{FengPandaProhl2024}.
\end{proof}
\end{proposition}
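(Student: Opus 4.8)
The plan is to derive the natural discrete hyperbolic energy identity via It\^o's formula and then close it by a Gronwall argument. Abbreviate $(X_1^0,X_2^0):=(\mathcal{X}_{1,h}^0[U_h],\mathcal{X}_{2,h}^0[U_h])$ and introduce the discrete energy $E(t):=\|X_2^0(t)\|_{\mathbb{L}^2_x}^2+\|\nabla X_1^0(t)\|_{\mathbb{L}^2_x}^2$. Since the first line of \eqref{1.91} carries no noise, ordinary calculus gives $ {\rm d}\|\nabla X_1^0\|_{\mathbb{L}^2_x}^2 = 2\langle\nabla X_1^0,\nabla X_2^0\rangle\,{\rm d}t = -2\langle \Delta_h X_1^0,X_2^0\rangle\,{\rm d}t$, the last equality being the definition of $\Delta_h$ on $\mathbb{V}_h$. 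Applying It\^o's formula to $t\mapsto\|X_2^0(t)\|_{\mathbb{L}^2_x}^2$ and using the second line of \eqref{1.91} (with $m=1$, so that the quadratic variation of the martingale part yields the It\^o correction $\gamma^2\|X_1^0\|_{\mathbb{L}^2_x}^2\,{\rm d}t$) gives
\[
{\rm d}\|X_2^0\|_{\mathbb{L}^2_x}^2 = 2\langle X_2^0,\Delta_h X_1^0+U_h\rangle\,{\rm d}t + 2\gamma\langle X_2^0,X_1^0\rangle\,{\rm d}W + \gamma^2\|X_1^0\|_{\mathbb{L}^2_x}^2\,{\rm d}t .
\]
Adding the two relations, the $\Delta_h$-terms cancel antisymmetrically, and with zero initial energy we obtain the energy balance
\[
E(s) = \int_0^s 2\langle X_2^0,U_h\rangle\,{\rm d}t + \int_0^s 2\gamma\langle X_2^0,X_1^0\rangle\,{\rm d}W + \int_0^s \gamma^2\|X_1^0\|_{\mathbb{L}^2_x}^2\,{\rm d}t \qquad \forall\,s\in[0,T].
\]

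Next I would estimate the three terms. The first is handled by Young's inequality, $2\langle X_2^0,U_h\rangle\le\|X_2^0\|_{\mathbb{L}^2_x}^2+\|U_h\|_{\mathbb{L}^2_x}^2$. The It\^o-correction term $\gamma^2\|X_1^0\|_{\mathbb{L}^2_x}^2$ is not controlled by $E$ directly, so I would invoke the Poincar\'e inequality on $\mathbb{V}_h\subset\mathbb{H}_0^1$, namely $\|X_1^0\|_{\mathbb{L}^2_x}\le C_P\|\nabla X_1^0\|_{\mathbb{L}^2_x}$, converting it into $\gamma^2 C_P^2\|\nabla X_1^0\|_{\mathbb{L}^2_x}^2$, which feeds the Gronwall loop. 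For the stochastic integral, after taking $\sup_{s\le t}$ and expectation I would apply the Burkholder--Davis--Gundy inequality together with Cauchy--Schwarz, $\langle X_2^0,X_1^0\rangle^2\le\|X_2^0\|_{\mathbb{L}^2_x}^2\|X_1^0\|_{\mathbb{L}^2_x}^2$, and then Young's inequality to split off a term of the form $\varepsilon\,\mathbb{E}\big[\sup_{s\le t}\|X_2^0(s)\|_{\mathbb{L}^2_x}^2\big]$ that can be absorbed into the left-hand side for $\varepsilon$ sufficiently small.

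After absorption this yields an inequality of the form
\[
\mathbb{E}\Big[\sup_{s\le t}E(s)\Big]\le C\,\mathbb{E}\big[\|U_h\|_{\mathbb{L}^2_{t,x}}^2\big] + C\int_0^t \mathbb{E}\Big[\sup_{r\le s}\|\nabla X_1^0(r)\|_{\mathbb{L}^2_x}^2\Big]\,{\rm d}s ,
\]
and Gronwall's lemma in the variable $t$ then delivers the asserted bound \eqref{today0002} with $C=C(T,\gamma,C_P)$.

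The only genuinely delicate point, and the one I expect to be the main obstacle, is the handling of the low-order factor $\|X_1^0\|_{\mathbb{L}^2_x}^2$ arising both in the It\^o correction and in the BDG estimate: it cannot be dominated by the instantaneous energy $E(t)$, so the combination of the Poincar\'e inequality with a Gronwall closure is essential. One must also take care to absorb the $\sup$-in-time contribution of the martingale term \emph{before} applying Gronwall, since that contribution involves $\sup_s\|X_2^0\|_{\mathbb{L}^2_x}$ rather than an integrated quantity; this is precisely what the small Young parameter $\varepsilon$ accomplishes.
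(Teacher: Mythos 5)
Your proposal is correct and follows exactly the route the paper intends: the paper's proof is only a pointer to It\^o's formula plus Gronwall's inequality (via \cite[Lemma 3.2]{FengPandaProhl2024}), and your energy argument—cancellation of the $\Delta_h$ cross terms, Young's inequality on the control term, Poincar\'e on the It\^o correction, and BDG with absorption of the $\sup$-in-time martingale contribution before Gronwall—is precisely the standard execution of that argument.
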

In the following theorem, we derive the semi-discrete Pontryagin's maximum principle, which provide optimality condition~\eqref{3.3} and the integral identity~\eqref{today0000}. 
	\begin{thm}[Semi-discrete Pontryagin's maximum principle]\label{Semi-discrete Pontryagin's maximum principle}
	Let Assumption~\ref{CC} hold. There exists the unique optimal control tuple $(X_{1,h}^*, X_{2,h}^*, U_h^*)$ for {\bf SLQ}$_h$ problem \eqref{3.1}-\eqref{3.2}. Let $(Y_{1,h}, Y_{2,h}, Z_{1,h}, Z_{2,h})$ be the unique solution to $\textbf{\textbf{BSDE}}_h$ \eqref{discrete adjoint ode}. Then, the following optimality condition holds:
		\begin{equation}\label{3.3}
			\alpha U_h^*(t) =-Y_{2,h}(t) \quad \forall\,t \in [0,T].
		\end{equation}
	 Moreover, the following integral identity holds: for all $V_h\in \mathbb{L}^2_{\mathbb{F}}\mathbb{L}^2_{t}\mathbb{V}_h$,
		\begin{align}\label{today0000}
			&\left\langle \mathcal{D}_{U}\hat{\mathcal{J}}_h(U_h^*),V_h\right\rangle_{\mathbb{L}^2_{\mathbb{F}}\mathbb{L}^2_{t, x}} \notag\\&= \mathbb{E} \bigg[\int_0^T\big[\left\langle X_{1,h}^*(t)-\widetilde{X}_h(t), \mathcal{X}_{1,h}^0[V_h](t)\right\rangle + \alpha\left\langle U_h^*(t), V_h(t)\right\rangle \big]\,{\rm d}t +\beta\left\langle X^*_{1,h}(T),\mathcal{X}_{1,h}^0[V_h](T)\right\rangle\bigg]=0.
		\end{align} 
	\end{thm}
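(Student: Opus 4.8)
The plan is to follow the variational route used in the proof of Theorem~\ref{Pontryagin's Maximum Principle}, taking advantage of the fact that in the semi-discrete setting every process takes values in the finite-dimensional space $\mathbb{V}_h$, so that the discrete Laplacian $\Delta_h$ is symmetric on $\mathbb{V}_h$ and all integration-by-parts manipulations are exact with no boundary contributions. I would first establish existence and uniqueness of $(X_{1,h}^*, X_{2,h}^*, U_h^*)$: the reduced cost $\hat J_h(U_h)=J(\mathcal{X}_{1,h}[U_h],U_h)$ is a quadratic functional on the Hilbert space $\mathbb{L}^2_{\mathbb{F}}\mathbb{L}^2_t\mathbb{V}_h$, and the affine dependence of $\mathcal{X}_{1,h}[\cdot]$ recorded in \eqref{goud} together with the a priori bound \eqref{today0002} shows that $U_h\mapsto\hat J_h(U_h)$ is continuous, coercive, and strongly convex (thanks to the term $\tfrac{\alpha}{2}\|U_h\|^2$ with $\alpha>0$). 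The direct method then yields a unique minimizer, exactly as in Proposition~\ref{existence of a unique optimal pair}.

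Next I would record the first-order condition $\langle \mathcal{D}_U\hat J_h(U_h^*),V_h\rangle_{\mathbb{L}^2_{\mathbb{F}}\mathbb{L}^2_{t,x}}=0$ for all $V_h\in\mathbb{L}^2_{\mathbb{F}}\mathbb{L}^2_t\mathbb{V}_h$ and compute the Fréchet derivative by differentiating through the affine solution map, using the discrete analogue of \eqref{toaday}, namely $\mathcal{D}_U\mathcal{X}_{i,h}[U_h]=\mathcal{X}_{i,h}^0[\cdot]$, which is immediate from \eqref{goud}. This gives the first equality in \eqref{today0000}. The crux is then to rewrite the state-dependent terms via the adjoint. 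I would apply the (finite-dimensional) It\^o product formula to $t\mapsto\langle Y_{1,h},\mathcal{X}_{1,h}^0[V_h]\rangle$ and to $t\mapsto\langle Y_{2,h},\mathcal{X}_{2,h}^0[V_h]\rangle$, in perfect parallel with \eqref{today1}--\eqref{today2}, then sum and take expectations so that the martingale parts vanish in mean.

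In the summed identity several families of terms cancel: the cross-coupling contributions $\pm\langle Y_{1,h},\mathcal{X}_{2,h}^0[V_h]\rangle$ cancel; the elliptic contributions cancel because $\langle\Delta_h Y_{2,h},\mathcal{X}_{1,h}^0[V_h]\rangle=-\langle\nabla Y_{2,h},\nabla\mathcal{X}_{1,h}^0[V_h]\rangle$ by the definition of $\Delta_h$; and the two terms carrying $Z_{2,h}$---the drift contribution in the $Y_{1,h}$ equation and the It\^o covariation produced by pairing the diffusion $Z_{2,h}$ of $Y_{2,h}$ against the diffusion of $\mathcal{X}_{2,h}^0[V_h]$---cancel exactly as the $\gamma Z_2$ terms do in \eqref{today1}--\eqref{today2}. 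Using $\mathcal{X}_{1,h}^0[V_h](0)=\mathcal{X}_{2,h}^0[V_h](0)=Y_{2,h}(T)=0$ and $Y_{1,h}(T)=\beta(X_{1,h}^*(T)-\widetilde{X}_h(T))$, what survives is the discrete counterpart of \eqref{today000},
\begin{equation*}
\mathbb{E}\bigg[\int_0^T\langle X_{1,h}^*-\widetilde{X}_h,\mathcal{X}_{1,h}^0[V_h]\rangle\,{\rm d}t\bigg]+\beta\mathbb{E}\big[\langle X_{1,h}^*(T)-\widetilde{X}_h(T),\mathcal{X}_{1,h}^0[V_h](T)\rangle\big]=\mathbb{E}\bigg[\int_0^T\langle Y_{2,h},V_h\rangle\,{\rm d}t\bigg].
\end{equation*}
Substituting this into the first-order condition gives $\mathbb{E}[\int_0^T\langle\alpha U_h^*+Y_{2,h},V_h\rangle\,{\rm d}t]=0$ for all $V_h$; since $\alpha U_h^*+Y_{2,h}$ itself lies in $\mathbb{L}^2_{\mathbb{F}}\mathbb{L}^2_t\mathbb{V}_h$, choosing $V_h=\alpha U_h^*+Y_{2,h}$ forces $\alpha U_h^*=-Y_{2,h}$, and the time-continuity of $Y_{2,h}$ upgrades this to the pointwise optimality condition \eqref{3.3}.

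The step I expect to demand the most care is the bookkeeping in the It\^o product formula, and in particular the verification that the $Z_{2,h}$-covariation terms cancel: this is precisely what lets the argument close without any control on the regularity of $Z_{2,h}$ and without invoking Malliavin calculus. By contrast, the remaining manipulations are genuine simplifications of the continuous argument, since in $\mathbb{V}_h$ the operator $\Delta_h$ is symmetric by construction, so no boundary terms arise and---unlike the continuous optimality condition \eqref{1.6}, asserted in $\mathbb{L}^2_{\mathbb{F}}\mathbb{L}^2_t\mathbb{H}_0^1$---no extra $\mathbb{H}_0^1$-regularity of the adjoint needs to be established.
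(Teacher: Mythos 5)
Your proposal is correct and follows essentially the same route as the paper: the paper's proof of this theorem simply invokes the direct-method argument of \cite[Theorem 1.43]{Hinze et al. book} for existence and uniqueness and then repeats, in the semi-discrete setting, the It\^o product-formula duality argument of Theorem~\ref{Pontryagin's Maximum Principle} (identities \eqref{today1}--\eqref{today2} and the cancellation of the $\gamma Z_2$-terms), which is precisely what you spell out. Your additional observations---that $\Delta_h$ is symmetric on $\mathbb{V}_h$ so no boundary terms arise, and that the $Z_{2,h}$-covariation cancellation closes the argument without any regularity of $Z_{2,h}$---are accurate fillings-in of details the paper leaves implicit.
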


\begin{proof}
For the existence and uniqueness of the optimal control tuple $(X_{1,h}^*, X_{2,h}^*, U_h^*)$, one can follow similar arguments as in the proof of \cite[Theorem 1.43]{Hinze et al. book}; see  also \cite{LuZhang2021}. For the proof of optimality condition~\eqref{3.3} and equation \eqref{today0000}, one can follow similar lines as in the proof of Theorem \ref{Pontryagin's Maximum Principle}.
\end{proof}
\begin{remark}
    The optimality condition~\eqref{3.3} enhances time regularity of the semi-discrete optimal control $U_h^*$ (see Proposition~\ref{Time-regularity for semi-discrete optimal control} in the Appendix), while the integral identity~\eqref{today0000} which plays a pivotal role in the error analysis for the space--time discretization (see Theorems~\ref{Thm 3.8} and \ref{strong rate of convergence for time discretization}).
\end{remark}
\subsection{Convergence with rates for ${\bf SLQ}_h$ problem}
In this subsection, we establish a strong convergence results for the semi-discrete problem \(\mathbf{SLQ}_h\) towards the continuous \(\mathbf{SLQ}\) problem. 
We now state the following proposition, which provides the error estimate between the analytic state $\mathcal{X}_{1}[\Pi_h U^*]$ and the semi-discrete state $\mathcal{X}_{1,h}[\Pi_h U^*]$ corresponding to the same semi-discrete control $\Pi_h U^*$. This result will be useful in the proof of Theorem~\ref{Thm 3.8}. 

\begin{proposition}\label{Proposition 3.7} Let Assumption~\ref{BB} hold. Let $(\mathcal{X}_{1,h}[\Pi_h U^*],\mathcal{X}_{2,h}[\Pi_h U^*])$ and $(X_{1}[\Pi_h U^*],X_{2}[\Pi_h U^*])$ be the unique solutions to \eqref{3.1} and \eqref{1.4} with distributed semi-discrete control $\Pi_h U^*$, respectively. Then there exists $C>0$ such that for all $t\in [0,T]$,
	\begin{align}\label{today44}
		&\mathbb{E}\big[\|\nabla \mathcal{X}_1[\Pi_h U^*](t)-\nabla \mathcal{X}_{1,h}[\Pi_h U^*](t)\|_{\mathbb{L}^2_{x}}^2\big]+\mathbb{E}\big[\|X_2(t)-X_{2,h}(t)\|_{\mathbb{L}^2_x}^2\big]\notag\\&\qquad\le C\,h^2\big(\|X_{1,0}\|_{\mathbb{H}_x^3}^2+ \|X_{2,0}\|_{\mathbb{H}_x^2}^2+\|\widetilde{X}\|_{C_t\mathbb{H}_0^1}^2+ \mathbb{E}\big[\|\sigma\|_{\mathbb{L}^2_t\mathbb{H}_x^2}^2]\big).
	\end{align}
\end{proposition}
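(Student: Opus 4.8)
The plan is to route the comparison through the \emph{exact} optimal control $U^*$ rather than comparing the two states for the projected control $\Pi_h U^*$ head-on. This is essential because $\Pi_h U^*\in\mathbb{V}_h$ is only $\mathbb{H}_0^1$-regular, so the analytic state $\mathcal{X}_1[\Pi_h U^*]$ cannot be expected to lie in $\mathbb{H}_x^3$, which is precisely the regularity a first–order finite element estimate consumes. Writing $X_i^*:=\mathcal{X}_i[U^*]$ and invoking the affine identity \eqref{Goud}, I would split
\[
\mathcal{X}_i[\Pi_h U^*]-\mathcal{X}_{i,h}[\Pi_h U^*]
=\underbrace{\mathcal{X}_i^0[\Pi_h U^*-U^*]}_{(\mathrm{A})_i}
\;+\;\underbrace{\big(X_i^*-\mathcal{X}_{i,h}[\Pi_h U^*]\big)}_{(\mathrm{B})_i},\qquad i=1,2,
\]
and estimate the control–projection part $(\mathrm{A})_i$ and the genuine finite element part $(\mathrm{B})_i$ separately.

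For $(\mathrm{A})_i$, the auxiliary system \eqref{1.9} with control $\Pi_h U^*-U^*$, zero initial data and zero additive noise is governed by Lemma~\ref{Lemma 2.1}(1) (the analytic counterpart of \eqref{today0002}), giving $\mathbb{E}\big[\sup_t(\|\nabla\mathcal{X}_1^0[\Pi_h U^*-U^*]\|_{\mathbb{L}^2_x}^2+\|\mathcal{X}_2^0[\Pi_h U^*-U^*]\|_{\mathbb{L}^2_x}^2)\big]\le C\,\mathbb{E}\|\Pi_h U^*-U^*\|_{\mathbb{L}^2_{t,x}}^2$; the $\mathbb{L}_x^2$–projection estimate \eqref{eq:L2proj-L2} then bounds the right-hand side by $C\,h^2\,\mathbb{E}\|U^*\|_{\mathbb{L}^2_t\mathbb{H}_0^1}^2$.

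For $(\mathrm{B})_i$ I would run the classical wave–FEM energy argument with the Ritz splitting $\theta_i:=\mathcal{R}_hX_i^*-X_{i,h}\in\mathbb{V}_h$. Differencing the Galerkin-tested analytic system against the semi-discrete system \eqref{3.2} and using $\langle\nabla X_1^*,\nabla\phi_h\rangle=-\langle\Delta_h\mathcal{R}_hX_1^*,\phi_h\rangle=-\langle\Pi_h\Delta X_1^*,\phi_h\rangle$, I obtain $\mathrm{d}\theta_1=\theta_2\,\mathrm{d}t$ and $\mathrm{d}\theta_2=(\Delta_h\theta_1+g_h)\,\mathrm{d}t+\gamma\theta_1\,\mathrm{d}W$, where the additive–noise contributions cancel exactly because \eqref{3.2} already uses $\mathcal{R}_h\sigma$, and the consistency remainder is $g_h=(\mathcal{R}_h\Delta X_1^*-\Pi_h\Delta X_1^*)+(\mathcal{R}_hU^*-\Pi_hU^*)$ with $\|g_h\|_{\mathbb{L}^2_x}\le C\,h(\|X_1^*\|_{\mathbb{H}_x^3}+\|U^*\|_{\mathbb{H}_0^1})$ by \eqref{eq:Ritz-L2} and \eqref{eq:L2proj-L2}. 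Applying It\^o's formula to the discrete energy $\|\nabla\theta_1\|_{\mathbb{L}^2_x}^2+\|\theta_2\|_{\mathbb{L}^2_x}^2$, the two $\langle\Delta_h\theta_1,\theta_2\rangle$ terms cancel and the stochastic integral has zero expectation, so after Young's and Poincar\'e's inequalities I am left with a differential inequality for $\mathbb{E}[\|\nabla\theta_1\|_{\mathbb{L}^2_x}^2+\|\theta_2\|_{\mathbb{L}^2_x}^2]$ whose forcing is $\mathbb{E}\|g_h\|_{\mathbb{L}^2_x}^2=O(h^2)$. Since the initial data in \eqref{3.2} are Ritz projections, $\theta_1(0)=\theta_2(0)=0$, and Gronwall's inequality yields $\mathbb{E}[\|\nabla\theta_1(t)\|_{\mathbb{L}^2_x}^2+\|\theta_2(t)\|_{\mathbb{L}^2_x}^2]\le C\,h^2\,\mathbb{E}\int_0^T(\|X_1^*\|_{\mathbb{H}_x^3}^2+\|U^*\|_{\mathbb{H}_0^1}^2)\,\mathrm{d}t$. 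Adding the Ritz–projection error of $X_1^*$ in $\mathbb{H}_0^1$ and of $X_2^*$ in $\mathbb{L}^2_x$ via \eqref{eq:Ritz-H1} and \eqref{eq:Ritz-L2} closes $(\mathrm{B})_i$ by the triangle inequality.

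It remains to convert the regularity norms into the data on the right-hand side of \eqref{today44}. Here I would use the optimality condition \eqref{1.6}, which gives $\|U^*\|_{\mathbb{H}_x^2}\le\alpha^{-1}\|Y_2\|_{\mathbb{H}_x^2}$, together with the adjoint estimate \eqref{space regularity H-2 bound for adjoint Y} and the state bound \eqref{bound for optimal control}, so that $\|U^*\|_{\mathbb{H}_x^2}$ is controlled by $\|\widetilde X\|_{C_t\mathbb{H}_0^1}$ and the initial data; feeding this into Lemma~\ref{Lemma 2.1}(3) bounds $\|X_1^*\|_{\mathbb{H}_x^3}$ by the same data, with no circularity since the $\mathbb{H}_x^2$ bound on $U^*$ uses only the $\mathbb{H}_0^1$ bound on $X_1^*$. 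The main obstacle is exactly the regularity mismatch described at the outset: a direct comparison of $\mathcal{X}_1[\Pi_h U^*]$ with $\mathcal{X}_{1,h}[\Pi_h U^*]$ produces the consistency term $\mathcal{R}_h\Delta\mathcal{X}_1[\Pi_h U^*]-\Pi_h\Delta\mathcal{X}_1[\Pi_h U^*]$, which is not $O(h)$ because $\Pi_h U^*\notin\mathbb{H}_x^2$ obstructs $\mathbb{H}_x^3$-regularity of the state; passing through $U^*$, which inherits $\mathbb{H}_x^2$-regularity from the adjoint, is what renders the remainder $g_h$ first order in $h$.
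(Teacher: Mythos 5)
Your proof is correct, but it takes a genuinely different route from the paper's. The paper compares the two states driven by the \emph{same} control $\Pi_h U^*$ head-on: it writes the projected analytic system \eqref{3.21} (Ritz projection on the first component, $\mathbb{L}^2_x$-projection on the second, via $\Pi_h\Delta X_1=\Delta_h\mathcal{R}_h X_1$), subtracts the semi-discrete system \eqref{3.2}, and runs an It\^o/Gronwall energy argument on $\|\nabla(X_{1,h}-\mathcal{R}_h X_1)\|_{\mathbb{L}^2_x}^2+\|X_{2,h}-\Pi_h X_2\|_{\mathbb{L}^2_x}^2$; the surviving projection-gap terms are then bounded by $h\|\mathcal{X}_2[\Pi_h U^*]\|_{\mathbb{H}^2_x}$ and $h^2\|\mathcal{X}_1[\Pi_h U^*]\|_{\mathbb{H}^2_x}$, which the paper controls by citing \eqref{space-regularity for otimal state 21}--\eqref{space-regularity for otimal state 2}. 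You instead peel off the control error first via the affine identity \eqref{Goud}, handle $\mathcal{X}_i^0[\Pi_h U^*-U^*]$ by stability (the analytic analogue of \eqref{today0002}, i.e.\ Lemma~\ref{Lemma 2.1}(1)) together with \eqref{eq:L2proj-L2}, and only then run the Ritz-splitting energy argument against the exact optimal state $X^*$. The It\^o core (cancellation of the $\langle\Delta_h\cdot,\cdot\rangle$ cross terms, zero-mean martingale part, Gronwall) is the same in both; the decompositions are not, and your detour buys exactly the regularity bookkeeping you identify: the bound $h\|\mathcal{X}_2[\Pi_h U^*]\|_{\mathbb{H}^2_x}$ that the paper's route consumes is not actually supplied by Lemma~\ref{Lemma 2.1}(3), because the piecewise-affine control $\Pi_h U^*$ does not lie in $\mathbb{L}^2_t\mathbb{H}^2_x$, and the estimates \eqref{space-regularity for otimal state 21}--\eqref{space-regularity for otimal state 2} that the paper invokes are stated for the state driven by $U^*$, not by $\Pi_h U^*$. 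In your version every regularity requirement falls on $(X^*,U^*)$, where \eqref{space-regularity for optimal control 1}, \eqref{00985} and \eqref{space-regularity for otimal state 2} genuinely deliver it, at the modest cost of one extra stability estimate and the projection error of $U^*$.

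One caveat: applying $\mathcal{R}_h$ (and the Ritz estimate \eqref{eq:Ritz-L2} with $s=1$) to $\Delta X_1^*$ presupposes $\Delta X_1^*(t)\in\mathbb{H}_0^1$ for all $t$. This compatibility is propagated from Assumption~\ref{BB} (which imposes $\Delta X_{1,0}\in\mathbb{H}_0^1$ and, via \eqref{00985}, $U^*,\sigma\in\mathbb{H}^2_x$), consistently with the hypotheses of Lemma~\ref{Lemma 2.1}(3), but you should state it explicitly --- or sidestep it entirely by comparing $\Pi_h X_2^*$ (rather than $\mathcal{R}_h X_2^*$) with $X_{2,h}$, as the paper does, so that the consistency error appears as $\nabla(\mathcal{R}_h-\Pi_h)X_2^*$ and only $X_2^*\in\mathbb{H}^2_x$ is needed.
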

\begin{proof}
For convenience, we set $$(X_1, X_2)=(\mathcal{X}_1[\Pi_h U^*], \mathcal{X}_2[\Pi_h U^*])\qquad\text{and}\qquad (X_{1,h}, X_{2,h})=(\mathcal{X}_{1,h}[\Pi_h U^*], \mathcal{X}_{2,h}[\Pi_h U^*]).$$ 	Now, from \eqref{1.4}, $(X_1, X_2)$ satisfies the following projected SDE with given control $\Pi_h U^*$
	\begin{equation}
		\begin{cases}\label{3.21}
			\,{\rm d}\mathcal{R}_h X_{1}=\mathcal{R}_h X_{2}(t)\,{\rm d}t & \qquad\forall\, t\in (0,T],\\
			\,{\rm d}\Pi_h X_{2}(t)= [\Delta_h\mathcal{R}_h X_{1}(t) +\Pi_h U^*(t)] \, \,{\rm d}t + [\gamma\Pi_h X_{1}(t)+\Pi_h \sigma(t)] \,{\rm d}W(t)& \qquad \forall\, t \in (0,T], \\
			\mathcal{R}_h X_{1}(0) = \mathcal{R}_h X_{1,0},\\
			\Pi_h X_{2}(0)=\Pi_h X_{2,0},
		\end{cases}
	\end{equation}
    where the fact $\Pi_h\Delta X_{1}= \Delta_h\mathcal{R}_h X_{1}$ is used.
Further from \eqref{3.2} and \eqref{3.21}, we obtain that 
\begin{equation}
	\begin{cases}\label{3.22}
		\,{\rm d}(X_{1,h}(t)-\mathcal{R}_h X_{1}(t))=(X_{2,h}(t)-\mathcal{R}_h X_{2}(t))\,{\rm d}t & \qquad\forall\, t\in (0,T],\\
		\,{\rm d}(X_{2,h}(t)-\Pi_h X_{2}(t))= [\Delta_h(X_{1,h}(t)-\mathcal{R}_h X_{1}(t))] \,{\rm d}t\\\qquad\qquad\qquad + [\Pi_h (X_{1,h}(t)-\Pi_h X_{1}(t))+(\mathcal{R}_h\sigma(t)-\Pi_h \sigma(t))] \,{\rm d}W(t)& \qquad \forall\, t \in (0,T], \\
		X_{1,h}(0)-\mathcal{R}_hX_1(0) = 0,\\
		X_{2,h}(0)-\Pi_h X_{2}(0)=(\mathcal{R}_h-\Pi_h) X_{2,0}.
	\end{cases}
\end{equation}
We apply It\^o's formula to $(X_1, X_{1,h})\to \|\nabla\big(\mathcal{R}_h X_1-X_{1,h})\|_{\mathbb{L}^2_x}^2$ and $(X_2, X_{2,h})\to \|\Pi_h X_2-X_{2,h}\|_{\mathbb{L}^2_x}^2$ to get $\mathbb{P}$-almost surely, for all $t\in[0,T]$,
	\begin{align}\label{1212}
		\|\nabla\big(X_{1,h}(t)-\mathcal{R}_h X_{1}(t)\big)\|_{\mathbb{L}^2_x}^2=	& 2\int_{0}^{t}\left\langle \nabla(X_{2,h}(t)-\mathcal{R}_h X_{2}(t)), \nabla(X_{1,h}(t)-\mathcal{R}_hX_{1}(t))\right\rangle\,{\rm d}t,
	\end{align}
and
\begin{align}\label{1313}
	\|X_{2,h}(t)-\Pi_h X_2(t)\|_{\mathbb{L}^2_x}^2&=\|X_{2,h}(0)-\Pi_h X_2(0)\|_{\mathbb{L}^2_x}^2\notag\\&-2\int_0^t\left\langle \nabla(X_{1,h}(t)-\mathcal{R}_hX_1(t)),\nabla(X_{2,h}(t)-\Pi_hX_2(t))\right\rangle\,{\rm d}t\notag\\&+2\int_0^T\left\langle \gamma(X_{1,h}(t)-\Pi_h X_1(t))+(\mathcal{R}_h\sigma(t)-\Pi_h\sigma(t)),(X_{2,h}(t)-\Pi_hX_2(t))\right\rangle\,{\rm d}W(t)\notag\\&+\int_{0}^t\|\gamma(X_{1,h}(t)-\Pi_h X_1(t))+(\mathcal{R}_h\sigma(t)-\Pi_h\sigma(t))\|_{\mathbb{L}^2_x}^2\,{\rm d}t.
\end{align}
By adding \eqref{1212}-\eqref{1313} and taking expectation, we obtain for all $t\in[0,T],$
\begin{align*}
&\mathbb{E}\big[\|\nabla\big(X_{1,h}(t)-\mathcal{R}_h X_{1}(t)\big)\|_{\mathbb{L}^2_x}^2+\|X_{2,h}(t)-\Pi_h X_2(t)\|_{\mathbb{L}^2_x}^2\big]=\mathbb{E}\bigg[\|\big(X_{2,h}(0)-\Pi_h X_{2}(0)\big)\|_{\mathbb{L}^2_x}^2\notag\\&\qquad+ 2\int_{0}^{t}\left\langle \nabla(\mathcal{R}_h X_{2}(t)-\Pi_h X_{2}(t)), \nabla(X_{1,h}(t)-\mathcal{R}_hX_{1}(t))\right\rangle\,{\rm d}t+\|X_{2,h}(0)-\Pi_h X_2(0)\|_{\mathbb{L}^2_x}^2\notag\\&\qquad+\int_{0}^t\|\gamma(X_{1,h}(t)-\Pi_h X_1(t))+(\mathcal{R}_h\sigma(t)-\Pi_h\sigma(t))\|_{\mathbb{L}^2_x}^2\,{\rm d}t\bigg].
\end{align*}
It implies that
	\begin{align*}
		\mathbb{E}\big[\|\nabla\mathcal{R}_h X_1(t)-\nabla X_{1,h}(t)\|_{\mathbb{L}^2_x}^2\big] &+\mathbb{E}\big[\|\Pi_h X_2(t)-X_{2,h}(t)\|_{\mathbb{L}^2_x}^2\big]\le\mathbb{E}\big[\|\Pi_h X_{2,0}-\mathcal{R}_h X_{2,0}\|_{\mathbb{L}^2_x}^2\big]\\& +\mathbb{E}\bigg[\int_0^t\|\nabla (X_{1,h}(t)-\mathcal{R}_h X_1(t))\|_{\mathbb{L}^2_x}^2\,{\rm d}t\bigg]\\&+\mathbb{E}\bigg[\int_{0}^t\|\nabla(\mathcal{R}_h X_2(t)-\Pi_h X_{2}(t))\|\,{\rm d}t\bigg]\\&+ C\int_0^t\bigg(\mathbb{E}\big[\|\Pi_h X_1(t)-X_{1,h}(t)\|_{\mathbb{L}^2_x}^2\big] +\mathbb{E}\big[\|\Pi_h \sigma(t)-\mathcal{R}_h\sigma(t)\|_{\mathbb{L}^2_x}^2\big]\bigg)\,{\rm d}t.
	\end{align*} 
	By using estimates \eqref{eq:L2proj-L2}-\eqref{eq:Ritz-L2}, we have for all $t\in[0,T]$,
	\begin{align*}
		&\mathbb{E}\big[\|\nabla\Pi_h X_1(t)-\nabla X_{1,h}(t)\|_{\mathbb{L}^2_x}^2\big]+\mathbb{E}\big[\|\Pi_h X_2(t)-X_{2,h}(t)\|_{\mathbb{L}^2_x}^2\big]\\&\le C h^4\|X_{2,0}\|_{\mathbb{H}^2}^2+ C h^4 \|\sigma\|_{\mathbb{L}^2_t\mathbb{H}^2}+ C \mathbb{E}\bigg[\int_0^t\|\nabla(\Pi_h X_{2}(t)-\mathcal{R}_h X_2(t))\|_{\mathbb{L}^2_x}^2\,{\rm d}t\bigg]\\&\qquad\qquad\qquad\qquad + \mathbb{E}\bigg[\int_0^t\|\Pi_h X_1(t)-\mathcal{R}_h X_1(t)\|_{\mathbb{L}^2_x}^2\,{\rm d}t\bigg]\\
		&\le\, C h^4\|X_{2,0}\|_{\mathbb{H}^2}^2+ C h^4 \|\sigma\|_{\mathbb{L}^2_t\mathbb{H}^2}+C h^2\mathbb{E}\big[\|\mathcal{X}_2[\Pi_h U^*]\|_{\mathbb{L}^2_t\mathbb{H}^2_x}^2\big] +C\,h^4\mathbb{E}\big[\|\mathcal{X}_1[\Pi_h U^*]\|_{\mathbb{L}^2_t\mathbb{H}_x^2}^2\big]\\&\le Ch^2\big(\|X_{1,0}\|_{\mathbb{H}_x^3}^2+\|X_{2,0}\|_{\mathbb{H}_x^2}^2+ \|\widetilde{X}\|_{C_t\mathbb{H}_0^1}^2+\mathbb{E}\big[\|\sigma\|_{\mathbb{L}^2_t\mathbb{H}_x^2}^2\big]\big),
        \end{align*}
where in the last inequality \eqref{space-regularity for otimal state 21} and \eqref{space-regularity for otimal state 2} are used. With the help of estimates \eqref{eq:L2proj-L2}-\eqref{eq:Ritz-H1}, it implies that for all $t\in[0,T]$,
	\begin{align*}
		&\mathbb{E}\big[\|\nabla \mathcal{X}_1[\Pi_h U^*](t)-\nabla \mathcal{X}_{1,h}[\Pi_h U^*](t)\|_{\mathbb{L}^2_{x}}^2\big]+\mathbb{E}\big[\|X_2(t)-X_{2,h}(t)\|_{\mathbb{L}^2_x}^2\big]\notag\\&\le\,C\,h^2\,\big(\|X_{1,0}\|_{\mathbb{H}_x^3}^2+\|X_{2,0}\|_{\mathbb{H}_x^2}^2 + \|\widetilde{X}\|_{C_t\mathbb{H}_0^1}^2+\mathbb{E}\big[\|\sigma\|_{\mathbb{L}^2_t\mathbb{H}_x^2}^2\big]\big).
	\end{align*}
    This completes the proof.
\end{proof}
In the following, we establish a rate of convergence for the semi-discrete optimal control tuple $(X_{1,h}^*, X_{2,h}^*, U_h^*)$ of the {\bf SLQ}$_h$ problem \eqref{3.1}-\eqref{3.2} towards the unique optimal control tuple $(X_1^*, X_{2}^*, U^*)$ of the continuous {\bf SLQ} problem \eqref{1.3}-\eqref{1.4}. The proof relies on the identities \eqref{today0001} and \eqref{today0000}, along with the stability estimates \eqref{today0002}, \eqref{space regularity H-2 bound for adjoint Y}, and \eqref{bound for optimal control}.
\begin{thm}\label{Thm 3.8}\textit{Let Assumption~\ref{BB} hold. Let $(X^*_1, X_2^*, U^*)$ and $(X_{1,h}^*, X_2^*, U_h^*)$ solve problems {\bf SLQ} \eqref{1.3}-\eqref{1.4} and {\bf SLQ}$_h$ \eqref{3.1}-\eqref{3.2}, respectively. Then there exists a constant $C > 0$ such that}
\begin{equation} 
	\mathbb{E}[\|U^* - U_h^*\|_{\mathbb{L}^2_{t, x}}^2] + \mathbb{E}[\|X_{1}^* - X_{1,h}^*\|_{\mathbb{L}^2_{t, x}}^2] \leq C h^2\,\big(\|X_{1,0}\|_{\mathbb{H}_x^3}^2+\|X_{2,0}\|_{\mathbb{H}_x^2}^2 + \|\widetilde{X}\|_{C_t\mathbb{H}_0^1}^2+\mathbb{E}\big[\|\sigma\|_{\mathbb{L}^2_t\mathbb{H}_x^2}^2\big]\big).
\end{equation}
\end{thm}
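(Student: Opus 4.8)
The plan is to exploit the strict convexity provided by the control penalty $\alpha>0$, together with the two optimality identities \eqref{today0001} and \eqref{today0000}, and to reduce the estimate to the state-approximation bound of Proposition~\ref{Proposition 3.7}. First I would split the control error through the $\mathbb{L}^2_x$-projection,
\begin{align*}
U^*-U_h^*=(U^*-\Pi_h U^*)+(\Pi_h U^*-U_h^*)=:(U^*-\Pi_h U^*)+e_h,\qquad e_h\in\mathbb{V}_h.
\end{align*}
By the optimality condition \eqref{1.6} and Remark~\ref{vanishing on the boundary} one has $U^*\in\mathbb{L}^2_{\mathbb{F}}C_t\mathbb{H}_0^1$ with $\alpha U^*=-Y_2$, so the projection estimate \eqref{eq:L2proj-L2} and the adjoint bound \eqref{today020} give $\mathbb{E}[\|U^*-\Pi_h U^*\|_{\mathbb{L}^2_{t,x}}^2]\le C h^2$ with a data-dependent constant; it therefore suffices to bound $\mathbb{E}[\|e_h\|_{\mathbb{L}^2_{t,x}}^2]$.

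Since the discrete reduced Hessian is coercive with constant $\alpha$ (its state-tracking part being nonnegative), testing $\mathcal{D}_U\hat{\mathcal{J}}_h(\Pi_h U^*)-\mathcal{D}_U\hat{\mathcal{J}}_h(U_h^*)$ against $e_h$ and invoking discrete optimality \eqref{today0000} gives $\alpha\,\mathbb{E}[\|e_h\|_{\mathbb{L}^2_{t,x}}^2]\le\langle \mathcal{D}_U\hat{\mathcal{J}}_h(\Pi_h U^*),e_h\rangle$. Subtracting the continuous identity \eqref{today0001}, which yields $\langle \mathcal{D}_U\hat{\mathcal{J}}(U^*),e_h\rangle=0$ (here $e_h\in\mathbb{V}_h\subset\mathbb{L}^2_x$ is an admissible continuous test function), I would arrive at
\begin{align*}
\alpha\,\mathbb{E}[\|e_h\|_{\mathbb{L}^2_{t,x}}^2]\le \alpha\,\mathbb{E}\Big[\int_0^T\!\langle \Pi_h U^*-U^*,e_h\rangle\,{\rm d}t\Big]+R_{\mathrm{state}},
\end{align*}
where $R_{\mathrm{state}}$ gathers the differences $\langle \mathcal{X}_{1,h}[\Pi_h U^*]-\widetilde{X}_h,\mathcal{X}_{1,h}^0[e_h]\rangle-\langle X_1^*-\widetilde{X},\mathcal{X}_1^0[e_h]\rangle$ and the analogous $\beta$-weighted terminal pairings. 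The first term is at once $\le C h\,\mathbb{E}[\|e_h\|_{\mathbb{L}^2_{t,x}}^2]^{1/2}$.

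To handle $R_{\mathrm{state}}$ I would insert the intermediate control $\Pi_h U^*$ and, using the linearity relations \eqref{Goud} and \eqref{goud}, split each pairing into three pieces: (i) the fully discrete minus continuous state at the \emph{fixed} control $\Pi_h U^*$, paired with $\mathcal{X}_{1,h}^0[e_h]$; (ii) the regular weight $\mathcal{X}_1[\Pi_h U^*]-\widetilde{X}$ paired with the auxiliary error $\mathcal{X}_{1,h}^0[e_h]-\mathcal{X}_1^0[e_h]$; (iii) the control-perturbation state $\mathcal{X}_1^0[U^*-\Pi_h U^*]$ paired with $\mathcal{X}_1^0[e_h]$. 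Pieces (i) and (iii) are routine: (i) is controlled by Proposition~\ref{Proposition 3.7} and the Ritz estimate \eqref{eq:Ritz-L2} for $\widetilde{X}-\widetilde{X}_h$, times the discrete stability bound $\mathbb{E}[\sup_t\|\mathcal{X}_{1,h}^0[e_h]\|_{\mathbb{L}^2_x}^2]\le C\,\mathbb{E}[\|e_h\|_{\mathbb{L}^2_{t,x}}^2]$ that follows from \eqref{today0002} and Poincaré; (iii) is controlled by the continuous analogue of \eqref{today0002} and $\|U^*-\Pi_h U^*\|\le C h$. Both give a bound of the form $C h\,\mathbb{E}[\|e_h\|_{\mathbb{L}^2_{t,x}}^2]^{1/2}$. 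The hard part will be piece (ii), where a regular weight is paired against the difference of the continuous and the semidiscrete auxiliary states driven by the \emph{low-regularity} increment $e_h\in\mathbb{V}_h$: a direct energy estimate for $\mathcal{X}_{1,h}^0[e_h]-\mathcal{X}_1^0[e_h]$ would require $\mathbb{H}_0^1$-control regularity and, through an inverse inequality, would destroy the rate. I would instead argue by duality: via the reciprocity identity \eqref{today000}, rewrite each state-tracking pairing against $\mathcal{X}^0[e_h]$ as a pairing $\langle P_2,e_h\rangle$ (respectively $\langle P_{2,h},e_h\rangle$) with the adjoint component of a backward system whose source is the regular weight $\mathcal{X}_1[\Pi_h U^*]-\widetilde{X}$; piece (ii) then equals $\mathbb{E}[\int_0^T\langle P_2-P_{2,h},e_h\rangle\,{\rm d}t]$, and the adjoint error $P_2-P_{2,h}$ is of order $h$ by repeating the argument of Proposition~\ref{Proposition 3.7} for the backward system, its required $\mathbb{H}^2_x$-regularity being supplied by \eqref{space regularity H-2 bound for adjoint Y}. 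This yields $C h\,\mathbb{E}[\|e_h\|_{\mathbb{L}^2_{t,x}}^2]^{1/2}$; the terminal pairings are treated identically using the supremum-in-time part of \eqref{today0002}.

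Collecting these bounds gives $\alpha\,\mathbb{E}[\|e_h\|_{\mathbb{L}^2_{t,x}}^2]\le C h\,\mathbb{E}[\|e_h\|_{\mathbb{L}^2_{t,x}}^2]^{1/2}$ with a data-dependent constant, whence Young's inequality yields $\mathbb{E}[\|e_h\|_{\mathbb{L}^2_{t,x}}^2]\le C h^2$ and the triangle inequality closes the control estimate. For the state error I would decompose $X_1^*-X_{1,h}^*=\mathcal{X}_1^0[U^*-\Pi_h U^*]+\big(\mathcal{X}_1[\Pi_h U^*]-\mathcal{X}_{1,h}[\Pi_h U^*]\big)+\mathcal{X}_{1,h}^0[\Pi_h U^*-U_h^*]$ and bound the three terms by continuous stability with the projection estimate, by Proposition~\ref{Proposition 3.7}, and by \eqref{today0002} applied to $e_h$ (whose norm is now $O(h)$), respectively, arriving at the asserted $O(h^2)$ bound with the data-dependent constant of \eqref{bound for optimal control}.
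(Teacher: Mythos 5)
Your proposal is correct in substance, but it takes a genuinely different route from the paper at the decisive step. Both arguments start from the same variational skeleton: the two first-order identities \eqref{today0001} and \eqref{today0000}, the splitting through $\Pi_h U^*$, and the quadratic structure giving coercivity in $\alpha$ (your strong-convexity inequality $\alpha\,\mathbb{E}[\|e_h\|^2]\le\langle \mathcal{D}_U\hat{\mathcal{J}}_h(\Pi_h U^*),e_h\rangle$ is the same information the paper encodes by writing the error identity \eqref{today41} with the coercive terms on the left). The divergence is in how the hard consistency term is treated — your piece (ii), where the regular weight is paired against $\mathcal{X}_{1,h}^0[e_h]-\mathcal{X}_1^0[e_h]$ with $e_h$ only $\mathbb{L}^2$-bounded. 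You resolve it by duality against a \emph{pair} of backward systems (continuous and semidiscrete) sourced by the regular weight, reducing everything to an $O(h)$ bound for $P_2-P_{2,h}$. The paper never compares two backward systems: instead it applies the It\^o product formula between the single \emph{continuous} adjoint $Y_2$ and the \emph{discrete} forward dynamics, so that the entire space-discretization mismatch surfaces as the projection gap $\Pi_h Y_2-\mathcal{R}_h Y_2$ (terms $I_{31}$, $I_{41}$, $I_{51}$ in its Step 1), which is $O(h)$ in $\mathbb{H}_0^1$ directly from the $\mathbb{H}^2_x$-regularity \eqref{space regularity H-2 bound for adjoint Y}. The paper's trick is leaner: it needs only the regularity of the one optimal adjoint plus elementary projection estimates. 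Your route is more modular and closer to the classical deterministic variational-discretization argument, but it buys this at the cost of an auxiliary lemma the paper does not contain.

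One caveat on that lemma: it is not literally obtained "by repeating the argument of Proposition~\ref{Proposition 3.7} for the backward system," because the backward system carries the martingale unknowns $(Q_1,Q_2)$ versus $(Q_{1,h},Q_{2,h})$, which have no analogue in the forward error equation. The estimate is nevertheless true and provable with the paper's own techniques: writing the error equations for $\Pi_h P_1-P_{1,h}$ and $\mathcal{R}_h P_2-P_{2,h}$ (using $\Pi_h\Delta=\Delta_h\mathcal{R}_h$), the wave-type cross terms cancel, the $Q$-error terms appear on the \emph{left} with a favorable sign through the It\^o correction (exactly as in the energy argument of Lemma~\ref{Lemma 2.3}), the remaining mismatch is the projection gap $(\Pi_h-\mathcal{R}_h)Q_2=O(h)$, and the terminal errors vanish by consistent projection of the terminal data; Gronwall then yields the $O(h)$ bound. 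You should state and prove this as a separate proposition if you write the argument out in full. Two minor remarks: your first term $\alpha\,\mathbb{E}\big[\int_0^T\langle \Pi_h U^*-U^*,e_h\rangle\,{\rm d}t\big]$ is in fact exactly zero by $\Pi_h$-orthogonality, since $e_h(t)\in\mathbb{V}_h$; and your final state-error decomposition recovers only the $\mathbb{L}^2_{t,x}$ bound, not the terminal term $\beta\mathbb{E}[\|X_1^*(T)-X_{1,h}^*(T)\|^2_{\mathbb{L}^2_x}]$ that the paper's identity delivers for free — but the theorem as stated does not require it.
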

\begin{proof} First we observe that
\begin{align*}
	&\alpha \mathbb{E}[\|U^* - U_h^*\|_{\mathbb{L}^2_{t, x}}^2]= \mathbb{E}\bigg[ \int_0^T \left\langle \alpha U^*(t), U^*(t) - U_h^*(t)\right\rangle  dt - \int_0^T \left\langle  \alpha U_h^*(t), \Pi_h U^*(t) - U_h^*(t) \right\rangle  dt \bigg] \\
	&= \mathbb{E} \bigg[ \int_0^T \left\langle  X_1^*(t)-\widetilde{X}(t), \mathcal{X}_1^0[U_h^*](t) - \mathcal{X}_1^0[U^*](t) \right\rangle  \,{\rm d}t+\beta\left\langle  X_1^*(T)-\widetilde{X}(T), \mathcal{X}_1^0[U_h^*](T) - \mathcal{X}_1^0[U^*](T) \right\rangle  \\&\qquad + \int_0^T \left\langle X_{1,h}^*(t)-\widetilde{X}_{1,h}(t), \mathcal{X}_{1,h}^0[\Pi_h U^* - U_h^*](t) \right\rangle  \,{\rm d}t +\beta\left\langle  X_{1,h}^*(T)-\widetilde{X}_{1,h}(T), \mathcal{X}_{1,h}^0[\Pi_h U^* - U_h^*](T) \right\rangle  \bigg],
\end{align*}
where in the last equality we used integral identities \eqref{today0001} and \eqref{today0000}. From the equality above we further derive (by inserting some intermediate terms)
\begin{align*}
	\alpha \mathbb{E}[\|U^* - U_h^*\|_{\mathbb{L}^2_{t, x}}^2] = &- \mathbb{E} \bigg[ \int_0^T \big\langle X_1^*(t) - \mathcal{X}_{1,h}[U_h^*](t), X_{1}^*(t) - \mathcal{X}_{1,h}[U_h^*](t) \big\rangle \,{\rm d}t \\
	&-\mathbb{E} \big[ \big\langle X_1^*(T) - \mathcal{X}_{1,h}[U_h^*](T), X_1^*(t) - \mathcal{X}_{1,h}[U_h^*](T) \big\rangle\big] + \sum_{i=1}^6I_i,
\end{align*}
which in turn gives
\begin{align}\label{today41}
	\alpha \mathbb{E}[\|U^* - U_h^*\|_{\mathbb{L}^2_{t, x}}^2] +  \mathbb{E}[\|X_1^* - X_{1,h}^*\|_{\mathbb{L}^2_t\mathbb{L}^2_x}^2]+\beta\mathbb{E}[\|X_1^*(T) - X_{1,h}^*(T)\|_{\mathbb{L}^2_x}^2]=\sum_{i=1}^6 I_i,
\end{align}
where
\begin{align*}
	I_1&= -\mathbb{E} \bigg[ \int_0^T \big\langle X_1^*(t)-\widetilde{X}(t), \mathcal{X}_1[U^* - U_h^*](t) - \mathcal{X}_{1,h}[\Pi_h U^* - U_h^*](t) \big\rangle \,{\rm d}t \bigg],\\
	I_2&=\beta\mathbb{E}\big[\big\langle X_1^*(T)-\widetilde{X}(T), \mathcal{X}_1[U^* - U_h^*](T) - \mathcal{X}_{1,h}[\Pi_h U^* - U_h^*](T) \big\rangle\big],\\
	I_3&=\mathbb{E}\bigg[\int_0^T \big\langle X_1^*(t) - \mathcal{X}_{1,h}[U_h^*](t), X_1^*(t) - \mathcal{X}_{1,h}[\Pi_h U^*](t) \big\rangle \,{\rm d}t\bigg],\\
	I_4&=\beta\mathbb{E}\big[\big\langle X_1^*(T) - \mathcal{X}_{1,h}[U_h^*](T), X_1^*(T) - \mathcal{X}_{1,h}[\Pi_h U^*](T) \big\rangle\big],\\
	I_5&=\mathbb{E}\bigg[\int_0^T\left\langle \widetilde{X}(t)-\widetilde{X}_{1,h}(t), \mathcal{X}_{1,h}^0[U_h^*-\Pi_h U^*](t)\right\rangle\,{\rm d}t\bigg],\\
	I_6&=\beta\mathbb{E}\big[\left\langle \widetilde{X}(T)-\widetilde{X}_{1,h}(T), \mathcal{X}_{1,h}^0[U_h^*-\Pi_h U^*](T)\right\rangle\big],
\end{align*}
here we used the facts (see equation~\eqref{Goud} and \eqref{goud})
\begin{align}\label{mkt}
\mathcal{X}_1[U^*]-\mathcal{X}_1[U_h^*]=\mathcal{X}_1^0[U^*-U_h^*] \qquad\text{and}\qquad \mathcal{X}_{1,h}[U_h^*]-\mathcal{X}_{1,h}[\Pi_h U^*]=\mathcal{X}_{1,h}^0[U_h^*-\Pi_h U^*].
\end{align}

\noindent
	\textbf{Step 1.} In this step, we split the term $I$ as follows:
\begin{align*}
	I_1+I_2&=:I_{11} + I_{21},			
\end{align*}
where
\begin{align*}
	I_{11}&=\mathbb{E}\bigg[\int_0^T \big\langle X_1^*(t)-\widetilde{X}(t), \mathcal{X}_1^0[U^*-U_h^*](t) \big\rangle \,{\rm d}t +\beta \big\langle X_1^*(T)-\widetilde{X}(T), \mathcal{X}_1^0[U^*-U_h^*](T) \big\rangle\bigg],\\
	I_{21}&= \mathbb{E}\bigg[\int_0^T\big\langle X_1^*(t)-\widetilde{X}(t),  \mathcal{X}_{1,h}^0[U_h^*-\Pi_h U^*](t) \big\rangle\,{\rm d}t +\beta\big\langle X_1^*(T)-\widetilde{X}(T),  \mathcal{X}_{1,h}^0[U_h^*-\Pi_h U^*](T) \big\rangle\bigg].\\
	\end{align*} 
\noindent
\textbf{Step 1(a).} As done in the proof of Pontryagin's maximum principle ({\em i.e.,} Theorem \ref{Pontryagin's Maximum Principle}) and from identity \eqref{today000}, we have
\begin{align*}
	I_{11}=\mathbb{E}\bigg[\int_0^T\left\langle  Y_2(t), U^*(t)-U_h^*(t)\right\rangle\,{\rm d}t\bigg].
\end{align*}
\textbf{Step 1(b).} For term $I_{21}$, we can follow similar lines as in the proof of Pontryagin's maximum principle ({\em i.e.,} Theorem~\ref{Pontryagin's Maximum Principle}) to conclude that
\begin{align*}
	I_{21}=\mathbb{E}\bigg[\int_0^T\left\langle  \nabla (\Pi_h Y_2- \mathcal{R}_hY_2), \mathcal{X}_{1,h}[U_h^*-\Pi_h U^*](t)\right\rangle\,{\rm d}t\bigg]+ \mathbb{E}\bigg[\int_0^T\left\langle  \Pi_h Y_2(t), U_h^*(t)-\Pi_h U^*(t) \right\rangle\,{\rm d}t\bigg].
\end{align*}
\textbf{Step 1(c):} From the last two substeps, we conclude that 
\begin{align*}
	I_1+I_2= I_{31}+ I_{41} + I_{51},
\end{align*}
where
\begin{align*}
	I_{31}&=\mathbb{E}\bigg[\int_0^T\left\langle \nabla (\Pi_h Y_2- \mathcal{R}_hY_2), \nabla \mathcal{X}_{1,h}^0[U_h^*-\Pi_h U^*](t)\right\rangle\,{\rm d}t\bigg],\\
	I_{41}&=\mathbb{E}\bigg[\int_0^T\left\langle \Pi_h Y_2(t)-Y_2, U_h^*(t)-\Pi_h U^*(t)\right\rangle\,{\rm d}t\bigg],\\
	I_{51}&=\mathbb{E}\bigg[\int_0^T\left\langle Y_2(t), U^*(t)-\Pi_hU^*(t)\right\rangle\,{\rm d}t\bigg].
\end{align*}
\textbf{Step 1(d)}:
For term $I_{31}$, we have for any $\delta>0$
\begin{align*}
	|I_{31}|\le C_\delta\mathbb{E}\big[\|\Pi_h Y_2-\mathcal{R}_h Y_2\|_{\mathbb{L}^2_{t}\mathbb{H}_0^1}^2\big]+ \delta\mathbb{E}\big[\|\nabla \mathcal{X}_{1,h}[U_h^*-\Pi_h U^*]\|_{\mathbb{L}^2_{t, x}}^2\big].
\end{align*}
By using stability estimate \eqref{today0002}, we obtain
\begin{align*}
 \mathbb{E}\big[\|\nabla \mathcal{X}_{1,h}^0[U_h^*-\Pi_h U^*]\|_{\mathbb{L}^2_{t, x}}^2\big]\le C\,\mathbb{E}\big[[U_h^*-\Pi_h U^*\|_{\mathbb{L}^2_{t, x}}^2\big] \le C\,\big(\mathbb{E}\big[[U^*-\Pi_h U^*\|_{\mathbb{L}^2_{t, x}}^2\big]+ \mathbb{E}\big[[U_h^*- U^*\|_{\mathbb{L}^2_{t, x}}^2\big]\big).
\end{align*}
By using \eqref{eq:L2proj-H1} and \eqref{eq:Ritz-H1}, we conclude that
\begin{align*}
 \mathbb{E}\big[\|\Pi_h Y_2-\mathcal{R}_h Y_2\|_{\mathbb{L}^2_{t}\mathbb{H}_0^1}^2\big]\le C\,h^2\mathbb{E}[\|Y_2\|_{\mathbb{L}^2_t\mathbb{H}_x^2}^2].
\end{align*}
With the help of \eqref{space regularity H-2 bound for adjoint Y} and 
\eqref{bound for optimal control}, we obtain 
\begin{align*}
\mathbb{E}\big[\|\Pi_h Y_2-\mathcal{R}_h Y_2\|_{\mathbb{L}^2_{t}\mathbb{H}_0^1}^2\big]\le C\,h^2\big(\|X_{1,0}\|_{\mathbb{H}_0^1}^2+ \|X_{2,0}\|_{\mathbb{L}^2_x}^2+\|\widetilde{X}\|_{C_t\mathbb{H}_0^1}^2+\mathbb{E}\big[\|\sigma\|_{\mathbb{L}^2_{t,x}}^2\big]\big).
\end{align*}
and 
by choosing small enough $\delta>0$, we yield
\begin{align*}
	|I_{31}|\le Ch^2\big(\|X_{1,0}\|_{\mathbb{H}_0^1}^2+ \|X_{2,0}\|_{\mathbb{L}^2_x}^2+ +\|\widetilde{X}\|_{C_t\mathbb{H}_0^1}^2+\mathbb{E}\big[\|\sigma\|_{\mathbb{L}^2_{t,x}}^2\big]\big)+ \frac{\alpha}{8}\mathbb{E}\big[[U_h^*- U^*\|_{\mathbb{L}^2_{t, x}}^2\big].
\end{align*}
\textbf{Step 1(f)} Similarly as in previous the substep, we get
\begin{align*} 
	|I_{41}|\le Ch^2\big(\|X_{1,0}\|_{\mathbb{H}_0^1}^2+ \|X_{2,0}\|_{\mathbb{L}^2_x}^2 +\|\widetilde{X}\|_{C_t\mathbb{H}_0^1}^2+\mathbb{E}\big[\|\sigma\|_{\mathbb{L}^2_{t,x}}^2\big]\big) + \frac{\alpha}{8}\mathbb{E}\big[\|U_h^*- U^*\|_{\mathbb{L}^2_{t, x}}^2\big].
\end{align*}
\textbf{Step 1(h)} By using the orthogonality of the projection $\Pi_h$, we have
\begin{align*}
	I_{51}=\mathbb{E}\bigg[\int_0^T\left\langle Y_2(t)-\Pi_h Y_2(t), U^*(t)-\Pi_hU^*(t)\right\rangle\,{\rm d}t\bigg].
\end{align*}
By using \eqref{eq:L2proj-L2},\eqref{1.6}, \eqref{space regularity H-2 bound for adjoint Y} and \eqref{space-regularity for otimal state 21}, it implies that
\begin{align*}
	|I_{51}|&\le\,C\mathbb{E}\big[\|Y_2-\Pi_h Y_{2}\|_{\mathbb{L}^2_{t, x}}^2\big]+\big[\|U^*- \Pi_h U^*\|_{\mathbb{L}^2_{t, x}}^2\big] \\&\le Ch^4\mathbb{E}\big[\|Y_2\|_{\mathbb{L}^2_{t}\mathbb{H}_x^2}^2\big] + Ch^4\mathbb{E}\big[\|U^*\|_{\mathbb{L}^2_{t}\mathbb{H}^2_x}^2\big]\\&\le Ch^4\mathbb{E}\big[\|Y_2\|_{\mathbb{L}^2_{t}\mathbb{H}_x^2}^2\big]\\
	&\le C\,h^4\big(\|X_{1,0}\|_{\mathbb{H}_0^1}^2+ \|X_{2,0}\|_{\mathbb{L}^2_x}^2+\|\widetilde{X}\|_{C_t\mathbb{H}_0^1}^2+\mathbb{E}\big[\|\sigma\|_{\mathbb{L}^2_{t,x}}^2\big]\big).
\end{align*}
\noindent
\textbf{Step 1(i):} From previous sub-steps, we conclude that
\begin{align}\label{today46}
	|I_1+I_2|\le Ch^2\big(\|X_{1,0}\|_{\mathbb{H}_0^1}^2+ \|X_{2,0}\|_{\mathbb{L}^2_x}^2+ +\|\widetilde{X}\|_{C_t\mathbb{H}_0^1}^2+\mathbb{E}\big[\|\sigma\|_{\mathbb{L}^2_{t,x}}^2\big]\big) + \frac{\alpha}{4}\mathbb{E}\big[[U_h^*- U^*\|_{\mathbb{L}^2_{t, x}}^2\big].
\end{align}
\textbf{Step 2:} In this step, we estimate the term $I_3$. We obtain that 
\begin{align}\label{today43}
	\mathbb{E}[\|X_1^* - X_h[\Pi_h U^*]\|_{\mathbb{L}_{t, x}}^2]\le  \mathbb{E}[\|X_1^* - \mathcal{X}_1[\Pi_h U^*]\|_{\mathbb{L}^2_{t, x}}^2]+ \mathbb{E}[\|\mathcal{X}_1[\Pi_h U^*] - \mathcal{X}_{1,h}[\Pi_h U^*]\|_{\mathbb{L}^2_{t, x}}^2].
\end{align}
By using the identity \eqref{mkt} and the estimate \eqref{today0002}, we obtain
\begin{align*}
	\mathbb{E}[\|X_1^* - \mathcal{X}_{1,h}[\Pi_h U^*]\|_{\mathbb{L}_{t, x}}^2]&\le 	\mathbb{E}[\|\mathcal{X}_1[U^*] - \mathcal{X}_1[\Pi_h U^*]\|_{\mathbb{L}_{t, x}}^2]+\mathbb{E}[\|\mathcal{X}_1[\Pi_h U^*] - \mathcal{X}_{1,h}[\Pi_h U^*]\|_{\mathbb{L}_{t, x}}^2]\\&\le 	\mathbb{E}[\|\mathcal{X}_1^0[U^* -\Pi_h U^*]\|_{\mathbb{L}_{t, x}}^2]+\mathbb{E}[\|\mathcal{X}_1[\Pi_h U^*] - \mathcal{X}_{1,h}[\Pi_h U^*]\|_{\mathbb{L}_{t, x}}^2]\\&\le C\mathbb{E}\big[\|U^*-\Pi_h U^*\|_{\mathbb{L}^2_{t, x}}^2\big] + \mathbb{E}[\|\mathcal{X}_1[\Pi_h U^*] - \mathcal{X}_{1,h}[\Pi_h U^*]\|_{\mathbb{L}_{t, x}}^2].
    \end{align*}
By using \eqref{eq:L2proj-L2}, \eqref{today44},  and \eqref{space-regularity for optimal control 1}, we get
\begin{align*}
\mathbb{E}[\|X_1^* - \mathcal{X}_{1,h}[\Pi_h U^*]\|_{\mathbb{L}_{t, x}}^2]	&\le C h^2\mathbb{E}\big[\|U^*\|_{C_t\mathbb{H}_0^1}^2\big]+C h^2\big(\|X_{1,0}\|_{\mathbb{H}_x^3}^2+\|X_{2,0}\|_{\mathbb{H}_x^2}^2 + \|\widetilde{X}\|_{C_t\mathbb{H}_0^1}^2+\mathbb{E}\big[\|\sigma\|_{\mathbb{L}^2_t\mathbb{H}_x^2}^2\big]\big)
    \\
	&\le Ch^2\big(\|X_{1,0}\|_{\mathbb{H}_x^3}^2+\|X_{2,0}\|_{\mathbb{H}_x^2}^2 + \|\widetilde{X}\|_{C_t\mathbb{H}_0^1}^2+\mathbb{E}\big[\|\sigma\|_{\mathbb{L}^2_t\mathbb{H}_x^2}^2\big]\big).
\end{align*}
It implies that
\begin{align}\label{today45}
	|I_3|&\le \frac{1}{2}\mathbb{E}\big[\|X_1^*-X_{1,h}^*\|_{\mathbb{L}^2_{t, x}}^2\big] + C\mathbb{E}\big[\|X_1^* - \mathcal{X}_{1,h}[\Pi_h U^*]\|_{\mathbb{L}_{t, x}}^2\big]\notag\\
	&\le \frac{1}{2}\mathbb{E}\big[\|X_1^*-X_{1,h}^*\|_{\mathbb{L}^2_{t, x}}^2\big]+ C\,h^2\big(\|X_{1,0}\|_{\mathbb{H}_x^2}^2+ \|X_{2,0}\|_{\mathbb{H}_0^1}^2+ \|\widetilde{X}\|_{C_t\mathbb{H}_0^1}^2+ \mathbb{E}\big[\|\sigma\|_{\mathbb{L}^2_t\mathbb{H}_0^1}^2\big]\big).
\end{align}
\textbf{Step 3:}
We can follow similar lines of Step 1 and Step 2 to conclude that
\begin{align}\label{yksk}
	|I_4|\le C\,h^2\big(\|X_{1,0}\|_{\mathbb{H}_x^2}^2+ \|X_{2,0}\|_{\mathbb{H}_0^1}^2+ \mathbb{E}\big[\|\sigma\|_{\mathbb{L}^2_t\mathbb{H}_0^1}^2\big]\big) + \frac{\beta}{2}\mathbb{E}\big[\|X^*(T)-X_{1,h}^*(T)\|_{\mathbb{L}^2_x}^2\big].
\end{align}
\textbf{Step 4:} By using Young's inequality, \eqref{eq:L2proj-L2}, \eqref{today0002} and \eqref{space-regularity for optimal control 1}, we get $(\delta>0)$
\begin{align*}
	|I_5|&\le C_\delta \mathbb{E}\big[\|\widetilde{X}-\widetilde{X}_{1,h}\|_{\mathbb{L}^2_{t, x}}^2\big]+\delta\mathbb{E}\big[\|\mathcal{X}_{1,h}^0[U_h^*-\Pi_h U^*]\|_{\mathbb{L}^2_{t, x}}^2\big]\\
	&\le C_\delta h^2 \|\widetilde{X}\|_{C_t\mathbb{H}_0^1}^2+ \delta C\mathbb{E}\big[\|U_h^*-\Pi_h U^*\|_{\mathbb{L}^2_{t, x}}^2\big]\\
	&\le C_\delta h^2\|\widetilde{X}\|_{C_t\mathbb{H}_0^1}^2 + C\delta\mathbb{E}\big[\|U_h^*-U^*\|_{\mathbb{L}^2_{t, x}}^2\big]+ C\,\delta\mathbb{E}\big[\|U^*-\Pi_h U^*\|_{\mathbb{L}^2_{t, x}}^2\big]\\
	&\le C_\delta h^2 \big(\|X_{1,0}\|_{\mathbb{H}_0^1}^2+ \|X_{2,0}\|_{\mathbb{L}_x^2}^2+ \|\widetilde{X}\|_{C_t\mathbb{H}_0^1}^2+\mathbb{E}\big[\|\sigma\|_{\mathbb{L}^2_{t,x}}^2\big]\big)+ C\delta  \mathbb{E}\big[\|U_h^*-U^*\|_{\mathbb{L}^2_{t, x}}^2\big].
\end{align*}
\textbf{Step 5:} Similarly to Step $4$, we conclude that
\begin{align}\label{today46}
	I_6\le C_\delta h^2\big(\|X_{1,0}\|_{\mathbb{H}_0^1}^2+ \|X_{2,0}\|_{\mathbb{L}_x^2}^2+ \|\widetilde{X}\|_{C_t\mathbb{H}_0^1}^2+\mathbb{E}\big[\|\sigma\|_{\mathbb{L}^2_{t,x}}^2\big]\big) + C\delta  \mathbb{E}\big[\|U_h^*-U^*\|_{\mathbb{L}^2_{t, x}}^2\big].
\end{align}
\textbf{Step 6:} In this final step, from \eqref{today41}--\eqref{today46} and by choosing small $\delta>0$, we obtain
\begin{align*}
		&\mathbb{E}[\|U^* - U_h^*\|_{\mathbb{L}^2_{t, x}}^2] + \mathbb{E}[\|X_1^* - X_{1,h}^*\|_{\mathbb{L}^2_{t, x}}^2]+\beta\mathbb{E}[\|X_1^*(T) - X_h^*(T)\|_{\mathbb{L}^2_x}^2]\\&\qquad \leq C h^2\big(\|X_{1,0}\|_{\mathbb{H}_x^3}^2+ \|X_{2,0}\|_{\mathbb{H}_x^2}^2+\|\widetilde{X}\|_{C_t\mathbb{H}_0^1}^2+ \mathbb{E}\big[\|\sigma\|_{\mathbb{L}^2_t\mathbb{H}_x^2}^2\big]\big).
\end{align*}
This completes the proof.
\end{proof}
The following theorem presents the main result of this section, establishing the rate of convergence in the energy norm.
\begin{thm}[Final result of this section]\label{thm3.5}\textit{Let Assumption~\ref{BB} hold. Let $(X^*_1, X^*_2, U^*)$ and $(X_{1,h}^*, X_{2,h}^*,  U_h^*)$ solve {\bf SLQ} \eqref{1.3}-\eqref{1.5} and {\bf SLQ}$_h$\eqref{3.1}-\eqref{3.2} problems, respectively. Then there exists a constant $C > 0$ such that} for all $t\in[0,T]$,
	\begin{align}
		&\mathbb{E}[\|U^* - U_h^*\|_{\mathbb{L}^2_{t, x}}^2] + \mathbb{E}[\|\nabla(X_{1}^*(t) - X_{1,h}^*(t))\|_{\mathbb{L}^2_{x}}^2]+\mathbb{E}[\|X_{2}^*(t) - X_{2,h}^*(t)\|_{\mathbb{L}^2_{ x}}^2] \notag\\ &\qquad\leq C h^2\big(\|X_{1,0}\|_{\mathbb{H}_x^3}^2+ \|X_{2,0}\|_{\mathbb{H}_x^2}^2+\|\widetilde{X}\|_{C_t\mathbb{H}_0^1}^2+ \mathbb{E}\big[\|\sigma\|_{\mathbb{L}^2_t\mathbb{H}_x^2}^2\big]\big).
	\end{align}
\end{thm}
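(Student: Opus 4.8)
The plan is to treat Theorem~\ref{thm3.5} as a strengthening of Theorem~\ref{Thm 3.8}: the control error $\mathbb{E}[\|U^*-U_h^*\|_{\mathbb{L}^2_{t,x}}^2]\le Ch^2(\cdots)$ is already available from Theorem~\ref{Thm 3.8} and may be quoted verbatim, so the whole task reduces to upgrading the state error from the time–integrated $\mathbb{L}^2_{t,x}$ norm to the pointwise–in–time energy norm, i.e. controlling $\mathbb{E}[\|\nabla(X_1^*(t)-X_{1,h}^*(t))\|_{\mathbb{L}^2_x}^2]$ and $\mathbb{E}[\|X_2^*(t)-X_{2,h}^*(t)\|_{\mathbb{L}^2_x}^2]$ uniformly in $t$.

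First I would insert the intermediate state driven by the projected control $\Pi_h U^*$ and split, for $i=1,2$,
\[
\mathcal{X}_i[U^*]-\mathcal{X}_{i,h}[U_h^*]
=\mathcal{X}_i^0[U^*-\Pi_h U^*]
+\bigl(\mathcal{X}_i[\Pi_h U^*]-\mathcal{X}_{i,h}[\Pi_h U^*]\bigr)
+\mathcal{X}_{i,h}^0[\Pi_h U^*-U_h^*]
=:A_i+B_i+C_i,
\]
where the first and third identities are the affine relations \eqref{Goud} and \eqref{goud}. The middle term $B_i$ is precisely the consistency error bounded pointwise in $t$ and in the energy norm by Proposition~\ref{Proposition 3.7}, so that $\mathbb{E}[\|\nabla B_1(t)\|_{\mathbb{L}^2_x}^2+\|B_2(t)\|_{\mathbb{L}^2_x}^2]\le Ch^2(\cdots)$ already with the required right–hand side.

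For $A_i$ I would invoke the continuous energy estimate for the homogeneous auxiliary system \eqref{1.9} (the $\sigma\equiv 0$, zero–initial–data specialization of Lemma~\ref{Lemma 2.1}, estimate \eqref{001}), which yields
\[
\mathbb{E}\Bigl[\sup_{t\in[0,T]}\bigl(\|\nabla A_1(t)\|_{\mathbb{L}^2_x}^2+\|A_2(t)\|_{\mathbb{L}^2_x}^2\bigr)\Bigr]\le C\,\mathbb{E}\bigl[\|U^*-\Pi_h U^*\|_{\mathbb{L}^2_{t,x}}^2\bigr],
\]
after which I would use \eqref{eq:L2proj-L2} with $s=1$ to bound $\|U^*-\Pi_h U^*\|_{\mathbb{L}^2_x}\le Ch\|U^*\|_{\mathbb{H}^1_0}$, drawing the enhanced spatial regularity $U^*\in\mathbb{L}^2_{\mathbb{F}}C_t\mathbb{H}^1_0$ from Remark~\ref{vanishing on the boundary} together with the optimality relation \eqref{1.6} and the adjoint bound \eqref{space regularity H-2 bound for adjoint Y} to control $\|U^*\|_{\mathbb{H}^1_0}$ by the data. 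For $C_i$ I would use the discrete stability estimate \eqref{today0002}, which is already in $\mathbb{E}[\sup_t(\cdots)]$ form, giving $\mathbb{E}[\sup_t(\|\nabla C_1(t)\|_{\mathbb{L}^2_x}^2+\|C_2(t)\|_{\mathbb{L}^2_x}^2)]\le C\,\mathbb{E}[\|\Pi_h U^*-U_h^*\|_{\mathbb{L}^2_{t,x}}^2]$; splitting $\Pi_h U^*-U_h^*=(\Pi_h U^*-U^*)+(U^*-U_h^*)$ and applying \eqref{eq:L2proj-L2} to the first part and Theorem~\ref{Thm 3.8} to the second keeps $C_i$ at order $h$ in this norm.

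Combining $A_i$, $B_i$, $C_i$ through the squared triangle inequality, adding the control bound from Theorem~\ref{Thm 3.8}, and taking the supremum over $t$ yields the asserted estimate with the stated data–dependent right–hand side. The only conceptually new point beyond Theorem~\ref{Thm 3.8} is the transfer from the $\mathbb{L}^2_{t,x}$ control error to the pointwise energy norm of the state; accordingly, the main obstacle will be ensuring that each of the three pieces is estimated in the $\mathbb{E}[\sup_t(\cdot)]$ norm rather than merely in $\mathbb{L}^2_t$, which is exactly why I rely on the supremum–in–time forms of Lemma~\ref{Lemma 2.1}, Proposition~\ref{Proposition 3.7}, and \eqref{today0002} instead of their time–integrated counterparts.
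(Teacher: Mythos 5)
Your proposal is correct and takes essentially the same approach as the paper: the paper's own (very terse) proof likewise reduces Theorem~\ref{thm3.5} to the argument of Proposition~\ref{Proposition 3.7} combined with the control error bound $\mathbb{E}[\|U^*-U_h^*\|_{\mathbb{L}^2_{t,x}}^2]\le Ch^2(\cdots)$ from Theorem~\ref{Thm 3.8}. Your explicit three-term decomposition $A_i+B_i+C_i$, with the sup-in-time stability estimates \eqref{001} and \eqref{today0002} handling the control-mismatch pieces and Proposition~\ref{Proposition 3.7} handling the same-control consistency piece, is a correct and faithful fleshing-out of exactly that sketch.
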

\begin{proof}
	For the proof, one can follow similar lines as in the proof of Proposition \ref{Proposition 3.7}. It is a consequence of the error bound on the additional term $\mathbb{E}[\|U^* - U_h^*\|_{\mathbb{L}^2_{t, x}}^2]$, which is established in Theorem \ref{Thm 3.8}.
\end{proof}
\section{Time discretization}\label{Section 4}
We denote by $I_{\tau} = \{t_n\}_{n=0}^{N} \subset [0,T]$ a time mesh with maximum step size $\tau := \max\{t_{n+1} - t_n : n = 0,1, \cdots, N-1\}$, and $\Delta_n W := W(t_n) - W(t_{n-1})$ for all $n = 1, \cdots, N$. Throughout, we assume that $\tau < 1$. For simplicity, we choose a uniform partition, {\em i.e.,} $\tau = T/N$, but the results in this work still hold for quasi-uniform partitions. 
We propose a temporal discretization of problem {\bf SLQ}$_h$ which will be analyzed in Section \ref{Section 3}. For this purpose, we use a mesh $I_\tau$ covering $[0,T]$, and consider step size processes $(X_{h\tau}, U_{h\tau}) \in \mathbb{X}_{h\tau} \times \mathbb{U}_{h\tau} \subset \mathbb{L}^2_{\mathbb{F}}\mathbb{L}^2_{t}(\mathbb{V}_h\times\mathbb{V}_h)$, where
\begin{align*}
	\mathbb{X}_{h\tau} &:= \left\{ X_{h\tau} \in \mathbb{L}^2_{\mathbb{F}}\mathbb{L}^2_{t}\mathbb{V}_h: X_{h\tau}(t) = X_{h\tau}(t_n) \ \forall\, t \in [t_n,t_{n+1}), \ n = 0, 1,\cdots, N \right\}, \\
	\mathbb{U}_{h\tau} &:= \left\{ U_{h\tau} \in \mathbb{L}^2_{\mathbb{F}}\mathbb{L}^2_{t}\mathbb{V}_h: U_{h\tau}(t) = U_{h\tau}(t_n) \ \forall\, t \in [t_n,t_{n+1}), \ n = 0,1,\cdots, N-1 \right\}.
\end{align*}
We also define for any $f\in \mathbb{L}^2(0,T)$,
\begin{align}\label{jhs}
\widehat{f}(t):=\frac{1}{\tau}\int_{t_{n}}^{t_{n+1}}f(\tau)\,{\rm d}\tau \qquad\forall\, s\in (t_n,t_{n+1}],\qquad n=0,....,N-1.\quad\text{and}\quad  \hat{Y}(0)=Y(0).
\end{align} 
We define a projection $\Pi_\tau:C([0,T];\mathbb{K})\to \mathbb{L}^2_t\mathbb{K}$ as follows: for all $X\in C([0,T];\mathbb{K})$,
\begin{align*}
	\Pi_\tau X(t):=X(t_n)\qquad\forall\,t\in[t_n,t_{n+1}),\qquad n=0,1,....,N-1.
\end{align*}
For simplicity, we also define $\widetilde{X}_{h\tau}=\Pi_\tau \widetilde{X}_{h}$.
\subsection{Space-time discretization of SLQ problem \eqref{1.3}-\eqref{1.4}}
Problem {\bf SLQ}$_{h\tau}$ then reads as follows: find an optimal tuple $\big(X_{1,h\tau}^*, X_{2,h\tau}^*, U_{h\tau}^*\big) \in \mathbb{X}_{h\tau} \times \mathbb{U}_{h\tau}$ that minimizes the following quadratic cost functional
\begin{equation}\label{fully discrete cost functional}
	\mathcal{J}_{h\tau}(X_{1, h\tau}, U_{h\tau}) = \frac{1}{2} \mathbb{E}\left[\|X_{1,h\tau}-\widetilde{X}_{h\tau}\|^2_{\mathbb{L}_{t,x}^2} +\alpha \|U_{h\tau}\|^2_{\mathbb{L}_{t,x}^2} + \beta\mathbb{E}\big[\|X_{1,h\tau}(T)-\widetilde{X}_{h\tau}(T)\|_{\mathbb{L}^2_x}^2\big]\right]
\end{equation}
subject to the following forward difference equations; for all $n=0,1,...,N-1,$
\begin{equation}\label{discrete state equation}
	\begin{cases}
		X_{1,h\tau}(t_{n+1})-X_{1,h\tau}(t_n)=\frac{\tau}{2} \big(X_{2,h\tau}(t_{n+1})+X_{2,h\tau}(t_n)\big),\\
		X_{2,h\tau}(t_{n+1}) - X_{2,h\tau}(t_n) = \frac{\tau}{2} \Delta_h \big(X_{1,h\tau}(t_{n+1})+X_{1,h\tau}(t_n)\big) + \tau U_{h\tau}(t_n)+ \left[\gamma X_{1,h\tau}(t_n)+\mathcal{R}_h \sigma(t_n) \right] \Delta_{n+1} W, \\
		X_{1,h\tau}(0) =\mathcal{R}_h X_{1,0},\\
		X_{2,h\tau}(0)=\mathcal{R}_h X_{2,0}.
	\end{cases}
\end{equation}


For given $U_{h\tau}\in\mathbb{U}_{h\tau}$, the tuple $(\mathcal{X}_{1,h\tau}^0[U_{h\tau}], \mathcal{X}_{2,h\tau}^0[U_{h\tau}])\equiv(X_{1,h\tau}^0, X_{2,h\tau}^0)\in \mathbb{X}_{h\tau}^2$ is the unique solution to the following auxiliary random difference equation for $n=0,1,..., N-1$,
\begin{align}
	\label{Boundary_SPDE_ht_zero initial data}
	\begin{cases}
		X_{1,h\tau}^0(t_{n+1})-X_{1,h}^0(t_n)=\frac{\tau}{2}\big(X_{2,h\tau}^0(t_{n+1})+ X_{2,h\tau}^0(t_n)\big),\\
		X_{2,h\tau}^0(t_{n+1}) - X_{2,h\tau}^0(t_n) = \frac{\tau}{2} \left[ \Delta_h \big(X_{1,h\tau}^0(t_{n+1})+X_{1,h\tau}^0(t_n)\big) \right] + \tau U_{h\tau}(t_n) + \gamma X_{1,h\tau}^0(t_n) \Delta_{n+1} W, \\
		X_{1, h\tau}^0(0) = 0,\\
		X_{2,h\tau}^0(0)= 0,
	\end{cases}
\end{align}
which is the space--time discretization of \eqref{1.91}.

In the following, we derive stability estimates for the fully discrete state \((X_{1, h\tau}^0, X_{2, h\tau}^0)\) associated with the equation \eqref{Boundary_SPDE_ht_zero initial data}.
\begin{proposition}[Stability bound]\label{stability bound for discrete state} Let $U_{h\tau}\in \mathbb{U}_{h\tau}$. Then there exists $C>0$ such that
	\begin{align}\label{today0003}
	\sup_{t\in[0,T]}\mathbb{E}[\|\nabla \mathcal{X}_{1,h\tau}^0[U_{h\tau}](t)\|_{\mathbb{L}^2_x}^2+\|\mathcal{X}_{2,h\tau}^0[U_{h\tau}](t)\|_{\mathbb{L}^2_x}^2]\le C\mathbb{E}\big[\|U_{h\tau}\|_{\mathbb{L}^2_{t, x}}^2\big].
	\end{align}
\end{proposition}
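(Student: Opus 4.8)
The plan is to run a discrete energy argument that mirrors the energy conservation of the implicit midpoint rule for the deterministic wave equation. Abbreviating $X_1^{n}:=\mathcal{X}_{1,h\tau}^0[U_{h\tau}](t_n)$, $X_2^{n}:=\mathcal{X}_{2,h\tau}^0[U_{h\tau}](t_n)$, $U^{n}:=U_{h\tau}(t_n)$, and setting the discrete energy $E^{n}:=\|\nabla X_1^{n}\|_{\mathbb{L}^2_x}^2+\|X_2^{n}\|_{\mathbb{L}^2_x}^2$, I would first test the first line of \eqref{Boundary_SPDE_ht_zero initial data} against $-\Delta_h(X_1^{n+1}+X_1^{n})$ and the second line against $X_2^{n+1}+X_2^{n}$. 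Using the definition of the discrete Laplacian $\langle \Delta_h v,w\rangle=-\langle\nabla v,\nabla w\rangle$ together with the telescoping identities $\langle X_2^{n+1}-X_2^{n},X_2^{n+1}+X_2^{n}\rangle=\|X_2^{n+1}\|_{\mathbb{L}^2_x}^2-\|X_2^{n}\|_{\mathbb{L}^2_x}^2$ and its gradient analogue, the two mixed bilinear contributions $\pm\tfrac{\tau}{2}\langle\nabla(X_1^{n+1}+X_1^{n}),\nabla(X_2^{n+1}+X_2^{n})\rangle$ cancel, leaving the one-step identity
\[
E^{n+1}-E^{n}=\tau\langle U^{n},\,X_2^{n+1}+X_2^{n}\rangle+\langle \gamma X_1^{n}\,\Delta_{n+1}W,\;X_2^{n+1}+X_2^{n}\rangle .
\]
I would also record at this point that the scheme is uniquely solvable at each step, since $I-\tfrac{\tau^2}{4}\Delta_h$ is invertible on $\mathbb{V}_h$ (because $-\Delta_h$ is positive semidefinite).

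Taking expectations, the drift term is routine: by Cauchy--Schwarz and Young, $\tau\langle U^{n},X_2^{n+1}+X_2^{n}\rangle\le \tfrac{\tau}{2}\|U^{n}\|_{\mathbb{L}^2_x}^2+\tau\|X_2^{n+1}\|_{\mathbb{L}^2_x}^2+\tau\|X_2^{n}\|_{\mathbb{L}^2_x}^2$, whose first piece sums to $\tfrac12\mathbb{E}[\|U_{h\tau}\|_{\mathbb{L}^2_{t,x}}^2]$ and the rest is Gronwall-admissible. The \textbf{main obstacle} is the stochastic term: the noise $\gamma X_1^{n}\Delta_{n+1}W$ is injected at $t_n$, but its inner-product partner $X_2^{n+1}$ lives at $t_{n+1}$ and is correlated with $\Delta_{n+1}W$, so it cannot be discarded by a plain martingale argument, and a naive expansion of $X_2^{n+1}$ would produce awkward higher-order It\^o corrections. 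I would resolve the implicit coupling exactly: eliminating $X_1^{n+1}$ via the first equation gives
\[
\Big(I-\tfrac{\tau^2}{4}\Delta_h\Big)X_2^{n+1}=\Big(I+\tfrac{\tau^2}{4}\Delta_h\Big)X_2^{n}+\tau\Delta_h X_1^{n}+\tau U^{n}+\gamma X_1^{n}\Delta_{n+1}W ,
\]
so that $X_2^{n+1}=L_\tau\big[\cdots+\gamma X_1^{n}\Delta_{n+1}W\big]$ with the resolvent $L_\tau:=(I-\tfrac{\tau^2}{4}\Delta_h)^{-1}$, which is symmetric, positive, and contractive ($\|L_\tau\|\le 1$) because $-\Delta_h\ge 0$. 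Since $X_1^{n},X_2^{n},U^{n}$ are $\mathcal{F}_{t_n}$-measurable and $\mathbb{E}[(\Delta_{n+1}W)^2\mid\mathcal{F}_{t_n}]=\tau$, only the $\gamma X_1^{n}\Delta_{n+1}W$ contribution survives and yields
\[
\mathbb{E}\big[\langle \gamma X_1^{n}\Delta_{n+1}W,\;X_2^{n+1}+X_2^{n}\rangle\big]=\tau\gamma^2\,\mathbb{E}\big[\langle X_1^{n},L_\tau X_1^{n}\rangle\big]\le \tau\gamma^2\,\mathbb{E}\big[\|X_1^{n}\|_{\mathbb{L}^2_x}^2\big]\le C\tau\,\mathbb{E}[E^{n}],
\]
where the last step uses the Poincar\'e inequality on $\mathbb{H}_0^1$. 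Solving the one-step linear system is precisely what keeps this clean.

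Finally I would sum over $n=0,\dots,m-1$, use $E^0=0$ and $\|X_2^{n}\|_{\mathbb{L}^2_x}^2,\|\nabla X_1^{n}\|_{\mathbb{L}^2_x}^2\le E^{n}$, obtaining
\[
\mathbb{E}[E^{m}]\le \tfrac12\,\mathbb{E}\big[\|U_{h\tau}\|_{\mathbb{L}^2_{t,x}}^2\big]+C\tau\sum_{n=0}^{m}\mathbb{E}[E^{n}].
\]
The implicit term $C\tau\,\mathbb{E}[E^{m}]$ is absorbed on the left for $\tau$ small (recall $\tau<1$; shrink $\tau_0$ so that $C\tau\le\tfrac12$), and the discrete Gronwall inequality gives $\max_{0\le m\le N}\mathbb{E}[E^{m}]\le C e^{CT}\,\mathbb{E}[\|U_{h\tau}\|_{\mathbb{L}^2_{t,x}}^2]$ with $C$ independent of $h,\tau$. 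Since $\mathcal{X}_{i,h\tau}^0[U_{h\tau}]$ is piecewise constant in time, $\sup_{t\in[0,T]}\mathbb{E}[\|\nabla\mathcal{X}_{1,h\tau}^0[U_{h\tau}](t)\|_{\mathbb{L}^2_x}^2+\|\mathcal{X}_{2,h\tau}^0[U_{h\tau}](t)\|_{\mathbb{L}^2_x}^2]=\max_{m}\mathbb{E}[E^{m}]$, which is exactly the claimed bound \eqref{today0003}.
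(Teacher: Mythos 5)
Your proof is correct, and its skeleton (midpoint energy identity, taking expectations, discrete Gronwall) coincides with the paper's argument in Appendix~\ref{appendix2}; the genuine difference is in how the correlated stochastic term $\gamma\langle X_1^n\Delta_{n+1}W,\,X_2^{n+1}+X_2^n\rangle$ is handled. The paper substitutes for $X_2^{n+1}$ using the scheme itself, which still leaves $\Delta_h(X_1^{n+1}+X_1^n)$ — hence a noise-correlated quantity — inside the inner product; the resulting cross terms ($I_3$, $I_4$ there) are then estimated by discrete integration by parts and Young's inequality, and the It\^o term $I_5$ via Poincar\'e. You instead solve the one-step implicit system exactly, writing $X_2^{n+1}=L_\tau\big[\cdots+\gamma X_1^n\Delta_{n+1}W\big]$ with the resolvent $L_\tau=(I-\tfrac{\tau^2}{4}\Delta_h)^{-1}$, so that conditioning on $\mathcal{F}_{t_n}$ annihilates everything except the single It\^o correction $\tau\gamma^2\,\mathbb{E}\big[\langle X_1^n,L_\tau X_1^n\rangle\big]$, which contractivity $\|L_\tau\|\le 1$ and Poincar\'e reduce to $C\tau\,\mathbb{E}[E^n]$. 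Your route buys an exact identity for the noise contribution — no leftover $X_1^{n+1}$ cross terms, and all stochastic Gronwall contributions sit at level $n$ rather than $n+1$ — while the paper's route avoids introducing the resolvent but pays with extra Young-type remainders. One cosmetic point: your absorption step imposes $C\tau\le\tfrac12$, a smallness restriction absent from the statement; it is removable by reweighting Young's inequality in the drift term (e.g.\ $\tau\langle U^n,X_2^{n+1}\rangle\le\tau\|U^n\|_{\mathbb{L}^2_x}^2+\tfrac{\tau}{4}\|X_2^{n+1}\|_{\mathbb{L}^2_x}^2$), after which the standing assumption $\tau<1$ of Section~\ref{Section 4} suffices — exactly the role the free Young parameter $\delta$ plays in the paper's version.
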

\begin{proof}
For the proof, we refer to Appendix~\ref{appendix2}.
\end{proof}
The following lemma gives the stability estimate for the fully discrete state $(X_{1,h\tau}, X_{2,h\tau})$ to the equation \eqref{discrete state equation}.
\begin{lem} Let Assumption~\ref{CC} hold. Then there exists a $C>0$ such that
{\begin{align*}
			&\sup_{t\in[0,T]}\mathbb{E}[\|\nabla \mathcal{X}_{1,h\tau}[U_{h\tau}](t)\|_{\mathbb{L}^2_x}^2+\|\mathcal{X}_{2,h\tau}[U_{h\tau}](t)\|_{\mathbb{L}^2_x}^2]\notag\\&\qquad\le C\big(\|X_{2}(0)\|_{\mathbb{L}^2_x}^2+\|\nabla X_{1}(0)\|_{\mathbb{L}^2_x}^2+\mathbb{E}\big[\|U_{h\tau}\|_{\mathbb{L}^2_{t, x}}^2\big] + \sup_{t\in[0,T]}\mathbb{E}\big[\|\sigma(t)\|_{\mathbb{L}^2_x}^2\big]\big).
	\end{align*}}
\end{lem}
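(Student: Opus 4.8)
The plan is to run a discrete energy argument adapted to the implicit midpoint rule, whose energy–conservation structure makes the deterministic (Laplacian) part of the increments cancel, and to close with a discrete Gronwall inequality. The one subtle point—the correlation of the noise increment with the fully implicit velocity $X_{2,h\tau}(t_{n+1})$—will be handled cleanly via the one-step resolvent, without any repeated substitution. Throughout I would set $X_i^n:=X_{i,h\tau}(t_n)$, $G_n:=\gamma X_1^n+\mathcal{R}_h\sigma(t_n)\in\mathbb{V}_h$ (which is $\mathcal{F}_{t_n}$-measurable), and $\mathcal{E}_n:=\mathbb{E}\big[\|\nabla X_1^n\|_{\mathbb{L}^2_x}^2+\|X_2^n\|_{\mathbb{L}^2_x}^2\big]$.

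First I would eliminate $X_1^{n+1}$ from \eqref{discrete state equation} using its first line, obtaining for the velocity the single-step relation
\[
\Big(I-\tfrac{\tau^2}{4}\Delta_h\Big)X_2^{n+1}=\Big(I+\tfrac{\tau^2}{4}\Delta_h\Big)X_2^n+\tau\Delta_h X_1^n+\tau U_{h\tau}(t_n)+G_n\,\Delta_{n+1}W .
\]
Since $-\Delta_h$ is symmetric and positive semidefinite on $\mathbb{V}_h$, the resolvent $B:=(I-\tfrac{\tau^2}{4}\Delta_h)^{-1}$ is self-adjoint, commutes with $\Delta_h$, and has spectrum in $(0,1]$, so $\|B\|_{\mathcal L(\mathbb{L}^2_x)}\le 1$. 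This gives unique solvability of the scheme (hence $X_{h\tau}\in\mathbb{X}_{h\tau}$) and, writing $X_2^{n+1}=P_n+B\,G_n\,\Delta_{n+1}W$ with $P_n$ an $\mathcal{F}_{t_n}$-measurable remainder, it isolates the only noise-correlated part of the implicit velocity.

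Next I would form the discrete energy identity by computing $\|\nabla X_1^{n+1}\|_{\mathbb{L}^2_x}^2-\|\nabla X_1^n\|_{\mathbb{L}^2_x}^2$ and $\|X_2^{n+1}\|_{\mathbb{L}^2_x}^2-\|X_2^n\|_{\mathbb{L}^2_x}^2$ from the two lines of \eqref{discrete state equation} and invoking the self-adjointness of $\Delta_h$; the two Laplacian contributions cancel, leaving
\[
\|\nabla X_1^{n+1}\|_{\mathbb{L}^2_x}^2-\|\nabla X_1^n\|_{\mathbb{L}^2_x}^2+\|X_2^{n+1}\|_{\mathbb{L}^2_x}^2-\|X_2^n\|_{\mathbb{L}^2_x}^2=\tau\langle U_{h\tau}(t_n),X_2^{n+1}+X_2^n\rangle+\langle G_n\Delta_{n+1}W,X_2^{n+1}+X_2^n\rangle .
\]
Upon taking expectations, every term in which $\Delta_{n+1}W$ multiplies an $\mathcal{F}_{t_n}$-measurable factor vanishes (independence and $\mathbb{E}[\Delta_{n+1}W]=0$); in particular the $X_2^n$- and $P_n$-contributions drop, and the surviving Itô correction is exactly $\mathbb{E}[\langle G_n\Delta_{n+1}W,X_2^{n+1}+X_2^n\rangle]=\tau\,\mathbb{E}[\langle G_n,BG_n\rangle]$.

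The crux is to estimate this correction without demanding spatial regularity of $\sigma$ beyond $\mathbb{L}^2_x$. Because $B$ is a contraction on $\mathbb{L}^2_x$, I would use $\langle G_n,BG_n\rangle\le\|G_n\|_{\mathbb{L}^2_x}^2$ directly—this is where the negative $\Delta_h$-correction generated by the implicit coupling is simply discarded rather than expanded (an expansion would force an $\mathbb{H}^1$-norm of $\sigma$). With Poincaré on $\mathbb{V}_h\subset\mathbb{H}_0^1$ and the (standard) $\mathbb{L}^2_x$-stability of $\mathcal{R}_h$ on the quasi-uniform mesh, this yields $\tau\,\mathbb{E}[\langle G_n,BG_n\rangle]\le C\tau(\mathcal{E}_n+\mathbb{E}\|\sigma(t_n)\|_{\mathbb{L}^2_x}^2)$. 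The control term is handled by Young's inequality, $\tau\langle U_{h\tau}(t_n),X_2^{n+1}+X_2^n\rangle\le C\tau\|U_{h\tau}(t_n)\|_{\mathbb{L}^2_x}^2+\epsilon\tau(\|X_2^{n+1}\|_{\mathbb{L}^2_x}^2+\|X_2^n\|_{\mathbb{L}^2_x}^2)$. Summing over $n=0,\dots,k-1$, absorbing the $\epsilon\tau\|X_2^{n+1}\|_{\mathbb{L}^2_x}^2$ term for small $\epsilon,\tau$, using $\sum_n\tau\mathbb{E}\|U_{h\tau}(t_n)\|_{\mathbb{L}^2_x}^2\le\mathbb{E}\|U_{h\tau}\|_{\mathbb{L}^2_{t,x}}^2$ and $\tau\sum_n\mathbb{E}\|\sigma(t_n)\|_{\mathbb{L}^2_x}^2\le T\sup_t\mathbb{E}\|\sigma(t)\|_{\mathbb{L}^2_x}^2$, and applying discrete Gronwall to $C\tau\sum_n\mathcal{E}_n$ bounds $\max_k\mathcal{E}_k$. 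Since the discrete state is piecewise constant in time, $\sup_t\mathbb{E}[\cdots]=\max_k\mathcal{E}_k$, and bounding $\mathcal{E}_0$ by the data through $\|\nabla\mathcal{R}_hX_{1,0}\|_{\mathbb{L}^2_x}\le\|\nabla X_{1,0}\|_{\mathbb{L}^2_x}$ and the $\mathbb{L}^2_x$-stability of $\mathcal{R}_h$ yields the claim. The main obstacle is precisely the noise/implicit-velocity correlation; the resolvent representation is what reduces it to the contraction bound $\tau\|G_n\|_{\mathbb{L}^2_x}^2$ and keeps the $\sigma$-dependence at the $\mathbb{L}^2_x$ level demanded by the statement.
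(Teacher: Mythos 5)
Your proposal is correct, and its skeleton — midpoint energy identity with cancellation of the Laplacian cross terms, killing of the $\mathcal{F}_{t_n}$-measurable noise pairings under expectation, Young's inequality for the control term, and a discrete Gronwall argument — is exactly the skeleton of the paper's proof, which is carried out for the zero-data auxiliary scheme \eqref{Boundary_SPDE_ht_zero initial data} in the appendix (Proposition~\ref{stability bound for discrete state}) and then invoked ``along similar lines'' for this lemma. Where you genuinely diverge is at the crux: the correlation of $\Delta_{n+1}W$ with the implicit velocity $X_{2,h\tau}(t_{n+1})$. The paper substitutes the second line of the scheme once into the stochastic pairing and then estimates the four resulting terms $I_2,\dots,I_5$ separately (independence for $I_2$, integration by parts $\langle v,\Delta_h w\rangle=-\langle\nabla v,\nabla w\rangle$ plus Young and It\^o isometry for $I_3$ and $I_4$, Poincar\'e for the It\^o correction $I_5$). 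You instead solve the one-step system exactly with the resolvent $B=(I-\tfrac{\tau^2}{4}\Delta_h)^{-1}$, so that the entire correction collapses to the single exact term $\tau\,\mathbb{E}\langle G_n,BG_n\rangle$, discarded by the contraction bound $\langle G_n,BG_n\rangle\le\|G_n\|_{\mathbb{L}^2_x}^2$. This buys two things. First, it is shorter and exact, with no cross terms like $I_3$, $I_4$ to estimate. Second — and this is a point you correctly identify — it keeps the $\sigma$-dependence at the $\mathbb{L}^2_x$ level: a verbatim extension of the paper's appendix argument to the present scheme \eqref{discrete state equation} (where $G_n=\gamma X_{1,h\tau}(t_n)+\mathcal{R}_h\sigma(t_n)$) would, in the analogue of $I_3$, integrate by parts against $\Delta_h(X_{1,h\tau}(t_{n+1})+X_{1,h\tau}(t_n))$ and thereby produce $\|\nabla\mathcal{R}_h\sigma(t_n)\|_{\mathbb{L}^2_x}$, i.e.\ an $\mathbb{H}^1$-norm of $\sigma$ not present on the right-hand side of the stated estimate (finite under Assumption~\ref{CC}, but weaker than claimed). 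Your route matches the stated bound literally, modulo the $\mathbb{L}^2_x$-stability of the Ritz projection $\mathcal{R}_h$ that you invoke for $\|\mathcal{R}_h\sigma(t_n)\|_{\mathbb{L}^2_x}\le C\|\sigma(t_n)\|_{\mathbb{L}^2_x}$ — a standard fact on quasi-uniform meshes, but one the paper never states (it only lists $\mathbb{L}^2$-stability for $\Pi_h$ and $\mathbb{H}^1$-stability for $\mathcal{R}_h$), so it deserves an explicit citation or a one-line justification in a final write-up.
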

\begin{proof}
	The proof follows similar lines as the one for Proposition \ref{stability bound for discrete state}. 
\end{proof}
\begin{remark}[Solution operator] We define the solution operator $\mathcal{X}_{h\tau}[\cdot]:\mathbb{U}_{h\tau}\to \mathbb{X}_{h\tau}^2$ as follows:
	\begin{align*}
		\mathcal{X}_{h\tau}[U_{h\tau}]=(\mathcal{X}_{1, h\tau}[U_{h\tau}],	\mathcal{X}_{2,h\tau}[U_{h\tau}]),
	\end{align*}
where $(\mathcal{X}_{1, h\tau}[U_{h\tau}],	\mathcal{X}_{2,h\tau}[U_{h\tau}])$ is the unique solution of the forward difference equations \eqref{discrete state equation} with control $U_{h\tau}\in\mathbb{U}_{h\tau}$.
\end{remark}
\begin{remark}[Reduced cost functional]The discrete reduced cost functional is defined as follows: for all $U_{h\tau}\in\mathbb{U}_{h\tau},$
	\begin{align*}
		\hat{\mathcal{J}}_{h\tau}(U_{h\tau}):&=	\mathcal{J}_{h\tau}(\mathcal{X}_{1,h\tau}[U_{h\tau}], U_{h\tau})\\& = \frac{1}{2} \left[\|\mathcal{X}_{1,h\tau}[U_{h\tau}]-\widetilde{X}_{h\tau}\|^2_{\mathbb{L}_{t,x}^2} +\alpha \|U_{h\tau}\|^2_{\mathbb{L}_{t,x}^2} + \beta\mathbb{E}\big[\|\mathcal{X}_{1,h\tau}[U_{h\tau}](T)-\widetilde{X}_{h\tau}(T)\|_{\mathbb{L}^2_x}^2\big]\right]
	\end{align*}
\end{remark}
The following lemma provide an integral identity that will be useful for the proof of the convergence rate below (see Theorem~\ref{strong rate of convergence for time discretization}).
  \begin{lem}[Existence and uniqueness of a discrete optimal control] Let Assumption~\ref{CC} hold. Then there exists a unique optimal tuple $(X_{1,h\tau}^*, X_{2,h\tau}^*, U_{h\tau}^*)$ to the ${\bf SLQ}_{h\tau}$ problem \eqref{fully discrete cost functional}-\eqref{discrete state equation} and the following uniform bound holds:
	\begin{align}\label{uniform bound for discrete optimal pair}
		\sup_{1\le n\le N}\mathbb{E}\big[\|\nabla X_{1,h\tau}^*(t_n)\|_{\mathbb{L}^2_x}^2 + \|U_{h\tau}^*\|_{\mathbb{L}^2_{t, x}}^2\big]\le C(\|X_{1,0}\|_{\mathbb{H}_0^1}^2+ \|X_{2,0}\|_{\mathbb{L}^2_x}^2+\|\widetilde{X}\|_{C_t\mathbb{L}_x^2}^2+ \|\sigma\|_{C_t\mathbb{L}^2_x}^2).
	\end{align} Moreover, the following integral identity holds: for all $V_{h\tau}\in \mathbb{U}_{h\tau}$,
\begin{align}\label{today0005}
		&\left\langle \mathcal{D}_{U}\hat{\mathcal{J}}_{h\tau}(U_{h\tau}^*),V_{h\tau}\right\rangle_{\mathbb{L}^2_\mathbb{F}\mathbb{L}^2_{t, x}}\notag\\&=\mathbb{E}\bigg[\int_0^T\left\langle \mathcal{X}_{1, h\tau}[U_{h\tau}^*](t)-\widetilde{X}_{h\tau}(t),\mathcal{X}_{1,h\tau}^0[V_{h\tau}](t)\right\rangle\,{\rm d}t +\alpha\int_0^T\left\langle U_{h\tau}^*(t), V_{h\tau}(t)\right\rangle\,{\rm d}t\notag \\&\qquad\qquad+\beta\left\langle \mathcal{X}_{1,h\tau}[U_{h\tau}^*](T), \mathcal{X}^0_{1,h\tau}[V_{h\tau}^*](T)\right\rangle\bigg]\notag\\&=0.
	\end{align}
\end{lem}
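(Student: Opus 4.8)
The plan is to treat the $\mathbf{SLQ}_{h\tau}$ problem as an unconstrained convex quadratic minimization over the Hilbert space $\mathbb{U}_{h\tau}$, exactly mirroring the continuous and semi-discrete cases. First I would record the affine structure of the discrete control-to-state map: subtracting the difference equations \eqref{discrete state equation} for two controls and comparing with \eqref{Boundary_SPDE_ht_zero initial data} gives $\mathcal{X}_{h\tau}[U_{h\tau}+V_{h\tau}]=\mathcal{X}_{h\tau}[U_{h\tau}]+\mathcal{X}_{h\tau}^0[V_{h\tau}]$, the fully discrete analogue of \eqref{goud}. Consequently the reduced cost $\hat{\mathcal{J}}_{h\tau}$ is a quadratic functional of the form $\tfrac12 a(U_{h\tau},U_{h\tau})-\ell(U_{h\tau})+c$, with $a$ a bounded symmetric bilinear form and $\ell$ a bounded linear functional on $\mathbb{U}_{h\tau}$. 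The regularization term $\alpha\|U_{h\tau}\|^2_{\mathbb{L}^2_{t,x}}$ with $\alpha>0$ makes $a$ coercive, so $\hat{\mathcal{J}}_{h\tau}$ is strictly convex, continuous and coercive; existence and uniqueness of the minimizer $(X_{1,h\tau}^*,X_{2,h\tau}^*,U_{h\tau}^*)$ then follow from the direct method (weak lower semicontinuity plus coercivity, uniqueness from strict convexity), or equivalently from Lax--Milgram applied to the Euler equation $a(U_{h\tau}^*,V_{h\tau})=\ell(V_{h\tau})$, just as in Theorem~\ref{Semi-discrete Pontryagin's maximum principle}.

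For the integral identity \eqref{today0005}, since $U_{h\tau}^*$ minimizes $\hat{\mathcal{J}}_{h\tau}$ over the entire space $\mathbb{U}_{h\tau}$ (no pointwise constraints on the control), the G\^ateaux derivative must vanish in every direction $V_{h\tau}\in\mathbb{U}_{h\tau}$. I would compute this derivative directly from the affine identity: writing $\mathcal{X}_{1,h\tau}[U_{h\tau}^*+sV_{h\tau}]=\mathcal{X}_{1,h\tau}[U_{h\tau}^*]+s\,\mathcal{X}_{1,h\tau}^0[V_{h\tau}]$ and differentiating the three quadratic terms of \eqref{fully discrete cost functional} at $s=0$ produces exactly the three summands in \eqref{today0005}, so imposing $\langle \mathcal{D}_U\hat{\mathcal{J}}_{h\tau}(U_{h\tau}^*),V_{h\tau}\rangle=0$ yields the claim. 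This step is routine, requiring only linearity of the expectation and the chain rule.

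The uniform a priori bound \eqref{uniform bound for discrete optimal pair} demands the most care, since the constant must be independent of $h$ and $\tau$. I would use optimality to compare against the zero control, $\hat{\mathcal{J}}_{h\tau}(U_{h\tau}^*)\le\hat{\mathcal{J}}_{h\tau}(0)$. The right-hand side is the cost of the uncontrolled discrete state $\mathcal{X}_{h\tau}[0]$, which by the discrete stability estimate (the lemma following Proposition~\ref{stability bound for discrete state}, specialized to $U_{h\tau}=0$) together with the stability of $\mathcal{R}_h$ and $\Pi_\tau$ applied to $\widetilde{X}$ and $\sigma$ is bounded by $C(\|X_{1,0}\|_{\mathbb{H}_0^1}^2+\|X_{2,0}\|_{\mathbb{L}^2_x}^2+\|\widetilde{X}\|_{C_t\mathbb{L}^2_x}^2+\|\sigma\|_{C_t\mathbb{L}^2_x}^2)$ uniformly in $h,\tau$. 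Since all terms of \eqref{fully discrete cost functional} are nonnegative, $\hat{\mathcal{J}}_{h\tau}(U_{h\tau}^*)\ge\tfrac{\alpha}{2}\mathbb{E}[\|U_{h\tau}^*\|_{\mathbb{L}^2_{t,x}}^2]$, which immediately controls $\mathbb{E}[\|U_{h\tau}^*\|_{\mathbb{L}^2_{t,x}}^2]$. Feeding this bound back into the discrete stability estimate \eqref{today0003} with $U_{h\tau}=U_{h\tau}^*$ (accounting for the nonzero initial data and noise through the full state lemma) then bounds $\sup_{1\le n\le N}\mathbb{E}[\|\nabla X_{1,h\tau}^*(t_n)\|_{\mathbb{L}^2_x}^2]$, completing \eqref{uniform bound for discrete optimal pair}.

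The main obstacle is precisely this uniformity in $(h,\tau)$: the midpoint-in-time, finite-element-in-space stability estimates must hold with $h,\tau$-independent constants, and the projected data $\mathcal{R}_hX_{1,0}$, $\mathcal{R}_hX_{2,0}$, $\widetilde{X}_{h\tau}$, $\mathcal{R}_h\sigma$ must be bounded uniformly by their continuous norms; both ingredients are supplied by Proposition~\ref{stability bound for discrete state} and the projection stability bounds. Once uniformity is secured, existence, uniqueness, the optimality identity, and the a priori bound all follow from the standard \textbf{SLQ} template already used in Theorem~\ref{Pontryagin's Maximum Principle}.
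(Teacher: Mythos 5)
Your proposal is correct and follows essentially the same route the paper takes (by citation): existence and uniqueness via the standard strictly convex, coercive quadratic minimization argument of \cite[Theorem 1.43]{Hinze et al. book}, and the identity \eqref{today0005} by computing the vanishing Fr\'echet/G\^ateaux derivative through the affine structure of the discrete control-to-state map, exactly as in the proof of Theorem~\ref{Pontryagin's Maximum Principle}. Your comparison-with-zero-control argument for the uniform bound \eqref{uniform bound for discrete optimal pair}, fed back into the $(h,\tau)$-uniform stability estimates, is the standard argument the paper leaves implicit (cf.\ the treatment of \eqref{bound for optimal control} via \cite[Lemma 4.2]{ChaudharyProhl}) and is carried out correctly.
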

\begin{proof}
For the existence and uniqueness of the optimal control tuple $(X_{1,h\tau}^*, X_{2,h\tau}^*, U_{h\tau}^*)$, one can follow similar arguments as in the proof of \cite[Theorem 1.43]{Hinze et al. book}; for more details see \cite{LuZhang2021}. The proof of identity~\eqref{today0005} is similar to that of the identity \eqref{today0001}, and we leave its proof to the interested reader.
\end{proof}
\begin{remark}[Fréchet derivative of the reduced cost functional]  
We can compute the Fréchet derivative of the reduced cost functional in variational form.  
For all \( U_{h\tau}, V_{h\tau} \in \mathbb{L}^2_{\mathbb{F}} \mathbb{L}^2_{t}\mathbb{V}_h \), we have  
\begin{align}\label{j2}
&\big\langle \mathcal{D}_{U}\hat{\mathcal{J}}_{h\tau}(U_{h\tau}), V_{h\tau} \big\rangle_{\mathbb{L}^2_{\mathbb{F}}\mathbb{L}^2_{t,x}} \notag\\
&\quad = \mathbb{E} \bigg[ 
    \int_0^T \big\langle \mathcal{X}_{1,h\tau}[U_{h\tau}](t) - \widetilde{X}_{h\tau}(t), \,\mathcal{X}_{1,h\tau}^0[V_{h\tau}](t) \big\rangle \, \,{\rm d}t  
    + \alpha \int_0^T \big\langle U_{h\tau}(t), \, V_{h\tau}(t) \big\rangle \, \,{\rm d}t \notag\\
&\qquad\quad + \beta \big\langle \mathcal{X}_{1,h\tau}[U_{h\tau}](T), \, \mathcal{X}^0_{1,h\tau}[V_{h\tau}](T) \big\rangle 
\bigg].
\end{align}
\end{remark}
The following proposition constitutes a crucial step in avoiding the use of {\em Malliavin calculus} in the subsequent error analysis. 
\begin{proposition}\label{Proposition02} Let Assumption~\ref{CC} hold. Then the following identity holds
	\begin{align}
	&\mathbb{E}\bigg[\int_{0}^T\left\langle X_{1,h}^*(t)-\widetilde{X}_{h\tau}(t), \mathcal{X}_{1, h\tau}^0[U_{h\tau}](t)\right\rangle\,{\rm d}t+ \beta\left\langle X_{1,h}^*(T)-\widetilde{X}_{h}(T), \mathcal{X}_{1,h\tau}^0[U_{h\tau}](T)\right\rangle\bigg]\notag\\&=I_1+I_2+ I_3+I_4+ I_5\label{j1},
\end{align}
where
\begin{align*}
I_1&=\frac{\tau}{2}\sum_{n=0}^{N-1}\mathbb{E}\bigg[\left\langle \big(\mathcal{X}_{2,h\tau}^0[U_{h\tau}](t_{n+1})+\mathcal{X}_{2,h\tau}^0[U_{h\tau}](t_{n})\big), Y_{1,h}(t_{n+1})\right\rangle\bigg]-\sum_{n=0}^{N-1}\mathbb{E}\bigg[\int_{t_{n}}^{t_{n+1}}\left\langle Y_{1,h}(t), X_{2,h\tau}^0(t_{n})\right\rangle\,{\rm d}t\bigg],\\
I_2&=\sum_{n=0}^{N-1}\mathbb{E}\bigg[\int_{t_{n}}^{t_{n+1}}\left\langle \nabla Y_{2,h}(t), \nabla \mathcal{X}_{1,h\tau}^0[U_{h\tau}](t_{n})\right\rangle\,{\rm d}t\bigg]\\&\qquad-\sum_{n=0}^{N-1}\mathbb{E}\bigg[\frac{\tau}{2}\left\langle \nabla \big(\mathcal{X}_{1,h\tau}^0[U_{h\tau}](t_{n+1})+\mathcal{X}_{1,h\tau}^0[U_{h\tau}](t_{n})\big), \nabla Y_{2,h}(t_{n+1})\right\rangle\bigg],\\
I_3&=\tau\sum_{n=0}^{N-1}\mathbb{E}\bigg[\left\langle U_{h\tau}(t_{n}), Y_{2,h}(t_{n+1})\right\rangle\bigg],\\
I_4&=-\sum_{n=0}^{N-1}\mathbb{E}\bigg[\left\langle \int_{t_{n}}^{t_{n+1}}Y_{1,h}(t)\,{\rm d}t,\mathcal{X}_{1,h\tau}^0[U_{h\tau}](t_n)\Delta_{n+1}W\right\rangle\bigg].
\end{align*}
\end{proposition}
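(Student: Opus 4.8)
The plan is to pair the fully discrete forward auxiliary process with the semi-discrete adjoint by a discrete summation-by-parts (telescoping) argument on each subinterval $[t_n,t_{n+1}]$, invoking the continuous-in-time dynamics of $(Y_{1,h},Y_{2,h})$ from \eqref{discrete adjoint ode} only \emph{inside} each interval. Throughout I write $P:=\mathcal{X}_{1,h\tau}^0[U_{h\tau}]$ and $Q:=\mathcal{X}_{2,h\tau}^0[U_{h\tau}]$; since these lie in $\mathbb{X}_{h\tau}$, they are piecewise constant with $P(t)=P(t_n)$, $Q(t)=Q(t_n)$ on $[t_n,t_{n+1})$, each $P(t_n),Q(t_n)$ is $\mathcal{F}_{t_n}$-measurable, and $P(0)=Q(0)=0$.

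First I would telescope $\sum_{n}\big[\langle Y_{1,h}(t_{n+1}),P(t_{n+1})\rangle-\langle Y_{1,h}(t_n),P(t_n)\rangle\big]$, splitting each summand as $\langle Y_{1,h}(t_{n+1}),P(t_{n+1})-P(t_n)\rangle+\langle Y_{1,h}(t_{n+1})-Y_{1,h}(t_n),P(t_n)\rangle$. In the first piece I substitute the first line of \eqref{Boundary_SPDE_ht_zero initial data}, producing $\tfrac{\tau}{2}\langle Y_{1,h}(t_{n+1}),Q(t_{n+1})+Q(t_n)\rangle$, i.e.\ the terminal-type term of $I_1$. In the second piece I integrate the drift of $Y_{1,h}$ from \eqref{discrete adjoint ode} against the $\mathcal{F}_{t_n}$-measurable $P(t_n)$, so the $Z_{1,h}\,{\rm d}W$ contribution vanishes in expectation. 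Using $Y_{1,h}(T)=\beta(X_{1,h}^*(T)-\widetilde{X}_h(T))$, $P(0)=0$, and the fact that $P$ is piecewise constant (to re-express sums as time integrals), this records $\beta\mathbb{E}\langle X_{1,h}^*(T)-\widetilde{X}_h(T),P(T)\rangle+\mathbb{E}\int_0^T\langle X_{1,h}^*-\widetilde{X}_h,P\rangle\,{\rm d}t$ in terms of the $I_1$ terminal term, a discrete-Laplacian pairing which (via the definition of $\Delta_h$, with both arguments in $\mathbb{V}_h$) becomes the gradient term $\mathbb{E}\int_0^T\langle\nabla Y_{2,h},\nabla P\rangle\,{\rm d}t$ of $I_2$, and a leftover $Z_{2,h}$-pairing.

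Next I repeat the telescoping for $\sum_{n}\big[\langle Y_{2,h}(t_{n+1}),Q(t_{n+1})\rangle-\langle Y_{2,h}(t_n),Q(t_n)\rangle\big]$, which equals $0$ because $Y_{2,h}(T)=0$ and $Q(0)=0$. Substituting the second line of \eqref{Boundary_SPDE_ht_zero initial data} yields: the gradient term completing $I_2$ (through $\Delta_h$), the control pairing $I_3$, and the stochastic term $\langle Y_{2,h}(t_{n+1}),\gamma P(t_n)\rangle\Delta_{n+1}W$; meanwhile the increment $\langle Y_{2,h}(t_{n+1})-Y_{2,h}(t_n),Q(t_n)\rangle$, after dropping the vanishing martingale part, produces the second half $-\sum_n\mathbb{E}\int_{t_n}^{t_{n+1}}\langle Y_{1,h}(t),Q(t_n)\rangle\,{\rm d}t$ of $I_1$. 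Adding the two telescoped identities and writing $\langle X_{1,h}^*-\widetilde{X}_{h\tau},P\rangle=\langle X_{1,h}^*-\widetilde{X}_h,P\rangle+\langle\widetilde{X}_h-\widetilde{X}_{h\tau},P\rangle$ isolates the residual term $I_5=\mathbb{E}\int_0^T\langle\widetilde{X}_h-\widetilde{X}_{h\tau},P\rangle\,{\rm d}t$, which accounts precisely for the mismatch between the time-projected target $\widetilde{X}_{h\tau}$ on the left of \eqref{j1} and the target $\widetilde{X}_h$ appearing in the adjoint source.

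The main obstacle is the stochastic term $\mathbb{E}\big[\langle Y_{2,h}(t_{n+1}),\gamma P(t_n)\rangle\Delta_{n+1}W\big]$, which cannot be discarded because $Y_{2,h}(t_{n+1})$ is $\mathcal{F}_{t_{n+1}}$-measurable and hence correlated with $\Delta_{n+1}W$. I would expand $Y_{2,h}(t_{n+1})$ through its backward dynamics over $[t_n,t_{n+1}]$: the $\mathcal{F}_{t_n}$-measurable part contributes nothing (as $\mathbb{E}[\Delta_{n+1}W\mid\mathcal{F}_{t_n}]=0$); the drift $-\int_{t_n}^{t_{n+1}}Y_{1,h}\,{\rm d}t$ paired with $\Delta_{n+1}W$ gives exactly $I_4$; and the martingale $\int_{t_n}^{t_{n+1}}Z_{2,h}\,{\rm d}W$ paired with $\Delta_{n+1}W$ produces, via the It\^o isometry, a $Z_{2,h}$-integral that cancels the leftover $Z_{2,h}$-pairing from the first telescoping. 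This exact cancellation — eliminating $Z_{2,h}$ from the identity altogether — is the crux of the argument and is precisely what makes the ensuing error analysis feasible without Malliavin calculus. Collecting the surviving terms then yields \eqref{j1}.
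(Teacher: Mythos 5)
Your proposal is correct and follows essentially the same route as the paper's own proof: both telescope the pairings $\langle Y_{1,h},\mathcal{X}^0_{1,h\tau}\rangle$ and $\langle Y_{2,h},\mathcal{X}^0_{2,h\tau}\rangle$ across the time grid, kill the martingale contributions against $\mathcal{F}_{t_n}$-measurable test functions, and resolve the critical term $\mathbb{E}\big[\langle Y_{2,h}(t_{n+1}),\gamma\,\mathcal{X}^0_{1,h\tau}[U_{h\tau}](t_n)\rangle\Delta_{n+1}W\big]$ by expanding $Y_{2,h}(t_{n+1})$ through its backward dynamics so that the It\^o-isometry term cancels the $\gamma Z_{2,h}$ drift contribution, exactly as in the paper's Steps 1--3. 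A minor bonus of your write-up is that you explicitly identify $I_5$ as the target-mismatch term $\mathbb{E}\int_0^T\langle \widetilde{X}_h-\widetilde{X}_{h\tau},\mathcal{X}^0_{1,h\tau}[U_{h\tau}]\rangle\,{\rm d}t$, which the paper's statement lists but never defines.
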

\begin{proof} For convenience, we set $(\mathcal{X}_{1,h\tau}^0[U_{h\tau}], \mathcal{X}_{2,h\tau}^0[U_{h\tau}])\equiv (X_{1,h\tau}, X_{2,h\tau})$. We give the proof in several steps as follows:

\noindent
\textbf{Step 1.} By testing $\eqref{Boundary_SPDE_ht_zero initial data}_1$ with $Y_{1,h}(t_{n+1})$ and \eqref{discrete adjoint ode} with $X_{1,h\tau}^0(t_{n})$, we obtain
\begin{align}\label{b1}
	\left\langle X_{1,h\tau}^0(t_{n+1}), Y_{1,h}(t_{n+1})\right\rangle-\left\langle  X_{1,h\tau}^0(t_{n}),Y_{1,h}(t_{n+1})\right\rangle=\frac{\tau}{2} \left\langle \big(X_{2,h\tau}^0(t_{n+1})+X_{2,h\tau}^0(t_n)\big), Y_{1,h}(t_{n+1})\right\rangle
\end{align}
and 
\begin{align}\label{b2}
	 &\left\langle  Y_{1,h}(t_{n+1}), X_{1,h\tau}^0(t_{n})\right\rangle-\left\langle Y_{1,h\tau}(t_n), X_{1,h\tau}^0(t_{n})\right\rangle\notag\\&=\int_{t_{n}}^{t_{n+1}}\left\langle \nabla Y_{2,h}, \nabla X_{1,h\tau}^0(t_{n})\right\rangle\,{\rm d}t -\int_{t_{n}}^{t_{n+1}}\left\langle \gamma Z_{2,h}(t), X_{h\tau}^0(t_{n})\right\rangle\,{\rm d}t\notag\\&\qquad - \int_{t_{n}}^{t_{n+1}}\left\langle X^*_{1,h}(t)-\widetilde{X}_{h\tau}(t),X_{h\tau}^0(t_n)\right\rangle\,{\rm d}t+\int_{t_{n}}^{t_{n+1}}\left\langle Z_{1,h}(t),X_{1,h\tau}(t_{n})\right\rangle\,{\rm d}W(t).
\end{align}
Add identities \eqref{b1}--\eqref{b2} and apply expectations to get
\begin{align*}
&	\mathbb{E}\big[\left\langle  Y_{1,h}(t_{n+1}), X_{1,h\tau}^0(t_{n+1})\right\rangle-\left\langle Y_{1,h\tau}(t_n), X_{1,h\tau}^0(t_{n})\right\rangle\big]\notag\\&=\frac{\tau}{2}\mathbb{E}\bigg[ \left\langle \big(X_{2,h\tau}^0(t_{n+1})+X_{2,h\tau}^0(t_{n})\big), Y_{1,h}(t_{n+1})\right\rangle+\int_{t_{n}}^{t_{n+1}}\left\langle \nabla Y_{2,h}, \nabla X_{1,h\tau}^0(t_{n})\right\rangle\,{\rm d}t\notag\\&\qquad -\int_{t_{n}}^{t_{n+1}}\left\langle \gamma Z_{2,h}(t), X_{1, h\tau}^0(t_{n})\right\rangle\,{\rm d}t - \int_{t_{n}}^{t_{n+1}}\left\langle X^*_{1,h}(t)-\widetilde{X}_{h\tau}(t), X_{1,h\tau}^0(t_n)\right\rangle\,{\rm d}t\bigg].
\end{align*}
After summing over index $n$, it gives
\begin{align*}
    &\sum_{n=0}^{N-1}\int_{t_{n}}^{t_{n+1}}\left\langle X^*_{1,h}(t)-\widetilde{X}_{h\tau}(t), X_{1,h\tau}^0(t_n)\right\rangle\,{\rm d}t+\mathbb{E}\big[\left\langle  Y_{1,h}(t_{N}), X_{1,h\tau}^0(t_{N})\right\rangle-\left\langle Y_{1,h\tau}(t_0), X_{1,h\tau}^0(t_{0})\right\rangle\big]\notag\\&
    =\frac{\tau}{2}\sum_{n=0}^{N-1}\mathbb{E}\bigg[ \left\langle \big(X_{2,h\tau}^0(t_{n+1})+X_{2,h\tau}^0(t_{n})\big), Y_{1,h}(t_{n+1})\right\rangle+\int_{t_{n}}^{t_{n+1}}\left\langle \nabla Y_{2,h}, \nabla X_{1,h\tau}^0(t_{n})\right\rangle\,{\rm d}t\notag\\&\qquad -\int_{t_{n}}^{t_{n+1}}\left\langle \gamma Z_{2,h}(t), X_{1, h\tau}^0(t_{n})\right\rangle\,{\rm d}t\bigg].
\end{align*}
 By  using the facts $Y_{1,h}(t_N)=\beta(X^*_{1,h}(T)-\widetilde{X}_{h\tau}(T))$ and $X_{1,h\tau}^0(t_0)=0$, we obtain
 \begin{align}\label{c1}
    &\sum_{n=0}^{N-1}\int_{t_{n}}^{t_{n+1}}\left\langle X^*_{1,h}(t)-\widetilde{X}_{h\tau}(t), X_{1,h\tau}^0(t_n)\right\rangle\,{\rm d}t+\beta\mathbb{E}\big[\left\langle  (X^*_{1,h}(T)-\widetilde{X}_h(T)), X_{1,h\tau}^0(t_{N})\right\rangle\notag\\&
    =\frac{\tau}{2}\sum_{n=0}^{N-1}\mathbb{E}\bigg[ \left\langle \big(X_{2,h\tau}^0(t_{n+1})+X_{2,h\tau}^0(t_{n})\big), Y_{1,h}(t_{n+1})\right\rangle+\int_{t_{n}}^{t_{n+1}}\left\langle \nabla Y_{2,h}, \nabla X_{1,h\tau}^0(t_{n})\right\rangle\,{\rm d}t\notag\\&\qquad -\int_{t_{n}}^{t_{n+1}}\left\langle \gamma Z_{2,h}(t), X_{1, h\tau}^0(t_{n})\right\rangle\,{\rm d}t\bigg].
\end{align}
\textbf{Step 2.} We test $\eqref{Boundary_SPDE_ht_zero initial data}_2$ with $Y_{2,h}(t_{n+1})$, to get
\begin{align}\label{b3}
	&\left\langle X_{2,h\tau}^0(t_{n+1}), Y_{2,h}(t_{n+1})\right\rangle-\left\langle X_{2,h\tau}^0(t_n),Y_{2,h}(t_{n+1})\right\rangle\notag\\&=\frac{-\tau}{2}\left\langle \nabla \big(X_{1,h\tau}^0(t_{n+1})+X_{1,h\tau}^0(t_{n+1})\big),\nabla Y_{2,h}(t_{n+1})\right\rangle+ \tau\left\langle U_{h\tau}(t_{n}), Y_{2,h}(t_{n+1})\right\rangle\notag\\&\qquad+ \left\langle Y_{2,h}(t_{n+1}),\gamma X_{1,h\tau}^0(t_n)\Delta_{n+1}W\right\rangle.
\end{align} 
\textbf{Step 2(a).} For the last term of r.h.s of the equation \eqref{b3}, by testing \eqref{discrete adjoint ode} with $\gamma X_{1,h\tau}^0(t_n)\Delta_{n+1}W$ , we compute
\begin{align}\label{b4}
	&\left\langle Y_{2,h}(t_{n+1}),\gamma X_{1,h\tau}^0(t_n)\Delta_{n+1}W\right\rangle\notag\\&= \left\langle Y_{2,h}(t_{n}), \gamma X_{1,h\tau}^0(t_n)\Delta_{n+1}W \right\rangle-\left\langle \int_{t_{n+1}}^{t_{n}}Y_{1,h}(t)\,{\rm d}t,\gamma X_{1,h\tau}^0(t_n)\Delta_{n+1}W\right\rangle\notag\\&\qquad+\left\langle  \int_{t_{n}}^{t_{n+1}}Z_{2,h}(t)\,{\rm d}W(t),\gamma X_{1,h\tau}^0(t_n)\Delta_{n+1}W\right\rangle.
\end{align} 
For the first term of the right hand side of the equation \eqref{b4}, by using the independence of Wiener process and covariance of It\^o integral, we conclude that
 \begin{align}\label{b5}
	\mathbb{E}\big[\left\langle Y_{2,h}(t_{n}), \gamma X_{1,h\tau}^0(t_n)\Delta_{n+1}W \right\rangle\big]=0,
\end{align}
and
\begin{align}\label{b6}
	\mathbb{E}\bigg[\left\langle  \int_{t_{n}}^{t_{n+1}}Z_{2,h}({s})\,{\rm d}W(t),\gamma X_{1,h\tau}^0(t_n)\Delta_{n+1}W\right\rangle\bigg]=\mathbb{E}\bigg[\int_{t_{n}}^{t_{n+1}}\left\langle \gamma Z_{2,h}(t), X_{1,h\tau}^0(t_n)\right\rangle\,{\rm d}t\bigg].
\end{align}
From identities \eqref{b3}-\eqref{b6}, we obtain
\begin{align}\label{b7}
&\mathbb{E}\bigg[\left\langle X_{2,h\tau}^0(t_{n+1}), Y_{2,h}(t_{n+1})\right\rangle-\left\langle X_{2,h\tau}^0(t_n),Y_{2,h}(t_{n+1})\right\rangle\bigg]\notag\\&=\mathbb{E}\bigg[\frac{\tau}{2}\left\langle \nabla\big( X_{1,h\tau}^0(t_{n})+X_{1,h\tau}^0(t_{n+1})\big), \nabla Y_{2,h}(t_{n+1})\right\rangle+ \tau\left\langle U_{h\tau}(t_{n}), Y_{2,h}(t_{n+1})\right\rangle\bigg]\notag\\
&\qquad-\mathbb{E}\bigg[\left\langle \int_{t_{n+1}}^{t_{n}}Y_{1,h}(t)\,{\rm d}t, \gamma X_{1,h\tau}^0(t_n)\Delta_{n+1}W\right\rangle\bigg]+\mathbb{E}\bigg[\int_{t_{n}}^{t_{n+1}}\left\langle \gamma Z_{2,h}(t), X_{1,h\tau}^0(t_n)\right\rangle\,{\rm d}t\bigg].
\end{align}
\textbf{Step 2(b).} On the other hand, by testing \eqref{discrete adjoint ode} with $X_{2,h\tau}^0(t_{n})$, we obtain
\begin{align}\label{b8}
	\left\langle Y_{2,h}(t_{n+1}), X_{2,h\tau}^0(t_{n})\right\rangle-\left\langle Y_{2,h}(t_n),X_{2,h\tau}^0(t_{n})\right\rangle&=-\int_{t_{n}}^{t_{n+1}}\left\langle Y_{1,h}(t), X_{2,h\tau}^0(t_{n})\right\rangle\notag\\&
\qquad +\int_{t_{n}}^{t_{n+1}}\left\langle \gamma Z_{2,h}(t),X_{2,h\tau}^0(t_n)\right\rangle\,{\rm d}W(t).
\end{align}
\textbf{Step 2(c).} Adding identities \eqref{b7}--\eqref{b8} then give
\begin{align}\label{c2}
&	\mathbb{E}\bigg[\left\langle Y_{2,h}(t_{n+1}),X_{2,h\tau}^0(t_{n+1})\right\rangle-\left\langle Y_{2,h}(t_n),X_{2,h\tau}^0(t_{n})\right\rangle\bigg]\notag\\&=-\mathbb{E}\bigg[\int_{t_{n}}^{t_{n+1}}\left\langle Y_{1,h}(t), X_{2,h\tau}^0(t_{n})\right\rangle\,{\rm d}t\bigg]+\mathbb{E}\bigg[\frac{\tau}{2}\bigg \langle \nabla\big( X_{1,h\tau}^0(t_{n+1})+X_{1,h\tau}^0(t_{n})\big), Y_{2,h}(t_{n+1})\bigg\rangle\notag\\&\qquad+ \tau\left\langle U_{h\tau}(t_{n}), Y_{2,h}(t_{n+1})\right\rangle\bigg]-\mathbb{E}\bigg[\left\langle \int_{t_{n+1}}^{t_{n}}Y_{1,h}(t)\,{\rm d}t,X_{1,h\tau}^0(t_n)\Delta_{n+1}W\right\rangle\bigg]\notag\\&\qquad+\mathbb{E}\bigg[\int_{t_{n}}^{t_{n+1}}\left\langle \gamma Z_{2,h}(t), X_{1,h\tau}^0(t_n)\right\rangle\,{\rm d}t\bigg].
\end{align}
After summing over index $n$, we get
\begin{align*}
    &\mathbb{E}\bigg[\left\langle Y_{2,h}(t_{N}),X_{2,h\tau}^0(t_{N})\right\rangle-\left\langle Y_{2,h}(t_0),X_{2,h\tau}^0(t_{0})\right\rangle\bigg]\notag\\&=-\sum_{n=0}^{N-1}\mathbb{E}\bigg[\int_{t_{n}}^{t_{n+1}}\left\langle Y_{1,h}(t), X_{2,h\tau}^0(t_{n})\right\rangle\,{\rm d}t + \frac{\tau}{2}\left\langle \nabla\big( X_{1,h\tau}^0(t_{n+1})+X_{1,h\tau}^0(t_{n})\big), Y_{2,h}(t_{n+1})\right\rangle\notag\\&\qquad+\tau\left\langle U_{h\tau}(t_{n}), Y_{2,h}(t_{n+1})\right\rangle-\left\langle \int_{t_{n+1}}^{t_{n}}Y_{1,h}(t)\,{\rm d}t,X_{1,h\tau}^0(t_n)\Delta_{n+1}W\right\rangle+\int_{t_{n}}^{t_{n+1}}\left\langle \gamma Z_{2,h}(t), X_{1,h\tau}^0(t_n)\right\rangle\,{\rm d}t\bigg].
\end{align*}
By using the fact $Y_{2,h}(t_N)=X_{2,h\tau}^0(t_0)=0$, we get
\begin{align}\label{c2}
    -&\mathbb{E}\bigg[\int_{t_{n}}^{t_{n+1}}\left\langle \gamma Z_{2,h}(t), X_{1,h\tau}^0(t_n)\right\rangle\,{\rm d}t\bigg]\notag\\&=-\sum_{n=0}^{N-1}\mathbb{E}\bigg[\int_{t_{n}}^{t_{n+1}}\left\langle Y_{1,h}(t), X_{2,h\tau}^0(t_{n})\right\rangle\,{\rm d}t + \frac{\tau}{2}\left\langle \nabla\big( X_{1,h\tau}^0(t_{n+1})+X_{1,h\tau}^0(t_{n})\big), Y_{2,h}(t_{n+1})\right\rangle\notag\\&\qquad+\tau\left\langle U_{h\tau}(t_{n}), Y_{2,h}(t_{n+1})\right\rangle-\left\langle \int_{t_{n+1}}^{t_{n}}Y_{1,h}(t)\,{\rm d}t,X_{1,h\tau}^0(t_n)\Delta_{n+1}W\right\rangle\bigg].
\end{align}
\textbf{Step 3.} By adding \eqref{c1} and \eqref{c2}, we conclude that
\begin{align*}
	 &\mathbb{E}\bigg[\int_{0}^T\left\langle X_{1,h}^*(t)-\widetilde{X}_{h\tau}, {X}_{1, h\tau}^0(t)\right\rangle\,{\rm d}t+ \beta\left\langle X_{1,h}^*(T)-\widetilde{X}_{h\tau}(T), {X}_{1,h\tau}^0(T)\right\rangle\bigg]\\&=I_1+I_2+ I_3+I_4+ I_5.
\end{align*}
This completes the proof.
\end{proof}
\begin{remark}[On avoiding {\em Malliavin calculus} in the present error analysis]\label{Remark 4.3}
	Previous works on strong error estimates for discretizations of the stochastic optimal control problems, such as those for stochastic heat equations with multiplicative noise ({\em e.g.,} see~\cite{Prohl&Wang1, ProhlWang2021}), relied on {\em Malliavin calculus} to prove time-discretization error estimates in \cite[Lemmas 3.11--3.13]{ProhlWang2021}. This was necessary due to $Z_h$’s role in the drift term, requiring extensive technical machinery ({\em e.g.,} see~\cite[Sec.~3.3, pg. 3401 to pg. 3421]{ProhlWang2021}).
    
    In contrast, our error analysis bypasses {\em Malliavin calculus} by reformulating the Fréchet derivative of the discrete cost functional, $\mathcal{D}_{U_{h\tau}}\hat{\mathcal{J}}_{h\tau}(U_{h\tau}^* )$ at the fully discrete optimal control $U_{h\tau}^*$ without involving the drift term $Z_h = (Z_{1,h}, Z_{2,h})$; see~equations~\eqref{j2} and \eqref{j1}. This enables us to derive all temporal regularity estimates within a variational framework, with the key error terms provided by a single proposition; see Proposition~\ref{Proposition02}.
\end{remark}
\subsection{Error analysis for space-time discretization} In this subsection, we estimate the error between the fully discrete optimal tuple $(X_{1, h\tau}^*, X_{2,h\tau}^*, U_{h\tau}^*)$ and the semi-discrete optimal tuple $(X_{1,h}^*, X_{2,h}^*, U_h^*)$ in suitable norms. To this end, we introduce several technical propositions and lemmas. Moreover, Assumption~\ref{BB} give that there exists a constant \(C>0\), independent of the discretization parameters \(h\) and \(\tau\), such that
\begin{align}\label{assumption on noise coefficitents}
	\|\mathcal{R}_h\sigma-\Pi_\tau\mathcal{R}_h\sigma\|_{\mathbb{L}^2_{t,x}}^2
	+ \|\widetilde{X}_{h}-\widetilde{X}_{h\tau}\|_{\mathbb{L}^2_{t,x}}^2
	\le C\,\tau \big(\|\widetilde{X}\|_{C_t^{1/2}\mathbb{H}_0^1}^2+\|\sigma\|_{\mathbb{L}^2_{\mathbb{F}}C^{1/2}_t\mathbb{H}_0^1}^2\big).
\end{align} 
We now state the following proposition, which provides the error estimate between the semi-disctere state $\mathcal{X}_{1,h}[U_{h\tau}]$ and the fully-discrete state $\mathcal{X}_{1,h\tau}[U_{h\tau}]$ corresponding to the same semi-discrete control $U_{h\tau}$. This result will be useful in the proof of Theorem~\ref{strong rate of convergence for time discretization}.
\begin{proposition}[Error estimate]\label{Proposition 4.7} Let $U_{h\tau}\in \mathbb{U}_{h\tau}$ and Assumption~\ref{BB} hold. Then there exists a $C>0$ such that for all $t\in[0,T]$,
	\begin{align}\label{today0004}
	&\mathbb{E}\bigg[\|\nabla \big(\mathcal{X}_{1,h}[U_{h\tau}](t)-\mathcal{X}_{1,h\tau}[U_{h\tau}](t)\|_{\mathbb{L}^2_{x}}^2\bigg]+\mathbb{E}\bigg[\| \mathcal{X}_{2,h}[U_{h\tau}](t)-\mathcal{X}_{2,h\tau}[U_{h\tau}](t)\|_{\mathbb{L}^2_{x}}^2\big]\bigg]\notag\\&\qquad\le C \tau 
    \bigg(\|X_{1,0}\|_{\mathbb{H}_x^3}^2+ \|X_{2,0}\|_{\mathbb{H}^2_x}^2+\mathbb{E}\big[ \|\sigma\|_{\mathbb{L}^2_t\mathbb{H}^2_x\cap C_t^{1/2}\mathbb{H}_0^1}^2+ \|\nabla\Delta_h U_{h\tau}\|_{\mathbb{L}_{t, x}^2}^2\big]\bigg).
	\end{align}
\begin{proof} For convenience, we set $$(X_{1,h},X_{2,h})\equiv(\mathcal{X}_{1,h}[U_{h\tau}], \mathcal{X}_{2,h}[U_{h\tau}]),\qquad (X_{1,h\tau},X_{2,h\tau})\equiv(\mathcal{X}_{1,h\tau}[U_{h\tau}], \mathcal{X}_{2,h\tau}[U_{h\tau}]).$$
We have  for all $n\in\{0,1,2,...,N-1\}$
	\begin{align}
		X_{1,h}(t_{n+1})-X_{1,h}(t_n)&=\int_{t_n}^{t_{n+1}}X_{2,h}(t)\,{\rm d}t,\label{toaday2}\\	
	X_{2,h}(t_{n+1})-X_{2,h}(t_n)&=\int_{t_n}^{t_{n+1}}\Delta_h X_{1,h}(t)\,{\rm d}t +\int_{t_n}^{t_{n+1}}U_{h\tau}(t)\,{\rm d}t + \int_{t_n}^{t_{n+1}}\big(\mathcal{R}_h\sigma(t)+\gamma X_{1,h}(t)\big)\,{\rm d}W(t)\label{toaday21}.
		\end{align}
We define for all $n=0,1,...,N,$
\begin{align*}
	e_{n}^1= X_{1,h}(t_n)-X_{1,h\tau}(t_n), \qquad e_{n}^{2}= X_{2,h}(t_n)-X_{2,h\tau}(t_n).
\end{align*}
From~\eqref{discrete state equation} and \eqref{toaday2}-\eqref{toaday21}, we conclude that 
\begin{align}e_{n+1}^1-e_{n}^1&=\frac{\tau}{2} (e_{n+1}^2+e_{n}^2) +\frac{1}{2} \int_{t_n}^{t_{n+1}}\big(X_{2,h}(t)-X_{2,h}(t_{n+1})\big)\,{\rm d}t+\frac{1}{2} \int_{t_n}^{t_{n+1}}\big(X_{2,h}(t)-X_{2,h}(t_{n})\big)\,{\rm d}t,\label{andreas1}\\
	e_{n+1}^2-e_{n}^2&=\frac{\tau}{2}\Delta_h e_{n+1}^1 +\frac{\tau}{2}\Delta_h e_{n}^1+ \gamma e_{n}^1\Delta_{n+1}W+ \frac{1}{2}\int_{t_n}^{t_{n+1}} \Delta_h(X_{1,h}(t)-X_{1,h}(t_{n+1}))\,{\rm d}t\notag \\&\qquad+\frac{1}{2}\int_{t_n}^{t_{n+1}} \Delta_h(X_{1,h}(t)-X_{1,h}(t_{n}))\,{\rm d}t\notag\\&\qquad+ \int_{t_{n}}^{t_{n+1}}\big[(\mathcal{R}_h\sigma(t)-\mathcal{R}_h\sigma(t_n))+\gamma(X_{1,h}(t)-X_{1,h}(t_n)) \big]\,{\rm d}W(t).\label{andreas}
\end{align}
We test \eqref{andreas} with $e_{n+1}^2+e_{n}^2$ write to arrive at
\begin{align}\label{a1}
	\left\langle e_{n+1}^2-e_{n}^2, e_{n+1}^2+ e_{n}^2\right\rangle&=I_1+ I_2+ I_3+ I_4,
\end{align}
where
\begin{align*}
I_1(n)&=	-\tau\left\langle \nabla (e_{n+1}^2+e_{n}^2), \nabla (e_{n+1}^1+e_{n}^1)\right\rangle, \\
I_2(n)&=\int_{t_n}^{t_{n+1}}\left\langle \nabla (e_{n+1}^2+e_{n}^2), \nabla X_{1,h}(t)-\nabla \frac{1}{2}\big(X_{1,h}(t_{n+1})+X_{1,h}(t_n)\big) \right\rangle\,{\rm d}t,\\
I_3(n)&=\gamma\left\langle e_n^1, e_{n+1}^2+ e_{n}^2\right\rangle\Delta_{n+1}W,\\
I_4(n)&=\left\langle  e_{n+1}^2+e_{n}^2, \int_{t_{n+1}}^{t_{n}}\bigg[(\mathcal{R}_h\sigma(t)-\mathcal{R}_h\sigma(t_n))+\gamma(X_{1,h}(t)-X_{1,h}(t_n))\bigg]\,{\rm d}W(t)\right\rangle.
\end{align*}
We estimate each term separately.

\noindent
\textbf{Step 1.} We start with the term $I_1$. For this purpose, we test \eqref{andreas} with $\Delta_h(e_{n+1}^1+e_{n}^1)$ to conclude with the help of \eqref{andreas1} that
\begin{align*}
	I_1(n)&=  -\|\nabla e_{n+1}^1\|^2+\|\nabla e_{n}^1\|^2-\frac{1}{2}\int_{t_n}^{{n+1}}\left\langle \nabla\big(X_{2,h}(t)-X_{2,h}(t_{t_{n+1}})\big), \nabla\big(e_{n+1}^1+e_{n}^1\big)\right\rangle\,{\rm d}t\\&\qquad-\frac{1}{2}\int_{t_n}^{{n+1}}\left\langle \nabla\big(X_{2,h}(t)-X_{2,h}(t_{t_{n}})\big),\nabla(e_{n+1}^1+e_{n}^1)\right\rangle\,{\rm d}t.
\end{align*} 
After summation and by using Young's inequality, we obtain that  $(\delta>0)$
\begin{align*}
	\mathbb{E}\big[I_{1}(n)\big]&\le -\mathbb{E}\big[\|\nabla e_{n+1}^1\|^2\big]+\mathbb{E}\big[\|\nabla e_{n}^1\|^2\big]+\delta\tau\mathbb{E}\big[\|\nabla e_{n+1}^1\|_{\mathbb{L}^2_{x}}^2\big]+\delta\tau \mathbb{E}\big[\|\nabla e_{n}^1\|_{\mathbb{L}^2_x}^2\big]\\&\qquad + C_\delta\mathbb{E}\bigg[\int_{t_{n}}^{t_{n+1}}\|\nabla(X_{2,h}(t)-X_{2,h}(t_{n+1}))\|_{\mathbb{L}^2_x}^2\,{\rm d}t + \int_{t_{n}}^{t_{n+1}}\|\nabla(X_{2,h}(t)-X_{2,h}(t_n))\|_{\mathbb{L}^2_{x}}^2\,{\rm d}t\bigg].
\end{align*}
Using the estimate \eqref{time-regularity for semi-discrete state-2}, we obtain
\begin{align*}
&\sum_{n=0}^{k-1}\mathbb{E}\big[I_{1}(n)\big]\le-\mathbb{E}\big[\|\nabla e_{k}^1\|^2_{\mathbb{L}^2_x}\big]+\tau \delta \mathbb{E}\big[\|\nabla e_{k}^1\|^2_{\mathbb{L}^2_x}\big] + C_\delta\tau\sum_{n=0}^{k-1}\mathbb{E}\big[\|\nabla e_{n}^1\|^2_{\mathbb{L}^2_x}\big]\\&\qquad+ C_\delta \tau \big(\|X_{1,0}\|_{\mathbb{H}_x^3}^2+ \|X_{2,0}\|_{\mathbb{H}^2_x}^2+\mathbb{E}\big[ \|\sigma\|_{\mathbb{L}^2_t\mathbb{H}^2_x}^2+ \|\nabla\Delta_hU_{h}\|_{\mathbb{L}_{t, x}^2}^2\big]\big).
\end{align*}
\textbf{Step 2.} We consider the term $I_2$. With the help of Young's inequality, we obtain $(\delta>0)$
\begin{align*}
	I_2(n)&\le \delta\tau (\|e_{n+1}^2\|_{\mathbb{L}^2_x}^2+\|e_{n}^2\|_{\mathbb{L}^2_x}^2) +C_\delta\int_{t_{n}}^{t_{n+1}}\big\|\Delta_h (X_{1,h}(t)-X_{1,h}(t_{n+1}))\big\|_{\mathbb{L}^2_x}^2\,{\rm d}t.
\end{align*}
With the help of \eqref{time-regularity for semi-discrete state-1}, we obtain $(\delta>0)$
\begin{align}\label{a3}
\mathbb{E}\big[I_2(n)\big]\le \tau\delta\mathbb{E}\big[ \|e_{n+1}^2\|_{\mathbb{L}^2_x}^2\big]+\tau\mathbb{E}\big[ \|e_{n}^2\|_{\mathbb{L}^2_x}^2\big] +C_\delta\,\tau\big(\|X_{1,0}\|_{\mathbb{H}_x^3}^2+ \|X_{2,0}\|_{\mathbb{H}^2_x}^2+\mathbb{E}\big[ \|\sigma\|_{\mathbb{L}^2_t\mathbb{H}^2_x}^2+ \|\nabla\Delta_hU_{h}\|_{\mathbb{L}_{t, x}^2}^2\big]\big).
\end{align}
It implies that
\begin{align*}
	&\sum_{n=0}^{k-1}\mathbb{E}\big[I_{2}(n)\big]\le \delta\tau\mathbb{E}\big[\|e_{k}^2\|_{\mathbb{L}^2_x}^2\big]+\tau C_{\delta}\sum_{n=0}^{k-1}\mathbb{E}\big[\|e_{n}^2\|_{\mathbb{L}^2_x}^2\big]\\&\qquad+ C_\delta \tau \big(\|X_{1,0}\|_{\mathbb{H}_x^3}^2+ \|X_{2,0}\|_{\mathbb{H}^2_x}^2+\mathbb{E}\big[ \|\sigma\|_{\mathbb{L}^2_t\mathbb{H}^2_x}^2+ \|\nabla\Delta_hU_{h}\|_{\mathbb{L}_{t, x}^2}^2\big]\big).
	\end{align*}
\textbf{Step 3.} In this step, we estimate the term $I_3$. By independence of Wiener process, we have
\begin{align*}
\mathbb{E}\big[I_3(n)\big]& =\mathbb{E}\big[\big\langle e_n^1,\, \gamma e_{n}^1 \big\rangle \Delta_{n+1}W\big] + \mathbb{E}\big[\big\langle e_{n+1}^2,\, \gamma e_{n}^1 \big\rangle \Delta_{n+1}W\big]\\
&=\mathbb{E}\big[\big\langle e_{n+1}^2,\, \gamma e_{n}^1 \big\rangle \Delta_{n+1}W\big].
\end{align*}
Since $e_{n+1}^2$ is not $\mathcal{F}_{t_n}$-measurable, we expand $e_{n+1}^2$ using the recursion \eqref{andreas}.
In order to now estimate $I_3(n)$, we test \eqref{andreas} with $\gamma e_{n}^1\Delta_{n+1}W$ to obtain
\begin{align*}
	&I_3(n) = \gamma\mathbb{E}\bigg[\big\langle e_n^1,\, e_n^2 \big\rangle \Delta_{n+1}W  - \big\langle \nabla e_n^1,\, \frac{\tau}{2}\nabla e_{n+1}^1 + \frac{\tau}{2}\nabla e_n^1 \big\rangle \Delta_{n+1}W \\
	&- \big\langle \nabla e_n^1,\, \frac{1}{2}\int_{t_n}^{t_{n+1}} \nabla(X_{1,h}(t) - X_{1,h}(t_{n+1}))\,{\rm d}t \big\rangle \Delta_{n+1}W - \big\langle \nabla e_n^1,\, \frac{1}{2}\int_{t_n}^{t_{n+1}} \nabla(X_{1,h}(t) - X_{1,h}(t_n))\,{\rm d}t \big\rangle \Delta_{n+1}W \\
	&+ \big\langle \gamma e_n^1,\, e_n^1 \Delta_{n+1}W \big\rangle \Delta_{n+1}W  + \big\langle e_n^1,\, \int_{t_n}^{t_{n+1}} [(\Pi_h\sigma(t) - \Pi_h\sigma(t_n)) +\gamma (X_{1,h}(t) - X_{1,h}(t_n))] \,{\rm d}W(t)] \big\rangle \Delta_{n+1}W\bigg].
\end{align*}
\textbf{(a):} For the first term, since $\big\langle e_n^1 , e_n^2 \big\rangle$, is $\mathcal{F}_{t_n}$-measurable, we arrive at
\[
\mathbb{E}\left[ \sum_n \big\langle e_n^1,\, e_n^2 \big\rangle \Delta_{n+1}W \right]=0.\]
\textbf{(b):} For the second term, we use Young's inequality, independence of random variables, and It\^o isometry to get $(\delta>0)$
\begin{align*}
	\mathbb{E}\big[\left\langle \nabla e_{n}^1,\frac{\tau}{2}\nabla(e_{n+1}^1+e_{n}^1)\Delta_{n+1}W\right\rangle\big]&=\mathbb{E}\big[\left\langle \nabla e_{n}^1,\frac{\tau}{2}\nabla e_{n+1}^1\Delta_{n+1}W\right\rangle\big]\\
	&\le \tau^2\delta \mathbb{E}\big[\|\nabla e_{n+1}^1\|^2_{\mathbb{L}^2_x}\big]+ C_{\delta}\tau \mathbb{E}\big[\|\nabla e_{n}^1\|_{\mathbb{L}^2_x}^2\big].
\end{align*}
This implies that for any $N-1\ge k\ge 1$,
\begin{align*}
	\sum_{n=0}^{k-1}\mathbb{E}\big[\left\langle \nabla e_{n}^1,\frac{\tau}{2}\nabla(e_{n+1}^1+e_{n}^1)\Delta_{n+1}W\right\rangle\big]\le \delta \mathbb{E}\big[\|\nabla e_{k}^1\|_{\mathbb{L}^2_x}^2\big]+ C_{\delta}\tau\sum_{n=0}^{k-1}\mathbb{E}\big[\|\nabla e_{n}^1\|_{\mathbb{L}^2_x}^2\big].
\end{align*}
\textbf{(c):} For the third term, with the help of the estimate  \eqref{time-regularity for semi-discrete state-1}, we obtain that $(\delta>0)$
\begin{align*}
	\sum_{n=0}^{k-1}\mathbb{E}\bigg[\big\langle \nabla e_n^1,\, \frac{1}{2}\int_{t_n}^{t_{n+1}} \nabla(X_{1,h}(t) - X_{1,h}(t_{n+1}))\,{\rm d}t\big\rangle \Delta_{n+1}W \bigg]&\le \delta \mathbb{E}\big[\|\nabla e_{k}^1\|_{\mathbb{L}^2_x}^2\big]+ C_{\delta}\tau\sum_{n=0}^{k-1} \mathbb{E}\big[\|\nabla e_{n}^1\|_{\mathbb{L}^2_x}^2\big]\\&+ C_{\delta}\tau\mathbb{E}\big[\|\nabla\Delta_h U_{h\tau}\|_{\mathbb{L}^2_x}^2\big].
\end{align*}
\textbf{(d):} For the fifth term, with the help of the estimate \eqref{time-regularity for semi-discrete state-1}, we obtain that $(\delta>0)$
\begin{align*}
	\sum_{n=0}^{N-1}\mathbb{E}\bigg[\big\langle \nabla e_n^1,\, \frac{1}{2}\int_{t_n}^{t_{n+1}} \nabla(X_{1,h}(t) - X_{1,h}(t_n))\,{\rm d} \big\rangle \Delta_{n+1}W \bigg]&\le \delta \mathbb{E}\big[\|\nabla e_{k}^1\|_{\mathbb{L}^2_x}^2\big]+C_{\delta}\tau\sum_{n=0}^{k-1}\mathbb{E}\big[\|\nabla e_{n}^1\|^2_{\mathbb{L}^2_x}\big]\\&+ C_{\delta}\tau\mathbb{E}\big[\|\nabla\Delta_h U_{h\tau}\|_{\mathbb{L}^2_{t, x}}^2\big]. 
\end{align*}
\textbf{(e):} By independence and It\^o isometry and Poincar\'e inequality, the quadratic term $\big\langle e_n^1, e_n^1 \Delta_{n+1} W \big\rangle \Delta_{n+1} W$ is handled as
\[
\mathbb{E}\big[\big\langle e_n^1, e_n^1 \Delta_{n+1} W \big\rangle \Delta_{n+1} W \big]= \tau\mathbb{E}\big[\|e_{n}^1\|_{\mathbb{L}^2_x}^2\big]\le C\tau\mathbb{E}\big[\|\nabla e_{n}^1\|_{\mathbb{L}^2_x}^2\big].
\]
\textbf{(f):} For the final term, we use the estimates \eqref{assumption on noise coefficitents} and \eqref{time-regularity for semi-discrete state-1} to get that
\begin{align*}
&\sum_{n=0}^{k-1}\mathbb{E}\bigg[\big\langle e_n^1,\, \int_{t_n}^{t_{n+1}} \big[\mathcal{R}_h\sigma(t) - \mathcal{R}_h\sigma(t_n)) + (X_{1,h}(t) - X_{1,h}(t_n))\big] \,{\rm d}W(t) \big\rangle \Delta_{n+1}W\bigg]\\&\le \sum_{n=0}^{k-1}\tau\mathbb{E}\bigg[\|e_{n}^1\|_{\mathbb{L}^2_x}^2\bigg]+C\tau\big(\|X_{1,0}\|_{\mathbb{H}_x^3}^2+ \|X_{2,0}\|_{\mathbb{H}^2_x}^2+\mathbb{E}\big[ \|\sigma\|_{\mathbb{L}^2_t\mathbb{H}^2_x\cap C^{1/2}_t\mathbb{H}_0^1}^2+ \|\nabla\Delta_hU_{h}\|_{\mathbb{L}_{t, x}^2}^2\big]\big).
\end{align*}
Thus, finally, we get $(\delta>0)$
\begin{align*}
&	\sum_{n=0}^{k-1}\mathbb{E}\big[I_3(n)\big]\le \delta\mathbb{E}\big[\|\nabla e_{k}^1\|^2_{\mathbb{L}^2_{x}}\big]+C_{\delta}\tau\sum_{n=0}^{k-1}\mathbb{E}\big[\|\nabla e_{n}^1\|_{\mathbb{L}^2_{x}}^2\big]\\&\qquad+ C_\delta \tau \bigg(\|X_{1,0}\|_{\mathbb{H}_x^3}^2+ \|X_{2,0}\|_{\mathbb{H}^2_x}^2+ \mathbb{E}\big[\|\sigma\|_{\mathbb{L}^2_t\mathbb{H}_x^2\cap C^{1/2}_t\mathbb{H}_0^1}^2+ \|\nabla\Delta_hU_{h}\|_{\mathbb{L}_{t, x}^2}^2\big]\bigg).
\end{align*}
\noindent
\textbf{Step 4.} In this step, with the help of the estimate \eqref{assumption on noise coefficitents}, we can estimate the term $I_4$ in a similar way as in Step 3, we yield
\begin{align*}
 \sum_{n=0}^{k-1}\mathbb{E}\big[I_{4}(n)\big]\le &\delta \mathbb{E}\big[\|\nabla e_{k}^1\|_{\mathbb{L}^2_x}^2\big]+ C_{\delta}\tau\sum_{n=0}^{k-1}\mathbb{E}\big[\|\nabla e_{n}^1\|_{\mathbb{L}^2_x}^2\big]\\&+C_\delta \tau\big(\|X_{1,0}\|_{\mathbb{H}_x^3}^2+ \|X_{2,0}\|_{\mathbb{H}^2_x}^2+\mathbb{E}\big[ \|\sigma\|_{\mathbb{L}^2_t\mathbb{H}^2_xC^{1/2}_t\mathbb{H}_0^1}^2+ \|\nabla\Delta_hU_{h}\|_{\mathbb{L}_{t, x}^2}^2\big]\big).
\end{align*}
\textbf{Step 5.} From the last steps and choosing small enough $\delta>0$, we get for any $1\le k\le N-1$,
\begin{align*}
	&\mathbb{E}\big[\|\nabla e_{k}^1\|_{\mathbb{L}^2_x}^2+ \|e_{k}^2\|_{\mathbb{L}^2_x}^2\big]\le C\tau \sum_{n=0}^{k-1}\mathbb{E}\big[\|\nabla e_{n}^1\|_{\mathbb{L}^2_x}^2+\|e_{n}^2\|_{\mathbb{L}_x^2}^2\big]+ \\&\qquad+ C \tau \big(\|X_{1,0}\|_{\mathbb{H}_x^3}^2+ \|X_{2,0}\|_{\mathbb{H}^2_x}^2+\mathbb{E}\big[ \|\sigma\|_{\mathbb{L}^2_t\mathbb{H}^2_x\cap C^{1/2}_t\mathbb{H}_0^1}^2+ \|\nabla\Delta_hU_{h}\|_{\mathbb{L}_{t, x}^2}^2\big]\big).
\end{align*}
We use discrete Gronwall's inequality to conclude the estimate \eqref{today0004}.
\end{proof}
\end{proposition}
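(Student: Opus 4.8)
The plan is to derive a discrete energy estimate for the error process and close it with the discrete Gronwall inequality. Following the notation of the statement, I would set $e_n^1 := \mathcal{X}_{1,h}[U_{h\tau}](t_n) - \mathcal{X}_{1,h\tau}[U_{h\tau}](t_n)$ and $e_n^2 := \mathcal{X}_{2,h}[U_{h\tau}](t_n) - \mathcal{X}_{2,h\tau}[U_{h\tau}](t_n)$. First I would rewrite the semi-discrete state equation \eqref{3.2} in integral form over each subinterval $[t_n, t_{n+1}]$, so that $X_{1,h}(t_{n+1}) - X_{1,h}(t_n) = \int_{t_n}^{t_{n+1}} X_{2,h}(t)\,{\rm d}t$ and the velocity component carries its drift integral plus the It\^o integral of $\mathcal{R}_h\sigma + \gamma X_{1,h}$. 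Subtracting the fully discrete midpoint scheme \eqref{discrete state equation} then produces recursions for $(e_n^1, e_n^2)$ whose right-hand sides split into the implicit midpoint terms $\tfrac{\tau}{2}(e_{n+1}^2 + e_n^2)$ and $\tfrac{\tau}{2}\Delta_h(e_{n+1}^1 + e_n^1)$, the noise-coupling term $\gamma e_n^1 \Delta_{n+1}W$, and consistency (local truncation) terms that measure the deviation of $X_{1,h}$, $X_{2,h}$, and $\mathcal{R}_h\sigma$ from their endpoint values across $[t_n, t_{n+1}]$.

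The energy step would test the velocity error recursion with $e_{n+1}^2 + e_n^2$ and, to absorb the discrete-Laplacian coupling $\langle \nabla(e_{n+1}^2 + e_n^2), \nabla(e_{n+1}^1 + e_n^1)\rangle$ appearing there, test the recursion a second time with $\Delta_h(e_{n+1}^1 + e_n^1)$ and invoke the position recursion. Adding the two identities makes the symmetric midpoint cross-terms telescope into $\|\nabla e_k^1\|_{\mathbb{L}^2_x}^2 + \|e_k^2\|_{\mathbb{L}^2_x}^2$ --- this is exactly where the energy-conserving structure of the midpoint rule for the wave operator is used. Each consistency term I would bound by Young's inequality, sending the $\|\nabla e\|$ and $\|e\|$ factors either into a small multiple of the left side (tuned by a parameter $\delta > 0$) or into a $\tau$-weighted tail suitable for Gronwall. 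The genuinely new contributions, such as $\int_{t_n}^{t_{n+1}} \|\nabla(X_{2,h}(t) - X_{2,h}(t_n))\|_{\mathbb{L}^2_x}^2\,{\rm d}t$ and the corresponding $\Delta_h X_{1,h}$ and $\mathcal{R}_h\sigma$ increments, I would estimate using the semi-discrete time-regularity bounds \eqref{time-regularity for semi-discrete state-1} and \eqref{time-regularity for semi-discrete state-2} together with \eqref{assumption on noise coefficitents}; under Assumption~\ref{BB} these carry the factor $\tau$ and are controlled by the data norms on the right of \eqref{today0004}, which is what produces the asserted rate.

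The main obstacle I anticipate is the stochastic coupling term $\mathbb{E}[\gamma\langle e_n^1,\, e_{n+1}^2 + e_n^2\rangle\, \Delta_{n+1}W]$, since $e_{n+1}^2$ is not $\mathcal{F}_{t_n}$-measurable: it depends on $\Delta_{n+1}W$ precisely through the noise term $\gamma e_n^1 \Delta_{n+1}W$, so this expectation does not vanish. My plan is to substitute the velocity recursion for $e_{n+1}^2$ into the inner product and evaluate the resulting pieces individually. The $\mathcal{F}_{t_n}$-measurable pieces (for instance $\langle e_n^1, e_n^2\rangle$) vanish against the independent, mean-zero increment; the quadratic self-interaction $\langle \gamma e_n^1,\, e_n^1 \Delta_{n+1}W\rangle \Delta_{n+1}W$ yields, via $\mathbb{E}[(\Delta_{n+1}W)^2] = \tau$, a term of order $\tau\|e_n^1\|_{\mathbb{L}^2_x}^2$ that is absorbed after Poincar\'e's inequality into the Gronwall tail; and the remaining It\^o-integral interactions are handled by the It\^o isometry and the same time-regularity estimates. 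After all terms are bounded in the form $\delta(\|\nabla e_k^1\|_{\mathbb{L}^2_x}^2 + \|e_k^2\|_{\mathbb{L}^2_x}^2) + C_\delta \tau \sum_{n=0}^{k-1}(\|\nabla e_n^1\|_{\mathbb{L}^2_x}^2 + \|e_n^2\|_{\mathbb{L}^2_x}^2) + C_\delta \tau\,(\text{data})$, choosing $\delta$ sufficiently small and applying the discrete Gronwall inequality yields \eqref{today0004}.
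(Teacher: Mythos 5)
Your proposal follows essentially the same route as the paper's proof: the same error recursions obtained by subtracting the midpoint scheme from the integrated semi-discrete equation, the same double testing (with $e_{n+1}^2+e_n^2$ and with $\Delta_h(e_{n+1}^1+e_n^1)$) to telescope the energy $\|\nabla e_k^1\|_{\mathbb{L}^2_x}^2+\|e_k^2\|_{\mathbb{L}^2_x}^2$, the same expansion of $e_{n+1}^2$ via the recursion to treat the non-adapted stochastic coupling term (with the quadratic self-interaction giving $\tau\|e_n^1\|_{\mathbb{L}^2_x}^2$ by It\^o isometry and Poincar\'e), and the same use of the time-regularity bounds \eqref{time-regularity for semi-discrete state-1}--\eqref{time-regularity for semi-discrete state-2} and \eqref{assumption on noise coefficitents} followed by Young's inequality and the discrete Gronwall lemma. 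The argument is correct and matches the paper's proof in all essential respects.
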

In the proof of Proposition~\ref{Proposition 4.7}, it is clear that estimating \(I_1(n)\) requires H\"older time regularity of \(X_{2,h} = \partial_t X_{1,h}\), which is limited up to \(1/2\) (see, equation~\eqref{time-regularity for semi-discrete state-2}). Consequently, this limitation results in a convergence rate of order \(1/2\) in the proposition.

The following theorem establishes the rate of convergence of \(\mathbf{SLQ}_{h\tau}\) problem \eqref{fully discrete cost functional}-\eqref{discrete state equation} to \(\mathbf{SLQ}_h\) problem \eqref{3.1}-\eqref{3.2}. 
\begin{thm}\label{strong rate of convergence for time discretization}Let Assumption~\ref{BB} hold. Let $(X_h^*, U_h^*)$ and $(X_{1,h\tau}^*, U_{h\tau}^*)$ be solve {\bf SLQ}$_h$ problem \eqref{3.1}-\eqref{3.2} and ${\bf SLQ}_{h\tau}$ problem \eqref{fully discrete cost functional}-\eqref{discrete state equation}, respectively. Then there exists a positive constant $C$ such that
	\begin{align*}
		&\mathbb{E}\big[\|U_h^*-U_{h\tau}^*\|_{\mathbb{L}^2_{t, x}}^2\big]+\mathbb{E}\big[\|X^*_{1,h}- X_{1,h\tau}^*\|_{\mathbb{L}^2_{t, x}}^2\big]+ \beta\mathbb{E}\big[\|X^*_h(T)- X_{1,h\tau}^*(T)\|_{\mathbb{L}^2_x}^2\big] \\&\le C\tau\big(\|X_{1,0}\|_{\mathbb{H}_x^3}^2+ \|X_{2,0}\|_{\mathbb{H}^2_x}^2+\|\widetilde{X}\|_{\mathbb{L}^2_t\mathbb{H}_x^2\cap C_t^{1/2}\mathbb{H}_0^1}^2+\mathbb{E}\big[ \|\sigma\|_{\mathbb{L}^2_t\mathbb{H}^2_x\cap C^{1/2}_t\mathbb{H}_0^1}^2\big]\big).
	\end{align*} 
	
\end{thm}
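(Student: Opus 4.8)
The plan is to mirror the proof of Theorem~\ref{Thm 3.8}, transposing it from the (continuous-to-semidiscrete) level to the (semidiscrete-to-fully-discrete) level, with Proposition~\ref{Proposition02} taking over the role that Malliavin calculus plays in \cite{ProhlWang2021}; see Remark~\ref{Remark 4.3}. First I would start from the coercivity
\[
\alpha\mathbb{E}\big[\|U_h^*-U_{h\tau}^*\|_{\mathbb{L}^2_{t,x}}^2\big]=\mathbb{E}\Big[\int_0^T\langle\alpha U_h^*,U_h^*-U_{h\tau}^*\rangle\,{\rm d}t\Big]-\mathbb{E}\Big[\int_0^T\langle\alpha U_{h\tau}^*,\Pi_\tau U_h^*-U_{h\tau}^*\rangle\,{\rm d}t\Big]+R_\tau,
\]
where $R_\tau=\mathbb{E}[\int_0^T\langle\alpha U_{h\tau}^*,\Pi_\tau U_h^*-U_h^*\rangle\,{\rm d}t]$ is a pure time-projection remainder. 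Into the first integral I insert the semi-discrete identity \eqref{today0000} with the admissible test function $V_h=U_h^*-U_{h\tau}^*\in\mathbb{L}^2_{\mathbb{F}}\mathbb{L}^2_t\mathbb{V}_h$, and into the second the fully-discrete identity \eqref{today0005} with $V_{h\tau}=\Pi_\tau U_h^*-U_{h\tau}^*\in\mathbb{U}_{h\tau}$. Adding and subtracting the intermediate state $\mathcal{X}_{1,h}[U_{h\tau}^*]$ and using the linearity relations \eqref{goud}, this recasts the identity in the coercive form
\[
\alpha\mathbb{E}[\|U_h^*-U_{h\tau}^*\|^2]+\mathbb{E}[\|X_{1,h}^*-X_{1,h\tau}^*\|^2]+\beta\mathbb{E}[\|X_{1,h}^*(T)-X_{1,h\tau}^*(T)\|^2]=\textstyle\sum_i J_i,
\]
where the $J_i$ are residuals of the same flavour as those in \eqref{today41}.

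The decisive step is to treat the one residual that couples the fully-discrete sensitivity $\mathcal{X}_{1,h\tau}^0[\cdot]$ against the semi-discrete defect $X_{1,h}^*-\widetilde{X}_{h\tau}$: this is exactly the left-hand side of Proposition~\ref{Proposition02}, applied with control increment $\Pi_\tau U_h^*-U_{h\tau}^*$. I would invoke that proposition to replace it by $I_1+I_2+I_3+I_4$, all expressed through the semi-discrete adjoint $(Y_{1,h},Y_{2,h})$ and \emph{without} the martingale integrands $Z_{1,h},Z_{2,h}$ entering the drift. The leading contribution $I_3=\tau\sum_{n}\mathbb{E}[\langle U_{h\tau},Y_{2,h}(t_{n+1})\rangle]$ is then combined with the semi-discrete optimality $\alpha U_h^*=-Y_{2,h}$ from \eqref{3.3}, so that the principal $O(1)$ parts cancel and only time-quadrature remainders survive. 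This cancellation, achieved purely in the variational framework, is the point at which Malliavin calculus is bypassed.

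The surviving pieces $I_1,I_2$ measure the midpoint-versus-integral mismatch of $Y_{1,h}$ and $\nabla Y_{2,h}$ over each subinterval, while $I_4$ is a stochastic correction carrying $\Delta_{n+1}W$. I would bound $I_1,I_2$ by pairing the H\"older-$\tfrac12$ time regularity of $(Y_{1,h},\nabla Y_{2,h})$ with the stability estimate \eqref{today0003} for $\mathcal{X}_{i,h\tau}^0[\Pi_\tau U_h^*-U_{h\tau}^*]$, and $I_4$ through independence and the It\^o isometry, exactly as in Step~3 of Proposition~\ref{Proposition 4.7}. The residuals $J_i$ that compare states at a \emph{fixed} control are controlled by Proposition~\ref{Proposition 4.7} (which is $O(\sqrt\tau)$ in these norms), while the data-consistency terms $\|\widetilde{X}_h-\widetilde{X}_{h\tau}\|$, $\|\mathcal{R}_h\sigma-\Pi_\tau\mathcal{R}_h\sigma\|$ and $R_\tau\sim\|U_h^*-\Pi_\tau U_h^*\|$ are handled by \eqref{assumption on noise coefficitents} together with the time regularity of $U_h^*$ (Proposition~\ref{Time-regularity for semi-discrete optimal control}). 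A Young inequality with a small parameter $\delta>0$ then absorbs the $\delta$-multiples of $\mathbb{E}[\|U_h^*-U_{h\tau}^*\|^2]$ and $\beta\mathbb{E}[\|X_{1,h}^*(T)-X_{1,h\tau}^*(T)\|^2]$ into the left-hand side, which yields the claimed $O(\tau)$ bound on the squared norms.

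The main obstacle I anticipate lies in the previous two paragraphs: making the cancellation against $\alpha U_h^*=-Y_{2,h}$ precise, and then bookkeeping every quadrature remainder produced by Proposition~\ref{Proposition02} without losing a power of $\tau$. The attainable rate is capped at $\tfrac12$ (i.e. $O(\tau)$ on the squared norms) precisely because the time regularity of $X_{2,h}=\partial_t X_{1,h}$ and of the adjoint component $Y_{1,h}$ is only H\"older-$\tfrac12$, as already noted after Proposition~\ref{Proposition 4.7}.
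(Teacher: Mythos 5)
Your plan reproduces the paper's architecture in all essential respects: the same two variational identities \eqref{today0000} and \eqref{today0005}, insertion of intermediate states to extract the coercive terms, Proposition~\ref{Proposition02} as the Malliavin-free workhorse for the residual coupling the fully discrete sensitivity $\mathcal{X}^0_{1,h\tau}[\cdot]$ to the semi-discrete defect, cancellation of its $O(1)$ part against the semi-discrete adjoint pairing so that only increments $Y_{2,h}(t_{n+1})-Y_{2,h}(t)$ survive, and the final absorption via time regularity, stability and Young's inequality. The genuine gap sits in your opening decomposition. By testing \eqref{today0000} with $V_h=U_h^*-U_{h\tau}^*$ but \eqref{today0005} with $V_{h\tau}=\Pi_\tau U_h^*-U_{h\tau}^*$, you create the mismatch term
\begin{align*}
R_\tau=\alpha\,\mathbb{E}\Big[\int_0^T\big\langle U_{h\tau}^*,\;\Pi_\tau U_h^*-U_h^*\big\rangle\,{\rm d}t\Big],
\end{align*}
in which the $O(1)$ quantity $U_{h\tau}^*$ multiplies the time-projection error of $U_h^*$. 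This term cannot be absorbed and is not $O(\tau)$ by the tools you cite: splitting $U_{h\tau}^*=(U_{h\tau}^*-U_h^*)+U_h^*$, the first part is absorbable by Young, but the second part $\alpha\mathbb{E}[\int_0^T\langle U_h^*,\Pi_\tau U_h^*-U_h^*\rangle\,{\rm d}t]$ is \emph{linear} (not quadratic) in the projection error, so Cauchy--Schwarz together with Proposition~\ref{Time-regularity for semi-discrete optimal control} (which gives $\mathbb{E}[\|U_h^*-\Pi_\tau U_h^*\|^2_{\mathbb{L}^2_{t,x}}]\le C\tau$, i.e.\ only $O(\sqrt{\tau})$ in the unsquared norm) yields $|R_\tau|\le C\sqrt{\tau}$. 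No orthogonality rescues you either: $\Pi_\tau$ is left-endpoint nodal interpolation, not the $L^2$-in-time orthogonal projection, so the mechanism that kills the analogous term in Theorem~\ref{Thm 3.8} (where $\Pi_h$ \emph{is} the $\mathbb{L}^2_x$-orthogonal projection) is unavailable. As written, your scheme therefore caps the squared-norm estimate at $O(\sqrt{\tau})$ rather than the claimed $O(\tau)$.

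The paper avoids this trap by testing \emph{both} identities with the same $V_{h\tau}=\Pi_\tau U_h^*-U_{h\tau}^*$, so no mismatch term ever arises in Step 1; the projection error then enters only through $J_2=\alpha\mathbb{E}[\int_0^T\langle U_h^*-U_{h\tau}^*,\,U_h^*-\Pi_\tau U_h^*\rangle\,{\rm d}t]$ in Step 2, where it is paired with the absorbable difference $U_h^*-U_{h\tau}^*$, and Young's inequality gives $\tfrac{\alpha}{4}\mathbb{E}[\|U_h^*-U_{h\tau}^*\|^2_{\mathbb{L}^2_{t,x}}]+C\tau$. Your $R_\tau$ can in fact be salvaged, but only by an argument you do not give: using $\alpha U_h^*=-Y_{2,h}$ and the ${\bf BSPDE}_h$ dynamics \eqref{discrete adjoint ode}, the increment $Y_{2,h}(t)-Y_{2,h}(t_n)$ decomposes into a drift part of size $O(t-t_n)$ and a stochastic integral whose pairing with $\mathcal{F}_{t_n}$-measurable quantities has zero mean and whose quadratic contribution is $O(\tau)$ by the It\^o isometry (via $\mathbb{E}[\|Z_{2,h}\|^2_{\mathbb{L}^2_{t,x}}]\le C$ from \eqref{estimate for discrete adjoint 1}); summing over the mesh then gives $|R_\tau|\le C\tau+\delta\,\mathbb{E}[\|U_h^*-U_{h\tau}^*\|^2_{\mathbb{L}^2_{t,x}}]$. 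Either adopt the paper's matched-test-function pairing or supply this martingale argument; citing pathwise H\"older-$\tfrac12$ regularity of $U_h^*$ alone does not close the step.
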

\begin{proof} We will complete the proof in several steps as follows.
	
	\noindent
	\textbf{Step 1.} We have
    \begin{align*}
        &\mathbb{E}\bigg[\int_0^T\alpha\big\langle U_h^*(t)- U_{h\tau}^*(t), V_{h\tau}(t)\big\rangle \,{\rm d}t\bigg]=\mathbb{E}\bigg[\int_0^T\alpha\big\langle U_h^*(t), V_{h\tau}(t)\big\rangle \,{\rm d}t\bigg]-\mathbb{E}\bigg[\int_0^T\alpha\big\langle U_{h\tau}^*(t), V_{h\tau}(t)\big\rangle \,{\rm d}t\bigg].
    \end{align*}
    We use the integral identities \eqref{today0000} and \eqref{today0005} to conclude that for all $V_{h\tau}\in\mathbb{U}_{h\tau}$,
	\begin{align*}
		&\mathbb{E}\bigg[\int_0^T\alpha\big\langle U_h^*(t)- U_{h\tau}^*(t), V_{h\tau}(t)\big\rangle \,{\rm d}t\bigg]\\&=\mathbb{E}\bigg[\int_0^T\big\langle \widetilde{X}_h(t)-X_{1,h}^*(t), \mathcal{X}_{1,h}^0[ V_{h\tau}](t)\big\rangle \,{\rm d}t\bigg]+ \beta\mathbb{E}\big[\big\langle \widetilde{X}_h(T)-X_{1,h}^*(T), \mathcal{X}_{1,h}^0[ V_{h\tau}](T)\big\rangle\big]\\&\qquad-\mathbb{E}\bigg[\int_0^T\big\langle \widetilde{X}_{h\tau}(t)- X_{1,h\tau}^*(t), \mathcal{X}_{1,h\tau}^0[V_{h\tau}](t)\big\rangle \,{\rm d}t\bigg]-\beta\mathbb{E}\big[\big\langle \widetilde{X}_{h\tau}(T)- X_{1,h\tau}^*(T), \mathcal{X}_{1,h\tau}^0[V_{h\tau}](T)\big\rangle\big]\\&=-\bigg\{\mathbb{E}\bigg[\int_0^T\big\langle X_{1,h}^*(t)-  X_{1,h\tau}^*(t), \mathcal{X}_{1,h\tau}^0[V_{h\tau}](t)\big\rangle \,{\rm d}t\bigg]+\mathbb{E}\bigg[\int_0^T\big\langle X_{1,h}^*(t)-\widetilde{X}_h(t), \mathcal{X}_{1,h}^0[V_{h\tau}](t)-\mathcal{X}_{1,h\tau}^0[V_{h\tau}](t)\big\rangle \,{\rm d}t\bigg]\\&\qquad-\mathbb{E}\bigg[\int_0^T\big\langle \widetilde{X}_h(t)-\widetilde{X}_{h\tau}(t), \mathcal{X}_{1,h\tau}^0[V_{h\tau}](t)\big\rangle \,{\rm d}t\bigg]\bigg\}\\&\qquad- \beta\bigg\{\mathbb{E}\big[\big\langle X_{1,h}^*(T)- X_{1,h\tau}^*(T), \mathcal{X}_{1,h\tau}^0[V_{h\tau}](T)\big\rangle\big]+\mathbb{E}\big[\big\langle X_{1,h}^*(T)-\widetilde{X}_h(T), \mathcal{X}_{1,h}^0[V_{h\tau}](T)-\mathcal{X}_{1,h\tau}^0[V_{h\tau}](T)\big\rangle \big]\\&\qquad-\mathbb{E}\big[\big\langle \widetilde{X}_h(T)-\widetilde{X}_{h\tau}(T), \mathcal{X}_{1,h\tau}^0[V_{h\tau}](T)\big\rangle\big]\bigg\},
	\end{align*}
    where inserting some intermediate terms are added and subtracted. In the above equality we take $V_{h\tau}=\Pi_\tau U_h^*-U_{h\tau}^*$ and use the facts $\mathcal{X}_{1,h}[U^*_h]-\mathcal{X}_{1,h}[U_{h\tau}^*]=\mathcal{X}_{1,h}^0[U_h^*-U_{h\tau}^*]$ and $\mathcal{X}_{1,h\tau}[U_{h\tau}^*]-\mathcal{X}_{1,h\tau}[\Pi_\tau U_h^*]=\mathcal{X}_{1,h\tau}^0[U_{h\tau}^*-\Pi_\tau U_h^*]$ to conclude that (by inserting some intermediate terms)
    $$-\mathbb{E}\bigg[\int_0^T\big\langle X_{1,h}^*(t)-  X_{1,h\tau}^*(t), \mathcal{X}_{1,h\tau}^0[V_{h\tau}](t)\big\rangle \,{\rm d}t\bigg]=\sum_{i=1}^3I_i,$$
    and 
    $$-\mathbb{E}\big[\big\langle X_{1,h}^*(T)- X_{1,h\tau}^*(T), \mathcal{X}_{1,h\tau}^0[V_{h\tau}](T)\big\rangle\big]=\sum_{i=1}^3I_i'$$
    Finally, we deduce that  
	\begin{align}\label{today 31}
		\mathbb{E}\bigg[\int_0^T\alpha\big\langle U_h^*(t)- U_{h\tau}^*(t), \Pi_\tau U_h^*(t)-U_{h\tau}^*(t)\big\rangle \,{\rm d}t\bigg]=:\sum_{i=1}^5 I_i +\sum_{i=1}^4 I_i',
	\end{align}
	where
	\begin{align*}
		I_1&=-\mathbb{E}\bigg[\int_0^T \big\langle X_{1,h}^*(t)-X_{1,h\tau}^*(t), \mathcal{X}^0_{1,h}[\Pi_\tau U^*_h- U_{h\tau}^*](t) \big\rangle \,{\rm d}t\bigg],\\
        I_2&=\mathbb{E}\bigg[\int_0^T \big\langle X^*_{1,h}(t)- X_{1,h\tau}^*(t),\, \mathcal{X}_{1,h}[\Pi_\tau U^*_h](t)- \mathcal{X}_{1,h\tau}[\Pi_{\tau} U_h^*](t)\big\rangle \,{\rm d}t\bigg],\\
		I_3&=-\mathbb{E}\bigg[\int_0^T\big\langle X_{1,h}^*(t)-X_{1,h\tau}^*(t), X_{1,h}^*(t)-X^*_{1,h\tau}(t)\big\rangle \,{\rm d}t\bigg],\\
        I_4&=\mathbb{E}\bigg[\int_0^T\big\langle \widetilde{X}_h(t)-\widetilde{X}_{1,h\tau}(t), \mathcal{X}_{1,h\tau}^0[\Pi_\tau U_h^*-U_{h\tau}^*](t)\big\rangle \,{\rm d}t\bigg],\\
		I_5&=-\mathbb{E}\bigg[\int_0^T\big\langle X^*_{1,h}(t)-\widetilde{X}_h(t), (\mathcal{X}^0_{1,h}-\mathcal{X}^0_{1,h\tau})[\Pi_\tau U_h^*- U_{h\tau}^*](t) \big\rangle \,{\rm d}t\bigg]\\&\qquad\qquad-\beta\mathbb{E}\big[\big\langle X^*_{1,h}(T)-\widetilde{X}_h(T), (\mathcal{X}^0_{1,h}-\mathcal{X}^0_{1,h\tau})[\Pi_\tau U_h^*- U_{h\tau}^*](T) \big\rangle\big],\\
        I_1'&=\beta\mathbb{E}\big[\big\langle X^*_{1,h}(T)- X_{1,h\tau}^*(T),\, \mathcal{X}_{1,h}[\Pi_\tau U^*_h](T)- \mathcal{X}_{1,h\tau}[\Pi_{\tau} U_h^*](T)\big\rangle \big],\\
        I_2'&=-\beta\mathbb{E}\big[\big\langle X_{1,h}^*(T)-X_{1,h\tau}^*(T), \mathcal{X}^0_{1,h}[\Pi_\tau U^*_h- U_h^*](T) \big\rangle \big],\\
        I_3'&=-\beta\mathbb{E}\big[\big\langle X_{1,h}^*(T)-X_{1,h\tau}^*(T), X_{1,h}^*(T)-X^*_{1,h\tau}(T)\big\rangle\big],\\
		I_4'&=\beta\mathbb{E}\bigg[\big\langle \widetilde{X}_h(T)-\widetilde{X}_{h\tau}(T), \mathcal{X}_{1,h\tau}^0[\Pi_\tau U_h^*-U_{h\tau}](T)\big\rangle \bigg].
	\end{align*}
We will estimate these terms separately in the following sub-steps:

	\noindent
	\textbf{Step 1(a).} For term $I_1$, we can conclude that
	\begin{align*}
		I_1&\le\,\frac{1}{4}\mathbb{E}\big[\|X^*_{1,h}- X_{1,h\tau}^*\|_{\mathbb{L}^2_{t, x}}^2\big]+ C\,\mathbb{E}\big[\| \mathcal{X}^0_{1,h}[\Pi_\tau U^*_h- U_h^*]\|_{\mathbb{L}^2_{t, x}}^2\big]\\&\le\,\frac{1}{4}\mathbb{E}\big[\| X^*_{1,h}-X_{1,h\tau}^*\|_{\mathbb{L}^2_{t, x}}^2\big]+ C\,\mathbb{E}\big[\|\Pi_\tau U_h^*- U_h^*\|_{\mathbb{L}^2_{t, x}}^2\big]\\&\le\,\frac{1}{4}\mathbb{E}\big[\| X^*_{1,h} - X_{1,h\tau}^*\|_{\mathbb{L}^2_{t, x}}^2\big]+ C\,\tau\big(\|X_{1,0}\|_{\mathbb{H}_x^3}^2+ \|X_{2,0}\|_{\mathbb{H}^2_x}^2+\|\widetilde{X}\|_{C_t\mathbb{H}_x^2}^2+\mathbb{E}\big[ \|\sigma\|_{\mathbb{L}^2_t\mathbb{H}^2_x}^2\big]\big),
	\end{align*}
	where in the second inequality the estimate \eqref{today0002} is used, while in the last inequality the estimate \eqref{time-regularity for semi-discrete control} is used. Similarly, we obtain
	\begin{align*}
		I_1'\le \frac{\beta}{4}\mathbb{E}\big[\|X_{1,h}^*(T)-X_{1,h\tau}^*(T)\|_{\mathbb{L}^2_x}^2\big]+ C\tau\big(\|X_{1,0}\|_{\mathbb{H}_x^3}^2+ \|X_{2,0}\|_{\mathbb{H}^2_x}^2+\|\widetilde{X}\|_{C_t\mathbb{H}_x^2}^2+\mathbb{E}\big[ \|\sigma\|_{\mathbb{L}^2_t\mathbb{H}^2_x}^2\big]\big).
	\end{align*}
	\noindent
	\textbf{Step 1(b).} For term $I_2$, we use the estimates \eqref{today0004} and \eqref{434343} to conclude that
	\begin{align*}
		I_2&\le\,\frac{1}{4}\mathbb{E}\big[\|X^*_{1,h}- X_{1,h\tau}^*\|_{\mathbb{L}^2_{t, x}}^2\big] +C\,\mathbb{E}\big[\| \mathcal{X}_{1,h}[\Pi_\tau U^*_h]- \mathcal{X}_{1,h\tau}[\Pi_{\tau} U_h^*]\|_{\mathbb{L}^2_{t, x}}^2\big]\\&\le\frac{1}{4}\mathbb{E}\big[\| X^*_{1,h}- X_{1,h\tau}^*\|_{\mathbb{L}^2_{t, x}}^2\big] + C\tau\big(\|X_{1,0}\|_{\mathbb{H}_x^3}^2+ \|X_{2,0}\|_{\mathbb{H}^2_x}^2+\|\widetilde{X}\|_{C_t\mathbb{H}_x^2\cap C^{1/2}_t\mathbb{H}_0^1}^2+\mathbb{E}\big[ \|\sigma\|_{\mathbb{L}^2_t\mathbb{H}^2_x\cap C^{1/2}_t\mathbb{H}_0^1}^2\big]\big).
	\end{align*}
Similarly, we obtain
\begin{align*}
	I_2'\le  \frac{\beta}{4}\mathbb{E}\big[\|X_h^*(T)-X_{1,h\tau}^*(T)\|_{\mathbb{L}^2_x}^2\big]+ C\tau\big(\|X_{1,0}\|_{\mathbb{H}_x^3}^2+ \|X_{2,0}\|_{\mathbb{H}^2_x}^2+\|\widetilde{X}\|_{C_t\mathbb{H}_x^2\cap C^{1/2}_t\mathbb{H}_0^1}^2+\mathbb{E}\big[ \|\sigma\|_{\mathbb{L}^2_t\mathbb{H}^2_x\cap C^{1/2}_t\mathbb{H}_0^1}^2\big]\big).
\end{align*}

%

\noindent
\textbf{Step 1(f).}  As previous sub-steps, by using \eqref{assumption on noise coefficitents} and \eqref{today0003}, we conclude that
\begin{align*}
	I_4 + I_4'\le C\tau\big(\|X_{1,0}\|_{\mathbb{H}_x^3}^2+ \|X_{2,0}\|_{\mathbb{H}^2_x}^2+\|\widetilde{X}\|_{C_t\mathbb{H}_x^2\cap C^{1/2}_t\mathbb{H}_0^1}^2+\mathbb{E}\big[ \|\sigma\|_{\mathbb{L}^2_t\mathbb{H}^2_x\cap C^{1/2}_t\mathbb{H}_0^1}^2\big]\big)+ \frac{\alpha}{4}\mathbb{E}\big[\| U_h^*-U_{h\tau}^*\|_{\mathbb{L}^2_{t, x}}^2\big].
\end{align*}
	\noindent
	\textbf{Step 1(d).} For the term $I_5$, as an application of It\^o formula as done in the proof of Pontryagin's maximum principle (see identity \eqref{today000}), we get 
\begin{align*}
	&\mathbb{E}\bigg[\int_0^T\big\langle X^*_{1,h}(t)-\widetilde{X}_h(t),\mathcal{X}^0_{1,h}[\Pi_\tau U_h^*- U_{h\tau}^*](t) \big\rangle \,{\rm d}t\bigg]+\beta\mathbb{E}\big[\big\langle X^*_{1,h}(T)-\widetilde{X}_h(T), \mathcal{X}^0_{1,h}[\Pi_\tau U_h^*- U_{h\tau}^*](T) \big\rangle \big]\\&=\mathbb{E}\bigg[\int_0^T\left\langle Y_{2,h},\Pi_\tau U_h^*- U_{h\tau}^*\right\rangle\,{\rm d}t\bigg].
\end{align*}
From Proposition~\ref{Proposition02}, we get
	\begin{align*}
		&\mathbb{E}\bigg[\int_{0}^T\left\langle X^*_{1,h}(t)-\widetilde{X}_{h}(t), \mathcal{X}_{1,h\tau}^0[\Pi_\tau U_h^*-U_{h\tau}^*](t)\right\rangle\,{\rm d}t\bigg]+\beta \mathbb{E}\big[\left\langle X^*_{1,h}(T)-\widetilde{X}_{h}(T), \mathcal{X}_{1,h\tau}^0[\Pi_\tau U_h^*-U_{h\tau}^*](T)\right\rangle\big]\\&=I_{11}+I_{12}+ I_{13}+I_{14},
	\end{align*}
	where
	\begin{align*}
		I_{11}&=\frac{\tau}{2}\sum_{n=0}^{N-1}\mathbb{E}\bigg[\left\langle \big(\mathcal{X}_{2,h\tau}^0[V_{h\tau}](t_{n+1})+\mathcal{X}_{2,h\tau}^0[V_{h\tau}](t_{n})\big), Y_{1,h}(t_{n+1})\right\rangle\bigg]\\&\qquad-\sum_{n=0}^{N-1}\mathbb{E}\bigg[\int_{t_{n}}^{t_{n+1}}\left\langle Y_{1,h}(t), \mathcal{X}_{2,h\tau}^0[V_{h\tau}](t_{n})\right\rangle\bigg],\\
		I_{12}&=\sum_{n=0}^{N-1}\mathbb{E}\bigg[\int_{t_{n}}^{t_{n+1}}\left\langle \nabla Y_{2,h}, \nabla \mathcal{X}_{1,h\tau}^0[V_{h\tau}](t_{n})\right\rangle\,{\rm d}t\bigg]\\&\qquad\qquad-\frac{\tau}{2}\sum_{n=0}^{N-1}\mathbb{E}\bigg[\left\langle \nabla\big( \mathcal{X}_{1,h\tau}^0[V_{h\tau}](t_{n+1})+\mathcal{X}_{1,h\tau}^0[V_{h\tau}](t_{n})\big), \nabla Y_{2,h}(t_{n+1})\right\rangle\bigg],\\
		I_{13}&=\tau\sum_{n=0}^{N-1}\mathbb{E}\bigg[\left\langle V_{h\tau}(t_{n}), Y_{2,h}(t_{n+1})\right\rangle\bigg],\\
		I_{14}&=-\sum_{n=0}^{N-1}\mathbb{E}\bigg[\left\langle \int_{t_{n+1}}^{t_{n}}Y_{1}(t)\,{\rm d}t,\gamma \mathcal{X}_{1,h\tau}^0[V_{h\tau}](t_n)\Delta_{n+1}W\right\rangle\bigg],\\
		V_{h\tau}&=\Pi_\tau U_h^*-U_{h\tau}^*.
	\end{align*}
It implies that
\begin{align*}
	I_5= I_{11}+ I_{12}+ I_{13}'+ I_{14},
\end{align*}
where
\begin{align*}
	I_{13}'=\sum_{n=0}^{N-1}\mathbb{E}\bigg[\int_{t_{n}}^{t_{n+1}}\left\langle V_{h\tau}(t_n), Y_{2,h}(t_{n+1})-Y_{2,h}(t)\right\rangle\,{\rm d}t\bigg].
\end{align*}
We estimate each $ I$'s terms separately.

\noindent
\textbf{Step 1(d)(a).} In this step, we estimate the term $I_{12}$ as follows:
\begin{align}\label{d1}
	I_{12}=&-\frac{1}{2}\sum_{n=0}^{N-1}\mathbb{E}\bigg[\int_{t_{n}}^{t_{n+1}}\left\langle \nabla\big(\mathcal{X}_{1,h\tau}^0[V_{h\tau}](t_{n+1})+\mathcal{X}_{1,h\tau}^0[V_{h\tau}](t_{n})\big), \nabla Y_{2,h}(t_{n+1})-\nabla Y_{2,h}(t)\right\rangle\,{\rm d}t\bigg]\notag\\&+\frac{1}{2}\sum_{n=0}^{N-1}\mathbb{E}\bigg[\int_{t_{n}}^{t_{n+1}}\left\langle \nabla Y_{2,h}(t), \nabla \mathcal{X}_{1,h\tau}^0[V_{h\tau}](t_n)-\nabla \mathcal{X}_{1,h\tau}^0[V_{h\tau}](t_{n+1})\right\rangle\,{\rm d}t\bigg],
\end{align}
By using the discrete integration by parts formula, the identity~\eqref{jhs} and facts $Y_{2,h}(t_N)=X_{1,h\tau}^0(0)=0$, we obtain
\begin{align*}
	&\sum_{n=0}^{N-1}\mathbb{E}\bigg[\int_{t_{n}}^{t_{n+1}}\left\langle \nabla Y_{2,h}(t),\nabla \mathcal{X}_{1,h\tau}^0[V_{h\tau}](t_n)-\nabla \mathcal{X}^0_{1,h\tau}[V_{h\tau}](t_{n+1})\right\rangle\,{\rm d}t\bigg]\\&=\sum_{n=0}^{N-1}\mathbb{E}\bigg[\int_{t_{n}}^{t_{n+1}}\left\langle (\nabla\hat{Y}_{2,h}(t_{n+1})-\nabla \hat{Y}_{2,h}(t_{n})),\nabla \mathcal{X}^0_{1,h\tau}[V_{h\tau}](t_n)\right\rangle\,{\rm d}t\bigg].
	\end{align*}
We use of Young's inequality, \eqref{today0002}, \eqref{time-regularity for semi-discrete control} and \eqref{today48} to conclude that 
\begin{align}
	&\sum_{n=0}^{N-1}\mathbb{E}\bigg[\int_{t_{n}}^{t_{n+1}}\left\langle \nabla Y_{2,h}(t),\nabla \mathcal{X}_{1,h\tau}^0[V_{h\tau}](t_n)-\nabla \mathcal{X}^0_{1,h\tau}[V_{h\tau}](t_{n+1})\right\rangle\,{\rm d}t\bigg]\notag\\
	&\le C_\delta\tau\sum_{n=0}^{N-1}\mathbb{E}\bigg[\|\nabla\hat{Y}_{2,h}(t_{n+1})-\nabla\hat{Y}_{2,h}(t_n)\|_{\mathbb{L}^2_x}^2\bigg]+\tau \delta \sum_{n=0}^{N-1}\mathbb{E}\big[\|\nabla \mathcal{X}^0_{1,h\tau}[V_{h\tau}](t_n)\|_{\mathbb{L}^2_x}^2\big]\notag\\
	&\le C_\delta\tau(\|X_{1,0}\|_{\mathbb{H}_0^1}^2+ \|X_{2,0}\|_{\mathbb{L}^2_x}^2+\|\widetilde{X}\|_{C_t\mathbb{H}_0^1}^2+ \mathbb{E}\big[\|\sigma\|_{\mathbb{L}^2_{t,x}}^2\big])+ \delta \mathbb{E}\big[\|\Pi_\tau U_h^*-U_{h\tau}^*\|_{\mathbb{L}^2_{t, x}}^2\big]\notag\\
	&\le C_{\delta}\tau \big(\|X_{2,0}\|_{\mathbb{H}^1_0}^2+\|X_{1,0}\|_{\mathbb{H}^2_x}^2+\|\widetilde{X}\|_{C_t\mathbb{H}_0^1}^2+\mathbb{E}\big[\|\sigma\|_{\mathbb{L}^2_{t}\mathbb{H}^1_0}^2 \big]\big)+ \delta\mathbb{E}\big[\|U_h^*-U_{h\tau}^*\|_{\mathbb{L}_{t, x}^2}^2\big].\label{d4}
\end{align}
Similarly, we can use the estimates \eqref{today03}-\eqref{today47} to obtain
\begin{align}\label{d2}
&\frac{1}{2}\sum_{n=0}^{N-1}\mathbb{E}\bigg[\int_{t_{n}}^{t_{n+1}}\left\langle \nabla \big(\mathcal{X}^0_{1,h\tau}[V_{h\tau}](t_{n+1})+\mathcal{X}^0_{1,h\tau}[V_{h\tau}](t_{n})), \nabla Y_{2,h}(t_{n+1})-\nabla Y_{2,h}(t)\right\rangle\,{\rm d}t\bigg]\notag\\&\le C_\delta \tau\big(\|X_{2,0}\|_{\mathbb{H}^1_0}^2+\|X_{1,0}\|_{\mathbb{H}^2_x}^2+\|\widetilde{X}\|_{C_t\mathbb{H}_0^1}^2+\mathbb{E}\big[\|\sigma\|_{\mathbb{L}^2_{t}\mathbb{H}^1_0}^2 \big]\big) +\delta \mathbb{E}\big[\|U_h^*-U_{h\tau}^*\|_{\mathbb{L}^2_x}^2\big].
\end{align}
From the previous estimates \eqref{d1}-\eqref{d2}, we conclude that
\begin{align}\label{f1}
	I_{12}\le C_\delta \tau\big(\|X_{2,0}\|_{\mathbb{H}^1_0}^2+\|X_{1,0}\|_{\mathbb{H}^2_x}^2+\|\widetilde{X}\|_{C_t\mathbb{H}_0^1}^2+\mathbb{E}\big[\|\sigma\|_{\mathbb{L}^2_{t}\mathbb{H}^1_0}^2 \big]\big) +\delta \mathbb{E}\big[\|U_h^*-U_{h\tau}^*\|_{\mathbb{L}^2_x}^2\big].
\end{align}
\textbf{Step 1(d)(b).}
To estimate the term $I_{11}$, we can follows similar lines as used to estimate the term $I_{12}$. The term $I_{13}'$ can be easily estimate by using Young's inequality. For terms $I_{11}$ and $I_{13}'$, we can conclude that
\begin{align}\label{f2}
	|I_{11}|+|I_{13}'|\le C_{\delta}\tau\big(\|X_{2,0}\|_{\mathbb{H}^1_0}^2+\|X_{1,0}\|_{\mathbb{H}^2_x}^2+\|\widetilde{X}\|_{C_t\mathbb{H}_0^1}^2+\mathbb{E}\big[\|\sigma\|_{\mathbb{L}^2_{t}\mathbb{H}^1_0}^2 \big]\big)+ \delta \mathbb{E}\big[\|U_h^*-U_{h\tau}^*\|_{\mathbb{L}^2_x}^2\big].
\end{align}
\textbf{Step 1(d)(c).}
For term $I_{14}$, we obtain
\begin{align*}
	|I_{14}|\le C_\delta\sum_{n=0}^{N-1}\mathbb{E}\bigg[\bigg\|\int_{t_{n}}^{t_{n+1}}Y_{1,h}(t)\,{\rm d}t\bigg\|_{\mathbb{L}^2_x}^2\bigg]+ \delta\sum_{n=0}^{N-1}\mathbb{E}\bigg[\bigg\|\mathcal{X}^0_{1,h\tau}[V_{h\tau}](t_n)\Delta_{n+1}W\bigg\|_{\mathbb{L}^2_x}^2\bigg].
\end{align*} By using H\"older's inequality and It\^o isometry we yield
\begin{align}\label{f3}
	|I_{14}|&\le C_\delta \tau \sum_{n=0}^{N-1}\mathbb{E}\bigg[\int_{t_{n}}^{t_{n+1}}\|Y_{1,h}(t) \|_{\mathbb{L}^2_x}^2\,{\rm d}t\bigg] + \delta \tau \sum_{n=0}^{N-1}\mathbb{E}\bigg[\|\mathcal{X}^0_{1,h\tau}[V_{h\tau}](t_n) \|_{\mathbb{L}^2_x}^2\bigg]\notag\\
	&\le C_{\delta} \tau\big(\|X_{1,0}\|_{\mathbb{H}_0^1}^2+ \|X_{2,0}\|_{\mathbb{L}^2_x}^2+\|\widetilde{X}\|_{C_t\mathbb{L}_x^2}^2+\mathbb{E}\big[ \|\sigma\|_{\mathbb{L}^2_t\mathbb{L}^2}^2\big]\big) + \delta  \mathbb{E}\big[\|V_{h\tau}\|_{\mathbb{L}^2_{t, x}}^2\big]\notag\\&\le\,C_{\delta} \tau\big(\|X_{1,0}\|_{\mathbb{H}_0^1}^2+ \|X_{2,0}\|_{\mathbb{L}^2_x}^2+\|\widetilde{X}\|_{C_t\mathbb{L}_x^2}^2+\mathbb{E}\big[ \|\sigma\|_{\mathbb{L}^2_t\mathbb{L}^2}^2\big]\big) + \delta  \mathbb{E}\big[\|U_h^*-U_{h\tau}^*\|_{\mathbb{L}^2_{t, x}}^2\big]+ \delta  \mathbb{E}\big[\|U_h^*-\Pi_hU_{h}^*\|_{\mathbb{L}^2_{t, x}}^2\big]\notag\\&\le C_{\delta}\tau\big(\|X_{2,0}\|_{\mathbb{H}^1_0}^2+\|X_{1,0}\|_{\mathbb{H}^2_x}^2+\|\widetilde{X}\|_{C_t\mathbb{H}_0^1}^2+\mathbb{E}\big[\|\sigma\|_{\mathbb{L}^2_{t}\mathbb{H}^1_0}^2 \big]\big) + \delta \mathbb{E}\big[\|U_h^*-U_{h\tau}^*\|_{\mathbb{L}^2_{t, x}}^2\big],
\end{align}
where \eqref{today0003}, \eqref{estimate for discrete adjoint 1} and \eqref{time-regularity for semi-discrete control} are used.
Finally for the term $I_4$, we use \eqref{f1}-\eqref{f3} to obtain
\begin{align*}
	|I_5|\le C_\delta \tau \big(\|X_{2,0}\|_{\mathbb{H}^1_0}^2+\|X_{1,0}\|_{\mathbb{H}^2_x}^2+\|\widetilde{X}\|_{C_t\mathbb{H}_0^1}^2+\mathbb{E}\big[\|\sigma\|_{\mathbb{L}^2_{t}\mathbb{H}^1_0}^2 \big]\big) + \delta \mathbb{E}\big[\|U_h^*-U_{h\tau}^*\|_{\mathbb{L}^2_{t, x}}^2\big].
\end{align*}
Finally, with the estimates from terms $I's$ and \eqref{today 31}, we conclude that there exists a positive constant $C$ such that
	\begin{align}\label{today 32}
		\mathbb{E}\bigg[\int_0^T\alpha\big\langle U_h^*(t)&- U_{h\tau}^*(t), \Pi_\tau U_h^*(t)-U_{h\tau}^*(t)\big\rangle \,{\rm d}t\bigg]+\mathbb{E}\big[\|X^*_{1,h}- X_{1,h\tau}^*\|_{\mathbb{L}^2_{t, x}}^2\big]+\beta\mathbb{E}\big[\|X^*_{1,h}(T)- X_{1,h\tau}^*(T)\|_{\mathbb{L}^2_{x}}^2\big]\notag\\&\le\,C\,\tau\big(\|X_{1,0}\|_{\mathbb{H}^3_x}^2+ \|X_{2,0}\|_{\mathbb{H}^2_x}^2+\|\widetilde{X}\|_{C_t\mathbb{H}_x^2\cap C^{1/2}_t\mathbb{H}_0^1}^2+\mathbb{E}\big[ \|\sigma\|_{\mathbb{L}^2_t\mathbb{H}^2_x\cap C^{1/2}_t\mathbb{H}_0^1}^2\big]\big)+ \frac{\alpha}{4}\mathbb{E}\big[\| U_h^*-U_{h\tau}^*\|_{\mathbb{L}^2_{t, x}}^2\big].
	\end{align}

\noindent
\textbf{Step 2.} We have the following identity
	\begin{align*}
		\alpha\mathbb{E}\big[\|U_h^*-U_{h\tau}^*\|_{\mathbb{L}^2_{t, x}}^2\big]&=J_1 + J_2,
	\end{align*}
	where 
	\begin{align*}
		J_1&=\mathbb{E}\bigg[\int_0^T\alpha\big\langle U_h^*(t)- U_{h\tau}^*(t), \Pi_\tau U_h^*-U_{h\tau}^*(t)\big\rangle \,{\rm d}t\bigg],\\
		J_2&=\mathbb{E}\bigg[\int_0^T\alpha\big\langle U_h^*(t)- U_{h\tau}^*(t), U_h^*-\Pi_{\tau} U_{h}^*(t)\big\rangle \,{\rm d}t\bigg].
	\end{align*}
	For the term $J_1$, from \eqref{today 32}, we conclude that there exists a positive constant $C$ such that
	\begin{align}
		J_1-I_3-I_3'\le\,C\,\tau\big(\|X_{1,0}\|_{\mathbb{H}_x^3}^2+ \|X_{2,0}\|_{\mathbb{H}^2_x}^2+\|\widetilde{X}\|_{C_t\mathbb{H}_x^2\cap C^{1/2}_t\mathbb{H}_0^1}^2+\mathbb{E}\big[ \|\sigma\|_{\mathbb{L}^2_t\mathbb{H}^2_x\cap C^{1/2}_t\mathbb{H}_0^1}^2\big]\big) + \frac{\alpha}{4}\mathbb{E}\big[\| U_h^*-U_{h\tau}^*\|_{\mathbb{L}^2_{t, x}}^2\big]\label{j3}.
	\end{align}
	For the term $J_2$, we obtain by using the estimate \eqref{time-regularity for semi-discrete control} that there exists a positive constant $C$ such that 
	\begin{align}
		J_2\,&\le\,\frac{\alpha}{4}\mathbb{E}\big[\|U_h^*- U_{h\tau}^*\|_{\mathbb{L}^2_{t, x}}^2\big] + C\,\mathbb{E}\big[\|U_h^*-\Pi_{\tau} U_{h}^*\|_{\mathbb{L}^2_{t, x}}^2\big]\notag\\&\le\,\frac{\alpha}{4}\mathbb{E}\big[\|U_h^*- U_{h\tau}^*\|_{\mathbb{L}^2_{t, x}}^2\big] + C\,\tau\big(\|X_{1,0}\|_{\mathbb{H}_x^3}^2+ \|X_{2,0}\|_{\mathbb{H}^2_x}^2+\|\widetilde{X}\|_{C_t\mathbb{H}_x^2}^2+\mathbb{E}\big[ \|\sigma\|_{\mathbb{L}^2_t\mathbb{H}^2_x}^2\big]\big)\label{j4}.
	\end{align}
	From the estimates \eqref{j3}-\eqref{j4}, we conclude that there exists a positive constant $C$ such that
	\begin{align*}
		&\mathbb{E}\big[\|U_h^*-U_{h\tau}^*\|_{\mathbb{L}^2_{t, x}}^2\big]+\mathbb{E}\big[\|X^*_{1,h}- X_{1,h\tau}^*\|_{\mathbb{L}_{t, x}^2}^2\big]+\beta \mathbb{E}\big[\|X^*_{1,h}(T)- X_{1,h\tau}^*(T)\|_{\mathbb{L}_{x}^2}^2\big]\\&\le\,C\,\tau\big(\|X_{1,0}\|_{\mathbb{H}_x^3}^2+ \|X_{2,0}\|_{\mathbb{H}^2_x}^2+\|\widetilde{X}\|_{C_t\mathbb{H}_x^2\cap C^{1/2}_t\mathbb{H}_0^1}^2+\mathbb{E}\big[ \|\sigma\|_{\mathbb{L}^2_t\mathbb{H}^2_x\cap C^{1/2}_t\mathbb{H}_0^1}^2\big]\big).
	\end{align*}
	This completes the proof.
\end{proof}
\begin{remark}[An important point]\label{Remark 4.5}
	For the {\bf SLQ} problem with stochastic heat equation the methods in \cite{Prohl&Wang1, ProhlWang2021, B.Li} require time discretization of the $\textbf{\textbf{BSDE}}$ and employ different techniques to estimate error terms due to this discretization of \textbf{\textbf{BSDE}}; see the proof of \cite[Theorem 3.3, pg. 3422]{ProhlWang2021} and \cite[Section 4.3]{B.Li}. However, in our approach, time discretization of $\textbf{\textbf{BSDE}}_h$ \eqref{discrete adjoint ode} is not required for the error analysis.
\end{remark}
\begin{thm}\label{strong rate of convergence for time discretization1} Let Assumption~\ref{BB} hold. Let $(X_{1,h}^*, X_{2,h}^*, U_h^*)$ and $(X_{1,h\tau}^*, X_{2,h\tau}^*, U_{h\tau}^*)$ be solve {\bf SLQ}$_h$ problem \eqref{3.1}-\eqref{3.2} and ${\bf SLQ}_{h\tau}$ problem \eqref{fully discrete cost functional}-\eqref{discrete state equation}, respectively. Then there exists a positive constant $C$ such that for all $t\in[0,T]$,
	\begin{align*}
		&\mathbb{E}\big[\|U_h^*-U_{h\tau}^*\|_{\mathbb{L}^2_{t, x}}^2\big]+\mathbb{E}\big[\|\nabla(X^*_{1,h}(t)- X_{1,h\tau}^*(t))\|_{\mathbb{L}^2_{t, x}}^2\big]+\mathbb{E}\big[\|X_{2,h}^*(t)- X_{2,h\tau}^*(t)\|_{\mathbb{L}^2_x}^2\big] \\&\le C\tau\big(\|X_{1,0}\|_{\mathbb{H}_x^3}^2+ \|X_{2,0}\|_{\mathbb{H}^2_x}^2+\|\widetilde{X}\|_{C_t\mathbb{H}_x^2\cap C^{1/2}_t\mathbb{H}_0^1}^2+\mathbb{E}\big[ \|\sigma\|_{\mathbb{L}^2_t\mathbb{H}^2_x\cap C^{1/2}_t\mathbb{H}_0^1}^2\big]\big).
	\end{align*} 	
\end{thm}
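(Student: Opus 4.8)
The plan is to bootstrap from the $\mathbb{L}^2_{t,x}$-control error already furnished by Theorem~\ref{strong rate of convergence for time discretization} up to the stronger energy-norm control of the state error, exploiting the affine structure of both the semi- and the fully-discrete solution maps. Writing $X_{1,h}^*=\mathcal{X}_{1,h}[U_h^*]$ and $X_{1,h\tau}^*=\mathcal{X}_{1,h\tau}[U_{h\tau}^*]$, I would first insert the two intermediate states $\mathcal{X}_{1,h}[\Pi_\tau U_h^*]$ and $\mathcal{X}_{1,h\tau}[\Pi_\tau U_h^*]$ and use the linearity relations \eqref{goud} together with their fully-discrete analogue to obtain the splitting
\begin{align*}
X_{1,h}^*-X_{1,h\tau}^*
&=\mathcal{X}_{1,h}^0[U_h^*-\Pi_\tau U_h^*]
+\big(\mathcal{X}_{1,h}[\Pi_\tau U_h^*]-\mathcal{X}_{1,h\tau}[\Pi_\tau U_h^*]\big)
+\mathcal{X}_{1,h\tau}^0[\Pi_\tau U_h^*-U_{h\tau}^*],
\end{align*}
and the completely analogous decomposition for the velocity component $X_{2,h}^*-X_{2,h\tau}^*$.

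Next I would estimate the three pieces directly in the energy norm (gradient of the displacement and $\mathbb{L}^2_x$-norm of the velocity). For the first piece, the semi-discrete stability bound \eqref{today0002} yields an energy-norm control by $C\,\mathbb{E}\big[\|U_h^*-\Pi_\tau U_h^*\|_{\mathbb{L}^2_{t,x}}^2\big]$, which is $O(\tau)$ by the time regularity of the semi-discrete optimal control \eqref{time-regularity for semi-discrete control}. The second piece is precisely the pure time-discretization error for the fixed control $\Pi_\tau U_h^*\in\mathbb{U}_{h\tau}$, so it is governed by Proposition~\ref{Proposition 4.7} (estimate \eqref{today0004}), giving the $O(\tau)$ rate once the high-order term is absorbed as in \eqref{434343}. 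For the third piece, the fully-discrete stability bound \eqref{today0003} gives an energy-norm control by $C\,\mathbb{E}\big[\|\Pi_\tau U_h^*-U_{h\tau}^*\|_{\mathbb{L}^2_{t,x}}^2\big]$, and a triangle inequality $\|\Pi_\tau U_h^*-U_{h\tau}^*\|\le\|\Pi_\tau U_h^*-U_h^*\|+\|U_h^*-U_{h\tau}^*\|$ reduces this to the already-established time regularity of $U_h^*$ and to the control error from Theorem~\ref{strong rate of convergence for time discretization}, both of which are $O(\tau)$. Since the control-error term $\mathbb{E}\big[\|U_h^*-U_{h\tau}^*\|_{\mathbb{L}^2_{t,x}}^2\big]$ is retained verbatim from Theorem~\ref{strong rate of convergence for time discretization}, assembling the three pieces via the triangle inequality closes the estimate.

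I expect the only genuine obstacle to sit inside the second piece, namely controlling the high-order quantity $\mathbb{E}\big[\|\nabla\Delta_h\Pi_\tau U_h^*\|_{\mathbb{L}^2_{t,x}}^2\big]$ that arises on the right-hand side of \eqref{today0004}; this requires a \emph{uniform-in-$h$} spatial regularity estimate for the semi-discrete optimal control, obtained from the optimality condition \eqref{3.3} and discrete elliptic regularity for the adjoint $Y_{2,h}$ (cf.\ Proposition~\ref{Time-regularity for semi-discrete optimal control}). This is the same bookkeeping already carried out to produce \eqref{434343}, so in the present proof it can be invoked rather than re-derived; everything else is a routine reassembly of the stability bounds \eqref{today0002}--\eqref{today0003}, Proposition~\ref{Proposition 4.7}, and the control-error output of Theorem~\ref{strong rate of convergence for time discretization}, exactly mirroring how Theorem~\ref{thm3.5} was deduced from Proposition~\ref{Proposition 3.7} and Theorem~\ref{Thm 3.8} in the spatial case.
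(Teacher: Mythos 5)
Your proposal is correct and follows essentially the same route as the paper, whose own proof is only a two-sentence sketch ("follow similar lines as in the proof of Proposition~\ref{Proposition 4.7}; it is a consequence of the error bound on $\mathbb{E}[\|U_h^*-U_{h\tau}^*\|_{\mathbb{L}^2_{t,x}}^2]$ from Theorem~\ref{strong rate of convergence for time discretization}"): you reduce the energy-norm state error to exactly those two ingredients, and your three-term splitting via $\Pi_\tau U_h^*$, combined with the stability bounds \eqref{today0002} and \eqref{today0003}, the time regularity \eqref{time-regularity for semi-discrete control}, and the absorption of $\mathbb{E}\big[\|\nabla\Delta_h\Pi_\tau U_h^*\|_{\mathbb{L}^2_{t,x}}^2\big]$ via \eqref{434343}, is a clean and rigorous implementation of that sketch.
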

\begin{proof}
For the proof, one can follow similar lines as in the proof of Proposition \ref{Proposition 4.7}. It is a consequence of the error bound on the additional term $\mathbb{E}[\|U_h^* - U_{h\tau}^*\|_{\mathbb{L}^2_{t, x}}^2]$, which is established in Theorem \ref{strong rate of convergence for time discretization}.    
\end{proof}
\begin{remark}[Rate of convergence]
	In the proof of Theorem \ref{strong rate of convergence for time discretization}, One needs error bound on \(\mathbb{E}\big[\|U_h^* - \Pi_\tau U_h^*\|_{\mathbb{L}_x^2}^2\big]^{1/2}\), but estimating \(\mathbb{E}\big[\|U_h^* - \Pi_\tau U_h^*\|_{\mathbb{L}_x^2}^2\big]^{1/2}\) relies on the time regularity of \(U_h^* = -\frac{1}{\alpha} Y_{2,h}\) (see Lemma~\ref{time-regularity for semi-discrete control}). As \(Y_{2,h}\), a solution component of the ${\bf BSPDE}_h$ \eqref{discrete adjoint ode}, has Hölder continuity up to \(1/2\), the convergence rate in Theorem \ref{strong rate of convergence for time discretization} is limited to order \(1/2\) (see Proposition~\ref{Time-regularity for semi-discrete optimal control}). Thus, improving this rate is challenging.
\end{remark}
\subsection{Main result of the error analysis for space-time discretization}
The following theorem gives the main result of this section, establishing the rate of convergence in the energy norm.
\begin{thm}[Final result of this section]\label{final result for full discrete scheme} Let Assumption~\ref{BB} hold. Let $(X^*_1, X_2^*, U^*)$ and $(X_{1,h\tau}^*, X_{2,h\tau}^*, U_{h\tau}^*)$ be solve {\bf SLQ} problem \eqref{1.3}-\eqref{1.4} and ${\bf SLQ}_{h\tau}$ problem \eqref{fully discrete cost functional}-\eqref{discrete state equation}, respectively. Then there exists a positive constant $C$ such that
	\begin{align*}
		&\mathbb{E}\big[\|U^*-U_{h\tau}^*\|_{\mathbb{L}^2_{t, x}}^2\big]+\sup_{t\in[0,T]}\bigg[\mathbb{E}\big[\|\nabla (X_1^*(t)- X_{1,h\tau}^*(t))\|_{\mathbb{L}^2_{x}}^2\big]+\mathbb{E}\big[\|X_2^*(t)- X_{2,h\tau}^*(t)\|_{\mathbb{L}^2_{x}}^2\big]\bigg]\\&\le\,C\ (\tau +h^2)\big(\|X_{1,0}\|_{\mathbb{H}_x^3}^2+ \|X_{2,0}\|_{\mathbb{H}^2_x}^2+\|\widetilde{X}\|_{C_t\mathbb{H}_x^2\cap C^{1/2}_t\mathbb{H}_0^1}^2+\mathbb{E}\big[ \|\sigma\|_{\mathbb{L}^2_t\mathbb{H}^2_x\cap C^{1/2}_t\mathbb{H}_0^1}^2\big]\big).
	\end{align*} 
\end{thm}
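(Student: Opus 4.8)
The plan is to obtain the fully discrete error by a triangle inequality that passes through the intermediate semi-discrete optimal tuple $(X_{1,h}^*, X_{2,h}^*, U_h^*)$, thereby reducing the combined space--time error to the two one-parameter rates already established. Concretely, for every fixed $t\in[0,T]$ I would write
\[
X_1^*(t) - X_{1,h\tau}^*(t) = \big(X_1^*(t) - X_{1,h}^*(t)\big) + \big(X_{1,h}^*(t) - X_{1,h\tau}^*(t)\big),
\]
and analogously for the velocity component $X_2^*(t) - X_{2,h\tau}^*(t)$ and for the control $U^* - U_{h\tau}^*$ (the latter measured directly in the $\mathbb{L}^2_{t,x}$-norm, which carries no pointwise-in-time dependence).

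Next I would apply the elementary inequality $\|a+b\|^2 \le 2\|a\|^2 + 2\|b\|^2$ to each splitting, take expectations, and group the terms so that the first summand of each pair collects the \emph{spatial} error and the second the \emph{temporal} error. The spatial contributions
\[
\mathbb{E}\big[\|U^* - U_h^*\|_{\mathbb{L}^2_{t,x}}^2\big] + \mathbb{E}\big[\|\nabla(X_1^*(t) - X_{1,h}^*(t))\|_{\mathbb{L}^2_x}^2\big] + \mathbb{E}\big[\|X_2^*(t) - X_{2,h}^*(t)\|_{\mathbb{L}^2_x}^2\big]
\]
are bounded by $C h^2(\cdots)$ uniformly in $t$ by Theorem~\ref{thm3.5}, while the temporal contributions are bounded by $C\tau(\cdots)$ uniformly in $t$ by Theorem~\ref{strong rate of convergence for time discretization1}. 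Since both bounds hold for all $t\in[0,T]$, taking the supremum over $t$ before adding the two is harmless, and the control error (being $t$-independent) is simply bounded by the sum of the two respective $U$-error bounds.

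Finally I would combine the two estimates additively, which produces the factor $(\tau + h^2)$. The only point requiring minor care is the compatibility of the data-dependent right-hand sides: the spatial estimate of Theorem~\ref{thm3.5} involves $\|\widetilde{X}\|_{C_t\mathbb{H}_0^1}$ and $\mathbb{E}[\|\sigma\|_{\mathbb{L}^2_t\mathbb{H}_x^2}^2]$, whereas the temporal estimate of Theorem~\ref{strong rate of convergence for time discretization1} involves the stronger norms $\|\widetilde{X}\|_{C_t\mathbb{H}_x^2\cap C^{1/2}_t\mathbb{H}_0^1}$ and $\mathbb{E}[\|\sigma\|_{\mathbb{L}^2_t\mathbb{H}_x^2\cap C^{1/2}_t\mathbb{H}_0^1}^2]$; since the latter dominate the former, both contributions can be absorbed into a single constant multiple of the stronger data norm appearing in the statement, which is finite under Assumption~\ref{BB}. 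I do not anticipate a genuine obstacle here: the substance of the argument lives entirely in Theorems~\ref{thm3.5} and~\ref{strong rate of convergence for time discretization1}, and the present step is a routine, stability-type assembly of those two rates via the triangle inequality.
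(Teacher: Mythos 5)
Your proposal is correct and is essentially the paper's own argument: the paper proves Theorem~\ref{final result for full discrete scheme} simply by combining Theorems~\ref{thm3.5} and~\ref{strong rate of convergence for time discretization1}, i.e.\ exactly the triangle-inequality splitting through the semi-discrete optimal tuple $(X_{1,h}^*, X_{2,h}^*, U_h^*)$ that you describe. Your additional remark that the stronger data norms of the temporal estimate absorb the weaker ones of the spatial estimate is the only bookkeeping point the paper leaves implicit, and you handle it correctly.
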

\begin{proof}
	This is a combined result of Theorems~\ref{thm3.5} and \ref{strong rate of convergence for time discretization1}.
	\end{proof}
\section{Fully discrete Pontryagin's Maximum Principle and gradient descent method}\label{Section 5} The fully discrete optimal tuple $(X_{1,h\tau}^*, X_{2,h\tau}^*,  U_{h\tau}^*)$ for the ${\bf SLQ}_{h\tau}$ problem \eqref{fully discrete cost functional}-\eqref{discrete state equation} exists but lacks an explicit, {\em implementable } form. Thus, we need to apply the fully discrete Pontryagin's maximum principle (see Proposition~\ref{fully discrete Pontryagin's maximum principle} below) to characterize it via a decoupled forward-backward system and an optimality condition for a practical implementation purpose. Hence, this section discusses the fully discrete Pontryagin's maximum principle. 
\subsection{Discrete Pontryagin's maximum principle} 
Let $U_{h\tau}\in \mathbb{U}_{h\tau}$. Then let the pair $(Y_{1,h\tau}, Y_{2,h\tau})\in \mathbb{X}_{h\tau}\times\mathbb{X}_{h\tau}$ solve the following backward difference equations: for all $n=N-1,...,0$,
\begin{align}\label{backward spde}
\begin{cases}
    Y_{1,h\tau}(t_{n})=\mathbb{E}\biggl[Y_{1,h\tau}(t_{n+1})+\frac{\tau}{2}\Delta_h\bigl[Y_{2,h\tau}(t_{n+1})+Y_{2,h\tau}(t_n)\bigr]+Y_{2,h\tau}(t_{n+1})\gamma\cdot\Delta_{n+1}W\bigg|\mathcal{F}_{t_n}\biggr]\\ \qquad\qquad\qquad+\tau\bigl(\widetilde{X}_h(t_{n})-\mathcal{X}_{1,h\tau}[U_{h\tau}](t_{n})\bigr),\\
    Y_{2,h\tau}(t_{n})=\mathbb{E}\biggl[Y_{2, h\tau}(t_{n+1})+\frac{\tau}{2}\bigl[Y_{1,h\tau}(t_{n+1})+Y_{1,h\tau}(t_n)\bigr]\bigg|\mathcal{F}_{t_n}\biggr],\\
    Y_{1,h\tau}(t_{N})=\frac{\tau}{2}\Delta_h Y_{2,h\tau}(t_N)+\beta(\widetilde{X}_{h\tau}(t_N)-X_{1,h\tau}(t_N)),\\
    Y_{2,h\tau}(t_N)=\frac{\tau}{2} Y_{1,h\tau}(t_N),
\end{cases}
\end{align}
For $i=1,2$, we define the operator $\mathcal{Y}_{i,h\tau}:\mathbb{U}_{h\tau}\to X_{h\tau}$ such that
\begin{align*}
    (\mathcal{Y}_{1,h\tau}[U_{h\tau}],\mathcal{Y}_{2,h\tau}[U_{h\tau}])=(Y_{1,h\tau}, Y_{2,h\tau})\in \mathbb{X}_{h\tau}\times\mathbb{X}_{h\tau},
\end{align*}
solve \eqref{backward spde}.
\begin{proposition}[Discrete Pontryagin's maximum principle]\label{fully discrete Pontryagin's maximum principle}
Let Assumption~\ref{CC} hold. The unique optimal tuple $(X_{1, h\tau}^*, X_{2,h\tau}^*, U_{h\tau}^*)\in [\mathbb{X}_{h\tau}]^2\times \mathbb{U}_{h\tau}$ to ${\bf SLQ}_{h\tau}$ problem \eqref{fully discrete cost functional}-\eqref{discrete state equation} if only if there exists the quadruple $(X^*_{1,h\tau}, X_{2,h\tau}^*, U_{h\tau}^*, Y_{2,h\tau})$ which satisfies the following conditions: 
\begin{itemize}
    \item [1.] \textbf{Forward state}: $(X^*_{1,h\tau}, X_{2,h\tau}^*)=(\mathcal{X}_1[U_{h\tau}^*],\mathcal{X}_2[U_{h\tau}^*]),$ 
    \item [2.]  {\textbf{Backward state:}} $(Y_{1,h\tau}, Y_{2,h\tau})=(\mathcal{Y}_{1,h\tau}[U_{h\tau}^*],\mathcal{Y}_{2,h\tau}[U_{h\tau}^*]),$
    \item [3.] {\textbf{Optimality condition:}} $\alpha U^*_{h\tau}(t_n)=\mathbb{E}\big[Y_{2,h\tau}(t_{n+1})\big|\mathcal{F}_{t_n}\big],$ for all $n=0,1...,N-1$.
\end{itemize}
\end{proposition}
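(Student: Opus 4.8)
The plan is to exploit the strict convexity of the reduced cost functional $\hat{\mathcal{J}}_{h\tau}$ to reduce the statement to a first-order variational condition, and then to convert that condition into the decoupled form claimed in the proposition by a discrete duality (summation-by-parts) argument that mirrors the proof of Proposition~\ref{Proposition02} and of the continuous maximum principle in Theorem~\ref{Pontryagin's Maximum Principle}. Since both directions of the equivalence are handled by the same vanishing-derivative characterization, the bulk of the work is an algebraic re-expression rather than a new estimate.

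First I would record that $\hat{\mathcal{J}}_{h\tau}$ is a strictly convex quadratic functional on the Hilbert space $\mathbb{U}_{h\tau}$: the state map $U_{h\tau}\mapsto\mathcal{X}_{1,h\tau}[U_{h\tau}]$ is affine (its increment solves the auxiliary system \eqref{Boundary_SPDE_ht_zero initial data}), the tracking terms are convex, and the penalty constant $\alpha>0$ makes the functional coercive and strictly convex. Consequently $U_{h\tau}^*$ minimizes \eqref{fully discrete cost functional}--\eqref{discrete state equation} \emph{if and only if} its G\^ateaux derivative vanishes, i.e.\ the integral identity \eqref{today0005} holds for every $V_{h\tau}\in\mathbb{U}_{h\tau}$. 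This yields the ``only if'' and ``if'' directions simultaneously, so the remaining task is purely to re-express \eqref{today0005} using the backward variables. Conditions~1 and~2 are then nothing but the definitions of the forward operator $\mathcal{X}_{h\tau}[\cdot]$ and the backward operator $\mathcal{Y}_{h\tau}[\cdot]$ evaluated at $U_{h\tau}^*$.

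The heart of the argument is the duality. The only term in \eqref{j2}--\eqref{today0005} not already of the form $\langle U_{h\tau}^*,V_{h\tau}\rangle$ is the state part
\[
\mathbb{E}\Big[\int_0^T \langle \mathcal{X}_{1,h\tau}[U_{h\tau}^*](t)-\widetilde{X}_{h\tau}(t),\, \mathcal{X}^0_{1,h\tau}[V_{h\tau}](t)\rangle\,{\rm d}t + \beta\langle \mathcal{X}_{1,h\tau}[U_{h\tau}^*](T),\, \mathcal{X}^0_{1,h\tau}[V_{h\tau}](T)\rangle\Big].
\]
I would pair the auxiliary forward system \eqref{Boundary_SPDE_ht_zero initial data} driven by $V_{h\tau}$ against the backward system \eqref{backward spde}, testing each equation against the opposite variable and summing over $n$. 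The implicit-midpoint symmetry makes the $\Delta_h$-coupling terms telescope, the terminal data $Y_{1,h\tau}(t_N),Y_{2,h\tau}(t_N)$ together with the zero initial data $\mathcal{X}^0_{i,h\tau}(t_0)=0$ remove the boundary contributions, and the tower property combined with the $\mathcal{F}_{t_n}$-measurability of $V_{h\tau}(t_n)$ and the independence of $\Delta_{n+1}W$ disposes of the stochastic-increment terms (exactly as in Steps~1--2 of Proposition~\ref{Proposition02}). The outcome is the identification
\[
\text{(state part)} \;=\; -\,\tau\sum_{n=0}^{N-1}\mathbb{E}\big[\langle \mathbb{E}[Y_{2,h\tau}(t_{n+1})\mid\mathcal{F}_{t_n}],\, V_{h\tau}(t_n)\rangle\big],
\]
the sign being fixed by the source-term convention $\widetilde{X}_h-\mathcal{X}_{1,h\tau}$ in \eqref{backward spde}. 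Substituting this back, \eqref{today0005} reads $\tau\sum_{n}\mathbb{E}\big[\langle \alpha U_{h\tau}^*(t_n)-\mathbb{E}[Y_{2,h\tau}(t_{n+1})\mid\mathcal{F}_{t_n}],\,V_{h\tau}(t_n)\rangle\big]=0$ for all $V_{h\tau}\in\mathbb{U}_{h\tau}$; letting $V_{h\tau}(t_n)$ range over all $\mathcal{F}_{t_n}$-measurable $\mathbb{V}_h$-valued test functions (the discrete fundamental lemma of the calculus of variations) yields the pointwise condition~3.

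The main obstacle is precisely this discrete duality, in particular matching the stochastic correction $\gamma X^0_{1,h\tau}(t_n)\Delta_{n+1}W$ in \eqref{Boundary_SPDE_ht_zero initial data} against the conditional-expectation form of \eqref{backward spde}, since the latter encodes its diffusion component only implicitly through $\mathbb{E}[\cdot\mid\mathcal{F}_{t_n}]$ rather than through an explicit pair $(Z_{1,h\tau},Z_{2,h\tau})$. I would resolve this either by introducing the discrete martingale increments $Z_{i,h\tau}(t_n)$ via the $\mathcal{F}_{t_n}$-orthogonal decomposition of $Y_{i,h\tau}(t_{n+1})$, thereby rewriting \eqref{backward spde} as a $\mathbf{BSDE}_{h\tau}$-analogue of \eqref{discrete adjoint ode} and then arguing verbatim as in Proposition~\ref{Proposition02}, or by carrying the conditional expectations through the summation directly. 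Checking that both routes produce the same sign and the same $t_{n+1}$-indexing of the adjoint in condition~3 is the delicate bookkeeping, but no new analytic estimate beyond those already established is required.
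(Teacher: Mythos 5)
Your proposal is correct, but it is organized genuinely differently from the paper's own proof, which is a one-line appeal to a discrete Lagrangian construction with all details delegated to the cited reference (Bakan, Prop.\ 2.1). In the Lagrangian route the backward system \eqref{backward spde} --- including its non-obvious terminal conditions $Y_{1,h\tau}(t_N)=\tfrac{\tau}{2}\Delta_h Y_{2,h\tau}(t_N)+\beta(\widetilde{X}_{h\tau}(t_N)-X_{1,h\tau}(t_N))$ and $Y_{2,h\tau}(t_N)=\tfrac{\tau}{2}Y_{1,h\tau}(t_N)$ --- is \emph{derived} as a stationarity condition, whereas you take \eqref{backward spde} as given and \emph{verify} by discrete summation by parts that it represents the gradient; in your route those peculiar terminal conditions are precisely what make the boundary terms of the telescoping sums cancel (the $\tfrac{\tau}{2}\langle Y_{1,h\tau}(t_N),X^0_{2,h\tau}(t_N)\rangle$ and $\tfrac{\tau}{2}\langle\Delta_h Y_{2,h\tau}(t_N),X^0_{1,h\tau}(t_N)\rangle$ contributions), so they must be checked rather than discovered. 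What your approach buys is self-containedness within the paper's own machinery: strict convexity of $\hat{\mathcal{J}}_{h\tau}$ (affine state map plus $\alpha>0$) makes the ``if and only if'' immediate through the vanishing G\^ateaux derivative \eqref{today0005}, and your pairing argument is the exact fully discrete analogue of Proposition~\ref{Proposition02}, reproducing the derivative formula \eqref{Frachet derivatie} with the correct sign and $t_{n+1}$-indexing. One small imprecision to fix when writing this up: the stochastic-increment terms are not ``disposed of'' by independence and the tower property --- $\mathbb{E}\big[\langle\gamma Y_{2,h\tau}(t_{n+1})\Delta_{n+1}W, X^0_{1,h\tau}(t_n)\rangle\big]$ does not vanish, since $Y_{2,h\tau}(t_{n+1})$ is correlated with $\Delta_{n+1}W$; rather, this term from the backward pairing cancels \emph{exactly} against the term $\gamma\,\mathbb{E}\big[\langle Y_{2,h\tau}(t_{n+1}), X^0_{1,h\tau}(t_n)\Delta_{n+1}W\rangle\big]$ arising from the forward pairing, as they enter with opposite signs. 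Your fallback of introducing discrete martingale increments $Z_{i,h\tau}$ via the orthogonal decomposition of $Y_{i,h\tau}(t_{n+1})$ handles the same point, and you flag the issue yourself, so this is bookkeeping rather than a gap.
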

\begin{proof}
For the proof, one can easily drive this discrete optimality system by defining discrete Lagrangian. For more details we refer to the proof of \cite[Prop.~2.1]{Bakan}.
\end{proof}
Note that items $1$ and $2$ in Proposition~\ref{fully discrete Pontryagin's maximum principle} are now decoupled: the first step requires solving a space-time discretization of {\bf SPDE} \eqref{1.1}, while the second requires solving the space-time discretization of the {\bf BSPDE} \eqref{1.5}. 
\begin{remark}[Frech\'et derivation of the fully discrete reduced cost functional] From the proof of Proposition~\ref{fully discrete Pontryagin's maximum principle}, one can easily conclude that for all $U_{h\tau}\in\mathbb{U}_{h\tau}$, for all $n=0,1..,N-1,$
	\begin{align}\label{Frachet derivatie}
		\mathcal{D}_{U}\hat{\mathcal{J}}_{h\tau}(U_{h\tau})(t_n):=-\mathbb{E}\big[\mathcal{Y}_{2,h\tau}[U_{h\tau}](t_{n+1})\big|\mathcal{F}_{t_n}\big]+\alpha U_{h\tau}.
	\end{align}
\end{remark}

\subsection{Gradient descent method}
By Proposition \ref{fully discrete Pontryagin's maximum principle}, solving the minimization ${\bf SLQ}_{h\tau}$ problem \eqref{fully discrete cost functional}-\eqref{discrete state equation} is equivalent to solving the system of {\em coupled} forward-backward difference equations with the optimality condition. By using the explicit expression of $\mathcal{D}_{U}\hat{\mathcal{J}}_{h\tau}$ from \eqref{Frachet derivatie}, we may exploit the variational character of ${\bf SLQ}_{h\tau}$ problem \eqref{fully discrete cost functional}-\eqref{discrete state equation} to construct a gradient descent method (for short, {\em i. e.}, ${\bf SLQ}_{h\tau}^{{\rm grad}}$) where approximate iterates of the optimal control $U^*_{h\tau}$ in the Hilbert space $\mathbb{U}_{h\tau}$ are obtained.  A similar approach has been chosen in \cite{ChaudharyProhl,ProhlWang2021, Prohl&Wang1} in a different setting.

\begin{algorithm}[H]
	\caption{Gradient descent method to compute control iterates$\{U_{h\tau}^{(\ell)}\}_{\ell\in\mathbb{N}}$}
	\label{tt1}
	\begin{algorithmic}[1]
		\State \textbf{Input:} Fix given $X_{1,0}, X_{2,0}\in \mathbb{H}_0^1$, $\widetilde{X}\in C_t\mathbb{H}_0^1$, noise coefficient $\sigma\in \mathbb{L}^2_{\mathbb{F}}C_t\mathbb{H}_0^1$, initial control iterate $U_{h\tau}^{(0)}\in\mathbb{U}_{h\tau}$, and fix $\kappa>0$.
		\State \textbf{Iterates:} For any $\ell\in \mathbb{N}\cup\{0\}$;
		\State \textbf{State iterates:} Compute the state iterates $ (X_{1, h\tau }^{(\ell)}, X_{1, h\tau }^{(\ell)})\in  \mathbb{X}_{h\tau}\times \mathbb{X}_{h\tau}$ such that $$(X_{1, h\tau}^{(\ell)},X_{2, h\tau}^{(\ell)}):=(\mathcal{X}_{1,h\tau}[U_{h\tau}^{(\ell)}], \mathcal{X}_{2,h\tau}[U_{h\tau}^{(\ell)}]).$$
		\State \textbf{Adjoint iterates:} Compute the adjoint iterates $({Y}_{1, h\tau}^{(\ell)}, {Y}_{2, h\tau}^{(\ell)})\in \mathbb{X}_{h\tau}\times \mathbb{X}_{h\tau}$ such that $$(Y_{1,h\tau}^{(\ell)}, Y_{2,h\tau}^{(\ell)}):=(\mathcal{Y}_{1,h\tau}[U_{h\tau}^{(\ell)}], \mathcal{Y}_{2,h\tau}[U_{h\tau}^{(\ell)}]).$$
		\State \textbf{Update iterates:}
		Update $U_{h}^{(\ell+1)}\in \mathbb{U}_{h\tau}$ by the following formula: for all $n=0,1..,N-1$
		\begin{align*}
			U_{h\tau}^{(\ell+1)}(t_n):=(1-\frac{\alpha}{\kappa})U_{h\tau}^{(\ell)}(t_n)+ \frac{1}{\kappa}\mathbb{E}\big[{Y}_{2, h\tau}^{(\ell)}(t_{n+1})\big|\mathcal{F}_{t_{n}}\big].
		\end{align*}
	\end{algorithmic}
\end{algorithm}
To find rate of convergence for the ${\bf SLQ}_{h\tau}^{\mbox{grad}}$, one needs the Lipschitz constant of $\mathcal{D}_U\hat{J}_{h\tau}(U_{h\tau})$ which can be find as follows: for all $U_{h\tau}, V_{h\tau}\in \mathbb{U}_{h\tau}$,
\begin{align*}
	\left\langle \mathcal{D}^2_{U}\hat{J}_{h\tau}(U_{h\tau})V_{h\tau},V_{h\tau}\right\rangle=&\mathbb{E}\bigg[\int_0^T\big[\left\langle\mathcal{X}_{1, h\tau}^0[V_{h\tau}](t),\mathcal{X}_{1, h\tau}^0[V_{h\tau}](t)\right\rangle+ \alpha\left\langle V_{h\tau}(t), V_{h\tau}(t)\right\rangle\big]\,{\rm d}t\\&\qquad+\beta\left\langle\mathcal{X}_{1,h\tau}^0[V_{h\tau}](T),\mathcal{X}_{1, h\tau}^0[V_{h\tau}](T)\right\rangle\bigg].
\end{align*}
It shows that for all $U_{h\tau}, V_{h\tau}\in\mathbb{U}_{h\tau}$,
\begin{align*}
	|\left\langle \mathcal{D}^2_{U}\hat{J}_{h\tau}(U_{h\tau})V_{h\tau},V_{h\tau}\right\rangle|&\le\mathbb{E}\big[\|\mathcal{X}_{1,h\tau}^0[V_{h\tau}]\|_{\mathbb{L}^2_{t, x}}^2]+\alpha\mathbb{E}\big[\|V_{h\tau}\|_{\mathbb{L}^2_{t, x}}^2\big]+\beta\mathbb{E}\big[\|\mathcal{X}_{h\tau}^0[V_{h\tau}](T)\|^2_{\mathbb{L}^2_x}\big]\\&\le (T+\beta)c_Pc_1e^{c_2T}\mathbb{E}\big[\|V_{h\tau}\|_{\mathbb{L}^2_{t, x}}^2\big]+\alpha\mathbb{E}\big[\|V_{h\tau}\|_{\mathbb{L}^2_{t, x}}^2\big]\\&=\big((T+\beta)c_Pc_1e^{c_2T}+\alpha\big)\mathbb{E}\big[\|V_{h\tau}\|_{\mathbb{L}^2_{t, x}}^2\big],
\end{align*}
where \eqref{today1414} is used and where \( c_1 = c_P \gamma^2 + \frac{\gamma^2 \tau}{4} (2c_P + 1) + 1 \), \( c_2 = 1 \), $c_P=\bigg(\mbox{diam}(D)/\pi\bigg)^2.$

\noindent
It shows that for all $U_{h\tau}\in\mathbb{U}_{h\tau}$,
\begin{align*}
	\|\mathcal{D}^2_U \hat{J}_{h\tau}(U_{h\tau})\|_{\mathcal{L}(\mathbb{U}_{h\tau};\mathbb{U}_{h\tau})}\le \big((T+\beta)c_Pc_1e^{c_2T}+\alpha\big).
\end{align*}
It gives the Lipschitz constant $K$ of $\mathcal{D}_U\hat{J}_{h\tau}(U_{h\tau})$ such that $$K=\|\mathcal{D}^2_U \hat{J}_{h\tau}(U_{h\tau})\|_{\mathcal{L}(\mathbb{U}_{h\tau};\mathbb{U}_{h\tau})}\le \big((T+\beta)c_Pc_1e^{c_2T}+\alpha\big).$$
\begin{proposition}[Error between $U_{h\tau}^{(\ell)}$ and ${U}_{h\tau}^*$]\label{error for gradient descent method} Let Assumption~\ref{CC} hold and $\kappa>K$. Then there exists a constant $C>0$ such that the following error estimates hold:
	\begin{align*}
		\mathbb{E}\big[\|{U}^*_{h\tau}-U_{h\tau}^{(\ell)}\|_{\mathbb{L}^2_{t, x}}^2\big]&\le C\bigg(1-\frac{\alpha}{\kappa}\bigg)^{\ell},\\
        \hat{\mathcal{J}}_{h\tau}(U_{h\tau}^{(\ell)})-\hat{\mathcal{J}}_{h\tau}(U_{h\tau}^*)&\le \frac{2\kappa\mathbb{E}\big[\|U_{h\tau}^*-U_{h\tau}^{(0)}\|_{\mathbb{L}^2_{t,x}}^2\big]}{\ell}.
	\end{align*}
\end{proposition}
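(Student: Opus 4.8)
The plan is to recognize that $\hat{\mathcal{J}}_{h\tau}$ is a \emph{quadratic} functional on the Hilbert space $\mathbb{U}_{h\tau}$, endowed with the inner product inherited from $\mathbb{L}^2_{\mathbb{F}}\mathbb{L}^2_{t,x}$, and that Algorithm~\ref{tt1} is precisely the gradient descent iteration $U_{h\tau}^{(\ell+1)}=U_{h\tau}^{(\ell)}-\tfrac1\kappa\,\mathcal{D}_U\hat{\mathcal{J}}_{h\tau}(U_{h\tau}^{(\ell)})$ with step size $1/\kappa$, the Riesz representative of the gradient being given by~\eqref{Frachet derivatie}; the conditional expectation $\mathbb{E}[\,\cdot\,|\mathcal{F}_{t_n}]$ appearing there is exactly the orthogonal projection onto the adapted subspace $\mathbb{U}_{h\tau}$, which keeps $U_{h\tau}^{(\ell+1)}\in\mathbb{U}_{h\tau}$. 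I would then record two structural facts. First, from the expression for $\langle\mathcal{D}^2_U\hat{\mathcal{J}}_{h\tau}(U_{h\tau})V_{h\tau},V_{h\tau}\rangle$ displayed just before the statement, together with the nonnegativity of the terms $\|\mathcal{X}_{1,h\tau}^0[V_{h\tau}]\|^2$ and $\beta\|\mathcal{X}_{1,h\tau}^0[V_{h\tau}](T)\|^2$ (recall $\beta\ge0$), the functional is $\alpha$-strongly convex, i.e. $\langle\mathcal{D}^2_U\hat{\mathcal{J}}_{h\tau}V,V\rangle\ge\alpha\,\mathbb{E}[\|V\|_{\mathbb{L}^2_{t,x}}^2]$. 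Second, the already-derived bound $\|\mathcal{D}^2_U\hat{\mathcal{J}}_{h\tau}\|_{\mathcal{L}(\mathbb{U}_{h\tau};\mathbb{U}_{h\tau})}\le K$ shows the gradient is $K$-Lipschitz, and by hypothesis $\kappa>K$.

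For the first (linear) estimate I would exploit that, $\hat{\mathcal{J}}_{h\tau}$ being quadratic, its Hessian $\mathcal{A}:=\mathcal{D}^2_U\hat{\mathcal{J}}_{h\tau}$ is a constant, self-adjoint operator (symmetry of the bilinear form $(V,W)\mapsto\langle\mathcal{A}V,W\rangle$ follows from linearity of $V\mapsto\mathcal{X}_{1,h\tau}^0[V]$), whose spectrum lies in $[\alpha,K]$. Since $\mathcal{D}_U\hat{\mathcal{J}}_{h\tau}(U_{h\tau}^*)=0$ one has $\mathcal{D}_U\hat{\mathcal{J}}_{h\tau}(U)=\mathcal{A}(U-U_{h\tau}^*)$, so subtracting $U_{h\tau}^*$ from the update gives the exact recursion $U_{h\tau}^{(\ell+1)}-U_{h\tau}^*=(\mathrm{Id}-\tfrac1\kappa\mathcal{A})(U_{h\tau}^{(\ell)}-U_{h\tau}^*)$. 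Because $0<\alpha\le\lambda\le K<\kappa$ for every spectral value $\lambda$ of $\mathcal{A}$, the operator $\mathrm{Id}-\tfrac1\kappa\mathcal{A}$ is a contraction with $\|\mathrm{Id}-\tfrac1\kappa\mathcal{A}\|_{\mathcal{L}(\mathbb{U}_{h\tau})}=1-\tfrac\alpha\kappa$. Iterating and squaring yields $\mathbb{E}[\|U_{h\tau}^*-U_{h\tau}^{(\ell)}\|_{\mathbb{L}^2_{t,x}}^2]\le(1-\tfrac\alpha\kappa)^{2\ell}\,\mathbb{E}[\|U_{h\tau}^*-U_{h\tau}^{(0)}\|_{\mathbb{L}^2_{t,x}}^2]$, and since $0<1-\tfrac\alpha\kappa<1$ this is bounded by $C(1-\tfrac\alpha\kappa)^{\ell}$ with $C=\mathbb{E}[\|U_{h\tau}^*-U_{h\tau}^{(0)}\|_{\mathbb{L}^2_{t,x}}^2]$.

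For the second ($O(1/\ell)$) estimate I would run the classical descent-lemma argument. Writing $g_\ell:=\mathcal{D}_U\hat{\mathcal{J}}_{h\tau}(U_{h\tau}^{(\ell)})$ and $\delta_\ell:=\hat{\mathcal{J}}_{h\tau}(U_{h\tau}^{(\ell)})-\hat{\mathcal{J}}_{h\tau}(U_{h\tau}^*)$, the $K$-smoothness bound together with $\kappa>K$ (so $\tfrac{K}{2\kappa^2}\le\tfrac1{2\kappa}$) gives the per-step decrease $\delta_{\ell+1}\le\delta_\ell-\tfrac{1}{2\kappa}\|g_\ell\|^2$. Co-coercivity of the gradient of a convex $K$-smooth functional yields $\langle g_\ell,U_{h\tau}^{(\ell)}-U_{h\tau}^*\rangle\ge\tfrac1K\|g_\ell\|^2\ge\tfrac1\kappa\|g_\ell\|^2$, from which the distance $\|U_{h\tau}^{(\ell)}-U_{h\tau}^*\|$ is non-increasing, hence bounded by $\|U_{h\tau}^{(0)}-U_{h\tau}^*\|=:\sqrt{D}$. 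Convexity then gives $\delta_\ell\le\langle g_\ell,U_{h\tau}^{(\ell)}-U_{h\tau}^*\rangle\le\|g_\ell\|\sqrt D$, so $\|g_\ell\|^2\ge\delta_\ell^2/D$ and the decrease becomes $\delta_{\ell+1}\le\delta_\ell-\tfrac{\delta_\ell^2}{2\kappa D}$. Inverting and telescoping the resulting bound $\tfrac1{\delta_{\ell+1}}-\tfrac1{\delta_\ell}\ge\tfrac1{2\kappa D}$ gives $\tfrac1{\delta_\ell}\ge\tfrac{\ell}{2\kappa D}$, i.e. $\hat{\mathcal{J}}_{h\tau}(U_{h\tau}^{(\ell)})-\hat{\mathcal{J}}_{h\tau}(U_{h\tau}^*)\le\tfrac{2\kappa\,\mathbb{E}[\|U_{h\tau}^*-U_{h\tau}^{(0)}\|_{\mathbb{L}^2_{t,x}}^2]}{\ell}$.

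The main obstacle is not the optimization bookkeeping, which is standard once the two constants are in hand, but rather confirming that the abstract Hilbert-space gradient-descent theory applies verbatim in the present adapted, stochastic setting: namely that~\eqref{Frachet derivatie} is the genuine Riesz gradient of $\hat{\mathcal{J}}_{h\tau}$ \emph{within} $\mathbb{U}_{h\tau}$, so that the conditional-expectation projection keeps the iterates adapted and the descent, convexity, and co-coercivity inequalities all hold with respect to the $\mathbb{L}^2_{\mathbb{F}}\mathbb{L}^2_{t,x}$ inner product, and that the Hessian is self-adjoint with spectrum exactly in $[\alpha,K]$ so that the contraction factor is precisely $1-\alpha/\kappa$ rather than a larger quantity.
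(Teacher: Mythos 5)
Your proposal is correct, and its content is precisely what the paper delegates to a citation: the paper's entire proof is ``direct consequence of \cite[Theorem 1.2.4]{Ne} with Lipschitz constant $K$,'' whereas you reprove the underlying optimization facts from scratch. The substance is the same in both routes — everything hinges on verifying that Algorithm~\ref{tt1} is exact gradient descent with step $1/\kappa$ (which follows from \eqref{Frachet derivatie} and the update rule), that $\mathcal{D}_U\hat{\mathcal{J}}_{h\tau}(U^*_{h\tau})=0$ (item 3 of Proposition~\ref{fully discrete Pontryagin's maximum principle}), and that the reduced functional is $\alpha$-strongly convex and $K$-smooth (the two displays preceding the statement) — but your version buys two things the citation does not. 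First, for the linear rate you exploit the quadratic structure directly: since the Hessian $\mathcal{A}$ is a constant self-adjoint operator with spectrum in $[\alpha,K]\subset(0,\kappa)$, the error obeys the exact recursion $U^{(\ell+1)}_{h\tau}-U^*_{h\tau}=(\mathrm{Id}-\tfrac1\kappa\mathcal{A})(U^{(\ell)}_{h\tau}-U^*_{h\tau})$ and you get the contraction factor $1-\alpha/\kappa$ per step, hence the stronger bound $(1-\alpha/\kappa)^{2\ell}$, which trivially implies the stated $(1-\alpha/\kappa)^{\ell}$; this spectral shortcut is not available for general strongly convex functions and is cleaner than invoking the general theorem. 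Second, for the $O(1/\ell)$ bound your descent-lemma/co-coercivity/telescoping chain reproduces the claimed constant $2\kappa\,\mathbb{E}[\|U^*_{h\tau}-U^{(0)}_{h\tau}\|^2_{\mathbb{L}^2_{t,x}}]/\ell$ exactly, and you correctly flag the only point where the infinite-dimensional adapted setting could cause trouble, namely that the conditional expectation in \eqref{Frachet derivatie} is the orthogonal projection onto the closed subspace $\mathbb{U}_{h\tau}\subset\mathbb{L}^2_{\mathbb{F}}\mathbb{L}^2_{t,x}$, so the iterates stay in $\mathbb{U}_{h\tau}$ and all Hilbert-space inequalities apply verbatim. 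Two cosmetic remarks: the spectrum of $\mathcal{A}$ is only \emph{contained} in $[\alpha,K]$, so you should write $\|\mathrm{Id}-\tfrac1\kappa\mathcal{A}\|_{\mathcal{L}(\mathbb{U}_{h\tau})}\le 1-\alpha/\kappa$ rather than equality (the inequality is all you use), and the second estimate is of course read for $\ell\ge1$.
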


\begin{proof}  The proof is a direct consequence of \cite[Theorem 1.2.4]{Ne} with Lipschitz constant $K$.
\end{proof}
\subsection{Final result of the error analysis}
\begin{thm}\label{convergence of gradient descent method} Let Assumption~\ref{BB} hold and $\kappa > K$. Let $(X^*_1, X_2^*, U^*)$  be solve problem {\bf SLQ}~\eqref{1.3}-\eqref{1.4} and $(X_{h\tau}^{(\ell)}, U_{h\tau}^{(\ell)})$ be computed by Algorithm~\ref{tt1}. Then there exists a positive constant $C$ such that for $\kappa>K$ and for all $t\in[0,T]$,
	\begin{align*}
		&\mathbb{E}\big[\|U^*-U_{h\tau}^{(\ell)}\|_{\mathbb{L}^2_{t, x}}^2\big]+\mathbb{E}\big[\|\nabla (X_1^*(t)- X_{1,h\tau}^{(\ell)}(t))\|_{\mathbb{L}^2_{x}}^2\big]+\mathbb{E}\big[\|X_2^*(t)- X_{2,h\tau}^{(\ell)}(t)\|_{\mathbb{L}^2_{x}}^2\big]\\&\le\,C\ \bigg(\tau +h^2+ \bigg(1-\frac{\alpha}{\kappa}\bigg)^{\ell}\bigg)\big(\|X_{1,0}\|_{\mathbb{H}_x^3}^2+ \|X_{2,0}\|_{\mathbb{H}^2_x}^2+\|\widetilde{X}\|_{C_t\mathbb{H}_x^2\cap C^{1/2}_t\mathbb{H}_0^1}^2+\mathbb{E}\big[ \|\sigma\|_{\mathbb{L}^2_t\mathbb{H}^2_x\cap C^{1/2}_t\mathbb{H}_0^1}^2\big]\big).
	\end{align*}
\end{thm}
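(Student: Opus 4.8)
The plan is to prove the statement by a triangle inequality that cleanly separates the \emph{discretization} error (continuous $\mathbf{SLQ}$ versus the fully discrete $\mathbf{SLQ}_{h\tau}$ problem) from the \emph{iteration} error (the exact fully discrete optimizer $U_{h\tau}^*$ versus its gradient-descent approximation $U_{h\tau}^{(\ell)}$). For the control I would insert the intermediate term $U_{h\tau}^*$ and write
\begin{align*}
\mathbb{E}\big[\|U^*-U_{h\tau}^{(\ell)}\|_{\mathbb{L}^2_{t,x}}^2\big] \le 2\,\mathbb{E}\big[\|U^*-U_{h\tau}^*\|_{\mathbb{L}^2_{t,x}}^2\big] + 2\,\mathbb{E}\big[\|U_{h\tau}^*-U_{h\tau}^{(\ell)}\|_{\mathbb{L}^2_{t,x}}^2\big],
\end{align*}
and analogously for each state component by inserting $X_{i,h\tau}^*$ between $X_i^*$ and $X_{i,h\tau}^{(\ell)}$. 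The first contribution is exactly the full discretization error and is bounded by $C(\tau+h^2)$ times the data norms via Theorem~\ref{final result for full discrete scheme} (which itself assembles the spatial rate of Theorem~\ref{thm3.5} and the temporal rate of Theorem~\ref{strong rate of convergence for time discretization1}); no new computation is needed there.

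The second contribution is the iteration error. Proposition~\ref{error for gradient descent method} directly supplies $\mathbb{E}[\|U_{h\tau}^*-U_{h\tau}^{(\ell)}\|_{\mathbb{L}^2_{t,x}}^2]\le C(1-\alpha/\kappa)^{\ell}$ whenever $\kappa>K$. To transfer this control error to the state components I would exploit the linearity of the fully discrete solution operator (the analogue of \eqref{goud} for scheme~\eqref{discrete state equation}): since $X_{i,h\tau}^*=\mathcal{X}_{i,h\tau}[U_{h\tau}^*]$ and $X_{i,h\tau}^{(\ell)}=\mathcal{X}_{i,h\tau}[U_{h\tau}^{(\ell)}]$ solve \eqref{discrete state equation} with identical initial data and noise, their difference equals $\mathcal{X}_{i,h\tau}^0[U_{h\tau}^*-U_{h\tau}^{(\ell)}]$ for $i=1,2$. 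The stability bound \eqref{today0003} of Proposition~\ref{stability bound for discrete state} then yields
\begin{align*}
\sup_{t\in[0,T]}\mathbb{E}\big[\|\nabla(X_{1,h\tau}^*(t)-X_{1,h\tau}^{(\ell)}(t))\|_{\mathbb{L}^2_x}^2 + \|X_{2,h\tau}^*(t)-X_{2,h\tau}^{(\ell)}(t)\|_{\mathbb{L}^2_x}^2\big] \le C\,\mathbb{E}\big[\|U_{h\tau}^*-U_{h\tau}^{(\ell)}\|_{\mathbb{L}^2_{t,x}}^2\big],
\end{align*}
so the state iteration error inherits the geometric decay $(1-\alpha/\kappa)^{\ell}$. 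Adding the discretization and iteration bounds then produces the claimed $C(\tau+h^2+(1-\alpha/\kappa)^{\ell})$ rate.

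The main obstacle is not analytic but rather a careful bookkeeping of constants, so that the right-hand side carries the stated data-norm factor \emph{uniformly} in $h,\tau,\ell$. Two points require verification. First, the admissible step parameter must satisfy $\kappa>K$ with $K=(T+\beta)c_P c_1 e^{c_2 T}+\alpha$; I would check that $K$ is bounded independently of $h$ and $\tau$, using $\tau<1$ so that $c_1=c_P\gamma^2+\tfrac{\gamma^2\tau}{4}(2c_P+1)+1$ stays bounded, which guarantees that a single $\kappa$ works across all discretization levels. Second, the constant $C$ in Proposition~\ref{error for gradient descent method} depends on $\mathbb{E}[\|U_{h\tau}^*-U_{h\tau}^{(0)}\|_{\mathbb{L}^2_{t,x}}^2]$; fixing a discretization-independent initial iterate (e.g. $U_{h\tau}^{(0)}=0$) and invoking the uniform a priori bound \eqref{uniform bound for discrete optimal pair} controls this quantity by the required data norms, uniformly in $h,\tau$. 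Once these two uniformity facts are in place, the three error contributions combine into the asserted estimate.
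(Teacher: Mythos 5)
Your proposal is correct and follows essentially the same route as the paper: the paper's proof is exactly the combination of Theorem~\ref{final result for full discrete scheme} (discretization error $C(\tau+h^2)$), the stability bound \eqref{today0003} to transfer the control iteration error to the state components, and Proposition~\ref{error for gradient descent method} (geometric decay $(1-\alpha/\kappa)^{\ell}$), assembled via the triangle inequality you describe. Your additional uniformity checks on $K$ and on the initial iterate via \eqref{uniform bound for discrete optimal pair} are sensible bookkeeping that the paper leaves implicit.
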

\begin{proof}
	The proof is a direct consequence of Theorem~\ref{final result for full discrete scheme}, stability estimates~\eqref{today0003} and Proposition~\ref{error for gradient descent method}.
\end{proof}
\begin{remark}\label{5.1} In the gradient descent algorithm ({\em i.e.}, Algorithm~\ref{tt1}), computing the adjoint iterate ${Y}_{2,h\tau}^{(\ell)}$ requires the evaluation of a conditional expectation. Since these conditional expectations are generally not available in closed form, they must be approximated. One common approach is to estimate the conditional expectation using regression-based methods \cite{Lemor2006,BouchardTouzi2004, gobet2005,BenderDenk2007}, a statistical technique; see subsection~\ref {subsection 1.3} for more of its details.  In the presence of multiplicative noise (i.e., $\gamma\neq 0$), one may use the methodology of the random partition estimator method\cite{Duns&Prohl} to approximate (simulate) the conditional expectation in the adjoint iterates $Y_{h\tau}^{(\ell)}$--this method \cite{Duns&Prohl} is practical for limited higher dimension of state space. A comprehensive analysis of such methods lies beyond the scope of this paper. However, in the next subsection, we demonstrate that in the presence of only additive noise ({\em i.e.}, $\gamma=0$), the conditional expectation can be computed explicitly by the help of {\em artificial} gradient iterates.
\end{remark}
\subsection{Implementable scheme}\label{subsection 5.4}
\noindent
In the case of additive noise ({\em i.e.}, \(\gamma=0\)), the adjoint iterate \(Y_{2,h\tau}^{(\ell)}\) in Algorithm~\ref{tt1} can be computed using the new approach based on {\em artificial} gradient iterates, which eliminates the need of the approximation of conditional expectations. Therefore, in this subsection, we restrict our analysis to the case of additive noise.
\subsubsection{Artificial iterates for gradient descent method:}\label{5.4.1}
For all $\ell\in \mathbb{N}\cup\{0\}$ we introduce the concept of {\em artificial} control iterate, {\em artificial} state iterate and {\em artificial} adjoint iterate to compute adjoint iterate $Y_{2, h\tau}^{(\ell)}$ in Algorithm~\ref{tt1} with $\gamma=0$ as follows: 
\begin{itemize}
 \item [1.] \textbf{Artificial control iterate:} For $m\in\{0,...,N-1\}$, let $\mathfrak{U}_{m}^{(\ell)}\in \mathbb{U}_{h\tau}$ such that for all $n=0,...,N-1$,
 \begin{align}\label{aicontrol}
     \mathfrak{U}_{m}^{(\ell)}(t_n):=\mathbb{E}\big[{U}_{h\tau}^{(\ell)}(t_n)\big|\mathcal{F}_{t_m}\big].
 \end{align}

    \item [2.] \textbf{Artificial state iterate:} For $m\in\{0,...,N-1\}$ and $i=1,2$, let $\frak{X}_{i,m}\in \mathbb{X}_{h\tau}$ such that for all $n=0,...,N$
    \begin{align}\label{aistate}
    \mathfrak{X}_{i,m}^{(\ell)}(t_n):=\mathbb{E}\big[\mathcal{X}_{i,h\tau}^{(\ell)}(t_n)\big|\mathcal{F}_{t_m}\big].
\end{align}
Then by using the tower property of conditional expectation the {\em artificial} state iterate $\bigl(\mathfrak{X}_{1,m}^{(\ell)}, \mathfrak{X}_{2,m}^{(\ell)}\bigr)\in\mathbb{X}_{h\tau}\times\mathbb{X}_{h\tau}$ solves the following {\em artificial} state equations for all $n\in \{1,...,N-1\}$, 
    \begin{equation}\label{i}
	\begin{cases}
		\mathfrak{X}_{1,m}^{(\ell)}(t_{n+1})-\mathfrak{X}^{(\ell)}_{1,m}(t_n)=\frac{\tau}{2} \big(\mathfrak{X}_{2,m}^{(\ell)}(t_{n+1})+\mathfrak{X}_{2,m}^{(\ell)}(t_n)\big),\\
		\mathfrak{X}_{2,m}^{(\ell)}(t_{n+1}) - \mathfrak{X}_{2,m}^{(\ell)}(t_n) = \frac{\tau}{2} \Delta_h \big(\mathfrak{X}_{1,m}^{(\ell)}(t_{n+1})+\mathfrak{X}_{1,m}^{(\ell)}(t_n)\big) + \tau \mathfrak{U}_{m}^{(\ell)}(t_n)+\mathfrak{W}_{m}(t_n), \\
		\mathfrak{X}_{1,m}^{(\ell)}(0) =\mathcal{R}_h X_{1,0},\\
		\mathfrak{X}_{2,m}^{(\ell)}(0)=\mathcal{R}_h X_{2,0},
	\end{cases}
\end{equation}
where $\mathfrak{W}_{m}(t_n):=\mathbb{E}\bigl[\sigma(t_n)\Delta_{n+1}W\big|\mathcal{F}_{t_m}\bigr]=\begin{cases}
    0,\qquad& n+1>m,\\
    \sigma(t_n)\Delta_{n+1}W,\qquad& n+1\le m.
\end{cases}$
 \item [3.] \textbf{Artificial adjoint iterate:} For $m\in\{0,...,N-1\}$ and $i=1,2$, let $\mathfrak{Y}_{i,m}^{(\ell)}\in \mathbb{X}_{h\tau}$ such that for all $n=0,...,N$,
 \begin{align}\label{aiadjoint}
     \mathfrak{Y}_{i,m}^{(\ell)}(t_n):=\mathbb{E}\big[\mathcal{Y}_{i,h\tau}^{(\ell)}(t_n)\big|\mathcal{F}_{t_m}\big].
 \end{align}
Then by using the tower property of conditional expectation the {\em artificial} adjoint state $\bigl(\mathfrak{Y}_{1,m}^{(\ell)}, \mathfrak{Y}_{2,m}^{(\ell)}\bigr)\in \mathbb{X}_{h\tau}\times\mathbb{X}_{h\tau}$ solves the following {\em artificial} backward equations: for all $0\le m\le n$, 
\begin{align}\label{ii}
\begin{cases}
    \mathfrak{Y}_{1,m}^{(\ell)}(t_{n})=\mathfrak{Y}_{1,m}^{(\ell)}(t_{n+1})+\frac{\tau}{2}\Delta_h\bigl[\mathfrak{Y}_{2,m}^{(\ell)}(t_{n+1})+\mathfrak{Y}_{2,m}^{(\ell)}(t_n)\bigr]+\tau\bigl(\widetilde{X}_h(t_{n})-\mathcal{X}_{1,m}^{(\ell)}(t_{n})\bigr),\\
    \mathfrak{Y}_{2,m}^{(\ell)}(t_{n})=\mathfrak{Y}_{2,m}^{(\ell)}(t_{n+1})+\frac{\tau}{2}\bigl[\mathfrak{Y}_{1,m}^{(\ell)}(t_{n+1})+\mathfrak{Y}_{1,m}^{(\ell)}(t_n)\bigr],\\
    \mathfrak{Y}_{1,m}^{(\ell)}(t_{N})=\frac{\tau}{2}\Delta_h\mathfrak{Y}_{2,m}^{(\ell)}(t_N)+\beta(\widetilde{X}_{h\tau}(t_N)-\mathfrak{X}^{(\ell)}_{1,m}(t_N)),\\
    \mathfrak{Y}_{2,m}^{(\ell)}(t_N)=\frac{\tau}{2}\mathfrak{Y}_{1,m}^{(\ell)}(t_N),
\end{cases}
\end{align}
and for all $m>n$, $\bigl(\mathfrak{Y}_{1,m}^{(\ell)}(t_n), \mathfrak{Y}_{2,m}^{(\ell)}(t_n)\bigr):=\bigl(\mathfrak{Y}_{1,n}^{(\ell)}(t_n), \mathfrak{Y}_{2,n}^{(\ell)}(t_n)\bigr)$.

\item [4.] \textbf{Artificial updated control iterate:}
The {\em artificial} update control  $\mathfrak{U}_{m}^{(\ell)}\in \mathbb{U}_{h\tau}$ satisfies the following formula: for all $n=0,1...,N-1$,
\begin{align}\label{updatecontrol}
    \mathfrak{U}_{m}^{(\ell+1)}(t_n):=(1-\frac{\alpha}{\kappa})\mathfrak{U}_{m}^{(\ell)}(t_n)+\frac{1}{\kappa}\mathfrak{Y}_{2,m}^{(\ell)}(t_{n+1}).
\end{align}
\end{itemize}
\subsubsection{Computation of gradient iterates}
\noindent
From items~(1)--(4), it is evident that the computation of these {\em artificial} iterates does not involve any direct evaluation of conditional expectations. By employing these {\em artificial} iterates, we can efficiently compute the state iterate \(X_{1,h\tau}^{(\ell)}\), the adjoint iterate \(Y_{1,h\tau}^{(\ell)}\), and the control iterate \(U_{h\tau}^{(\ell)}\) of Algorithm~\ref{tt1} as follows:

\begin{itemize}
\item [A.] \textbf{Gradient control, state and adjoint iterates:} By the help of \eqref{aicontrol}, \eqref{aistate} and \eqref{aiadjoint}, for $i=1,2,$ the control iterate $U_{h\tau}^{(\ell)}\in\mathbb{U}_{h\tau}$, the state iterate $X_{i, h\tau}^{(\ell)}\in \mathbb{X}_{h\tau}$ and the adjoint iterate $Y_{i,h\tau}^{(\ell)}\in\mathbb{X}_{h\tau}$ of Algorithm~\ref{tt1} are then computed by the following  relation: for all $n=0,1,...,N-1$,
\begin{align}\label{relation}
U_{h\tau}^{(\ell)}(t_n)=\mathfrak{U}_{n}^{(\ell)}(t_{n}),\qquad X_{i, h\tau}^{(\ell)}(t_{n+1})=\mathfrak{X}_{i,n+1}^{(\ell)}(t_{n+1}),\qquad Y_{i,h\tau}^{(\ell)}(t_{n+1})=\mathfrak{Y}_{i,n+1}^{(\ell)}(t_{n+1}).
\end{align}
\end{itemize}
\noindent
\noindent
Consequently, Algorithm~\ref{tt1} with \(\gamma=0\) can be reformulated into the following {\em implementable} algorithm.

\begin{algorithm}[ht]\label{tt3}
	\caption{{\em Implementable} algorithm to compute control iterates $\{U_{h\tau}^{(\ell)}\}_{\ell\in \mathbb{N}}$ of Algorithm~\ref{tt1} with $\gamma=0$}
	\label{tt3}
    \begin{itemize}
        \item [1.] \textbf{Input:} Fix given $X_{1,0}, X_{2,0}\in \mathbb{H}^1_0$, $\widetilde{X}\in C_t\mathbb{H}_0^1$, noise coefficient $\sigma\in \mathbb{L}^2_{\mathbb{F}}C_t\mathbb{H}_0^1$, initial guess ${U}_{h\tau}^{(0)}\equiv 0$, and fix $\kappa>K$, total time steps $N$, total space steps $M$, $\tau=1/N$, and $h=1/M$.
        \item [2.]
        \textbf{Gradient iterates:} For all $\ell\in \mathbb{N}\cup\{0\}$; 
        \begin{itemize}
        \item [2(i).]\textbf{Artificial iterate:} For all $m\in\{0,...,N-1\}$, 
        \begin{itemize}
            \item [a.] \textbf{Initial control iterate} For all $n\in\{0,...,N\}$, $\mathfrak{U}_{m}^{(0)}(t_n)\equiv 0$.
            \item [b.]  \textbf{Artificial state iterates:} Compute $(\mathfrak{X}_{1,m}^{(\ell)},\mathfrak{X}^{(\ell)}_{2,m})\in \mathbb{X}_{h\tau}\times\mathbb{X}_{h\tau}$ by  \eqref{i}. 
\item [c.] \textbf{Artificial adjoint iterates:}  Compute $(\mathfrak{Y}_{1,m}^{(\ell)},\mathfrak{Y}_{2,m}^{(\ell)})\in \mathbb{X}_{h\tau}\times\mathbb{X}_{h\tau}$ by \eqref{ii}.
\item [d.]  \textbf{Artificial update control iterates:} Update the {\em artificial} control $\mathfrak{U}_m^{(\ell+1)}\in\mathbb{U}_{h\tau}$ by~\eqref{updatecontrol}.
        \end{itemize}
        \item [2(ii).]  \textbf{Gradient control iterates:} Compute the control iterate $U_{h\tau}^{(\ell+1)}\in \mathbb{U}_{h\tau}$ by~\eqref{updatecontrol} and~\eqref{relation}.
        \end{itemize}
    \end{itemize}
\end{algorithm}
\section{Conclusion}\label{Remark 6.2}
This work proposes convergence with rates for an {\em implementable} scheme to solve the {\bf SLQ} roblem \eqref{1.1}—\eqref{1.2}. From a methodological viewpoint, it contains two main novelties. First, we introduce a {\em new} proposition (Proposition~\ref{Proposition02}) that circumvents the lengthy {\em Malliavin calculus} arguments  in the error analysis for the optimal pair $(X^*,U^*)$ to {\bf SLQ} problem\eqref{1.1}-\eqref{1.2} as discussed in Remarks~\ref{Remark 4.3} and~\ref{Remark 4.5}. Second, we {\em eliminate} the costly approximation of the conditional expectations that typically arise in the computation of the adjoint state \((Y_{1, h\tau}, Y_{2,h\tau})\) in Pontryagin's maximum principle ({\em cf.}\ Proposition \ref{fully discrete Pontryagin's maximum principle} and Remark~\ref{5.1}) by introducing a new concept of {\em artificial} gradient iterates; see Section~\ref{5.4.1} . Computational studies supporting efficiency are reported in Section~\ref{example 1}.

\appendix
\section{Technical Results}
In this section, we state bounds in stronger norms for {\bf SLQ} problem \eqref{1.3}-\eqref{1.4}. These results rest on the stronger data Assumptions~\ref{BB} as stated in Section~\ref{sec assumption}.
\begin{lem}[Spatial regularity of optimal control]\label{Spatial regularity of optimal control}Let Assumption~\ref{BB} hold. Let $(X_1^*, X_2^*, U^*)$ be the unique optimal control tuple for {\bf SLQ} problem \eqref{1.3}-\eqref{1.4}. Then there exists a $C>0$ such that the following estimates hold:
    \begin{align}\label{space-regularity for optimal control 1}
		\mathbb{E}\big[\sup_{t\in[0,T]}\|U^*(t)\|^2_{\mathbb{H}^1_0}\big]&\le C(\| X_{1,0}\|_{\mathbb{H}^1_0}^2+ \|X_{2,0}\|_{\mathbb{L}^2_x}^2+ \|\widetilde{X}\|_{\mathbb{L}^2_{t,x}}^2+\mathbb{E}\big[\|\sigma\|_{\mathbb{L}^2_{t,x}}^2\big]),
	\end{align}
    \begin{equation}\label{space-regularity for otimal state 21}
		\mathbb{E} \left[ \sup_{t\in[0,T]} \left( \|X_1^*(t)\|_{\mathbb{H}^2_x}^{2} + \|X_2^*(t)\|_{\mathbb{H}^1_0}^{2} \right) \right] \leq C(\|X_{1,0}\|_{\mathbb{H}^2_x}^2 +\|X_{2,0}\|_{\mathbb{H}^1_0}^2 +\|\widetilde{X}\|_{C_t\mathbb{H}_0^1}^2+\mathbb{E}\big[\|\sigma\|_{\mathbb{L}^2_t\mathbb{H}^1_0}^2\big]),
	\end{equation}
    \begin{align}\label{00985}
         \mathbb{E}\big[\sup_{t\in[0,T]}\|U^*(t)\|_{\mathbb{H}_x^2}^2\big]&\le C(\|X_{1,0}\|_{\mathbb{H}^2_x}^2+ \|X_{2,0}\|_{\mathbb{H}^1_0}^2+ \|\widetilde{X}\|_{C_t\mathbb{H}_0^1}^2+\mathbb{E}\big[\|\sigma\|_{\mathbb{L}^2_{t}\mathbb{H}_0^1}^2\big]),
     \end{align}
    \begin{equation}\label{space-regularity for otimal state 2}
		\mathbb{E}[ \sup_{0 \leq t \leq T} \big( \|X_1^*(t)\|_{\mathbb{H}^3_x}^{2} + \|X_2^*(t)\|_{\mathbb{H}^2_x}^{2}] \leq C\big(\|X_{1,0}\|_{\mathbb{H}^3_x}^2 +\|X_{2,0}\|_{\mathbb{H}^2_x}^2 +\|\widetilde{X}\|_{C_t\mathbb{H}_0^1}^2+\mathbb{E}\big[\|\sigma\|_{\mathbb{L}^2_t\mathbb{H}^2_x}^2\big]\big).
	\end{equation}
\end{lem}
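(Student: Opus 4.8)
The plan is to run a bootstrap argument in which the optimality condition \eqref{1.6} transfers spatial regularity from the adjoint variable $Y_2$ to the control $U^*$, while the forward a priori bounds of Lemma~\ref{Lemma 2.1} transfer regularity from $U^*$ back to the state $(X_1^*,X_2^*)$; the two transfers are chained, each gaining one order of spatial differentiability, until we reach the level permitted by the data Assumption~\ref{BB}. The base of the induction is the bound \eqref{bound for optimal control} of Proposition~\ref{existence of a unique optimal pair}, which already places $X_1^*$ in $\mathbb{L}^2_{\mathbb F}C_t\mathbb{H}_0^1$ and $U^*$ in $\mathbb{L}^2_{\mathbb F}\mathbb{L}^2_{t,x}$.

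First I would prove \eqref{space-regularity for optimal control 1}. By the optimality condition \eqref{1.6} one has $\alpha U^*=-Y_2$ pointwise, so $\mathbb{E}[\sup_t\|U^*(t)\|_{\mathbb{H}_0^1}^2]=\alpha^{-2}\,\mathbb{E}[\sup_t\|\nabla Y_2(t)\|_{\mathbb{L}^2_x}^2]$. The right-hand side is controlled by the BSPDE estimate \eqref{today020} of Lemma~\ref{Lemma 2.3}, whose data term $\mathbb{E}[\|X_1^*-\widetilde{X}\|_{\mathbb{L}^2_{t,x}}^2+\beta^2\|X_1^*(T)-\widetilde{X}(T)\|_{\mathbb{L}^2_x}^2]$ is finite by the base bound \eqref{bound for optimal control} together with $\widetilde{X}\in C_t\mathbb{H}_0^1$. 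This yields \eqref{space-regularity for optimal control 1}.

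Next, \eqref{space-regularity for otimal state 21} follows by feeding the just-obtained $U^*\in\mathbb{L}^2_{\mathbb F}\mathbb{L}^2_t\mathbb{H}_0^1$ into the second a priori estimate \eqref{002} of Lemma~\ref{Lemma 2.1}; the remaining hypotheses $X_{1,0}\in\mathbb{H}_x^2$, $X_{2,0}\in\mathbb{H}_0^1$, $\sigma\in\mathbb{L}^2_{\mathbb F}\mathbb{L}^2_t\mathbb{H}_0^1$ are all guaranteed by Assumption~\ref{BB}. I would then upgrade the control once more: applying \eqref{1.6} again gives $\mathbb{E}[\sup_t\|U^*(t)\|_{\mathbb{H}_x^2}^2]=\alpha^{-2}\,\mathbb{E}[\sup_t\|\Delta Y_2(t)\|_{\mathbb{L}^2_x}^2]$, and the higher BSPDE estimate \eqref{space regularity H-2 bound for adjoint Y} bounds this in terms of a norm of $\nabla(X_1^*-\widetilde{X})$, which is finite by \eqref{space-regularity for otimal state 21}. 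This gives \eqref{00985}. Finally, inserting $U^*\in\mathbb{L}^2_{\mathbb F}\mathbb{L}^2_t\mathbb{H}_x^2$ into the third forward estimate \eqref{0002} of Lemma~\ref{Lemma 2.1}, with $X_{1,0}\in\mathbb{H}_x^3$, $\Delta X_{1,0}\in\mathbb{H}_0^1$, $X_{2,0}\in\mathbb{H}_x^2$, $\sigma\in\mathbb{L}^2_{\mathbb F}\mathbb{L}^2_t\mathbb{H}_x^2$ from Assumption~\ref{BB}, yields \eqref{space-regularity for otimal state 2}.

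The only delicate point is the apparent circularity: the adjoint estimates \eqref{today020} and \eqref{space regularity H-2 bound for adjoint Y} have right-hand sides depending on $X_1^*$, while the state estimates depend on $U^*=-Y_2/\alpha$. This closes without trouble because each adjoint estimate at a given order requires only the next-lower order of spatial regularity of $X_1^*$—the $\mathbb{H}_0^1$-bound of $Y_2$ needs only $X_1^*$ in $\mathbb{L}^2_x$, and the $\mathbb{H}_x^2$-bound of $Y_2$ needs only $\nabla X_1^*$ in $\mathbb{L}^2_x$—so the induction advances strictly and terminates at the regularity dictated by the data. I expect the main burden to be bookkeeping rather than analysis: one must check that the data hypotheses of Assumption~\ref{BB} match exactly those required at each invocation of Lemmas~\ref{Lemma 2.1} and~\ref{Lemma 2.3}, and that the $\sup_t$ norms propagate correctly, which they do since both lemmas already deliver $\mathbb{E}[\sup_t(\cdot)]$ bounds.
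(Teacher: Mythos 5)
Your proposal is correct and follows essentially the same bootstrap as the paper: the optimality condition \eqref{1.6} together with the BSPDE estimates \eqref{today020} and \eqref{space regularity H-2 bound for adjoint Y} upgrades the control, and the forward estimates \eqref{002} and \eqref{0002} of Lemma~\ref{Lemma 2.1} then upgrade the state, in exactly the order \eqref{space-regularity for optimal control 1} $\to$ \eqref{space-regularity for otimal state 21} $\to$ \eqref{00985} $\to$ \eqref{space-regularity for otimal state 2} that the paper uses. Your only (harmless) deviation is invoking the base bound \eqref{bound for optimal control} where the paper cites \eqref{001}, and your closing remark on why the state--adjoint coupling is not circular makes explicit what the paper leaves as ``routine intermediate computations.''
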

\begin{proof}
The asserted regularity estimates follow directly from the optimality condition \eqref{1.6} together with Lemmas~\ref{Lemma 2.1} and \ref{Lemma 2.3}. More precisely, \eqref{space-regularity for optimal control 1} is obtained from the optimality condition \eqref{1.6} combined with \eqref{today020} and \eqref{001}. Then \eqref{space-regularity for otimal state 21} follows by combining \eqref{002} with \eqref{space-regularity for optimal control 1}. Estimate \eqref{00985} is a consequence of \eqref{1.6}, \eqref{space regularity H-2 bound for adjoint Y} and \eqref{space-regularity for otimal state 21}. Finally, \eqref{space-regularity for otimal state 2} follows from \eqref{0002} together with \eqref{00985}. The intermediate computations are routine and are left to the reader.
\end{proof}
        The following proposition gathers stability bounds in stronger norms for the semi discretization ${\bf SLQ}_h$ \eqref{3.1}—\eqref{3.2}.
\begin{proposition}\label{Proposition 3.3} Let Assumption~\ref{BB} hold. Let $(X_{1,h}^*, X_{2,h}^*, U_{h}^*)$ be the unique optimal tuple to  {\bf SLQ}$_h$ problem \eqref{3.1}-\eqref{3.2}. Then the following estimates hold:
	{	\begin{align}
			&\mathbb{E}\big[\sup_{s\in[t,T]}\big(\|X_{2,h}^*(t)\|_{\mathbb{L}^2_x}^2+\|\nabla X_{1,h}^*(t)\|_{\mathbb{L}^2_x}^2\big]\big]\notag\\&\qquad\qquad\le C \big[\|X_{2,h}(0)\|_{\mathbb{L}^2_x}^2+\|\nabla X_{1,h}(0)\|_{\mathbb{L}^2_x}^2+\mathbb{E}\big[\|U_{h}^*\|_{\mathbb{L}^2_{t, x}}^2+\|\sigma\|_{\mathbb{L}^2_{t, x}}^2\big]\big),
		\end{align}
		and
		\begin{align}
			&\mathbb{E}\big[\sup_{s\in[t,T]}\big[\|\nabla X_{2,h}^*(t)\|_{\mathbb{L}^2_x}^2+\|\Delta_h X_{1,h}^*(t)\|_{\mathbb{L}^2_x}^2\big]\big]\notag\\
			&\qquad\qquad\le C \big(\|\nabla X_{2,0}(0)\|_{\mathbb{L}^2_x}^2+\|\Delta_h X_{1,h}(0)\|_{\mathbb{L}^2_x}^2+\mathbb{E}\big[\|\nabla U_{h}^*\|_{\mathbb{L}^2_{t, x}}^2+\|\nabla \sigma\|_{\mathbb{L}^2_{t, x}}^2  \big]\big).
	\end{align}}
\end{proposition}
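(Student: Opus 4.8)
The plan is to prove both bounds by applying It\^o's formula to suitable discrete energy functionals for the forward system \eqref{3.2} with control $U_h^*$, exploiting the skew-symmetric coupling of the wave dynamics so that the leading-order cross terms cancel exactly. Throughout I abbreviate $(X_{1,h},X_{2,h})=(X_{1,h}^*,X_{2,h}^*)$ and recall that $U_h^*$ enters merely as a given forcing, so that no optimality property is used and the result is in fact a pure stability estimate for \eqref{3.2}.

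For the first estimate I would apply It\^o's formula to $t\mapsto \tfrac12\|X_{2,h}(t)\|_{\mathbb{L}^2_x}^2+\tfrac12\|\nabla X_{1,h}(t)\|_{\mathbb{L}^2_x}^2$. Since $\mathrm{d}X_{1,h}=X_{2,h}\,\mathrm{d}t$ has no martingale part, the second functional contributes only $\left\langle \nabla X_{1,h},\nabla X_{2,h}\right\rangle\,\mathrm{d}t$, while the first contributes the drift term $\left\langle X_{2,h},\Delta_h X_{1,h}+U_h^*\right\rangle\,\mathrm{d}t$, the It\^o correction $\tfrac12\|\gamma X_{1,h}+\mathcal{R}_h\sigma\|_{\mathbb{L}^2_x}^2\,\mathrm{d}t$, and the martingale increment $\left\langle X_{2,h},\gamma X_{1,h}+\mathcal{R}_h\sigma\right\rangle\,\mathrm{d}W$. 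Using the defining identity of the discrete Laplacian, $\left\langle \Delta_h X_{1,h},X_{2,h}\right\rangle=-\left\langle \nabla X_{1,h},\nabla X_{2,h}\right\rangle$, the two cross terms cancel. What remains is the control-forcing term $\left\langle X_{2,h},U_h^*\right\rangle$, the It\^o correction, and the stochastic integral. I then integrate over $[0,T]$, take the supremum in time, and take expectations; the stochastic integral is controlled by the BDG inequality and the resulting supremum is absorbed via Young's inequality, the term $\|X_{1,h}\|_{\mathbb{L}^2_x}$ is dominated by $\|\nabla X_{1,h}\|_{\mathbb{L}^2_x}$ through Poincar\'e's inequality, and $\|\mathcal{R}_h\sigma\|_{\mathbb{L}^2_x}$ is handled by the stability of the projections. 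Gronwall's inequality then closes the argument and yields the first bound.

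For the second, higher-order estimate I would apply It\^o's formula to $t\mapsto \tfrac12\|\nabla X_{2,h}(t)\|_{\mathbb{L}^2_x}^2+\tfrac12\|\Delta_h X_{1,h}(t)\|_{\mathbb{L}^2_x}^2$. The same cancellation recurs one derivative higher: using $\left\langle \nabla X_{2,h},\nabla\Delta_h X_{1,h}\right\rangle=-\left\langle \Delta_h X_{2,h},\Delta_h X_{1,h}\right\rangle$ together with $\mathrm{d}(\Delta_h X_{1,h})=\Delta_h X_{2,h}\,\mathrm{d}t$, the two second-order cross terms cancel, leaving the forcing term $\left\langle \nabla X_{2,h},\nabla U_h^*\right\rangle$, the correction $\tfrac12\|\nabla(\gamma X_{1,h}+\mathcal{R}_h\sigma)\|_{\mathbb{L}^2_x}^2$, and an $\mathbb{H}_0^1$-valued martingale. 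Here I would invoke the Ritz stability bound $\|\nabla\mathcal{R}_h\sigma\|_{\mathbb{L}^2_x}\le\|\nabla\sigma\|_{\mathbb{L}^2_x}$ and feed in the already-established first estimate to control the lower-order quantity $\|\nabla X_{1,h}\|_{\mathbb{L}^2_x}$ that appears in the correction, after which the same BDG--Young--Gronwall scheme applies.

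The main obstacle is the treatment of the It\^o correction term $\tfrac12\|\gamma X_{1,h}+\mathcal{R}_h\sigma\|_{\mathbb{L}^2_x}^2$ (and its gradient analogue): expanding it produces a contribution proportional to $\|X_{1,h}\|_{\mathbb{L}^2_x}^2$ which must be reabsorbed through Poincar\'e and closed via Gronwall, and in the higher-order estimate this step relies on having the first bound already in hand. The cancellation of the cross terms is the structural feature that makes the energy method work; once it is in place, the remaining steps follow standard lines, and one may instead refer, as indicated, to the analogous computations in \cite[Lemma 3.2]{FengPandaProhl2024}.
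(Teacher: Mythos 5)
Your proposal is correct and follows essentially the same route as the paper: the paper's proof simply invokes It\^o's formula and refers to the analogous computation in \cite[Lemma 3.2]{FengPandaProhl2024}, which is precisely the energy argument you spell out (cancellation of the cross terms via the definition of $\Delta_h$, the It\^o correction absorbed through Poincar\'e, and BDG--Young--Gronwall to close). Your observation that the result is a pure stability estimate for \eqref{3.2} in which $U_h^*$ acts only as a forcing term, and that the second bound reuses the first to control the lower-order quantity in the It\^o correction, is consistent with the intended argument.
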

\begin{proof}
	For the proof, one can use similar arguments as used in the proof of \cite[Lemma 3.2]{FengPandaProhl2024}. It is a direct consequence of It\^o formula. 
\end{proof}
\begin{proposition}[Higher regularity estimate]\label{Proposition 3.4} Let Assumption~\ref{BB} hold.
	Let the quadruple $(Y_{1,h}, Y_{2,h}, Z_{1,h}, Z_{2, h})$ be the unique solution to ${\bf BSPDE}_h$ \eqref{discrete adjoint ode}, then there exists $C>0$ such that 
	{\begin{align}
			&\mathbb{E}\big[\sup_{t\in[0,T]}\big[\|Y_{1,h}(t)\|_{\mathbb{L}^2_x}^2+\|\nabla Y_{2,h}(t)\|_{\mathbb{L}^2_x}^2\big]\big]+\mathbb{E}\bigg[\int_0^{T}\|Z_{1,h}(t)\|_{\mathbb{L}^2_x}^2\,{\rm d}t +\int_0^T\|\nabla Z_{2,h}(t)\|^2_{\mathbb{L}_x^2}\,{\rm d}t\bigg]\notag\\&\le C \mathbb{E}\big[\|X_{1,h}^*-\widetilde{X}_h\|_{\mathbb{L}^2_{t, x}}^2+ \beta^2\|X_{1,h}^*(T)-\widetilde{X}_h(T)\|_{\mathbb{L}^2_x}^2\big],\label{estimate for discrete adjoint 1}
		\end{align}
		\begin{align}
			&\mathbb{E}\big[\sup_{t\in[0,T]}\big[\|\nabla Y_{1,h}(t)\|_{\mathbb{L}^2_x}^2+\|\Delta_hY_{2,h}(t)\|_{\mathbb{L}^2_x}^2\big]\big]+\mathbb{E}\bigg[\int_0^{T}\|\nabla Z_{1,h}(t)\|_{\mathbb{L}^2_x}^2\,{\rm d}t +\int_0^T\|\Delta_h Z_{2,h}(t)\|^2_{\mathbb{L}_x^2}\,{\rm d}t\bigg]\notag\\&\le C \mathbb{E}\big[\|\nabla \big(X_{1,h}^*-\widetilde{X}_h\big)\|_{\mathbb{L}^2_{t, x}}^2+\|\nabla \big(X_{1,h}^*(T)-\widetilde{X}_h(T)\big)\|_{\mathbb{L}^2_x}^2\big], \label{estimate for discrete adjoint 2}
		\end{align}
		and
		\begin{align}
			&\mathbb{E}\big[\sup_{t\in[0,T]}\|\Delta_h Y_{1,h}(t)\|_{\mathbb{L}^2_x}^2+\|\nabla\Delta_hY_{2,h}(t)\|_{\mathbb{L}^2_x}^2\big]\big]+\mathbb{E}\bigg[\int_0^{T}\|\Delta_h Z_{1,h}(t)\|_{\mathbb{L}^2_x}^2\,{\rm d}t +\int_0^T\|\nabla\Delta_h Z_{2,h}(t)\|^2_{\mathbb{L}_x^2}\,{\rm d}t\bigg]\notag\\&\le C \mathbb{E}\big[\|\Delta_h \big(X_{1,h}^*-\widetilde{X}_h\big)\|_{\mathbb{L}^2_{t, x}}^2+\|\Delta_h \big(X_{1,h}^*(T)-\widetilde{X}_h(T)\big)\|_{\mathbb{L}^2_x}^2\big]. \label{estimate for discrete adjoint 3}
	\end{align}}
\end{proposition}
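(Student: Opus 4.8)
The plan is to establish the three estimates \eqref{estimate for discrete adjoint 1}--\eqref{estimate for discrete adjoint 3} successively, mirroring the proof of the continuous counterparts \eqref{today020} and \eqref{space regularity H-2 bound for adjoint Y} in Lemma~\ref{Lemma 2.3}, but exploiting that in the finite element setting every object lives in the finite-dimensional space $\mathbb{V}_h$. Since $(Y_{1,h}, Y_{2,h})$ solves the time-continuous BSDE \eqref{discrete adjoint ode} in $\mathbb{V}_h \times \mathbb{V}_h$ and $\Delta_h : \mathbb{V}_h \to \mathbb{V}_h$ is a bounded linear operator, I would apply the standard (finite-dimensional) It\^o formula directly to the relevant quadratic functionals, without any Galerkin truncation. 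This is the key simplification over the continuous case, where the Galerkin step was needed precisely to avoid the boundary terms arising in the integration by parts; here the discrete identity $\langle \Delta_h \xi, \varphi\rangle = -\langle \nabla\xi, \nabla\varphi\rangle$ holds exactly for all $\xi,\varphi\in\mathbb{V}_h$, so no boundary terms ever appear.

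For the base estimate \eqref{estimate for discrete adjoint 1}, I would apply It\^o's formula to $s \mapsto \tfrac12\|Y_{1,h}(s)\|_{\mathbb{L}^2_x}^2$ and to $s\mapsto \tfrac12\|\nabla Y_{2,h}(s)\|_{\mathbb{L}^2_x}^2$ on $[s,T]$, using the terminal data $Y_{1,h}(T)=\beta(X_{1,h}^*(T)-\widetilde{X}_h(T))$ and $Y_{2,h}(T)=0$. The crucial cancellation comes from the discrete integration-by-parts identity: the term $-\langle Y_{1,h}, \Delta_h Y_{2,h}\rangle$ produced by the first functional equals $\langle \nabla Y_{1,h}, \nabla Y_{2,h}\rangle$ and cancels exactly against the $-\langle \nabla Y_{1,h}, \nabla Y_{2,h}\rangle$ coming from the second, leaving only the coercive $\int\|Z_{1,h}\|_{\mathbb{L}^2_x}^2 + \int\|\nabla Z_{2,h}\|_{\mathbb{L}^2_x}^2$ on the left. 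After adding the two identities the remaining drift contributions (those carrying $Z_{2,h}$ and the source $X_{1,h}^*-\widetilde{X}_h$) are absorbed by Young's inequality, the It\^o stochastic integrals are handled by the BDG inequality, and a Gronwall argument closes the estimate, exactly as in the derivation of \eqref{today020}.

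Estimates \eqref{estimate for discrete adjoint 2} and \eqref{estimate for discrete adjoint 3} follow by the same scheme carried out one and two discrete-Laplacian orders higher: for \eqref{estimate for discrete adjoint 2} I would apply It\^o to $\|\nabla Y_{1,h}\|_{\mathbb{L}^2_x}^2$ and $\|\Delta_h Y_{2,h}\|_{\mathbb{L}^2_x}^2$, and for \eqref{estimate for discrete adjoint 3} to $\|\Delta_h Y_{1,h}\|_{\mathbb{L}^2_x}^2$ and $\|\nabla\Delta_h Y_{2,h}\|_{\mathbb{L}^2_x}^2$; in each case the leading cross-terms cancel through the same discrete identity applied to $\Delta_h Y_{1,h}$ and $\Delta_h Y_{2,h}$ in $\mathbb{V}_h$, and differentiating the source produces precisely the data norms $\|\nabla(X_{1,h}^*-\widetilde{X}_h)\|$, respectively $\|\Delta_h(X_{1,h}^*-\widetilde{X}_h)\|$, appearing on the right-hand sides. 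The main point requiring care is closing this bootstrap: the cross-terms involving the diffusion component $Z_{2,h}$ (namely $\langle \nabla Y_{1,h}, \nabla Z_{2,h}\rangle$ at the second level and $\langle \Delta_h Y_{1,h}, \Delta_h Z_{2,h}\rangle$ at the third) are not directly controlled by the coercive left-hand side at that level, so they must be split by Young's inequality and the lower-order diffusion norm fed back in from the previously established estimate. Granting \eqref{estimate for discrete adjoint 1} when proving \eqref{estimate for discrete adjoint 2}, and both when proving \eqref{estimate for discrete adjoint 3}, each Gronwall step then closes and completes the proof.
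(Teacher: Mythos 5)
Your proposal is correct and follows essentially the same route as the paper, whose proof simply invokes the argument of Lemma~\ref{Lemma 2.3} (It\^o's formula applied to the paired quadratic functionals, cancellation of the cross-terms via the discrete integration-by-parts identity, then Young's inequality, BDG and Gronwall). Your additional observations---that the finite-dimensionality of $\mathbb{V}_h$ renders the Galerkin truncation of the continuous case unnecessary, and that the $Z_{2,h}$ cross-terms at the higher levels are closed by feeding back the previously established lower-order estimates---correctly fill in the details the paper leaves implicit.
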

\begin{proof}
	For the proof, we can follow similar lines as used in the proof of Lemma \ref{Lemma 2.3}; it is a direct consequence of It\^o formula. 
\end{proof}
\begin{lem}[Higher stability estimate] Let Assumption~\ref{BB} hold. Let $U_h^*$ be the unique optimal control to {\bf SLQ}$_h$ problem \eqref{3.1}-\eqref{3.2}. Then the following estimates hold:
	\begin{align}\label{434343}
		\mathbb{E}\bigg[\sup_{t\in[0,T]}\|\nabla\Delta_h U^*_h(t)\|_{\mathbb{L}^2_x}^2\bigg]\le C\big(\|X_{2,0}\|_{\mathbb{H}_x^1}^2+\| X_{1,0}\|_{\mathbb{H}_x^2}^2+\|\widetilde{X}\|_{C_t\mathbb{H}_x^2}^2+ \mathbb{E}\big[\|\sigma\|_{\mathbb{L}^2_t\mathbb{H}_0^1}^2\big]\big).
	\end{align}
\end{lem}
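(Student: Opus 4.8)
The plan is to exploit the semi-discrete optimality condition \eqref{3.3} to transfer the desired $\mathbb{H}^3$-type bound on $U_h^*$ onto the corresponding bound for the adjoint component $Y_{2,h}$, and then to invoke the higher-order stability estimates for the forward and backward semi-discrete systems already collected in Propositions~\ref{Proposition 3.3} and~\ref{Proposition 3.4}. Since \eqref{3.3} gives $\alpha U_h^*(t)=-Y_{2,h}(t)$ pointwise in $t$, applying the operator $\nabla\Delta_h$ and taking the supremum in time inside the expectation yields
\begin{align*}
\mathbb{E}\Big[\sup_{t\in[0,T]}\|\nabla\Delta_h U_h^*(t)\|_{\mathbb{L}^2_x}^2\Big]=\frac{1}{\alpha^2}\,\mathbb{E}\Big[\sup_{t\in[0,T]}\|\nabla\Delta_h Y_{2,h}(t)\|_{\mathbb{L}^2_x}^2\Big],
\end{align*}
so it suffices to bound the right-hand side by estimate \eqref{estimate for discrete adjoint 3}.

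By \eqref{estimate for discrete adjoint 3}, the quantity $\mathbb{E}[\sup_t\|\nabla\Delta_h Y_{2,h}\|_{\mathbb{L}^2_x}^2]$ is controlled by $C\,\mathbb{E}[\|\Delta_h(X_{1,h}^*-\widetilde{X}_h)\|_{\mathbb{L}^2_{t,x}}^2+\|\Delta_h(X_{1,h}^*(T)-\widetilde{X}_h(T))\|_{\mathbb{L}^2_x}^2]$, so the next step is to estimate $\Delta_h X_{1,h}^*$ and $\Delta_h\widetilde{X}_h$ separately. For the target, I would use the commutation identity $\Delta_h\mathcal{R}_h=\Pi_h\Delta$ (as used in the proof of Proposition~\ref{Proposition 3.7}) together with the stability of $\Pi_h$ to obtain $\|\Delta_h\widetilde{X}_h\|_{\mathbb{L}^2_x}=\|\Pi_h\Delta\widetilde{X}\|_{\mathbb{L}^2_x}\le\|\Delta\widetilde{X}\|_{\mathbb{L}^2_x}\le C\|\widetilde{X}\|_{\mathbb{H}_x^2}$, which produces the $\|\widetilde{X}\|_{C_t\mathbb{H}_x^2}^2$ term. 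For the state, I would invoke the second estimate of Proposition~\ref{Proposition 3.3}, which bounds $\mathbb{E}[\sup_t\|\Delta_h X_{1,h}^*\|_{\mathbb{L}^2_x}^2]$ by $\|\nabla X_{2,0}\|_{\mathbb{L}^2_x}^2+\|\Delta_h X_{1,h}^*(0)\|_{\mathbb{L}^2_x}^2+\mathbb{E}[\|\nabla U_h^*\|_{\mathbb{L}^2_{t,x}}^2+\|\nabla\sigma\|_{\mathbb{L}^2_{t,x}}^2]$; here $\|\Delta_h X_{1,h}^*(0)\|_{\mathbb{L}^2_x}=\|\Pi_h\Delta X_{1,0}\|_{\mathbb{L}^2_x}\le C\|X_{1,0}\|_{\mathbb{H}_x^2}$ by the same commutation identity, and $\|\nabla\sigma\|_{\mathbb{L}^2_{t,x}}=\|\sigma\|_{\mathbb{L}^2_t\mathbb{H}_0^1}$.

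The one point requiring care — and the main, though mild, obstacle — is to avoid circularity: the right-hand side of Proposition~\ref{Proposition 3.3} still contains a norm of $U_h^*$, but only the strictly lower-order quantity $\mathbb{E}[\|\nabla U_h^*\|_{\mathbb{L}^2_{t,x}}^2]$ rather than the $\mathbb{H}^3$-type norm being estimated, so the bootstrap is one-directional. To close it I would again use \eqref{3.3} to write $\nabla U_h^*=-\frac{1}{\alpha}\nabla Y_{2,h}$ and apply the lower-order adjoint estimate \eqref{estimate for discrete adjoint 1}, which bounds $\mathbb{E}[\sup_t\|\nabla Y_{2,h}\|_{\mathbb{L}^2_x}^2]$ by $\mathbb{E}[\|X_{1,h}^*-\widetilde{X}_h\|_{\mathbb{L}^2_{t,x}}^2+\|X_{1,h}^*(T)-\widetilde{X}_h(T)\|_{\mathbb{L}^2_x}^2]$; these $\mathbb{L}^2$-level terms are in turn controlled, via the first estimate of Proposition~\ref{Proposition 3.3} and the basic semi-discrete stability bound (the analogue of \eqref{bound for optimal control}), by the lower-order data norms $\|X_{1,0}\|_{\mathbb{H}_0^1}$, $\|X_{2,0}\|_{\mathbb{L}^2_x}$, $\|\widetilde{X}\|_{C_t\mathbb{L}^2_x}$ and $\mathbb{E}[\|\sigma\|_{\mathbb{L}^2_{t,x}}^2]$, all dominated by the right-hand side of \eqref{434343}. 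Collecting the contributions and absorbing the lower-order data norms into the stated ones then yields \eqref{434343}. The routine Young and Poincaré manipulations, and the verification that each data norm appearing is dominated by the right-hand side, are left to the reader, as is standard throughout this paper.
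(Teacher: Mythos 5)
Your proposal is correct and follows essentially the same route as the paper, which proves \eqref{434343} as a direct consequence of the optimality condition \eqref{3.3} combined with Propositions~\ref{Proposition 3.3} and~\ref{Proposition 3.4}. Your write-up simply makes explicit the details the paper leaves implicit — the commutation $\Delta_h\mathcal{R}_h=\Pi_h\Delta$ for the data terms and the one-directional bootstrap through the lower-order estimates \eqref{estimate for discrete adjoint 1} and the basic semi-discrete stability bound to avoid circularity in the $\|\nabla U_h^*\|_{\mathbb{L}^2_{t,x}}$ term.
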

\begin{proof}
	The proof is a direct consequence of the semi-discrete optimality condition \eqref{3.3} and Propositions \ref{Proposition 3.3}-\ref{Proposition 3.4}.
\end{proof}
\begin{proposition}[Time regularity estimate] Let Assumption~\ref{BB} hold. Let $(X_{1,h}, X_{2,h})$ be the unique solution to  ${\bf SPDE}_h$ \eqref{3.2} with given control $U_h\in \mathbb{U}_{h\tau}$. Then the following estimates hold:
	\begin{align}\label{time-regularity for semi-discrete state-1}
		&\sum_{n=0}^{N}\mathbb{E}\bigg[\int_{t_{n}}^{t_{n+1}}\|\Delta_h (X_{1,h}(t)-X_{1,h}(t_{n+1}))\|_{\mathbb{L}^2_x}^2\,{\rm d}t +\sum_{n=0}^{N}\int_{t_{n}}^{t_{n+1}}\| (X_{1,h}(t)-X_{1,h}(t_{n}))\|_{\mathbb{L}^2_x}^2\,{\rm d}t\bigg]\notag\\&\qquad\le C\tau^2 \big(\|X_{1,0}\|_{\mathbb{H}^3_x}^2+ \|X_{2,0}\|_{\mathbb{H}_x^2}^2+ \mathbb{E}\big[\|\sigma\|_{\mathbb{L}^2_t\mathbb{H}_x^2}^2+ \|\nabla\Delta_hU_{h}\|_{\mathbb{L}_{t, x}^2}^2\big]\big),
	\end{align}
	and
	\begin{align}\label{time-regularity for semi-discrete state-2}
		&\sum_{n=0}^{N}\mathbb{E}\bigg[\int_{t_{n}}^{t_{n+1}}\|\nabla(X_{2,h}(t)-X_{2,h}(t_{n+1}))\|_{\mathbb{L}^2_x}^2\,{\rm d}t + \sum_{n=0}^{N}\int_{t_{n}}^{t_{n+1}}\|(X_{2,h}(t)-X_{2,h}(t_{n+1}))\|_{\mathbb{L}^2_x}^2\,{\rm d}t\bigg]\notag\\&\qquad\le C\tau \big(\|X_{1,0}\|_{\mathbb{H}^3_x}^2+ \|X_{2,0}\|_{\mathbb{H}^2_x}^2+ \mathbb{E}\big[\|\sigma\|_{\mathbb{L}^2_t\mathbb{H}^2_x}^2+ \|\nabla\Delta_hU_{h}\|_{\mathbb{L}_{t, x}^2}^2\big]\big).
	\end{align}
\end{proposition}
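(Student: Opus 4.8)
The plan is to exploit the two integral representations furnished by the semi-discrete system~\eqref{3.2}. The first equation $\mathrm{d}X_{1,h}=X_{2,h}\,\mathrm{d}t$ is a pathwise (non-stochastic) ODE in time, so $X_{1,h}$ is one order smoother in time than $X_{2,h}$ and its increments reduce to plain time-integrals of $X_{2,h}$; by contrast the second equation carries the stochastic forcing, so increments of $X_{2,h}=\partial_t X_{1,h}$ inherit only the $\tfrac12$-Hölder regularity of the driving Wiener process. This structural dichotomy is exactly what produces the rate $\tau^2$ in~\eqref{time-regularity for semi-discrete state-1} and the rate $\tau$ in~\eqref{time-regularity for semi-discrete state-2}. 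Throughout I would use that $\Delta_h$ is self-adjoint and negative on $\mathbb{V}_h$, so powers of $\Delta_h$ may be inserted as test multipliers without generating boundary terms, together with the discrete Poincaré inequality $\|\Delta_h v_h\|_{\mathbb{L}^2_x}\le C\|\nabla\Delta_h v_h\|_{\mathbb{L}^2_x}$.

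For~\eqref{time-regularity for semi-discrete state-1}, for $t\in[t_n,t_{n+1}]$ I would write $X_{1,h}(t)-X_{1,h}(t_{n+1})=-\int_t^{t_{n+1}}X_{2,h}(r)\,\mathrm{d}r$ and $X_{1,h}(t)-X_{1,h}(t_n)=\int_{t_n}^{t}X_{2,h}(r)\,\mathrm{d}r$. Applying $\Delta_h$ to the first identity and Cauchy--Schwarz in time gives $\|\Delta_h(X_{1,h}(t)-X_{1,h}(t_{n+1}))\|_{\mathbb{L}^2_x}^2\le\tau\int_{t_n}^{t_{n+1}}\|\Delta_h X_{2,h}(r)\|_{\mathbb{L}^2_x}^2\,\mathrm{d}r$; integrating in $t$ over $[t_n,t_{n+1}]$ supplies one more factor $\tau$, and summing over $n$ collapses the interior integrals into $\mathbb{E}\int_0^T\|\Delta_h X_{2,h}\|_{\mathbb{L}^2_x}^2\,\mathrm{d}r$. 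The $\mathbb{L}^2_x$-term is treated identically, leaving $\tau^2\,\mathbb{E}\int_0^T\|X_{2,h}\|_{\mathbb{L}^2_x}^2\,\mathrm{d}r$. It then remains only to bound these two space--time averages by the a priori regularity of the semi-discrete state.

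For~\eqref{time-regularity for semi-discrete state-2}, I would use the integrated form of the second equation of~\eqref{3.2},
\[
X_{2,h}(t)-X_{2,h}(t_{n+1})
=-\int_t^{t_{n+1}}\big[\Delta_h X_{1,h}(r)+U_h(r)\big]\,\mathrm{d}r
-\int_t^{t_{n+1}}\big[\gamma X_{1,h}(r)+\mathcal{R}_h\sigma(r)\big]\,\mathrm{d}W(r),
\]
and the identity obtained after applying $\nabla$. For the drift integral, Cauchy--Schwarz followed by the double integration and summation again yields a clean $\tau^2$; the decisive term is the stochastic integral, for which Itô's isometry gives $\mathbb{E}\|\int_t^{t_{n+1}}[\gamma X_{1,h}+\mathcal{R}_h\sigma]\,\mathrm{d}W\|_{\mathbb{L}^2_x}^2=\mathbb{E}\int_t^{t_{n+1}}\|\gamma X_{1,h}+\mathcal{R}_h\sigma\|_{\mathbb{L}^2_x}^2\,\mathrm{d}r$. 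Here integrating in $t$ over a single subinterval contributes only one factor $\tau$, so after summation one is left with the $O(\tau)$ term $\tau\,\mathbb{E}\int_0^T\|\gamma X_{1,h}+\mathcal{R}_h\sigma\|_{\mathbb{L}^2_x}^2\,\mathrm{d}r$, which dominates and pins the rate at $\tau$. The gradient version is verbatim with $\nabla$ inside each norm, producing $\tau\,\mathbb{E}\int_0^T\|\gamma\nabla X_{1,h}+\nabla\mathcal{R}_h\sigma\|_{\mathbb{L}^2_x}^2\,\mathrm{d}r$ from the diffusion and $\tau^2\,\mathbb{E}\int_0^T\|\nabla\Delta_h X_{1,h}+\nabla U_h\|_{\mathbb{L}^2_x}^2\,\mathrm{d}r$ from the drift.

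\textbf{The main obstacle} is supplying the high-order a priori bound that all four terms reduce to, namely $\mathbb{E}\int_0^T\big(\|\Delta_h X_{2,h}\|_{\mathbb{L}^2_x}^2+\|\nabla\Delta_h X_{1,h}\|_{\mathbb{L}^2_x}^2\big)\,\mathrm{d}r$ controlled by the right-hand side of the proposition. This is one regularity level above Proposition~\ref{Proposition 3.3} (which only bounds $\|\nabla X_{2,h}\|$ and $\|\Delta_h X_{1,h}\|$), and I would establish it by applying Itô's formula to $\Delta_h X_{2,h}\mapsto\|\Delta_h X_{2,h}\|_{\mathbb{L}^2_x}^2$ together with $\Delta_h X_{1,h}\mapsto\|\nabla\Delta_h X_{1,h}\|_{\mathbb{L}^2_x}^2$. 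The cross terms $\langle\nabla\Delta_h X_{2,h},\nabla\Delta_h X_{1,h}\rangle$ cancel by the wave structure; the control enters only through $\langle\Delta_h U_h,\Delta_h X_{2,h}\rangle$, absorbed after the discrete Poincaré bound $\|\Delta_h U_h\|\le C\|\nabla\Delta_h U_h\|$ and Young's inequality; and the Itô correction $\|\gamma\Delta_h X_{1,h}+\Delta_h\mathcal{R}_h\sigma\|_{\mathbb{L}^2_x}^2$ is controlled by the lower-order energy together with $\|\Delta_h\mathcal{R}_h\sigma\|_{\mathbb{L}^2_x}=\|\Pi_h\Delta\sigma\|_{\mathbb{L}^2_x}\le\|\sigma\|_{\mathbb{H}^2_x}$. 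Taking expectations (so that the martingale term drops) and invoking the discrete Gronwall inequality closes the estimate under Assumption~\ref{BB}. Self-adjointness of $\Delta_h$ is what keeps this clean---unlike the continuous setting of Lemma~\ref{Lemma 2.3}, no boundary terms arise from repeated integration by parts. Finally, this analysis shows the rate-$\tau$ ceiling in~\eqref{time-regularity for semi-discrete state-2} is intrinsic, stemming solely from the Itô-isometry term, which is precisely the origin of the overall order-$1/2$ convergence in Proposition~\ref{Proposition 4.7}.
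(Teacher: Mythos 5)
Your proposal is correct and follows essentially the route the paper intends but does not write out (it delegates the details to \cite[Lemma 3.9]{ProhlWang2021}): express the increments as time integrals via the two equations of \eqref{3.2}, use Cauchy--Schwarz for the drift contributions and It\^o's isometry for the stochastic integral, and observe that the isometry term supplies only a single factor of $\tau$, which is exactly what caps \eqref{time-regularity for semi-discrete state-2} at order $\tau$ while the pathwise first equation yields order $\tau^2$ in \eqref{time-regularity for semi-discrete state-1}. Where you add genuine value is the a priori bound $\mathbb{E}\int_0^T\big(\|\Delta_h X_{2,h}\|_{\mathbb{L}^2_x}^2+\|\nabla\Delta_h X_{1,h}\|_{\mathbb{L}^2_x}^2\big)\,{\rm d}t$: the paper calls the proposition a ``direct consequence of Proposition~\ref{Proposition 3.3}'', but, as you correctly note, the estimates stated there (on $\|\nabla X_{2,h}\|_{\mathbb{L}^2_x}$ and $\|\Delta_h X_{1,h}\|_{\mathbb{L}^2_x}$) sit one order too low to close your reductions. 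Your fix---running the It\^o/Gronwall energy argument one order higher, with the wave cancellation $\langle\Delta_h^2X_{1,h},\Delta_hX_{2,h}\rangle=-\langle\nabla\Delta_hX_{1,h},\nabla\Delta_hX_{2,h}\rangle$, the Poincar\'e bound $\|\Delta_h U_h\|_{\mathbb{L}^2_x}\le C\|\nabla\Delta_h U_h\|_{\mathbb{L}^2_x}$, and the identity $\Delta_h\mathcal{R}_h=\Pi_h\Delta$---is sound and matches the right-hand sides in the statement. An equivalent shortcut, closer to the paper's wording, is to observe that $(\Delta_hX_{1,h},\Delta_hX_{2,h})$ solves a system of the same form as \eqref{3.2} with initial data $(\Delta_h\mathcal{R}_hX_{1,0},\Delta_h\mathcal{R}_hX_{2,0})$, control $\Delta_h U_h$ and noise coefficient $\gamma\Delta_hX_{1,h}+\Pi_h\Delta\sigma$, so that Proposition~\ref{Proposition 3.3} applied to this differentiated system delivers precisely the bound you prove by hand; either way the only remaining (routine) point is that the initial-data terms are controlled by $\|X_{1,0}\|_{\mathbb{H}^3_x}$ and $\|X_{2,0}\|_{\mathbb{H}^2_x}$ via stability of $\Pi_h$ and $\mathcal{R}_h$ under Assumption~\ref{BB}.
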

\begin{proof}
	For the proof, one can follow similar lines as in the proof of \cite[Lemma 3.9]{ProhlWang2021}. It is a direct consequence of Proposition \ref{Proposition 3.3}.
\end{proof}
The following result addresses the approximation in time of the ${\bf BSPDE}_h$ \eqref{discrete state equation}. 
\begin{proposition}[Time-regularity of adjoint variable]\label{time-regularity of adjoint variable} Let Assumption~\ref{BB} hold. Let $(Y_{1,h}, Y_{2,h}, Z_{1,h}, Z_{2,h})$ be the unique solution to ${\bf BSPDE}_h$ \eqref{discrete state equation}. Then the exists $C>0$ such that
\begin{align}\label{today03}
&\mathbb{E}\bigg[\|Y_{2,h}-\Pi_\tau Y_{2,h}\|^2_{\mathbb{L}^2_{t, x}}\bigg]\le C\tau\big( \mathbb{E}\big[\|X_{1,h}^*\|_{\mathbb{L}^2_{t, x}}^2\big]+\|\widetilde{X}\|_{C_t\mathbb{L}^2_x}^2\big),\\
&\mathbb{E}\bigg[\|\nabla Y_{2,h}-\Pi_\tau \nabla Y_{2,h}\|^2_{\mathbb{L}^2_{t, x}}\bigg]+ \mathbb{E}\bigg[\|Y_{1,h}-\Pi_\tau Y_{1,h}\|^2_{\mathbb{L}^2_{t, x}}\bigg]\le C\tau\big( \mathbb{E}\big[\|\nabla X_{1,h}^*\|_{\mathbb{L}^2_{t, x}}^2\big]+\|\nabla\widetilde{X}\|_{\mathbb{L}^2_{t, x}}^2\big),\label{today47}\\
        &\sum_{n=0}^{N-1}\bigg[\mathbb{E}\bigg[\int_{t_n}^{t_{n+1}}\|\nabla Y_{2,h}(t)-\nabla Y_{2,h}(t_{n+1})\|^2_{\mathbb{L}^2_{t, x}}\,{\rm d}t\bigg]+ \mathbb{E}\bigg[\int_{t_{n}}^{t_{n+1}}\|Y_{1,h}(t)-Y_{1,h}(t_{n+1})\|^2_{\mathbb{L}^2_{t, x}}\,{\rm d}t\bigg]\notag\\&\qquad\le C\tau\big( \mathbb{E}\big[\|\nabla X_{1,h}^*\|_{\mathbb{L}^2_{t, x}}^2\big]+\|\nabla\widetilde{X}\|_{\mathbb{L}^2_{t, x}}^2\big).\label{today480}
\end{align}
\end{proposition}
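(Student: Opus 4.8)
The plan is to work from the integral (variational) form of the backward system~\eqref{discrete adjoint ode} on each subinterval $[t_n,t_{n+1}]$, separating the absolutely continuous drift from the It\^o martingale part of each increment $Y_{i,h}(t)-Y_{i,h}(t_n)$. For~\eqref{today03} I would first write, for $t\in[t_n,t_{n+1}]$,
\[
Y_{2,h}(t)-Y_{2,h}(t_n)=-\int_{t_n}^{t}Y_{1,h}(s)\,{\rm d}s+\int_{t_n}^{t}Z_{2,h}(s)\,{\rm d}W(s),
\]
take $\|\cdot\|_{\mathbb{L}^2_x}^2$ and expectation, bound the drift by Cauchy--Schwarz (producing one factor $\tau$) and the stochastic integral by the It\^o isometry. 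Integrating in $t$ over $[t_n,t_{n+1}]$ yields a further factor $\tau$, and summation over $n$ gives
\[
\mathbb{E}\|Y_{2,h}-\Pi_\tau Y_{2,h}\|_{\mathbb{L}^2_{t,x}}^2\le C\tau^2\,\mathbb{E}\|Y_{1,h}\|_{\mathbb{L}^2_{t,x}}^2+C\tau\,\mathbb{E}\|Z_{2,h}\|_{\mathbb{L}^2_{t,x}}^2.
\]
Both right-hand quantities are then absorbed by the a priori bound~\eqref{estimate for discrete adjoint 1} of Proposition~\ref{Proposition 3.4}, after using the Poincar\'e inequality $\|Z_{2,h}\|_{\mathbb{L}^2_x}^2\le c_P\|\nabla Z_{2,h}\|_{\mathbb{L}^2_x}^2$ (valid since $Z_{2,h}\in\mathbb{V}_h\subset\mathbb{H}_0^1$) to pass from $\nabla Z_{2,h}$ to $Z_{2,h}$, and the $\mathbb{L}^2_x$-stability of $\mathcal{R}_h$ to replace $\widetilde{X}_h$ by $\widetilde{X}$. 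The dominant term is the $O(\tau)$ one coming from $Z_{2,h}$.

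For~\eqref{today47} I would repeat the computation componentwise after applying $\nabla$, which commutes with the time integrals and with $\Pi_\tau$. The increment of $\nabla Y_{2,h}$ involves $\nabla Y_{1,h}$ in the drift and $\nabla Z_{2,h}$ in the martingale part, both controlled in $\mathbb{L}^2_{t,x}$ by the higher-order estimate~\eqref{estimate for discrete adjoint 2}. The increment of $Y_{1,h}$ is read off from its own equation, whose drift contains $\Delta_h Y_{2,h}$, $Z_{2,h}$ and $X_{1,h}^*-\widetilde{X}_h$: here $\Delta_h Y_{2,h}$ is estimated via $\mathbb{E}\sup_{t}\|\Delta_h Y_{2,h}\|_{\mathbb{L}^2_x}^2$ from~\eqref{estimate for discrete adjoint 2}, the martingale integrand $Z_{1,h}$ via~\eqref{estimate for discrete adjoint 1}, and the source term directly, all reduced to $\mathbb{E}\|\nabla X_{1,h}^*\|_{\mathbb{L}^2_{t,x}}^2+\|\nabla\widetilde{X}\|_{\mathbb{L}^2_{t,x}}^2$ by Poincar\'e and the inequality $\|\nabla v\|_{\mathbb{L}^2_x}^2\le c_P\|\Delta_h v\|_{\mathbb{L}^2_x}^2$ for $v\in\mathbb{V}_h$. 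As in the first step the drift contributions are $O(\tau^2)$ and the martingale contributions $O(\tau)$, giving the stated bound. Estimate~\eqref{today480} is obtained identically, the only change being that increments are taken against the right endpoint $t_{n+1}$: writing $Y_{i,h}(t)-Y_{i,h}(t_{n+1})=\int_{t}^{t_{n+1}}(\cdots)\,{\rm d}s-\int_{t}^{t_{n+1}}(\cdots)\,{\rm d}W(s)$ and repeating the split reproduces the same $O(\tau)$ bound.

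The hard part is not any single estimate but recognising that the martingale contribution fixes the rate. The It\^o isometry applied to $\int_{t_n}^{t}Z_{2,h}\,{\rm d}W$ (resp.\ $\int_{t_n}^{t}Z_{1,h}\,{\rm d}W$) produces exactly $\mathbb{E}\int_{t_n}^{t}\|Z\|_{\mathbb{L}^2_x}^2\,{\rm d}s$, which after integration in $t$ and summation is $O(\tau)$ and cannot be improved to $O(\tau^2)$; this is what limits the time-H\"older regularity of the adjoint variables to order $1/2$ and, downstream, caps the temporal convergence rate in Theorem~\ref{strong rate of convergence for time discretization}. The remaining care is purely bookkeeping: matching each integrand to the correct a priori estimate (the first-order bound~\eqref{estimate for discrete adjoint 1} for quantities without a derivative, the second-order bound~\eqref{estimate for discrete adjoint 2} once a $\nabla$ or $\Delta_h$ is present) and applying the discrete elliptic inequalities on $\mathbb{V}_h$ to reconcile the norms appearing on the two sides.
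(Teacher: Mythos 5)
Your proposal is correct and takes essentially the same route as the paper's proof: both decompose each increment $Y_{i,h}(t)-Y_{i,h}(t_n)$ into its absolutely continuous drift and its It\^o-martingale part, bound the former by Cauchy--Schwarz (gaining a factor $\tau^2$) and the latter by the It\^o isometry (gaining a factor $\tau$), then integrate in $t$, sum over $n$, and close with the a priori bounds of Proposition~\ref{Proposition 3.4}. The only difference is that you make explicit the bookkeeping the paper compresses into ``similar lines'' — the Poincar\'e inequality to pass from $\nabla Z_{2,h}$ to $Z_{2,h}$, the discrete elliptic inequality on $\mathbb{V}_h$, and the right-endpoint increments for \eqref{today480} — which is consistent with, not divergent from, the paper's argument.
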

\begin{proof}
From \eqref{discrete adjoint ode} we have $\mathbb{P}$-almost surely, for every $t\in[t_n,t_{n+1}]$,
\[
Y_{2,h}(t)-Y_{2,h}(t_n)
= -\int_{t_n}^t Y_{1,h}(s)\,\,{\rm d}s
+ \int_{t_n}^t Z_{2,h}(s)\,\,{\rm d}W(s).
\]
Hence, by taking the $L^2_x$-norm, squaring, integrating in time and taking expectation, we obtain
\begin{align*}
\mathbb{E}\bigg[\int_{t_n}^{t_{n+1}}\|Y_{2,h}(t)-Y_{2,h}(t_n)\|_{\mathbb{L}^2_x}^2\,\,{\rm d}t\bigg]
&\le \,\mathbb{E}\bigg[\int_{t_n}^{t_{n+1}}\Big\|\int_{t_n}^t Y_{1,h}(s)\,\,{\rm d}s\Big\|_{\mathbb{L}^2_x}^2\,{\rm d}t\bigg]\\
&\qquad + \,\mathbb{E}\bigg[\int_{t_n}^{t_{n+1}}\Big\|\int_{t_n}^t Z_{2,h}(s)\,\,{\rm d}W(s)\Big\|_{\mathbb{L}^2_x}^2\,{\rm d}t\bigg].
\end{align*}
For the deterministic integral we use Cauchy--Schwarz in time to get
\[
\Big\|\int_{t_n}^t Y_{1,h}(s)\,\,{\rm d}s\Big\|_{\mathbb{L}^2_x}^2
\le (t-t_n)\int_{t_n}^t\|Y_{1,h}(s)\|_{\mathbb{L}^2_x}^2\,\,{\rm d}s
\le \tau \int_{t_n}^{t_{n+1}}\|Y_{1,h}(s)\|_{\mathbb{L}^2_x}^2\,\,{\rm d}s,
\]
and therefore
\[
\mathbb{E}\bigg[\int_{t_n}^{t_{n+1}}
\Big\|\int_{t_n}^t Y_{1,h}(s)\,\,{\rm d}s\Big\|_{\mathbb{L}^2_x}^2\,{\rm d}t\bigg]
\le \tau^2\,\mathbb{E}\bigg[\int_{t_n}^{t_{n+1}}\|Y_{1,h}(s)\|_{\mathbb{L}^2_x}^2\,\,{\rm d}s\bigg].
\]
For the stochastic integral we apply the Itô isometry to get
\begin{align*}
\mathbb{E}\bigg[\int_{t_n}^{t_{n+1}}\Big\|\int_{t_n}^t Z_{2,h}(s)\,\,{\rm d}W(s)\Big\|_{\mathbb{L}^2_x}^2\,{\rm d}t\bigg]
&= \mathbb{E}\bigg[\int_{t_n}^{t_{n+1}} \int_{t_n}^t \|Z_{2,h}(s)\|_{\mathbb{L}^2_x}^2\,\,{\rm d}s\,\,{\rm d}t\bigg]\\
&\le \tau \mathbb{E}\bigg[\int_{t_n}^{t_{n+1}}\|Z_{2,h}(s)\|_{\mathbb{L}^2_x}^2\,\,{\rm d}s\bigg].
\end{align*}
We combine the above two estimates to obtain
\[
\mathbb{E}\bigg[\int_{t_n}^{t_{n+1}}\|Y_{2,h}(t)-Y_{2,h}(t_n)\|_{\mathbb{L}^2_x}^2\,\,{\rm d}t\bigg]
\le \tau^2\,\mathbb{E}\bigg[\int_{t_n}^{t_{n+1}}\|Y_{1,h}(s)\|_{\mathbb{L}^2_x}^2\,{\rm d}s\bigg]
+ \tau\,\mathbb{E}\bigg[\int_{t_n}^{t_{n+1}}\|Z_{2,h}(s)\|_{\mathbb{L}^2_x}^2\,{\rm d}s\bigg].
\]
By summing this inequality over $n=0,\dots,N-1$ and using the a priori bound \eqref{estimate for discrete adjoint 1} for the semi-discrete adjoint pair $(Y_{1,h},Z_{1,h})$ yields the desired estimate \eqref{today03}. We can follow similar lines as used for estimate~\eqref{today03} to obtain estimates~\eqref{today47} and \eqref{today480}.
\end{proof}
\begin{proposition}[time-regularity for semi-discrete optimal control $U_h^*$]\label{Time-regularity for semi-discrete optimal control}Let Assumption~\ref{BB} hold. Let $U_h^*$ be the unique semi-discrete optimal control to {\bf SLQ}$_h$  \eqref{3.1}-\eqref{3.2}. Then the following time-regularity holds:
	\begin{align}\label{time-regularity for semi-discrete control}
		\mathbb{E}\big[\|U_h^*-\Pi_\tau U_h^*\|_{\mathbb{L}^2_{t, x}}^2\big]\le C\tau\big(\|X_{2,0}\|_{\mathbb{H}^1_0}^2+\|X_{1,0}\|_{\mathbb{H}^2_x}^2+\|\widetilde{X}\|_{C_t\mathbb{H}_0^1}^2+\mathbb{E}\big[\|\sigma\|_{\mathbb{L}^2_{t}\mathbb{H}^1_0}^2 \big]\big).
	\end{align}
\end{proposition}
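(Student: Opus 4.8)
The plan is to reduce the time-regularity of the semi-discrete control entirely to that of the adjoint component $Y_{2,h}$, and then to invoke the already-established adjoint bound \eqref{today03}. First I would use the semi-discrete optimality condition \eqref{3.3}, namely $\alpha U_h^*(t)=-Y_{2,h}(t)$ for all $t\in[0,T]$, which holds pointwise in time. Since the temporal projection $\Pi_\tau$ is linear and simply freezes a function at its left nodal value on each subinterval $[t_n,t_{n+1})$, it commutes with the scalar factor $-1/\alpha$, so that
\begin{align*}
U_h^*(t)-\Pi_\tau U_h^*(t)=-\tfrac{1}{\alpha}\bigl(Y_{2,h}(t)-\Pi_\tau Y_{2,h}(t)\bigr)\qquad\text{for all }t\in[0,T].
\end{align*}
Taking $\mathbb{L}^2_x$-norms, integrating in time and taking expectation, I would obtain the exact identity $\mathbb{E}\big[\|U_h^*-\Pi_\tau U_h^*\|_{\mathbb{L}^2_{t,x}}^2\big]=\alpha^{-2}\,\mathbb{E}\big[\|Y_{2,h}-\Pi_\tau Y_{2,h}\|_{\mathbb{L}^2_{t,x}}^2\big]$.

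The second step applies the adjoint time-regularity estimate \eqref{today03} from Proposition~\ref{time-regularity of adjoint variable}, which gives $\mathbb{E}\big[\|Y_{2,h}-\Pi_\tau Y_{2,h}\|_{\mathbb{L}^2_{t,x}}^2\big]\le C\tau\big(\mathbb{E}[\|X_{1,h}^*\|_{\mathbb{L}^2_{t,x}}^2]+\|\widetilde X\|_{C_t\mathbb{L}^2_x}^2\big)$. It then remains to control $\mathbb{E}[\|X_{1,h}^*\|_{\mathbb{L}^2_{t,x}}^2]$ by the data. I would bound $\mathbb{E}[\|X_{1,h}^*\|_{\mathbb{L}^2_{t,x}}^2]\le T\,\mathbb{E}[\sup_{t}\|X_{1,h}^*(t)\|_{\mathbb{L}^2_x}^2]\le C\,\mathbb{E}[\sup_t\|\nabla X_{1,h}^*(t)\|_{\mathbb{L}^2_x}^2]$ via Poincar\'e's inequality, and then invoke the semi-discrete stability estimate (Proposition~\ref{Proposition 3.3}) together with the uniform bound for the $\mathbf{SLQ}_h$ optimal control (the semi-discrete analogue of \eqref{bound for optimal control}) to dominate the right-hand side by the data norms at the $\mathbb{L}^2_x/\mathbb{H}^1_0$ level. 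Finally, since $\|\cdot\|_{\mathbb{L}^2_x}\le C\|\cdot\|_{\mathbb{H}^1_0}\le C\|\cdot\|_{\mathbb{H}^2_x}$, $\|\widetilde X\|_{C_t\mathbb{L}^2_x}\le\|\widetilde X\|_{C_t\mathbb{H}_0^1}$ and $\|\sigma\|_{\mathbb{L}^2_{t,x}}\le\|\sigma\|_{\mathbb{L}^2_t\mathbb{H}^1_0}$, all lower-order data norms are absorbed into the claimed right-hand side of \eqref{time-regularity for semi-discrete control}, which yields the stated estimate.

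As for the main difficulty, for this proposition itself the argument is essentially a one-line consequence of the optimality condition once \eqref{today03} is available, so there is little genuine obstruction here. The real work sits inside the adjoint estimate \eqref{today03}, whose proof writes $Y_{2,h}(t)-Y_{2,h}(t_n)$ through the backward dynamics \eqref{discrete adjoint ode} and estimates the deterministic drift integral by Cauchy--Schwarz and the martingale part by the It\^o isometry. The key structural point---and the reason the exponent cannot be improved beyond $\tau^{1/2}$ in the unsquared norm---is that $Y_{2,h}$ inherits only H\"older-$1/2$ time regularity from the $Z_{2,h}\,{\rm d}W$ term, so the stochastic integral contributes exactly the $O(\tau)$ growth (in the squared norm) that propagates into \eqref{time-regularity for semi-discrete control}. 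I would therefore stress that the sharpness of the rate is dictated by the diffusion component of the $\mathbf{BSDE}_h$, and not by the projection $\Pi_\tau$; this is precisely the mechanism that later caps the order of convergence in Theorem~\ref{strong rate of convergence for time discretization} at $1/2$.
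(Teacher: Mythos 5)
Your proposal is correct and takes essentially the same route as the paper, whose proof is stated as a direct consequence of the semi-discrete optimality condition \eqref{3.3} and Proposition~\ref{time-regularity of adjoint variable} (estimate \eqref{today03}). The details you supply---linearity of $\Pi_\tau$ giving the identity $\mathbb{E}\big[\|U_h^*-\Pi_\tau U_h^*\|_{\mathbb{L}^2_{t,x}}^2\big]=\alpha^{-2}\,\mathbb{E}\big[\|Y_{2,h}-\Pi_\tau Y_{2,h}\|_{\mathbb{L}^2_{t,x}}^2\big]$, followed by the stability bounds controlling $\mathbb{E}\big[\|X_{1,h}^*\|_{\mathbb{L}^2_{t,x}}^2\big]$ by the data---are exactly what the paper leaves implicit.
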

\begin{proof} It is direct consequence of the semi-discrete optimality condition \eqref{3.3} and Proposition \ref{time-regularity of adjoint variable}.
\end{proof}
\begin{proposition}\label{Propostion01}Let $(Y_{1,h}, Y_{2,h}, Z_{1,h}, Z_{2,h})$ be solution to ${\bf BSPDE}_h$ \eqref{discrete adjoint ode}, then there exists $C>0$ such that
\begin{align}\label{today48}
&\tau\sum_{n=0}^{N-1}\mathbb{E}\big[\|\nabla\widehat{Y}_{2,h}(t_{n+1})-\nabla\widehat{Y}_{2,h}(t_n)\|_{\mathbb{L}^2_x}^2\big]+\tau\sum_{n=0}^{N-1}\mathbb{E}\big[\|\widehat{Y}_{1,h}(t_{n+1})-\widehat{Y}_{1,h}(t_n)\|_{\mathbb{L}^2_x}^2\big]\notag\\&\le C \tau(\|X_{1,0}\|_{\mathbb{H}_0^1}^2+ \|X_{2,0}\|_{\mathbb{L}^2_x}^2+\|\widetilde{X}\|_{C_t\mathbb{H}_0^1}^2+ \mathbb{E}\big[\|\sigma\|_{\mathbb{L}^2_{t}\mathbb{H}_0^1}^2\big]).
\end{align}
\end{proposition}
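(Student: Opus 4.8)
The plan is to reduce the node increments of the time-averaged adjoint pair to a continuous-time $\mathbb{L}^2$-in-time modulus of continuity for $Y_{1,h}$ and $\nabla Y_{2,h}$, and then to read the latter off the integral form of ${\bf BSPDE}_h$ \eqref{discrete adjoint ode} together with the a priori bounds \eqref{estimate for discrete adjoint 1} and \eqref{estimate for discrete adjoint 2}.

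First I would use the averaging definition \eqref{jhs}. For $n\ge 1$ we have $\widehat{Y}_{i,h}(t_{n+1}) = \tfrac1\tau\int_{t_n}^{t_{n+1}} Y_{i,h}(s)\,{\rm d}s$ and $\widehat{Y}_{i,h}(t_n) = \tfrac1\tau\int_{t_{n-1}}^{t_n} Y_{i,h}(s)\,{\rm d}s$, so a shift of variables gives $\widehat{Y}_{i,h}(t_{n+1}) - \widehat{Y}_{i,h}(t_n) = \tfrac1\tau\int_{t_n}^{t_{n+1}} [Y_{i,h}(s) - Y_{i,h}(s-\tau)]\,{\rm d}s$; the index $n=0$ is treated identically using $\widehat{Y}_{i,h}(t_0)=Y_{i,h}(0)$. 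Jensen's inequality then yields $\|\widehat{Y}_{i,h}(t_{n+1}) - \widehat{Y}_{i,h}(t_n)\|_{\mathbb{L}^2_x}^2 \le \tfrac1\tau\int_{t_n}^{t_{n+1}} \|Y_{i,h}(s) - Y_{i,h}(s-\tau)\|_{\mathbb{L}^2_x}^2\,{\rm d}s$, and the same bound holds with $\nabla$ inserted. Multiplying by $\tau$ and summing, the left-hand side of \eqref{today48} is therefore controlled by $\mathbb{E}\int_\tau^T [\|\nabla Y_{2,h}(s) - \nabla Y_{2,h}(s-\tau)\|_{\mathbb{L}^2_x}^2 + \|Y_{1,h}(s) - Y_{1,h}(s-\tau)\|_{\mathbb{L}^2_x}^2]\,{\rm d}s$ up to a boundary contribution of the same order.

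Next I would bound these two moduli from the dynamics. Differentiating the second line of \eqref{discrete adjoint ode} gives $\nabla Y_{2,h}(s) - \nabla Y_{2,h}(s-\tau) = -\int_{s-\tau}^s \nabla Y_{1,h}(r)\,{\rm d}r + \int_{s-\tau}^s \nabla Z_{2,h}(r)\,{\rm d}W(r)$; estimating the drift by Cauchy--Schwarz in time, the stochastic integral by the It\^o isometry, integrating in $s$ over $[\tau,T]$ and applying Fubini, I obtain a bound of the shape $2\tau^2 T\,\mathbb{E}[\sup_t \|\nabla Y_{1,h}(t)\|_{\mathbb{L}^2_x}^2] + 2\tau\,\mathbb{E}\int_0^T \|\nabla Z_{2,h}(r)\|_{\mathbb{L}^2_x}^2\,{\rm d}r$. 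An identical computation on the first line of \eqref{discrete adjoint ode}, with increment $Y_{1,h}(s) - Y_{1,h}(s-\tau) = -\int_{s-\tau}^s [\Delta_h Y_{2,h} + Z_{2,h} + X_{1,h}^*-\widetilde{X}_h](r)\,{\rm d}r + \int_{s-\tau}^s Z_{1,h}(r)\,{\rm d}W(r)$, produces a higher-order $O(\tau^2)$ drift part (governed by $\sup_t\|\Delta_h Y_{2,h}\|_{\mathbb{L}^2_x}^2$, $\mathbb{E}\int\|Z_{2,h}\|_{\mathbb{L}^2_x}^2$ and $\|X_{1,h}^*-\widetilde{X}_h\|_{\mathbb{L}^2_{t,x}}^2$) together with the leading term $C\tau\,\mathbb{E}\int_0^T \|Z_{1,h}(r)\|_{\mathbb{L}^2_x}^2\,{\rm d}r$. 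Collecting the $O(\tau)$ terms and invoking \eqref{estimate for discrete adjoint 1}, \eqref{estimate for discrete adjoint 2}, the Poincar\'e inequality (to pass from $\nabla Z_{2,h}$ to $Z_{2,h}$) and the uniform state bound \eqref{uniform bound for discrete optimal pair} to rewrite each right-hand factor through the data norms then gives \eqref{today48}.

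The hard part will not be any single inequality but the bookkeeping of regularity levels: the $\nabla\widehat{Y}_{2,h}$ increment demands control of $\nabla Y_{1,h}$ and $\nabla Z_{2,h}$, whereas the $\widehat{Y}_{1,h}$ increment demands control of $\Delta_h Y_{2,h}$, so one is forced to use the higher-order adjoint estimate \eqref{estimate for discrete adjoint 2} rather than only \eqref{estimate for discrete adjoint 1}, and must then verify that its right-hand side---built from $\nabla$-norms of $X_{1,h}^*-\widetilde{X}_h$---is itself dominated by the asserted $\mathbb{H}_0^1$ and $\mathbb{L}^2_x$ data norms via the semi-discrete state regularity. One must also keep track of the $\tau$-powers to confirm that the genuinely higher-order $O(\tau^2)$ contributions are absorbed, leaving the claimed $O(\tau)$ rate.
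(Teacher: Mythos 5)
Your proposal is correct, and it rests on the same core mechanism as the paper's proof: write the increment of the time average as an average of increments of $Y_{i,h}$, bound those increments through the integral form of \eqref{discrete adjoint ode} via Cauchy--Schwarz on the drift and the It\^o isometry on the stochastic integral, and close with the a priori adjoint estimates. The structural difference is in the decomposition and in what is reused: the paper inserts the nodal value $Y_{i,h}(t_n)$ into both averages, so that the left-hand side of \eqref{today48} is controlled by forward and backward nodal differences $\|Y_{i,h}(t)-Y_{i,h}(t_n)\|$ and $\|Y_{i,h}(t)-Y_{i,h}(t_{n+1})\|$, which are exactly the quantities already estimated in Proposition~\ref{time-regularity of adjoint variable} (estimates \eqref{today47} and \eqref{today480}); you instead change variables to obtain the $\tau$-shifted modulus $\|Y_{i,h}(s)-Y_{i,h}(s-\tau)\|$ and re-derive its bound inline rather than invoking that proposition. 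Your route is self-contained and makes the $\tau$-power bookkeeping (the $O(\tau)$ stochastic contribution versus the $O(\tau^2)$ drift contributions) fully explicit, which the paper delegates to the earlier lemma; the paper's route is shorter because the modulus estimates are already available. You also correctly identify the regularity bookkeeping that the paper distributes over two propositions, namely that $\nabla Y_{1,h}$, $\Delta_h Y_{2,h}$ and $\nabla Z_{1,h}$ force the use of the higher-order estimate \eqref{estimate for discrete adjoint 2} and not only \eqref{estimate for discrete adjoint 1}. One small correction: for the final passage to the data norms you cite \eqref{uniform bound for discrete optimal pair}, which concerns the \emph{fully} discrete optimal pair $(X_{1,h\tau}^*,U_{h\tau}^*)$; what is needed here is the \emph{semi}-discrete state stability of Proposition~\ref{Proposition 3.3} together with the standard bound $\alpha\,\mathbb{E}\big[\|U_h^*\|_{\mathbb{L}^2_{t,x}}^2\big]\le 2\hat{J}_h(0)$ coming from optimality, so that $\mathbb{E}\big[\|\nabla(X_{1,h}^*-\widetilde{X}_h)\|_{\mathbb{L}^2_{t,x}}^2\big]$ is dominated by the asserted data norms; this is a citation slip, not a gap, and the paper leaves the same conversion implicit.
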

\begin{proof} 
Recall that $\widehat{Y}_{2,h}(t_n) = \frac{1}{\tau} \int_{t_{n-1}}^{t_n} Y_{2,h}(t) {\rm  d}t$ for $n=1,\dots,N$ and $\hat{Y}_{2,h}(t_0)={Y}_{2,h}(t_0)$, from Definition~\eqref{jhs}. By the triangle inequality and Cauchy--Schwarz inequality, for $n=1,\dots,N-1$,
\begin{align*}
\tau \|\nabla\widehat{Y}_{2,h}(t_{n+1})-\nabla\widehat{Y}_{2,h}(t_n)\|_{\mathbb{L}^2_x}^2 &= \frac{1}{\tau} \bigg\| \int_{t_n}^{t_{n+1}} \nabla Y_{2,h}(t) {\rm  d}t - \int_{t_{n-1}}^{t_n} \nabla Y_{2,h}(t) {\rm  d}t \bigg\|_{\mathbb{L}^2_x}^2 \\
&= \frac{1}{\tau} \bigg\| \int_{t_n}^{t_{n+1}} \nabla (Y_{2,h}(t) - Y_{2,h}(t_n)) {\rm  d}t - \int_{t_{n-1}}^{t_n} \nabla (Y_{2,h}(t) - Y_{2,h}(t_n)) {\rm  d}t \bigg\|_{\mathbb{L}^2_x}^2 \\
&\le  \int_{t_n}^{t_{n+1}} \|\nabla Y_{2,h}(t) - \nabla Y_{2,h}(t_n)\|_{\mathbb{L}^2_x}^2 {\rm  d}t +  \int_{t_{n-1}}^{t_n} \|\nabla Y_{2,h}(t) - \nabla Y_{2,h}(t_n)\|_{\mathbb{L}^2_x}^2 {\rm  d}t.
\end{align*}
For $n=0$,
\[
\tau \|\nabla\widehat{Y}_{2,h}(t_1) - \nabla\widehat{Y}_{2,h}(t_0)\|_{\mathbb{L}^2_x}^2 = \frac{1}{\tau} \bigg\| \int_{t_0}^{t_1} \nabla (Y_{2,h}(t) - Y_{2,h}(t_0)) {\rm  d}t \bigg\|_{\mathbb{L}^2_x}^2 \le \int_{t_0}^{t_1} \|\nabla Y_{2,h}(t) - \nabla Y_{2,h}(t_0)\|_{\mathbb{L}^2_x}^2 {\rm  d}t.
\]
By summing over $n=0,\dots,N-1$, taking expectations, we obtain
\begin{align*}
&\tau \sum_{n=0}^{N-1} \mathbb{E} \big[ \|\nabla\widehat{Y}_{2,h}(t_{n+1}) - \nabla\widehat{Y}_{2,h}(t_n)\|_{\mathbb{L}^2_x}^2 \big] \le \sum_{n=0}^{N-1} \mathbb{E} \bigg[ \int_{t_n}^{t_{n+1}} \|\nabla Y_{2,h}(t) - \nabla Y_{2,h}(t_n)\|_{\mathbb{L}^2_x}^2 {\rm  d}t \bigg]\\&\qquad +  \sum_{n=1}^{N-1} \mathbb{E} \bigg[ \int_{t_{n-1}}^{t_n} \|\nabla Y_{2,h}(t) - \nabla Y_{2,h}(t_n)\|_{\mathbb{L}^2_x}^2 {\rm  d}t \bigg].
\end{align*}
The second sum shifts to $\sum_{n=0}^{N-2} \mathbb{E} [ \int_{t_n}^{t_{n+1}} \|\nabla Y_{2,h}(t) - \nabla Y_{2,h}(t_{n+1})\|_{\mathbb{L}^2_x}^2 {\rm  d}t ]$. By using \eqref{today47} and \eqref{today480}, the right-hand side is bounded by $C \tau \big( \mathbb{E} \big[ \|\nabla X_{1,h}^*\|_{\mathbb{L}^2_{t, x}}^2+ \|\nabla\widetilde{X}\|_{\mathbb{L}^2_{t, x}}^2\big] \big)$. 

Similarly, we obtain the bound for $\tau \sum_{n=0}^{N-1} \mathbb{E} [ \|\widehat{Y}_{1,h}(t_{n+1}) - \widehat{Y}_{1,h}(t_n)\|_{\mathbb{L}^2_x}^2 ]$ by decomposing the differences of averages for $Y_{1,h}$, applying triangle and H\"older inequalities in the same manner, summing and taking expectations to express it in terms of forward and backward time differences, and bounding by using of \eqref{today47} and \eqref{today480}.
\end{proof}

\section{Proof of Proposition~\ref{stability bound for discrete state}}\label{appendix2}
\begin{proof} 
For convenience, we denote $(X_{1,h\tau}, X_{2,h\tau})\equiv(\mathcal{X}_{1,h\tau}^0[U_{h\tau}], \mathcal{X}_{2,h\tau}^0[U_{h\tau}])$. The scheme reads: for $n = 0, \ldots, N-1$,
\begin{align}
X_{1,h\tau}(t_{n+1}) - X_{1,h\tau}(t_n) &= \frac{\tau}{2} \left( X_{2,h\tau}(t_{n+1}) + X_{2,h\tau}(t_n) \right), \label{eq:scheme1-app} \\
X_{2,h\tau}(t_{n+1}) - X_{2,h\tau}(t_n) &= \frac{\tau}{2} \left[ \Delta_h \left( X_{1,h\tau}(t_{n+1}) + X_{1,h\tau}(t_n) \right) + U_{h\tau}(t_n) \right] + \gamma X_{1,h\tau}(t_n) \Delta_{n+1} W, \label{eq:scheme2-app}
\end{align}
with $$X_{1,h\tau}(0) = X_{2,h\tau}(0) = 0.$$ Recall the Poincar\'e inequality: for $v\in\mathbb{H}_0^1$, $$\|v\|_{\mathbb{L}^2_x}^2 \leq c_P \|\nabla v\|_{\mathbb{L}^2_x}^2,$$ where $c_P > 0$ depends on the domain.

\noindent
We define $$\mathcal{Y}_n := \|\nabla X_{1,h\tau}(t_n)\|_{\mathbb{L}^2_x}^2 + \|X_{2,h\tau}(t_n)\|_{\mathbb{L}^2_x}^2.$$ To derive the energy balance, apply the identity $\big\langle a - b, a + b \big\rangle = \|a\|^2 - \|b\|^2$. By taking the gradient of \eqref{eq:scheme1-app} and the inner product with $\nabla (X_{1,h\tau}(t_{n+1}) + X_{1,h\tau}(t_n))$, we yield
\[
\|\nabla X_{1,h\tau}(t_{n+1})\|_{\mathbb{L}^2_x}^2 - \|\nabla X_{1,h\tau}(t_n)\|_{\mathbb{L}^2_x}^2 = \frac{\tau}{2} \big\langle \nabla (X_{2,h\tau}(t_{n+1}) + X_{2,h\tau}(t_n)), \nabla (X_{1,h\tau}(t_{n+1}) + X_{1,h\tau}(t_n)) \big\rangle.
\]
Taking the inner product of \eqref{eq:scheme2-app} with $X_{2,h\tau}(t_{n+1}) + X_{2,h\tau}(t_n)$ gives
\begin{align*}
&\|X_{2,h\tau}(t_{n+1})\|_{\mathbb{L}^2_x}^2 - \|X_{2,h\tau}(t_n)\|_{\mathbb{L}^2_x}^2 = \frac{\tau}{2} \big\langle \Delta_h (X_{1,h\tau}(t_{n+1}) + X_{1,h\tau}(t_n)), X_{2,h\tau}(t_{n+1}) + X_{2,h\tau}(t_n) \big\rangle \\
&\quad + \frac{\tau}{2} \big\langle U_{h\tau}(t_n), X_{2,h\tau}(t_{n+1}) + X_{2,h\tau}(t_n) \big\rangle + \gamma \big\langle X_{1,h\tau}(t_n) \Delta_{n+1} W, X_{2,h\tau}(t_{n+1}) + X_{2,h\tau}(t_n) \big\rangle.
\end{align*}
Adding these equations, the deterministic cross terms cancel because $\big\langle \Delta_h v, w \big\rangle = -\big\langle \nabla v, \nabla w \big\rangle$, leading to
\begin{align}\label{energy banla}
\mathcal{Y}_{n+1} - \mathcal{Y}_n = \frac{\tau}{2} \big\langle U_{h\tau}(t_n), X_{2,h\tau}(t_{n+1}) + X_{2,h\tau}(t_n) \big\rangle + \gamma \big\langle X_{1,h\tau}(t_n) \Delta_{n+1} W, X_{2,h\tau}(t_{n+1}) + X_{2,h\tau}(t_n) \big\rangle.
\end{align}
To expand the stochastic term, we substitute $$X_{2,h\tau}(t_{n+1}) = X_{2,h\tau}(t_n) + \frac{\tau}{2} [\Delta_h (X_{1,h\tau}(t_{n+1}) + X_{1,h\tau}(t_n)) + U_{h\tau}(t_n)] + \gamma X_{1,h\tau}(t_n) \Delta_{n+1} W$$ from \eqref{eq:scheme2-app} to yield
\[
X_{2,h\tau}(t_{n+1}) + X_{2,h\tau}(t_n) = 2 X_{2,h\tau}(t_n) + \frac{\tau}{2} \Delta_h (X_{1,h\tau}(t_{n+1}) + X_{1,h\tau}(t_n)) + \frac{\tau}{2} U_{h\tau}(t_n) + \gamma X_{1,h\tau}(t_n) \Delta_{n+1} W.
\]
The stochastic term in \eqref{energy banla} then becomes
\begin{align*}
&\gamma \big\langle X_{1,h\tau}(t_n) \Delta_{n+1} W, X_{2,h\tau}(t_{n+1}) + X_{2,h\tau}(t_n) \big\rangle = 2\gamma \big\langle X_{1,h\tau}(t_n) \Delta_{n+1} W, X_{2,h\tau}(t_n) \big\rangle \\&\qquad+ \frac{\gamma\tau}{2} \big\langle X_{1,h\tau}(t_n) \Delta_{n+1} W, \Delta_h (X_{1,h\tau}(t_{n+1}) + X_{1,h\tau}(t_n)) \big\rangle  + \frac{\gamma\tau}{2} \big\langle X_{1,h\tau}(t_n) \Delta_{n+1} W, U_{h\tau}(t_n) \big\rangle\\&\qquad + \gamma^2 \| X_{1,h\tau}(t_n) \Delta_{n+1} W \|_{\mathbb{L}^2_x}^2.
\end{align*}
Summing the energy balance \eqref{energy banla} from $n=0$ to $m-1$ (with $\mathcal{Y}_0=0$) and taking expectations gives $$\mathbb{E}[\mathcal{Y}_m] = I_1 + I_2 + I_3 + I_4 + I_5,$$ where
\begin{align*}
I_1 &= \sum_{n=0}^{m-1} \frac{\tau}{2} \mathbb{E}[\big\langle U_{h\tau}(t_n), X_{2,h\tau}(t_{n+1}) + X_{2,h\tau}(t_n) \big\rangle], \\
I_2 &= 2\gamma \sum_{n=0}^{m-1} \mathbb{E}[\big\langle X_{1,h\tau}(t_n) \Delta_{n+1} W, X_{2,h\tau}(t_n) \big\rangle] = 0 \quad (\text{since } \mathbb{E}[\Delta_{n+1} W]=0 \text{ and independence}), \\
I_3 &= \frac{\gamma\tau}{2} \sum_{n=0}^{m-1} \mathbb{E}[\big\langle X_{1,h\tau}(t_n) \Delta_{n+1} W, \Delta_h (X_{1,h\tau}(t_{n+1}) + X_{1,h\tau}(t_n)) \big\rangle], \\
I_4 &= \frac{\gamma\tau}{2} \sum_{n=0}^{m-1} \mathbb{E}[\big\langle X_{1,h\tau}(t_n) \Delta_{n+1} W, U_{h\tau}(t_n) \big\rangle], \\
I_5 &= \gamma^2 \sum_{n=0}^{m-1} \mathbb{E}[\|X_{1,h\tau}(t_n) \Delta_{n+1} W \|_{\mathbb{L}^2_x}^2] = \gamma^2 \tau \sum_{n=0}^{m-1} \mathbb{E}[\| X_{1,h\tau}(t_n) \|_{\mathbb{L}^2_x}^2] \leq c_P \gamma^2 \tau \sum_{n=0}^{m-1} \mathbb{E}[\|\nabla X_{1,h\tau}(t_n) \|_{\mathbb{L}^2_x}^2].
\end{align*}
For $I_1$, Young's inequality with $\delta>0$ gives $$I_1 \leq \frac{\tau}{2\delta} \sum_{n=0}^{m-1} \mathbb{E}[\|U_{h\tau}(t_n)\|_{\mathbb{L}^2_x}^2] + \frac{\delta \tau}{2} \sum_{n=0}^{m-1} \mathbb{E}[\|X_{2,h\tau}(t_n)\|_{\mathbb{L}^2_x}^2] + \frac{\tau \delta}{4} \mathbb{E}[\|X_{2,h\tau}(t_m)\|_{\mathbb{L}^2_x}^2].$$
For $I_3$, by using $\big\langle v, \Delta_h w \big\rangle = -\big\langle \nabla v, \nabla w \big\rangle$ and Young's inequality with $\delta>0$ gives
\[
I_3 \leq \left( \frac{\gamma^2 \tau^2}{4\delta} + \frac{\delta \tau}{2} \right) \sum_{n=0}^{m-1} \mathbb{E}[\|\nabla X_{1,h\tau}(t_n)\|_{\mathbb{L}^2_x}^2] + \frac{\tau \delta}{4} \mathbb{E}[\|\nabla X_{1,h\tau}(t_m)\|_{\mathbb{L}^2_x}^2].
\]
For $I_4$, Young's inequality with $\delta>0$ implies
\[
I_4 \leq \frac{c_P \gamma^2 \tau^2}{4} \sum_{n=0}^{m-1} \mathbb{E}[\|\nabla X_{1,h\tau}(t_n)\|_{\mathbb{L}^2_x}^2] + \frac{\tau}{4} \sum_{n=0}^{m-1} \mathbb{E}[\|U_{h\tau}(t_n)\|_{\mathbb{L}^2_x}^2].
\]
By combining all bounds, we obtain $$(1 - \frac{\tau \delta}{2}) \mathbb{E}[\mathcal{Y}_m] \leq c_1 \tau \sum_{n=0}^{m-1} \mathbb{E}[\mathcal{Y}_n] + c_2 \tau \sum_{n=0}^{m-1} \mathbb{E}[\|U_{h\tau}(t_n)\|_{\mathbb{L}^2_x}^2],$$ where $c_1 = c_P \gamma^2 + \frac{c_P \gamma^2 \tau}{4} + \frac{\gamma^2 \tau}{4\delta} + \delta$ and $c_2 = \frac{1}{4} + \frac{1}{2\delta}$. By applying the discrete Gronwall's inequality for $0 < \delta < 2/\tau$ we obtain
\[
\mathbb{E}[\|\nabla X_{1,h\tau}(t_m)\|_{\mathbb{L}^2_x}^2 + \|X_{2,h\tau}(t_m)\|_{\mathbb{L}^2_x}^2] \leq c_{11} e^{c_{21} T} \mathbb{E}[\|U_{h\tau}\|_{\mathbb{L}^2_{t,x}}^2],
\]
with $c_{11} = c_2 / (1 - \tau \delta / 2)$ and $c_{21} = c_1 / (1 - \tau \delta / 2)$.
\end{proof}
\begin{remark}[$\mathbb{L}^2$-bound with explicit constants]
	To clarify the energy estimate in the proof, we set \( \delta = 1 \) and \( \tau < 1 \), and apply the Poincaré inequality \( \|v\|_{\mathbb{L}_x^2}^2 \leq c_P \|\nabla v\|_{\mathbb{L}_x^2}^2 \), with \( c_P = \bigg(\frac{\text{diam}(D)}{\pi} \bigg)^2\); see \cite{AcostaDuran1995, PayneWeinberger1960}. This gives for any $m\in \{1,...,N\}$
	\begin{align}\label{today1414}
		\mathbb{E} \big[ \| \mathcal{X}^0_{1,h\tau}[U_{h\tau}](t_m) \|_{\mathbb{L}^2_x}^2 \big] \leq c_P c_1 e^{c_2 T} \mathbb{E} \big[ \| U_{h\tau} \|_{\mathbb{L}^2_{t, x}}^2 \big],
	\end{align}
	where \( c_1 = c_P \gamma^2 + \frac{\gamma^2 \tau}{4} (2c_P + 1) + 1 \), \( c_2 = 1 \).
	
	\noindent
	For the case \( \gamma = 0 \):
	\begin{align}\label{today1515}
		\mathbb{E} \left[ \| \mathcal{X}_{1,h\tau}^0[U_{h\tau}](t_m) \|_{\mathbb{L}^2_x}^2 \right] \leq c_P e^{T} \mathbb{E} \left[ \| U_{h\tau} \|_{\mathbb{L}^2_{t, x}}^2 \right],
	\end{align}
	since \( c_1 = 1 \), \( c_2 = 1 \).
\end{remark}
\section*{Acknowledgments}
The author would like to express sincere gratitude to Andreas Prohl for providing valuable ideas and insightful suggestions that greatly improved this work.

\end{document}